\documentclass[10pt]{amsart}

\usepackage{latexsym,exscale,enumerate,amsfonts,amssymb,xparse, mathtools}
\usepackage[normalem]{ulem}
\usepackage{amsmath,amsthm,amsfonts,amssymb,amscd, stmaryrd,textcomp}
\usepackage[bookmarks=false]{hyperref}
\usepackage[usenames,dvipsnames]{xcolor}
\usepackage{enumitem}
\usepackage{verbatim}
\usepackage{ytableau}
\usepackage{euscript}
\usepackage{leftindex}
\usepackage{blkarray}

\colorlet{green}{black!30!green}

\definecolor{myblue}{RGB}{109, 156, 179}

\usepackage{xcolor}
\definecolor{myorange}{rgb}{1,0.647,0}

\definecolor{mypurple}{cmyk}{.6,.9,0, .11}

\numberwithin{equation}{section}

\theoremstyle{definition}
\newtheorem{theorem}{Theorem}[section]
\newtheorem{corollary}[theorem]{Corollary}
\newtheorem{conjecture}[theorem]{Conjecture}
\newtheorem{lemma}[theorem]{Lemma}
\newtheorem{remark}[theorem]{Remark}

\newtheorem{proposition}[theorem]{Proposition}
\newtheorem{definition}[theorem]{Definition}
\newtheorem{example}[theorem]{Example}
\newtheorem{question}[theorem]{Question}

\newtheorem{notation}[theorem]{Notation}

\usepackage{tikz}
\usetikzlibrary{cd}
\usetikzlibrary{snakes}
\usetikzlibrary{decorations.markings}
\usetikzlibrary{decorations.pathreplacing}
\usetikzlibrary{arrows,shapes,positioning}
\usetikzlibrary{decorations.pathmorphing}
\tikzset{anchorbase/.style={baseline={([yshift=-0.5ex]current bounding box.center)}},
tinynodes/.style={font=\tiny,text height=0.75ex,text depth=0.15ex},
smallnodes/.style={font=\scriptsize,text height=0.75ex,text depth=0.15ex},
>={Latex[length=1mm, width=1.5mm]},
overcross/.style={line width=4pt,color=white},
}
\tikzstyle directed=[postaction={decorate,decoration={markings,
	mark=at position #1 with {\arrow{>}}}}]
\tikzstyle rdirected=[postaction={decorate,decoration={markings,
	mark=at position #1 with {\arrow{<}}}}]

\newcommand{\Hom}{{\rm Hom}}

\newcommand{\End}{{\rm End}}

\renewcommand{\to}{\rightarrow}

\newcommand{\id}{{\rm id}}

\let\hat=\widehat
\let\tilde=\widetilde

\def\C{{\mathbb C}}

\newcommand{\gln}[1][n]{\mathfrak{gl}_{#1}}

\newcommand{\slm}{\mathfrak{sl}_{m}}
\newcommand{\glm}{\mathfrak{gl}_{m}}

\newcommand{\son}[1][2n]{\mathfrak{so}_{#1}}
\newcommand{\som}[1][m]{\mathfrak{so}_{#1}}

\newcommand{\ot}{\otimes}

\newcommand{\Br}{\mathrm{Br}}

\newcommand{\flip}{\operatorname{flip}}

\newcommand{\wt}{\mathrm{wt}}
\newcommand{\hgt}{\mathrm{ht}}

\newcommand{\iQW}{\prescript{\iota}{}{T}}

\newcommand{\imag}{\textbf{i}}

\DeclareMathOperator{\BrW}{{\rm Br}_{W}}

\newcommand{\ep}{\varepsilon}

\newcommand{\qbinom}[2]{\begin{bmatrix} #1 \\ #2 \end{bmatrix}}

\newcommand{\g}{\mathfrak{g}}
\newcommand{\borel}{\mathfrak{b}}
\newcommand{\h}{\mathfrak{h}}
\newcommand{\kompact}{\mathfrak{k}}
\newcommand{\hwp}{\boldsymbol{\psi}}
\newcommand{\howespin}{\boldsymbol{\mathbb{S}}}

\begin{document}
%

\title[]{A Kohno--Drinfeld Theorem for $\iota$quantum Weyl groups}

\author{Elijah Bodish}
\address{Department of Mathematics, Indiana University Bloomington, Rawles Hall, Bloomington, IN 47405-47405-7000, USA}
\email{ebodish@iu.edu}

\author{Ivan Motorin}
\address{Department of Mathematics, Massachusetts Institute of Technology, Building 2, Cambridge, MA 02139-4307, USA}
\email{ivanm597@mit.edu}

\begin{abstract}
We prove that two finite-dimensional linear representations of the braid group are isomorphic. One representation comes from the monodromy of the boundary Casimir connection, and the other comes from the $\iota$quantum Weyl group. Both representations are defined for any split symmetric pair $\mathfrak{k}\subset \g$ and for any integrable representation of $\mathfrak{k}$. Our proof uses $(O_m,\son[2n])$ spin Howe duality, so it is only for the pair $\mathfrak{so}_m\subset \mathfrak{sl}_m$.
\end{abstract}

\maketitle

\setcounter{tocdepth}{3}

%
\section{Introduction}
%

Flat connections over a smooth irreducible variety are of considerable interest in modern geometry. When the variety is the complement to a hyperplane arrangement, the most well-studied examples of flat connections are Knizhnik--Zamolodchikov (KZ) \cite{MR853258} and Casimir \cite{MR1797943,MR1926499} connections. Both examples are defined in terms of Lie theory, and Lie theory is used to answer geometric questions about these connections, e.g., in the case of the KZ connection for semisimple $\g$, Schechtman--Varchenko's formulas for flat sections \cite{MR1123378} and Kohno--Drinfeld's description of monodromy in terms of $U_h(\g)$'s $R$-matrix \cite{Drinfeld2,Koh1}. 

In \cite{toledanolaredo2016quasicoxeterquasitriangularquasibialgebrascasimir}, Toledano Laredo describes the monodromy of the Casimir connection $\nabla_{\g}^{Cas}$ in terms of the quantum Weyl group action on a corresponding $U_h(\mathfrak{g})$-module. This work was then extended to the symmetrizable Kac-Moody case by Appel--Toledano Laredo \cite{MR4728239}. An important precedent for those works is Toledano Laredo's \cite[Theorem 7.1]{Valerio-qWeyl}, which proves the result when $\g=\slm$ by using $(\glm,\gln)$ symmetric Howe duality and the Kohno--Drinfeld Theorem \cite{Drinfeld2,Koh1}.

\subsection{Results}

Let $\g$ be a semisimple $\C$-Lie algebra and let $\theta$ be the Chevalley involution:
\[
e_i\mapsto -f_i, \quad f_i\mapsto -e_i, \quad \text{and} \quad h_i \mapsto -h_i.
\]
For the symmetric pair $(\g^{\theta}\subset \g)$ there are analogues of the KZ connection and the Casimir connection: 
\begin{itemize}
    \item the \emph{cyclotomic KZ connection} $\nabla^{{\rm bKZ}}_{\mathfrak{g}^{\theta}\subset \mathfrak{g}}$ \cite{MR2383601}, and
    \item the \emph{boundary Casimir connection} $\nabla^{{\rm bCas}}_{\mathfrak{g}^{\theta}\subset \mathfrak{g}}$ \cite{MR4983792}.
\end{itemize}
The base of the cyclotomic KZ connection $\nabla^{{\rm bKZ}}_{\mathfrak{g}^{\theta}\subset \mathfrak{g}}$ is a type $B$ hyperplane arrangement complement, whereas the base of $\nabla_{\g}^{KZ}$ is a type $A$ arrangement complement. However, the Casimir connections $\nabla_{\g}^{Cas}$ and $\nabla_{\g^{\theta}\subset \g}^{bCas}$ have the same type $\g$ hyperplane arrangement complement in $\h$ as their base.

The notion of a symmetric pair can be lifted to the level of quantum groups. More precisely, an $\iota$quantum group $U_h^{\iota}(\mathfrak{g}^{\theta}\subset \mathfrak{g})$ is a coideal subalgebra of $U_h(\mathfrak{g})$ quantizing $U(\mathfrak{g}^{\theta})$ \cite{W22}. Recently Wang--Zhang developed an $\iota$quantum generalization of Lusztig's quantum Weyl group \cite{WW25}.

\begin{conjecture}[{\cite[Conjecture 8.10]{MR4983792}}]\label{conj1}
    The monodromy representation for $\nabla_{\g^{\theta}\subset \g}^{bCas}$ is isomorphic to the representation given by the $\iota$quantum Weyl group for $U_h^{\iota}(\g^{\theta}\subset \g)$. 
\end{conjecture}

The main result of this paper is the following (see Theorem \ref{monodromy} for a precise statement). 

\begin{theorem}\label{thm:main}
    Conjecture \ref{conj1} is true for the split symmetric pair $(\g^{\theta}\subset \g) = (\som\subset \slm)$.
\end{theorem}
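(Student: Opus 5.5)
The strategy follows Toledano Laredo's proof of \cite[Theorem 7.1]{Valerio-qWeyl}. There, $(\glm,\gln)$ Howe duality identified the Casimir connection of $\slm$ with a KZ connection for $\gln$, and the Kohno--Drinfeld theorem then finished the argument. Here the corresponding tool is $(O_m,\son[2n])$ spin Howe duality.

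Consider the spinor module $\howespin$ for the Clifford algebra on $\C^m\otimes\C^n$. The groups $O_m$ and $\son[2n]$ act on it as mutual commutants. Since $\som\subset\slm$ and $\h\subset\slm$, it is better to work with the larger dual pair $(\gln[m]\cdot\gln, \dots)$, or to use the restriction of the $\slm$-action together with the $\gln\subset\son[2n]$ Levi. The key steps, in order, are these.

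\begin{enumerate}
\item Show that every finite-dimensional $\som$-module $V$ appears, up to the relevant isotypic decomposition and twisting by $\det$, as a multiplicity space $\Hom_{\son[2n]}(L_\lambda,\howespin)$ for $n$ large. This reduces the problem to $V$ of this form.
\item Compute the boundary Casimir connection $\nabla^{\rm bCas}_{\som\subset\slm}$ on $\howespin$. The operators attached to the roots $\alpha=\ep_i-\ep_j$ of $\slm$ are built from $\kompact$-components of the root vectors, namely $e_\alpha-\theta(e_\alpha)$ type elements squared, and they should transfer under Howe duality to quadratic expressions in $\son[2n]$. Because of the $i$-indexed decomposition $\howespin=\bigotimes_{i=1}^m \howespin_i$, the connection should become a cyclotomic KZ connection $\nabla^{\rm bKZ}$ for $\son[2n]$ (or $\gln\subset\son[2n]$) on $m$ tensor factors. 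The Cartan variable $h\in\h$ of $\slm$ plays the role of the $m$ points, and the $\kompact$-pieces give the boundary terms $t_i$.
\item Apply a Kohno--Drinfeld type theorem for the cyclotomic KZ connection. Its monodromy is given by the $R$-matrix and $K$-matrix of the quantum symmetric pair. I take this as a known input, from Enriquez's work on quasi-reflection algebras, to be cited.
\item Quantize the spin Howe duality to a $(U_h^\iota(\som\subset\slm), U_h(\son[2n]))$ duality on a $q$-spinor module. Then check that under it the $R$/$K$-matrix action matches the $\iota$quantum Weyl group generators of Wang--Zhang \cite{WW25}, in the same way that Toledano Laredo matched $R$-matrices with Lusztig's $T_i$.
\end{enumerate}

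The main obstacle is step 4. It requires showing that the quantum duality intertwines the braid-group actions generator by generator, including for the $K$-matrix, the reflection generator associated to the boundary. This is also where normalizations such as $q$-shifts and the choice of $\theta$-twist must be matched exactly.

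I would first check the rank-one case $m=2$ by direct computation on $q$-spinors, where $\mathfrak{so}_2\subset\mathfrak{sl}_2$ is abelian. For general $m$, I would then use that the $\iota$quantum Weyl group elements are characterized by their rank-one restrictions and the braid relations. A secondary difficulty is verifying that the connection identification in step 2 holds on the nose, and not just up to a gauge by scalar or central terms. I would absorb any discrepancy into an abelian flat connection with trivial monodromy up to a character, and adjust by that character.
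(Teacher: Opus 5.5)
\textbf{There is a genuine gap: Step 2 misidentifies the connection, and Steps 3--4 inherit the error.}

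The boundary Casimir connection for $\som\subset\slm$ lives on $\h_{\slm}^{reg}$, the complement of the type $A_{m-1}$ arrangement $\{z_a=z_b\}$. Its monodromy is a representation of $\Br_{S_m}$. A cyclotomic KZ connection lives on a type $B_m$ arrangement complement ($z_a=\pm z_b$, $z_a=0$) and has type $B$ braid group monodromy. So no such identification can hold, and this problem has no boundary term, no $K$-matrix and no boundary reflection generator.

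What spin Howe duality actually gives is that the $\kompact$-operators are two-point Casimirs. On $\howespin\cong S^{\otimes m}$, the element $E_{ab}-E_{ba}\in\som$ acts by $B_{ab}:=\sum_{i=1}^n(\psi_{ia}^*\psi_{ib}-\psi_{ib}^*\psi_{ia})$. One has $2\Omega_{ab}-\frac12 B_{ab}^2=\frac n4\,\id$, where $\Omega_{ab}$ is the split Casimir of $\son[2n]$ in tensor slots $a,b$.

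Hence $\nabla^{h/2}_{\mathrm{bCas}}$ on $\howespin$ is the \emph{ordinary} KZ connection $\nabla^{h}_{KZ,\son[2n]}$ for $m$ points, plus the scalar form $\frac{nh}{4}\sum_{a<b}d\log(z_a-z_b)$ (Proposition \ref{Gauge_equiv}). The usual Kohno--Drinfeld theorem then turns the monodromy into the $R$-matrix representation of $U_{2h}(\son[2n])$ on $S^{\otimes m}$.

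The quantum statement you actually need is the rank-one identity $R_{S,S}=q^{n/2}\iQW$ on $S\otimes S$ (Theorem \ref{R-matrix-to-i-q-Weyl}). The $\iota$quantum Weyl group generator is Howe dual to the braiding itself, not to a $K$-matrix. The paper proves this on the singular vectors. There $C\hwp_k^{\pm\pm}=[2k]_Q\hwp_k^{\mp\mp}$ and $C\hwp_k^{\pm\mp}=[2k+1]_Q\hwp_k^{\mp\pm}$, with $Q=-q^2$. The paper evaluates the $\iota$divided power series at these values and compares with the scalars by which $R_{S,S}$ acts. Once the $K$-matrix is dropped, your rank-one check plus braid relations is essentially this argument.

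\textbf{Some of what you call normalizations are structural.}

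Wenzl's operators $C_i$, which commute with $U_{2h}(\son[2n])$, satisfy the $\iota$Serre relations at $q_i=-q^2$. So the quantized dual pair is $(U_{2h}(\son[2n]),U^\iota_{2h+\pi\mathbf{i}}(O_m))$, not a same-parameter pair. You reach $U^\iota_{2h}(\som\subset\slm)$ only through the twist $\widetilde C_i=\mathbf{i}e^{\pi\mathbf{i}E_{i,i}}C_i$.

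This twist changes $\iQW_i$ by the operator $e^{\pi\mathbf{i}E_{i,i}}(i,i+1)\flip^{-1}_{i,i+1}$ (Lemma \ref{iQW_pm_q2}). That operator is a sign depending on the parities of the $i$-th and $(i+1)$-st tensor factors. It is not a scalar, so it cannot be absorbed into an abelian connection or a character as you propose. It has to cancel exactly against the discrepancy between two choices: the lift $\tilde s_a\in SO_m$ used to define the boundary Casimir monodromy, and the flip used for the KZ monodromy (Lemma \ref{lem:signforaap1}, Corollary \ref{Br_S_m_monodromy}). Your abelian-character device only handles the genuinely scalar factors $e^{-\pi\mathbf{i}nh/2}$ and $q^{n/2}$.

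\textbf{Step 1.} This is consistent with the paper. You still need to pass from $O_m$-isotypic pieces to irreducible $\som$-modules; the paper does this with a holomorphic family of idempotents.
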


We deduce Theorem \ref{thm:main} using classical and quantum $(\son,O_m)$ spin Howe duality \cite{How95,Wenzl-Spin} as well as the Kohno--Drinfeld Theorem. Our argument is adapted from Toledano Laredo's in \cite{Valerio-qWeyl}, compare Figure \ref{fig:main-proof} to the diagram in \cite[\S1]{Valerio-qWeyl}.

\begin{figure}[ht]
\centering
\begin{tikzpicture}[
    node distance=2.7cm and 3.5cm,
    every node/.style={align=center, font=\small},
    box/.style={
        draw,
        rounded corners,
        inner sep=8pt,
        text width=3.8cm
    },
    iso/.style={<->, >=Latex, thick},
    implication/.style={->, >=Latex, very thick}
]

\node[box] (bcas)
{
\(\displaystyle
\pi_{\mathrm{bCas}}^{h}
\colon
\operatorname{Br}_{S_m}\to\operatorname{GL}(M)
\)
};

\node[box, right=of bcas] (kz)
{
\(\displaystyle
\pi_{\mathrm{KZ},\mathfrak{so}_{2n}}^{2h}
\colon
\operatorname{Br}_{S_m}
\to\operatorname{GL}(S^{\otimes m})
\)
};

\node[box, below=of kz] (rmat)
{
\(\displaystyle
\pi_{R,\mathfrak{so}_{2n}}^{2\pi i h}
\colon
\operatorname{Br}_{S_m}
\to\operatorname{GL}(S^{\otimes m})
\)
};

\node[box, left=of rmat] (iweyl)
{
\(\displaystyle
\pi_{\mathrm{bW}}^{2\pi i h}
\colon
\operatorname{Br}_{S_m}\to\operatorname{GL}(M)
\)
};

\draw[iso]
    (bcas) --
    node[above, text width=3.3cm]
    {\((O_m,\mathfrak{so}_{2n})\)\\spin Howe duality}
    (kz);

\draw[iso]
    (kz) --
    node[right, text width=3cm]
    {Kohno--Drinfeld}
    (rmat);

\draw[iso]
    (rmat) --
    node[below, text width=3.5cm]
    {quantum spin Howe duality\\
     \(q^{n/2}\,\iQW=R_{S,S}\)}
    (iweyl);

\draw[implication]
    (bcas) --
    node[left, text width=3cm]
    {\(\displaystyle
      \pi_{\mathrm{bCas}}^{h}
      \cong
      \pi_{\mathrm{bW}}^{2\pi i h}
     \)}
    (iweyl);

\end{tikzpicture}
\caption{The proof that $\pi_{\mathrm{bCas}}^{h} \cong\pi_{\mathrm{bW}}^{2\pi i h}$ reduces to the Kohno--Drinfeld theorem using classical and quantum spin Howe
duality.}
\label{fig:main-proof}
\end{figure}

\subsection{Further directions}

This work led to many discussions with experts from which we synthesized the following questions concerning the Casimir connection.

\begin{question}
    In \cite{BER, bodish2026typebwebs}, there is an $\iota$quantum Weyl group action on nonclassical\footnote{Following the classical/nonclassical naming convention of Iorgov--Klimyk \cite{MR2143754}.} $U_{h+\pi \imag}^{\iota}(\som\subset \slm)$-modules which identifies with the $\son$ $R$-matrix under quantum\footnote{There is no classical Howe duality in this case. When $h=0$, the commuting algebras are $U(\son[2n+1])$ and $U_{-1}(\som)$.} $(U_{h}(\son[2n+1]), U_{2h+\pi \imag}^{\iota}(\som))$ spin Howe duality. Do the results of \cite{MR4983792} and the arguments of this paper generalize to non-integrable modules?
\end{question}

\begin{question}\label{quest:quasisplit}
    Wang--Zhang define the $\iota$quantum Weyl group for quasi-split symmetric pairs \cite{WW25}, although we only use their definition for split pairs since that is the setting in which the boundary Casimir has been defined \cite{MR4983792}. Is there a boundary Casimir connection for quasi-split (or even more general) symmetric pairs? One might hope to relate this to the duality studied in \cite{MR5020472}.
\end{question}

\begin{question}
Let $\Upsilon_i$ be the rank one $1$-tensor quasi-$K$ matrix \cite{MR3864017,MR3905136}, $\iQW_i$ be the $\iota$quantum Weyl group generator, and $T_i$ be Lusztig's quantum Weyl group generator. Wang--Zhang proved that as operators on integrable $U_{h}^{\iota}$-modules 
\begin{equation}\label{eq:TKT}
\iQW_i = \Upsilon_iT_i,
\end{equation}
see \cite[Theorem B]{WW25}. Is there a relation between $\nabla_{\g^{\theta}\subset \g}^{bCas}$ and $\nabla_{\g}^{Cas}$ realizing \eqref{eq:TKT}?
\end{question}

\begin{question}
    Let $\Theta^{\iota}$ denote the $2$-tensor quasi-$K$ matrix \cite[Equation 3.1]{MR3864017}. Dobson--Kolb's results on factorization of the quasi-$K$ matrix \cite{MR4021849} suggest the following $\iota$quantum analogue of the Levendorskii--Soibelman/Kirillov--Reshetikhin formula \cite{KirResh, MR1142216}:
    \[
\Theta^{\iota}|_{M\otimes V}= (\iQW_{w_0}^{-1}\otimes T_{w_0}^{-1})\circ \Delta(\iQW_{w_0})|_{M\otimes V},
    \]
    where $V$ is a $U_h(\g)$-module and $M$ is an integrable $U_h^{\iota}$-module. Is there a related (useful) notion of a braided module Coxeter category extending Appel--Toledano Laredo's braided Coxeter categories \cite{MR3984102}?
\end{question}

\begin{question}
    It is possible to rewrite KZ equations in the context of certain interpolation categories known as Deligne categories. In particular, by using the orthogonal Howe duality the KZ equations in the context of a Deligne category $\underline{\smash{{\rm Rep}}}(O_t), \ t\in \C \setminus \mathbb{Z}$, correspond to the boundary Casimir connection \cite{MR5067248}. Is there an analogue of these results for the spin Brauer category \cite{MR4824565, mcnamara2025quantumspinbrauercategory}?
\end{question}

\begin{question}
    Is there a trigonometric boundary Casimir connection generalizing \cite{MR3087943,MR2769327}? In this analogy, the Yangian would be replaced with the twisted Yangian, and the monodromy would be given by the $\iota$quantum Weyl group of $U_q^{\iota}\subset U_q(\hat{\g})$.
\end{question}

    Maulik-Okounkov found the trigonometric Casimir connection is related to quantum cohomology of quiver varieties \cite{MR3951025}. Danilenko also found that the trigonometric KZ connection is related to quantum cohomology of affine Grassmannian slices in types ADE \cite{MR4779057}. Since the KZ connection in type $D$ is spin Howe dual to the boundary Casimir connection for $\som\subset \slm$, this is some evidence there is an enumerative geometry realization of the (to-be-defined) trigonometric boundary Casimir.
    
\subsection*{Structure of the paper}

Section \ref{sec2} fixes notations and reviews basic facts about Lie algebras and boundary Casimir connections. Section \ref{sec3} reviews Knizhnik–Zamolodchikov connection, Clifford algebras, and classical orthogonal Howe duality. Section \ref{sec4} gives necessary background on quantum and $\iota$quantum groups. Section \ref{sec5} recalls Wenzl's approach to the quantum analogue of orthogonal Howe duality. Section \ref{sec6} is reserved for computations relating braid group $R$-matrix action and the action of the $\iota$quantum Weyl group. Section \ref{sec7} contains the statement and proof of our main result on monodromy of the boundary Casimir connection for the symmetric pair $(\som\subset\slm)$.

\subsection*{Acknowledgments} 
We thank Artem Kalmykov, whose collaboration with the first named author lead to this project, for their careful reading of an earlier draft. We also thank Pavel Etingof, Valerio Toledano Laredo, Weiqiang Wang, and Weinan Zhang for many useful discussions. E.B. was partially supported by NSF MSPRF-2202897. We used AI to create Figure \ref{fig:main-proof} and for proofreading. 

%
\section{Boundary Casimir connection}\label{sec2}
%
\subsection{Lie algebras and root systems}\label{sec:lietheorynotation}

Fix a semisimple complex Lie algebra $\g$. Choose a Cartan subalgebra $\h\subset \g$, with roots $\Phi \subset \h^*$, and a Borel subalgebra $\h\subset \borel\subset \g$, with positive roots $\Phi_+\subset \Phi$ and simple roots $\Delta\subset \Phi_+\subset \Phi$.

The Killing form $\kappa:\g\times \g \rightarrow \mathbb{C}$ induces an isomorphism $\h\rightarrow \h^*$. Write $t_{\alpha} \in \h$ to denote the element which this isomorphism identifies with $\alpha\in \Phi$, i.e., such that $\kappa(t_{\alpha}, h) = \alpha(h)$ for all $h\in \h$, and write $\kappa^*(\alpha,\beta) :=\kappa(t_{\alpha},t_{\beta})$ for $\alpha, \beta\in \Phi$. For $\alpha,\beta\in \Delta$ we denote the entries of the Cartan matrix by $a_{\alpha\beta}=2\kappa^*(\alpha,\beta)/\kappa^*(\alpha,\alpha)$. The symmetrizing integers $d_{\alpha}\in \{1,2,3\}$ are defined to be proportional to $\kappa^*(\alpha,\alpha)$, such that $d_{\alpha}=1$ for all short roots. For an enumeration of simple roots $\{\alpha_1, \dots, \alpha_{|\Delta|}\}=\Delta$, the fundamental weights are the elements $\varpi_i\in X$, such that $2\kappa^*(\alpha_i,\varpi_j)/\kappa^*(\alpha_i,\alpha_i)=\delta_{ij}$.

We use the notation $\Lambda=\mathbb{Z}\Phi$ for the root lattice. Write $\Phi^{\vee}\subset \h$ for the dual root system with coroots $\alpha^{\vee}=2t_{\alpha}/ \kappa^*(\alpha,\alpha)$, for all $\alpha\in \Phi$. Let $Y=\mathbb{Z}\Phi^{\vee}$ be the coroot lattice, and let $X\subset \mathbb{Q}\Phi$ be the weight lattice, such that there is a perfect bilinear pairing $\langle -,- \rangle: Y \times X \rightarrow \mathbb{Z}$. 

 The choice of simple roots $\Delta$ yields the set of simple coroots $\Delta^{\vee}$ and the set of positive coroots $\Phi^{\vee}_+$. We also denote by $\rho$ and $\rho^{\vee}$ the half-sum of positive roots $\Phi_+$ and positive coroots $\Phi^{\vee}_+$, respectively. Finally, $W$ denotes the Weyl group acting on both $\Phi$ and $\Phi^{\vee}$.

For $x\in \g_{\alpha}$ and $y\in \g_{-\alpha}$, we have $[x,y] = \kappa(x,y)t_{\alpha}$. Set $h_{\alpha}:=\frac{2t_{\alpha}}{\kappa(t_{\alpha}, t_{\alpha})}$. For any $x_{\alpha}\in \g_{\alpha}$, there is $x_{-\alpha}\in \g_{-\alpha}$ such that $x_{\alpha}$, $x_{-\alpha}$, and $h_{\alpha}$ are an $\mathfrak{sl}_2$-triple \cite[Section 8.3]{MR323842}. Note that $\frac{2t_{\alpha}}{\kappa(t_{\alpha}, t_{\alpha})}=h_{\alpha}=[x_{\alpha},x_{-\alpha}]=\kappa(x_{\alpha},x_{-\alpha})t_{\alpha}$, so $\kappa(x_{\alpha},x_{-\alpha}) = \frac{2}{\kappa^*(\alpha,\alpha)}$. 
      
Fix a Chevalley basis \cite[Section 25.2]{MR323842} $\{x_{\alpha} \ | x_{\alpha}\in \g_{\alpha}, \alpha \in \Phi\}\cup \{h_{\alpha_i} | h_{\alpha_i}\in \h, \alpha_i\in \Delta\}$ such that 
\begin{enumerate}
    \item For $\alpha \in \Phi$, $[x_{\alpha},x_{-\alpha}] = h_{\alpha}\in \mathbb{Z}\cdot\{h_{\alpha_i} \ | \ \alpha_i\in \Delta\}$.
    \item For $\alpha, \beta, \alpha+\beta\in \Phi$, $[x_{\alpha},x_{\beta}] = c_{\alpha,\beta}x_{\alpha+\beta}$, and $[x_{-\alpha},x_{-\beta}] = -c_{\alpha,\beta}x_{-(\alpha+\beta)}$.
    \item For $\alpha, \beta, \alpha+\beta\in \Phi$, $c_{\alpha, \beta} = \pm (r+1)\ne 0$, where $\beta - r\alpha, \dots, \beta+q\alpha$ is the $\alpha$-string through $\beta$.
\end{enumerate} 
There is also a normalized basis 
\[
E_{\alpha}:=\sqrt{\kappa^*(\alpha,\alpha)/2}\cdot x_{\alpha}, \quad  F_{\alpha}:=\sqrt{\kappa^*(\alpha,\alpha)/2}\cdot x_{-\alpha}, \quad t_{\alpha_i}, \quad \alpha\in \Phi_+,\quad \alpha_i\in \Delta,
\]
such that $[E_{\alpha_i},F_{\alpha_i}]=t_{\alpha_i}$ and $\kappa(E_{\alpha},F_{\alpha})=1$.

\subsection{Symmetric pairs and boundary Casimir connections}

For $\alpha_i\in \Delta$, write $e_i:=x_{\alpha_i}$, $f_i:=x_{-\alpha_i}$, and $h_{i}:=h_{\alpha_i}$. Then $\g$ has a Serre presentation, with generators $e_{i}$, $f_{i}$, and $h_{i}$. The relations in the presentation make clear there is a Chevalley automorphism
\begin{equation}\label{eq:Chevalleyauto}
\theta: \g\rightarrow \g \quad \text{such that} \quad e_{i}\mapsto -f_{i}, \quad f_{i}\mapsto -e_{i}, \quad \text{and}  \quad h_{i} \mapsto - h_{i}, \quad \text{for $\alpha_i\in \Delta$.}
\end{equation}

\begin{lemma}
    If $\alpha \in \Phi$, then $\theta(x_{\alpha})= -x_{-\alpha}$.
\end{lemma}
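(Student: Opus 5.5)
The proof goes by induction on the height of $\alpha$, using only that $\theta$ is a Lie algebra automorphism, the values of $\theta$ on the Serre generators in \eqref{eq:Chevalleyauto}, and property (2) of the fixed Chevalley basis. We use the identifications $x_{\alpha_i}=e_i$ and $x_{-\alpha_i}=f_i$, which are built into the notation above.

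First, a weight-space observation. Since $\theta(h_i)=-h_i$ for all $i$, $\theta$ acts by $-1$ on $\h$. Hence for $x\in \g_\alpha$ and $h\in\h$,
\[
[h,\theta(x)]=-\theta([h,x])=-\alpha(h)\theta(x),
\]
so $\theta(\g_\alpha)=\g_{-\alpha}$. This step is not strictly needed for the computation below, but it explains why the answer must be a multiple of $x_{-\alpha}$.

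Next, the positive roots, by induction on height.
\begin{itemize}
\item For height one, $\alpha=\alpha_i$ and the claim $\theta(e_i)=-f_i$ is the definition of $\theta$.
\item For $\alpha\in\Phi_+$ of height greater than one, use the standard fact that there exist a simple root $\alpha_i$ and $\beta\in\Phi_+$ with $\alpha=\alpha_i+\beta$. By property (2), $[x_{\alpha_i},x_\beta]=c\,x_\alpha$ with $c=c_{\alpha_i,\beta}\neq 0$ by property (3).
\item Apply $\theta$ and the inductive hypothesis for $\alpha_i$ and $\beta$:
\[
c\,\theta(x_\alpha)=[\theta(x_{\alpha_i}),\theta(x_\beta)]=[-x_{-\alpha_i},-x_{-\beta}]=[x_{-\alpha_i},x_{-\beta}]=-c\,x_{-\alpha}.
\]
The last equality is the second half of property (2). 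Dividing by $c$ gives $\theta(x_\alpha)=-x_{-\alpha}$.
\end{itemize}

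Finally, the negative roots. Note that $\theta^2$ is an automorphism fixing every $e_i$, $f_i$, $h_i$, so $\theta^2=\id$. Then for $\alpha\in\Phi_+$,
\[
x_\alpha=\theta^2(x_\alpha)=-\theta(x_{-\alpha}),
\]
so $\theta(x_{-\alpha})=-x_{\alpha}$, which is the claim for the root $-\alpha$.

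The only delicate point is the sign convention. The argument works precisely because the Chevalley basis was fixed with $[x_{-\alpha},x_{-\beta}]=-c_{\alpha,\beta}x_{-(\alpha+\beta)}$. That minus sign absorbs the factor $(-1)^2$ coming from the two applications of the inductive hypothesis. With a different normalization of the basis, the lemma would only hold up to sign.
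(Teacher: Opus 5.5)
Your proof is correct and follows the paper's argument. Both proceed by induction on height: apply $\theta$ to $[x_{\alpha},x_{\beta}]=c_{\alpha,\beta}x_{\alpha+\beta}$, and the Chevalley sign convention $c_{-\alpha,-\beta}=-c_{\alpha,\beta}$ absorbs the two minus signs. The only difference is that you handle negative roots explicitly via $\theta^2=\id$, whereas the paper leaves them implicit in its induction.
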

\begin{proof}
    If $\alpha\in \Delta$, this is clear. If the height of a root is bigger than one (so the root is not simple), then that root is of the form $\alpha+\beta$, where $\alpha, \beta$ have smaller height. For $\alpha, \beta, \alpha+\beta\in \Phi$, the Chevalley basis structure constants satisfy $c_{\alpha,\beta}\ne 0$ and $c_{-\alpha, -\beta} = - c_{\alpha, \beta}$, so if $\theta(x_{\alpha})=-x_{-\alpha}$ and $\theta(x_{\beta})= -x_{-\beta}$, then
    \[
    \theta(x_{\alpha+\beta})= c_{\alpha,\beta}^{-1}\theta([x_{\alpha},x_{\beta}]) = c_{\alpha,\beta}^{-1}[-x_{-\alpha},-x_{-\beta}] = -c_{-\alpha,-\beta}^{-1}[x_{-\alpha},x_{-\beta}] = -x_{-(\alpha+\beta)}.
    \]
    Using the above, the claim follows from induction on height. 
\end{proof}

\begin{definition}
    For $\alpha\in \Phi$, define $b_{\alpha}:= x_{-\alpha} - x_{\alpha}$. Note that $b_{-\alpha} = -b_{\alpha}$. We also consider normalized versions $B_{\alpha}:=F_{\alpha}-E_{\alpha}$, for $\alpha\in \Phi_+$, and $B_{-\alpha}:=-B_{\alpha}$. For $\alpha_i\in \Delta$, we will write $b_{i}:=f_i-e_i$.
\end{definition}

\begin{remark}
    For all $\alpha\in \Phi$, $B_{\alpha}^2=\frac{\kappa^*(\alpha, \alpha)}{2}b_{\alpha}^2$.
\end{remark}

Let $\kompact:=\g^{\theta}$ be the linear subspace of $\g$ fixed by $\theta$, so $\kompact = \C\cdot\{b_{\alpha} \ | \ \alpha \in \Phi_+\}$. Since $\theta$ is a Lie algebra automorphism, the subspace $\kompact$ is a Lie subalgebra. Moreover, $\kompact$ is generated as a Lie algebra by $\{b_{i} \ | \ \alpha_i\in \Delta\}$. The $\theta$-anti-invariant subspace is the $\C$-span of $\{x_{-\alpha}+x_{\alpha} \ | \ \alpha \in \Phi_+\}\cup\{h_i \ | \ \alpha_i \in \Delta\}$.

\begin{remark}
    Such pairs $(\g, \kompact)$ are called split symmetric pairs; they satisfy the decomposition $\g= \kompact \oplus \borel$.
\end{remark} 

\begin{definition}
    Let $h\in \mathbb{C}$. The \emph{boundary Casimir one-form} $\nabla_{bCas, \kompact\subset \g}^{h}$ is
    \begin{equation}
    \nabla_{bCas, \kompact\subset \g}^{h}:= d- h\sum_{\alpha\in \Phi_+}\frac{d\alpha}{\alpha}B_{\alpha}^2. 
    \end{equation}
\end{definition}

\begin{remark}
    We can also write $\nabla_{bCas, \kompact\subset \g}^{h}=d- \frac{h}{2}\sum_{\alpha\in \Phi}\frac{d\alpha}{\alpha}B_{\alpha}^2$.
\end{remark}
    
    Recall that
    \[
    \h^{reg} = \h\setminus \bigcup_{\alpha \in \Phi_+}\{h\in \h \ | \ \alpha(h) = 0\}.
    \] 
    Let $M$ be a representation of $\kompact$. The one-form $\nabla_{bCas, \kompact\subset \g}^{h}$ defines a regular connection on the vector bundle $\h^{reg}\times M\rightarrow \h^{reg}$, which we denote by $\nabla_{bCas, \kompact\subset \g}^{h, M}$ and refer to as the \emph{boundary Casimir connection}.

\begin{theorem}
    The connection $\nabla_{bCas,\kompact\subset \g}^{h, M}$ is flat.
\end{theorem}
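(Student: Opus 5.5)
Flatness of a connection of the form $d - \sum_{\alpha\in\Phi_+}\frac{d\alpha}{\alpha}\,\tau_\alpha$ on a hyperplane complement, with constant residues $\tau_\alpha = hB_\alpha^2\in \End(M)$, follows from Kohno's lemma. The form $\omega=\sum_\alpha \frac{d\alpha}{\alpha}\tau_\alpha$ is closed, so flatness reduces to $\omega\wedge\omega=0$. For a root arrangement, the Arnold relations among the $d\log\alpha$ show this is equivalent to a local condition. For every rank-two root subsystem $\Psi\subset\Phi$ obtained by intersecting $\Phi$ with a two-dimensional subspace, and every $\alpha\in\Psi_+$, we need
\[
\Bigl[\tau_\alpha,\ \sum_{\beta\in\Psi_+}\tau_\beta\Bigr]=0 .
\]
This is the standard reduction used for the ordinary Casimir connection of Millson--Toledano Laredo and De Concini. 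I would invoke it directly and reduce the theorem to the identity
\[
\Bigl[B_\alpha^2,\ \sum_{\beta\in\Psi_+}B_\beta^2\Bigr]=0 \quad\text{in } U(\kompact)
\]
for each rank-two subsystem $\Psi$ and each $\alpha\in\Psi_+$. Since this is an identity in $U(\kompact)$, it holds on any $\kompact$-module $M$.

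The rank-two identity splits into two cases.

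\textbf{Reducible case.} If $\Psi=A_1\times A_1$ is reducible, then $\alpha\pm\beta\notin\Phi\cup\{0\}$. Hence $[x_{\pm\alpha},x_{\pm\beta}]=0$ and $[b_\alpha,b_\beta]=0$, so the claim is immediate.

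\textbf{Irreducible case.} If $\Psi$ is irreducible ($A_2$, $B_2$, or $G_2$), let $\g_\Psi\subset\g$ be the semisimple subalgebra generated by $\g_{\pm\beta}$ for $\beta\in\Psi$. Then $\kompact_\Psi:=\g_\Psi^\theta=\mathrm{span}\{b_\beta:\beta\in\Psi_+\}$ is a subalgebra, since $\theta$ preserves $\g_\Psi$. With respect to the (restricted) Killing form, the vectors $B_\beta$ are orthogonal, and
\[
\kappa(B_\beta,B_\beta)=-2\kappa(E_\beta,F_\beta)=-2 .
\]
So $\{B_\beta\}_{\beta\in\Psi_+}$ is an orthogonal basis of $\kompact_\Psi$ with constant norm. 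Therefore $\sum_{\beta\in\Psi_+}B_\beta^2$ is a scalar multiple of the Casimir element of $\kompact_\Psi$ for the invariant form $\kappa|_{\kompact_\Psi}$, provided that restriction is nondegenerate, which it is since it is negative definite on the real form. Being a Casimir, this sum is central in $U(\kompact_\Psi)$, and $B_\alpha\in\kompact_\Psi$. Hence the commutator vanishes, even before squaring $B_\alpha$.

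The main obstacle is justifying that $\kompact_\Psi$ is spanned exactly by the $B_\beta$ with $\beta\in\Psi_+$, and that $\kappa$ restricted to it is invariant and nondegenerate with the $B_\beta$ orthogonal. Invariance and nondegeneracy follow because $\kappa|_{\g_\Psi}$ is a nonzero multiple of the Killing form of each simple factor and $\kompact_\Psi$ is the fixed locus of the compact-type involution. Orthogonality follows from $\kappa(\g_\beta,\g_\gamma)=0$ unless $\beta+\gamma=0$. Finally, to apply Kohno's lemma I will check that the reduction to rank-two flats is valid for root arrangements. This holds because every codimension-two flat is the intersection of root hyperplanes of a rank-two subsystem, so the $\tau_\beta$ appearing are exactly those with $\beta\in\Psi_+$.
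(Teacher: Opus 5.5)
Your proposal is correct and takes the same route as the paper: Kohno's criterion reduces flatness to the rank-two conditions $[B_\alpha^2,\sum_{\beta\in\Psi_+}B_\beta^2]=0$. The paper cites \cite[Theorem 8.7]{MR4983792} for these conditions, whereas you verify them directly. You note that $\sum_{\beta\in\Psi_+}B_\beta^2$ is $-2$ times the Casimir of $\mathfrak{g}_\Psi^\theta$ for the restricted Killing form, which is nondegenerate because the $B_\beta$ are orthogonal with $\kappa(B_\beta,B_\beta)=-2$, and hence this sum is central in $U(\mathfrak{g}_\Psi^\theta)$.
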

\begin{proof}
Using Kohno's flatness criterion for connections on hyperplane complements \cite{MR1046571}, as stated in \cite[Lemma 2.1]{Valerio-qWeyl}, this is shown in \cite[Theorem 8.7]{MR4983792}.
\end{proof}

To study the monodromy of $\nabla_{bCas,\kompact\subset \g}^{h,M}$, we consider fundamental groups and braid groups related to $\g$.

Let $G$ be the complex, connected, and simply-connected Lie group, with Lie algebra $\g$. Let $T\subset G$ be the maximal torus with $\mathrm{Lie}(T) = \h$. Denote by $K$ the complex Lie subgroup of $G$ generated by $\exp(t b_{\alpha})$, for $\alpha\in \Delta$ and $t\in \C$. Note that $\mathrm{Lie}(K) = \kompact$.

For any semisimple Lie algebra $\mathfrak{g}$, we will be working with the category ${\rm Rep}(\mathfrak{g})$ of finite-dimensional integrable representations of $\mathfrak{g}$. We call a representation of $\kompact$ integrable if it is the differential of a representation of $K$ (see Definition \ref{def:iquantumintegrable} for a more general notion of integrable $\kompact$-module).

The Weyl group $W$ is a Coxeter group, so it has an associated braid group $\BrW$, with simple braid generators $\sigma_i$, for $\alpha_i \in \Delta$. Let $\tilde{s}_{i}:=\exp(-e_i)\exp(f_i)\exp(-e_i)\in N(T)$ be the standard lift of simple reflection generators of the Weyl group $W = N(T)/T$. The map $\BrW\longrightarrow N(T)$ sending $\sigma_{i}\mapsto \tilde{s}_{i}$ determines a group homomorphism, since the elements $\tilde{s}_{i}$ satisfy the braid relations in $N(T)$.

\begin{remark}\label{rem:Wactiononh-permutescoords}
    The $W$ action on $\h$ is realized by letting $s_i$ act on $\h$ as conjugation by $\tilde{s}_i$. In particular, for $\h\subset \glm$, the action of $W$ permutes coordinates, e.g.,
    $\begin{pmatrix}
    0 & -1\\
    1 & 0
\end{pmatrix}\cdot
\begin{pmatrix}
    a & 0\\
    0 & b
\end{pmatrix}\cdot
\begin{pmatrix}
    0 & -1\\
    1 & 0
\end{pmatrix}^{-1}=
\begin{pmatrix}
    b & 0\\
    0 & a
\end{pmatrix}$ 
\end{remark}

The group $\BrW$ also appears as a fundamental group \cite{MR293615}. Let $p:\tilde{\h^{reg}}\rightarrow \h^{reg}$ denote the universal cover. The action of $W$ on $\h^{reg}$ is fixed point free. Composition with $\pi_W:\h^{reg}\rightarrow \h^{reg}/W$ yields the universal cover of $\h^{reg}/W$, $\pi_W\circ p:\tilde{\h^{reg}}\rightarrow \h^{reg}/W$. Then $\BrW\cong \Br_{\g}:=\pi_1(\h^{reg}/W)$ acts on $\tilde{\h^{reg}}$.

\begin{remark}
The fundamental group of $\tilde{\h^{reg}}$ is the pure braid group, which is the kernel of the group homomorphism $\BrW\rightarrow W$.
\end{remark}

There is a nontrivial intersection between $N(T)$ and $K$.

\begin{lemma}\label{lift_of_braid_gp_gens}
    If $\alpha_i\in \Delta$, then $\tilde{s}_{i} = \exp(\frac{\pi}{2}b_{i})\in K$.
\end{lemma}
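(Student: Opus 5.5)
The identity is a statement inside the subgroup $SL_2$ attached to $\alpha_i$, so the plan is to compute in $SL_2$. The triple $(e_i,f_i,h_i)$ is an $\mathfrak{sl}_2$-triple, which gives a homomorphism $\phi_i\colon \mathfrak{sl}_2\to \g$ sending $e\mapsto e_i$, $f\mapsto f_i$, $h\mapsto h_i$. Since $SL_2(\C)$ is simply connected, $\phi_i$ integrates to a group homomorphism $\Phi_i\colon SL_2(\C)\to G$, and $\Phi_i$ intertwines $\exp$. Therefore it suffices to prove the matrix identity
\[
\exp(-e)\exp(f)\exp(-e)=\exp\!\big(\tfrac{\pi}{2}(f-e)\big)
\]
in $SL_2(\C)$, and then apply $\Phi_i$. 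Membership in $K$ is then automatic: $\exp(\frac{\pi}{2}b_i)$ is of the form $\exp(tb_{\alpha_i})$ with $\alpha_i\in\Delta$, which is one of the generators of $K$ by definition.

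Next, compute both sides with $e=\begin{pmatrix}0&1\\0&0\end{pmatrix}$ and $f=\begin{pmatrix}0&0\\1&0\end{pmatrix}$. For the left side, use $\exp(-e)=\begin{pmatrix}1&-1\\0&1\end{pmatrix}$ and $\exp(f)=\begin{pmatrix}1&0\\1&1\end{pmatrix}$. Multiplying gives
\[
\begin{pmatrix}1&-1\\0&1\end{pmatrix}\begin{pmatrix}1&0\\1&1\end{pmatrix}=\begin{pmatrix}0&-1\\1&1\end{pmatrix},
\qquad
\begin{pmatrix}0&-1\\1&1\end{pmatrix}\begin{pmatrix}1&-1\\0&1\end{pmatrix}=\begin{pmatrix}0&-1\\1&0\end{pmatrix}.
\]
For the right side, $f-e=\begin{pmatrix}0&-1\\1&0\end{pmatrix}$ satisfies $(f-e)^2=-I$. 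Hence $\exp(t(f-e))=\cos t\, I+\sin t\,(f-e)$, and at $t=\pi/2$ this equals $\begin{pmatrix}0&-1\\1&0\end{pmatrix}$. The two sides agree. This is also consistent with Remark \ref{rem:Wactiononh-permutescoords}.

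The only step requiring care is that $\phi_i$ integrates to all of $G$, rather than only to some quotient. This is standard: $SL_2$ is simply connected, so any Lie algebra map out of $\mathfrak{sl}_2$ integrates to a group homomorphism into $G$. Even without that fact, both sides are exponentials of elements of $\g$, so one could instead check the identity in every finite-dimensional representation, where each is a product of matrix exponentials and the same $\mathfrak{sl}_2$ computation applies. There is also a sign convention to watch: the identity needs $[e_i,f_i]=h_i$ exactly, which holds for the Chevalley basis. With that, the proof is complete.
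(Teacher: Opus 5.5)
Your proof is correct. The paper gives no argument of its own here and simply defers to Proposition 3.5 of \cite{MR4983792}. Your reduction is therefore a self-contained replacement for that citation rather than a competing method: you integrate the $\mathfrak{sl}_2$-triple $(e_i,h_i,f_i)$ to a homomorphism $SL_2(\C)\to G$, then check explicitly that both sides equal $\left(\begin{smallmatrix}0&-1\\1&0\end{smallmatrix}\right)$ in $SL_2(\C)$. You are also right that $\exp(\tfrac{\pi}{2}b_i)\in K$ is tautological from the way $K$ is defined in this paper.

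What your version buys is that it isolates exactly what is used. The first input is integrability of Lie algebra maps out of $\mathfrak{sl}_2$, i.e.\ simple connectivity of $SL_2(\C)$. The second is the Chevalley normalization $[e_i,f_i]=h_i$ with $\alpha_i(h_i)=2$. Your caveat about the normalization is well placed, since the identity is not invariant under rescaling $e_i,f_i$; for instance, it would not hold verbatim for the normalized $E_{\alpha_i},F_{\alpha_i}$.

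What the citation buys is consistency with the source in which the boundary Casimir connection is set up. That same reference also supplies the companion conjugation formula used in Lemma \ref{lem:wBwinvers}.
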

\begin{proof}
This follows from \cite[Proposition 3.5]{MR4983792}.
\end{proof}

Thus, there is a group homomorphism $\sigma:\BrW\longrightarrow K$, such that $\sigma_{i}\mapsto \exp(\frac{\pi}{2}b_{i})$. In particular, if $M$ is an integrable representation of $K$, then $\BrW$ acts on $M$. In particular, $\BrW$ acts on $\kompact$.

\begin{lemma}\label{lem:wBwinvers}
    If $\gamma\in \BrW$ projects to $w\in W$, then $\sigma(\gamma)B^2_{\alpha}\sigma(\gamma)^{-1} =  B^2_{w(\alpha)}$, for $\alpha \in \Phi$. 
\end{lemma}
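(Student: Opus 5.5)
Since $\sigma$ is a group homomorphism, it suffices to treat a generator $\gamma=\sigma_i$ with $w=s_i$; the case $\sigma_i^{-1}$ then follows by replacing $\alpha$ with $s_i(\alpha)$, and products follow by induction on word length. So the goal is
\[
\tilde{s}_i B_{\alpha}^2 \tilde{s}_i^{-1} = B_{s_i(\alpha)}^2, \qquad \alpha\in\Phi,
\]
where by Lemma \ref{lift_of_braid_gp_gens} we have $\sigma(\sigma_i)=\tilde{s}_i=\exp(-e_i)\exp(f_i)\exp(-e_i)$. This element lies in $N(T)$, so $\mathrm{Ad}(\tilde{s}_i)$ is a Lie algebra automorphism of $\g$ preserving $\h$. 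By Remark \ref{rem:Wactiononh-permutescoords}, it acts on $\h$ as $s_i$, hence it carries $\g_\alpha$ to $\g_{s_i(\alpha)}$. Thus there are scalars $c_\alpha$ with $\mathrm{Ad}(\tilde{s}_i)x_\alpha = c_\alpha x_{s_i(\alpha)}$.

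The key step is to control these scalars. Because $\mathrm{Ad}(\tilde{s}_i)$ preserves the Chevalley lattice, the standard fact (Tits) gives $c_\alpha=\pm1$. I would then show $c_{-\alpha}=c_\alpha$ as follows. The element $\tilde{s}_i$ lies in $K$, so $\mathrm{Ad}(\tilde{s}_i)$ commutes with $\theta$. This holds because $\theta$ integrates to an automorphism $\Theta$ of $G$ (as $G$ is simply connected), and $\Theta$ fixes $K$ pointwise since $K$ is generated by $\exp(tb_\alpha)$ with $\theta(b_\alpha)=b_\alpha$. Applying $\theta$ to $\mathrm{Ad}(\tilde{s}_i)x_\alpha=c_\alpha x_{s_i\alpha}$ and using the earlier lemma $\theta(x_\beta)=-x_{-\beta}$ gives
\[
\mathrm{Ad}(\tilde{s}_i)(-x_{-\alpha}) = -c_\alpha x_{-s_i(\alpha)},
\]
so $c_{-\alpha}=c_\alpha$. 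Consequently $\mathrm{Ad}(\tilde{s}_i)b_\alpha = c_\alpha b_{s_i(\alpha)} = \pm b_{s_i(\alpha)}$. This step does not even require the sign claim: $\theta$-equivariance alone yields $\mathrm{Ad}(\tilde{s}_i)b_\alpha = c_\alpha b_{s_i\alpha}$.

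Squaring inside $U(\g)$, where conjugation is multiplicative, gives $\tilde{s}_i b_\alpha^2\tilde{s}_i^{-1} = c_\alpha^2 b_{s_i\alpha}^2$. To pin down $c_\alpha^2$, I would use the invariance of the Killing form, $\kappa(x_\alpha,x_{-\alpha})=\kappa(\mathrm{Ad}\,x_\alpha,\mathrm{Ad}\,x_{-\alpha})$. Since $\kappa(x_\beta,x_{-\beta})=2/\kappa^*(\beta,\beta)$ and $\kappa^*$ is $W$-invariant, this yields $c_\alpha c_{-\alpha}=1$, and with $c_{-\alpha}=c_\alpha$ we get $c_\alpha^2=1$. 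Finally, by the Remark, $B_\alpha^2=\frac{\kappa^*(\alpha,\alpha)}{2}b_\alpha^2$, and $\kappa^*(\alpha,\alpha)=\kappa^*(s_i\alpha,s_i\alpha)$, so the normalization factors match and $\tilde{s}_iB_\alpha^2\tilde{s}_i^{-1}=B_{s_i\alpha}^2$.

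The main obstacle is justifying that $\mathrm{Ad}(\tilde{s}_i)$ commutes with $\theta$. The cleanest route is the group-level argument above: $\Theta$ fixes $\exp(\tfrac{\pi}{2}b_i)$ because $\theta(b_i)=b_i$. Alternatively, one can check it directly, since $\theta(e_i)=-f_i$ and $\theta(f_i)=-e_i$ give $\Theta(\tilde{s}_i)=\exp(f_i)\exp(-e_i)\exp(f_i)$, which equals $\tilde{s}_i$ by the standard $SL_2$ identity.
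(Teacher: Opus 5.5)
Your proof is correct. It has the same structure as the paper's: both use the homomorphism $\sigma$ to reduce to the identity $\tilde{s}_i B_\alpha \tilde{s}_i^{-1} = \pm B_{s_i(\alpha)}$ for a simple reflection.

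The difference is that the paper does not prove this identity but cites \cite[Lemma 3.15]{MR4983792}, whereas you derive it from scratch. Your steps are:
\begin{itemize}
\item $\mathrm{Ad}(\tilde{s}_i)$ maps $\g_\alpha$ to $\g_{s_i(\alpha)}$ because $\tilde{s}_i\in N(T)$.
\item $\mathrm{Ad}(\tilde{s}_i)$ commutes with $\theta$ because $\tilde{s}_i=\exp(\frac{\pi}{2}b_i)\in K$ (Lemma \ref{lift_of_braid_gp_gens}).
\item Together with $\theta(x_\beta)=-x_{-\beta}$, this forces $c_{-\alpha}=c_\alpha$, so $b_\alpha\mapsto c_\alpha b_{s_i(\alpha)}$.
\item Killing-form invariance, with $W$-invariance of $\kappa^*$, gives $c_\alpha c_{-\alpha}=1$, hence $c_\alpha^2=1$.
\end{itemize}

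What your route buys is self-containment and a transparent reason why the sign is harmless. The $\theta$-equivariance makes the signs on $x_\alpha$ and $x_{-\alpha}$ agree, which is exactly what is needed for $b_\alpha$ to go to a multiple of a single $b_{s_i(\alpha)}$.

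Three small remarks:
\begin{itemize}
\item The appeal to Tits' sign lemma is redundant, since the Killing-form argument already yields $c_\alpha^2=1$.
\item The passage to the group automorphism $\Theta$ can be avoided. Since $\mathrm{Ad}(\exp(\tfrac{\pi}{2}b_i))=\exp(\tfrac{\pi}{2}\,\mathrm{ad}\, b_i)$ and $\mathrm{ad}\, b_i$ commutes with $\theta$ (because $\theta(b_i)=b_i$), $\theta$-equivariance is immediate.
\item The lemma is used for operators on a $\kompact$-module $M$, which need not be a $\g$-module. So it is cleaner to phrase the squaring step in $U(\kompact)$. This is legitimate because $\mathrm{Ad}(\tilde{s}_i)$ preserves $\kompact$.
\end{itemize}
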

\begin{proof}
    It suffices to show $\tilde{s_{i}}B_{\alpha}\tilde{s_{i}}^{-1} = \pm B_{s_{i}(\alpha)}$, for $\alpha_i \in \Delta$ and $\alpha \in \Phi$, which follows from \cite[Lemma 3.15]{MR4983792}.
\end{proof}

\begin{proposition}\label{prop:iCasconnectiondescends}
    Let $M$ be an integrable $\kompact$-module. The one-form $p^*\nabla_{bCas, \kompact\subset \g}^{h}$ defines a $\BrW$-equivariant flat connection on $\tilde{\h^{reg}}\times M=p^*(\h^{reg}\times M)$, which therefore descends to a flat connection on the vector bundle $\tilde{\h^{reg}}\times_{\BrW} M\rightarrow \h^{reg}/W$.
\end{proposition}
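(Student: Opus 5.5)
We need to show two things: that the pulled-back connection $p^*\nabla^{h}_{bCas,\kompact\subset\g}$ on $\tilde{\h^{reg}}\times M$ is flat, and that it is equivariant for the diagonal action of $\BrW$. Here $\gamma\in\BrW$ acts on $\tilde{\h^{reg}}$ by deck transformations over $\h^{reg}/W$, and on $M$ through $\sigma:\BrW\to K$ and the integrable $K$-action. Flatness is immediate, since pullback along the local biholomorphism $p$ preserves curvature zero and $\nabla^{h,M}_{bCas,\kompact\subset\g}$ is flat by the theorem above. So the content lies in the equivariance.

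For equivariance, fix $\gamma\in\BrW$ with image $w\in W$. The covering map satisfies $p(\gamma\cdot\tilde x)=w(p(\tilde x))$, where $w$ acts on $\h$ as in Remark \ref{rem:Wactiononh-permutescoords}. The bundle automorphism is $\Gamma(\tilde x,m)=(\gamma\tilde x,\sigma(\gamma)m)$. Write the connection form as $A=-h\sum_{\alpha\in\Phi_+}p^*\frac{d\alpha}{\alpha}\,B_\alpha^2$, which is most convenient in its symmetric form $-\frac h2\sum_{\alpha\in\Phi}$. Equivariance then amounts to the identity
\[
\sigma(\gamma)\,A_{\tilde x}\,\sigma(\gamma)^{-1}=(\gamma^*A)_{\tilde x}.
\]
Since $\alpha\circ w=(w^{-1}\alpha)$ as a linear function on $\h$, we have $w^*\frac{d\alpha}{\alpha}=\frac{d(w^{-1}\alpha)}{w^{-1}\alpha}$. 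Hence
\[
\gamma^*A=-\frac h2\sum_{\alpha\in\Phi}\frac{d(w^{-1}\alpha)}{w^{-1}\alpha}B_\alpha^2=-\frac h2\sum_{\beta\in\Phi}\frac{d\beta}{\beta}B_{w\beta}^2,
\]
after reindexing by the bijection $\beta=w^{-1}\alpha$ of $\Phi$. By Lemma \ref{lem:wBwinvers}, $B^2_{w\beta}=\sigma(\gamma)B^2_\beta\sigma(\gamma)^{-1}$, so the right-hand side equals $\sigma(\gamma)A\sigma(\gamma)^{-1}$. This also uses that $\sigma(\gamma)\in K$ acts on $M$ compatibly with the adjoint action on $U(\kompact)$, which holds because $M$ is integrable, so the $\kompact$-action is the differential of the $K$-action. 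Working with the sum over all of $\Phi$ avoids tracking signs: $w$ may send positive roots to negative ones, but $\frac{d(-\alpha)}{-\alpha}=\frac{d\alpha}{\alpha}$ and $B_{-\alpha}^2=B_\alpha^2$, so nothing changes.

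Given equivariance, the descent is formal. $\BrW$ acts freely and properly discontinuously on $\tilde{\h^{reg}}$ with quotient $\h^{reg}/W$, so $\tilde{\h^{reg}}\times_{\BrW}M$ is a flat vector bundle on $\h^{reg}/W$. An equivariant connection, meaning one preserved by the bundle automorphisms $\Gamma$, descends uniquely to it, and flatness is local, so the descended connection is flat.

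The main obstacle is bookkeeping in the equivariance step. One must match conventions for how $w$ acts on $\h$ versus $\h^*$ and for whether deck transformations act on the left, so that $\gamma^*A$ produces $B_{w\beta}$ and not $B_{w^{-1}\beta}$. Either convention works as long as the $\BrW$-action on $M$ is chosen consistently (via $\sigma(\gamma)$ or $\sigma(\gamma^{-1})$), so I would fix the conventions at the outset to make Lemma \ref{lem:wBwinvers} apply directly.
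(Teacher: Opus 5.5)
Your proposal is correct and follows essentially the same route as the paper, which simply invokes Lemma \ref{lem:wBwinvers} and defers to the argument of \cite[Proposition 2.3]{Valerio-qWeyl}. That argument is the one you write out: flatness by pullback, equivariance by reindexing the root sum and conjugating with $\sigma(\gamma)$, and then formal descent along the free $\BrW$-action.
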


\begin{proof}
    Using Lemma \ref{lem:wBwinvers}, this argument is identical to the proof of \cite[Proposition 2.3]{Valerio-qWeyl}.
\end{proof}

We write $\pi^{h}_{bCas}:\BrW \rightarrow GL(M)$ to denote the monodromy representation of the boundary Casimir connection from Proposition \ref{prop:iCasconnectiondescends}. (Note that we have fixed $\sigma:\BrW \rightarrow K$.)

\begin{proposition}\label{parralel_transport}
    Let $M$ be an integrable $\kompact$-module. Let $\gamma\in \BrW=\pi_1(\mathfrak{h}^{reg}/W)$ and $\widetilde{\gamma}:[0,1]\rightarrow \mathfrak{h}^{reg}$ be a lift of $\gamma$, then
    \begin{equation}\label{monodromy_bCas_sigma}
        \pi^{h}_{bCas}(\gamma)=\sigma(\gamma)\mathcal{P}(\widetilde{\gamma})
    \end{equation}
    where $\mathcal{P}(\widetilde{\gamma})\in GL(M)$ is the parallel transport along $\widetilde{\gamma}$ for the connection $\nabla_{bCas,\kompact \subset \mathfrak{g}}^{h,M}$ on $\mathfrak{h}^{reg} \times V$ and $\sigma$ is the representation of $\Br_W$ on $M$ which factors through $\sigma:\Br_W\rightarrow K$.
\end{proposition}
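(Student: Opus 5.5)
We unwind the construction of the descended connection in Proposition \ref{prop:iCasconnectiondescends}, following \cite[Proposition 2.3]{Valerio-qWeyl}. Fix a base point $x_0\in \h^{reg}$ and a lift $\tilde{x}_0\in\tilde{\h^{reg}}$. A class $\gamma\in\BrW=\pi_1(\h^{reg}/W,[x_0])$ lifts uniquely to a path $\hat\gamma$ in $\tilde{\h^{reg}}$ from $\tilde{x}_0$ to $\gamma\cdot\tilde{x}_0$. Its projection $\tilde\gamma=p\circ\hat\gamma$ is a path in $\h^{reg}$ from $x_0$ to $w^{-1}(x_0)$ (or $w(x_0)$, depending on conventions), where $w$ is the image of $\gamma$ in $W$. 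A flat section of the descended bundle $\tilde{\h^{reg}}\times_{\BrW}M$ is a flat section $f$ of the pulled-back trivial bundle with the equivariance $f(\delta\cdot y)=\sigma(\delta)f(y)$, where $\delta\in\BrW$ acts on $M$ through $\sigma:\BrW\to K$. The monodromy along $\gamma$ is then obtained in two steps. First, parallel transport the fiber value along $\hat\gamma$ in the trivial bundle $\tilde{\h^{reg}}\times M$; this equals $\mathcal{P}(\tilde\gamma)$, since the connection there is $p^*\nabla^{h,M}_{bCas,\kompact\subset\g}$. Second, identify the fiber over $\gamma\cdot\tilde x_0$ with the fiber over $\tilde x_0$ via the equivariance relation, which contributes the factor $\sigma(\gamma)$. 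Composing the two steps gives \eqref{monodromy_bCas_sigma}.

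The key steps, in order, are as follows.
\begin{enumerate}
\item Record that $\BrW$ acts on $\tilde{\h^{reg}}\times M$ by $\delta\cdot(y,m)=(\delta y,\sigma(\delta)m)$. Check, as in the proof of Proposition \ref{prop:iCasconnectiondescends}, that this action preserves the connection. This uses Lemma \ref{lem:wBwinvers}: conjugating by $\sigma(\delta)$ sends $B_\alpha^2$ to $B_{w(\alpha)}^2$, while $w$ permutes the forms $d\alpha/\alpha$, up to the sign convention $B_{-\alpha}^2=B_\alpha^2$ that makes the sum over $\Phi$ invariant.
\item Note that parallel transport for $p^*\nabla$ along $\hat\gamma$ equals $\mathcal{P}(\tilde\gamma)$ for $\nabla^{h,M}_{bCas}$, because the bundle and connection are pulled back.
\item Define the monodromy representation via the fiber identification, and verify the composition order so that the formula is a homomorphism $\pi^h_{bCas}(\gamma_1\gamma_2)=\pi^h_{bCas}(\gamma_1)\pi^h_{bCas}(\gamma_2)$.
\end{enumerate}

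The homomorphism check in step (3) reduces to the identity
\[
\mathcal{P}(\widetilde{\gamma_1\gamma_2})=\sigma(\gamma_2)^{-1}\mathcal{P}(\tilde\gamma_1)\sigma(\gamma_2)\,\mathcal{P}(\tilde\gamma_2)
\]
(or its mirror, depending on conventions). Here the lift of $\gamma_1\gamma_2$ is the concatenation of $\tilde\gamma_2$ with the $W$-translate of $\tilde\gamma_1$. By the equivariance from step (1), transport along the translated path is the conjugate of $\mathcal{P}(\tilde\gamma_1)$ by $\sigma(\gamma_2)$.

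I expect the main obstacle to be bookkeeping rather than any conceptual issue. The task is to fix consistent left/right conventions: whether $\BrW$ acts on $\tilde{\h^{reg}}$ on the left, whether the lift ends at $w(x_0)$ or $w^{-1}(x_0)$, and hence whether the factor is $\sigma(\gamma)$ or $\sigma(\gamma)^{-1}$ and whether it stands on the left. I would fix these by matching against \cite[Proposition 2.3]{Valerio-qWeyl} and Remark \ref{rem:Wactiononh-permutescoords}, and then confirm the result on a rank-one example. For $\g=\mathfrak{sl}_2$, a half-loop around $0$ gives $\mathcal{P}=e^{\pi i h B_\alpha^2}$ up to normalization, so the monodromy is $\tilde s\,e^{\pi i h B_\alpha^2}$. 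This should reduce to $\tilde s=\exp(\tfrac{\pi}{2}b_1)$ at $h=0$, which Lemma \ref{lift_of_braid_gp_gens} confirms.
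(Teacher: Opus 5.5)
Your proposal is correct and follows the same route as the paper, whose proof simply defers to \cite[Proposition 2.4]{Valerio-qWeyl}. The descended connection's monodromy is obtained by transporting along the lift in the pulled-back trivial bundle, which gives $\mathcal{P}(\widetilde{\gamma})$, and then using the $\BrW$-equivariance (through $\sigma\colon\BrW\to K$ and Lemma \ref{lem:wBwinvers}) to identify the endpoint fiber with the initial one, which gives $\sigma(\gamma)$; your homomorphism check and your left/right and $w$ versus $w^{-1}$ conventions are exactly the bookkeeping that reference fixes.
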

\begin{proof}
    Same as in the proof of \cite[Proposition 2.4]{Valerio-qWeyl}.
\end{proof}
%
\section{Knizhnik–Zamolodchikov for $\son$ and the dual pair $(O_m, \son)$}\label{sec3}
%

\subsection{Knizhnik–Zamolodchikov connection}
Fix a simple complex Lie algebra $\g$ and a non-degenerate invariant form, i.e., a non-zero multiple of the Killing form. Let $\{e_a\}$ and $\{e^a\}$ be dual bases in $\g$ with respect to the chosen form. The Casimir element in the center of $U(\g)$ is $C = \sum_{1\le a \le \dim \g} e_ae^a$, while the split Casimir in $ U(\g)\otimes U(\g)$ is the tensor 
\[
\Omega:= \sum_{1\le a\le \dim \g}e_a\otimes e^a. 
\]
It is a standard exercise to check that $C$ and $\Omega$ do not depend on the choice of basis.

Fix an integer $r\ge 2$. For $1\le i\ne j \le r$, we define a tensor $\Omega_{ij}\in U(\g)^{\otimes r}$ as 
\[
    \Omega_{ij}:=\sum_{1\le a\le \dim(\g)}1\otimes \dots \otimes 1\otimes e_a \otimes 1 \otimes \dots \otimes 1\otimes e^{a} \otimes 1 \otimes \dots \otimes 1,
\]
where the $e_a$ and $e^a$ terms appear in the $i$-th and $j$-th tensor factors respectively. Again, $\Omega_{ij}$ does not depend on the choice of basis, so in particular, $\Omega_{ij}= \Omega_{ji}$.  

\begin{definition}
Let $h\in \C$. The \emph{KZ connection one-form} $\nabla_{KZ, \g}^{h}$ is defined as
\begin{equation}\label{KZ_defn}
    \nabla_{KZ, \g}^{h}:=d-h \sum_{1\le i<j \le r} \frac{d(z_i-z_j)}{(z_i-z_j)}\Omega_{ij}. 
\end{equation}
\end{definition}

\begin{remark}
    We can also write $\nabla_{KZ, \g}^{h} = d-\frac{h}{2}\sum_{1\le i\ne j\le r}\frac{d(z_i-z_j)}{(z_i-z_j)}\Omega_{ij}$.
\end{remark}

Recall that
\[
Conf_r(\C):=
    \mathbb{C}^r \setminus \bigcup_{1\le i\ne j\le r} \{z \in \mathbb{C}^r \ | \ z_i-z_j=0\}
\quad \text{and} \quad
UConf_r(\C):=Conf_r(\C)/ S_r,
\]
where $S_r$ acts on $Conf_r(\C)$ by permuting coordinates. Upon choosing a base point in $UConf_r(\C)$ we have an identification of $\pi_1(UConf_r(\C))$ with the braid group ${\rm Br}_{ S_r}$. 

Fix $V_1, \dots, V_r \in {\rm Rep}(\mathfrak{g})$. The one-form $\nabla_{KZ, \g}^{h}$ defines a flat connection $\nabla_{KZ, \g}^{h, V_1, \dots, V_r}$ on the trivial vector bundle over $Conf_r(\C)$ with fibers $V_1\otimes \dots \otimes V_r$, which we refer to as the \emph{KZ connection}. When $V_1=\dots = V_r= V$, the symmetric group $S_r$ acts on $V^{\otimes r}$ by flipping tensor factors, and $\nabla_{KZ, \g}^{h, V, \dots, V}$ descends to a flat connection on the vector bundle $Conf_r(\C)\times_{{S_r}}V^{\otimes r}\rightarrow UConf_r(\C)$.

Recall that $UConf_r(\C)$ is homotopy equivalent to $\mathfrak{h}^{reg}/W$ for $\mathfrak{sl}_r$. So the fundamental groups of both spaces are isomorphic (see the discussion after Proposition \ref{Gauge_equiv}). We have a similar Proposition to \ref{parralel_transport}.

\begin{proposition}\label{parralel_transport_KZ}
    Let $\gamma \in \pi_1(UConf_r(\C))$ and $\widetilde{\gamma}:[0,1]\rightarrow Conf_r(\C)$ be a lift of $\gamma$, then the monodromy representation of the KZ connection $\pi_{KZ}^h:\Br_{ S_r} \rightarrow GL(V^{\ot r})$ satisfies the relation
    \[
    \pi_{KZ}^h(\gamma)=\sigma'(\gamma)\mathcal{P}(\widetilde{\gamma})
    \]
    where $\mathcal{P}(\widetilde{\gamma})\in GL(V^{\ot r})$ is the parallel transport along $\widetilde{\gamma}$ for it and $\sigma'$ is the representation of $\Br_{ S_r}$ on $V^{\ot r}$ which factors through the surjection $\Br_{ S_r}\rightarrow  S_r$.
\end{proposition}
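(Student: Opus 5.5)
The plan is to follow the argument behind Proposition \ref{parralel_transport} (i.e.\ \cite[Proposition 2.4]{Valerio-qWeyl}), with $Conf_r(\C)$ in place of $\h^{reg}$ and $S_r$ acting on $V^{\ot r}$ by permuting tensor factors in place of $\sigma:\BrW\to K$. Write $\Omega_{KZ}:=\sum_{i<j}\frac{d(z_i-z_j)}{z_i-z_j}\Omega_{ij}$. The first step is the equivariance identity $\sigma'(w)\,\Omega_{ij}\,\sigma'(w)^{-1}=\Omega_{w(i)w(j)}$ for $w\in S_r$. It holds because conjugating by the flip of tensor factors moves the $i$-th and $j$-th slots, and $\Omega_{ij}=\Omega_{ji}$. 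Combined with the way $w$ acts on the forms $d(z_i-z_j)/(z_i-z_j)$ by permuting coordinates, this gives $w^*(\sigma'(w)\Omega_{KZ}\sigma'(w)^{-1})=\Omega_{KZ}$. So the trivial bundle $Conf_r(\C)\times V^{\ot r}$, with the diagonal action $w\cdot(z,v)=(wz,\sigma'(w)v)$, carries an $S_r$-equivariant flat connection, which descends to $Conf_r(\C)\times_{S_r}V^{\ot r}$ as stated before the proposition.

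Next I will compute the monodromy by identifying the fiber of the descended bundle over the base point $[z_0]$ with $V^{\ot r}$ through the representative $(z_0,v)$. Take a loop $\gamma$ at $[z_0]$ and lift it to $\widetilde\gamma$, which starts at $z_0$ and ends at $w^{-1}z_0$ (or $wz_0$, depending on convention), where $w$ is the image of $\gamma$ in $S_r$. A flat section of the descended bundle along $\gamma$ lifts to a flat section of the trivial bundle along $\widetilde\gamma$. That section starts at $(z_0,v)$ and ends at $(\widetilde\gamma(1),\mathcal P(\widetilde\gamma)v)$. To read this endpoint in the chosen fiber identification, I translate the point back to $z_0$ using the diagonal action, which gives $(z_0,\sigma'(w)\mathcal P(\widetilde\gamma)v)$. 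Hence $\pi^h_{KZ}(\gamma)=\sigma'(\gamma)\mathcal P(\widetilde\gamma)$.

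The last step is a consistency check: this formula does not depend on the choice of lift, and it is multiplicative, so it defines a homomorphism. Both facts follow from the equivariance in the first step, since $\mathcal P(w\cdot\widetilde\gamma)=\sigma'(w)\mathcal P(\widetilde\gamma)\sigma'(w)^{-1}$. Composing two loops then gives $\sigma'(\gamma_1\gamma_2)\mathcal P(\widetilde{\gamma_1\gamma_2})=\sigma'(\gamma_1)\mathcal P(\widetilde\gamma_1)\sigma'(\gamma_2)\mathcal P(\widetilde\gamma_2)$.

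I expect the main obstacle to be purely bookkeeping rather than anything conceptual. One has to fix the convention for whether $S_r$ acts through $w$ or $w^{-1}$ on coordinates versus tensor factors, and choose the identification of $\pi_1(UConf_r(\C))$ with $\Br_{S_r}$, so that the product comes out as $\sigma'(\gamma)\mathcal P(\widetilde\gamma)$ and not in the reverse order. I would resolve this by matching exactly the conventions used for Proposition \ref{parralel_transport} and \cite[Proposition 2.4]{Valerio-qWeyl}.
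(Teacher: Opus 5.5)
Your proposal is correct and matches the paper's argument: the paper gives no separate proof, but presents the statement as the direct analogue of Proposition~\ref{parralel_transport} (itself proved by deferring to \cite[Proposition 2.4]{Valerio-qWeyl}), and your equivariance, descent, and lifting argument is exactly that proof transplanted to $Conf_r(\C)$ with $\sigma'$ permuting tensor factors. The only care needed is the convention bookkeeping you already flag: if $w$ moves the $i$-th point to position $w(i)$ and $\sigma'(w)\Omega_{ij}\sigma'(w)^{-1}=\Omega_{w(i)w(j)}$, the invariance should read $w^*\Omega_{KZ}=\sigma'(w)\Omega_{KZ}\sigma'(w)^{-1}$, and the lift $\widetilde\gamma$ should start at the base point $z_0$, which makes it unique.
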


\subsection{Clifford algebra $Cl(2n)$ and spin representation of $\son$}\label{spinrepforson}

Let $(\C^{2n}, (-,-))$ be the vector space, with basis $e_1, \dots, e_{2n}$, equipped with the nondegenerate symmetric bilinear form $(e_i, e_j) := \delta_{i,j}$. 

\begin{definition}
The Lie algebra $\son[2n]$ is the $\C$-vector space of all $A\in \End(\C^{2n})$ preserving the form $(-,-)$, i.e., such that $(A(-),-) + (-,A(-))=0$.
\end{definition}

\begin{definition}
The \emph{Clifford algebra} $Cl(2n)$ is the unital associative $\C$-algebra generated by $\C^{2n}$ modulo the relation $v^2= (v,v)$ for all $v\in \C^{2n}$.
\end{definition} 

A consequence of the defining relation is that $vw+wv = 2(v,w)$, for $v,w\in \C^{2n}$. In particular, we have $e_i^2=1$, for $1\le i\le 2n$, while $e_ie_j=-e_je_i$, for $1\le i\ne j\le 2n$. 

We now recall how to relate $\son[2n]$ to $Cl(2n)$. Note that for $a,b\in \C^{2n}$, the operator $\varphi_{a\wedge b}:\C^{2n}\rightarrow \C^{2n}$, defined by 
\begin{equation}\label{eq:varphisubab}
\varphi_{a\wedge b}(v) = 2\Big((b,v)a - (a,v)b\Big),
\end{equation}
preserves the form $(-,-)$. Thus, the assignment $a\wedge b\mapsto \varphi_{a \wedge b}$ gives an isomorphism $\Lambda^2(\C^{2n})\cong \son[2n]$.

\begin{lemma}
    The map 
    \[
    \son[2n]\rightarrow Cl(2n), \quad \varphi_{a\wedge b}\mapsto \frac{ab-ba}{2}
    \]
    is a Lie algebra homomorphism.
\end{lemma}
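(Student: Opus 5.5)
The plan is to verify directly that the map, which I denote $\psi(\varphi_{a\wedge b}) = \frac{ab-ba}{2}$, is well defined and linear, and then that it intertwines brackets. Well-definedness and linearity are immediate. The expression $\frac{ab-ba}{2}$ is bilinear and antisymmetric in $(a,b)$, so it factors through $\Lambda^2(\C^{2n})$. Composing with the isomorphism $\Lambda^2(\C^{2n})\cong\son[2n]$ from \eqref{eq:varphisubab} gives a linear map on $\son[2n]$. Since the Lie bracket of $Cl(2n)$ is the commutator, it remains to show
\[
\Bigl[\tfrac{ab-ba}{2},\tfrac{cd-dc}{2}\Bigr] = \psi\bigl([\varphi_{a\wedge b},\varphi_{c\wedge d}]\bigr)
\]
for all $a,b,c,d\in\C^{2n}$.

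The key intermediate step is to compute the adjoint action of $\psi(\varphi_{a\wedge b})$ on the generators $v\in\C^{2n}\subset Cl(2n)$. Using $vw+wv=2(v,w)$, I expect
\[
\Bigl[\tfrac{ab-ba}{2},v\Bigr] = 2\bigl((b,v)a-(a,v)b\bigr)=\varphi_{a\wedge b}(v).
\]
A quick check: when $a\perp b$ we have $\frac{ab-ba}{2}=ab-(a,b)=ab$, and then $[ab,v]=abv-vab = a(2(b,v)-vb)-vab = 2(b,v)a -(av+va)b = 2(b,v)a-2(a,v)b$. The general case follows by bilinearity, since the scalar part $(a,b)$ is central. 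So the commutator with $\psi(\varphi_{a\wedge b})$ restricts on $\C^{2n}$ to $\varphi_{a\wedge b}$ itself. This is the crux of the argument, and it is the step where the factor $2$ in \eqref{eq:varphisubab} must match the normalization $\frac{ab-ba}{2}$.

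Next, expand $[\varphi_{a\wedge b},\varphi_{c\wedge d}]$ in $\Lambda^2$. Since $\varphi_{a\wedge b}$ acts as a derivation of the $\son[2n]$-equivariant identification $\Lambda^2\cong\son[2n]$, we get
\[
[\varphi_{a\wedge b},\varphi_{c\wedge d}]=\varphi_{\varphi_{a\wedge b}(c)\wedge d}+\varphi_{c\wedge\varphi_{a\wedge b}(d)}.
\]
This is the standard fact that $[A,\varphi_{c\wedge d}]=\varphi_{Ac\wedge d + c\wedge Ad}$ for $A\in\son[2n]$, and it can be checked by applying both sides to a vector $v$ and using skewness of $A$.

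On the Clifford side, $\mathrm{ad}_X$ with $X=\psi(\varphi_{a\wedge b})$ is a derivation of $Cl(2n)$. Hence
\[
[X,\tfrac{cd-dc}{2}] = \tfrac{(Xc)d-d(Xc)}{2}+\tfrac{c(Xd)-(Xd)c}{2},
\]
where $Xc$ denotes $[X,c]=\varphi_{a\wedge b}(c)$. By the key step this equals $\psi(\varphi_{\varphi_{a\wedge b}(c)\wedge d})+\psi(\varphi_{c\wedge \varphi_{a\wedge b}(d)})$, which matches the expansion above. This completes the bracket identity.

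The main obstacle is only keeping the normalization constants consistent. I expect no conceptual difficulty beyond the commutator computation above.
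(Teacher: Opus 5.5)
Your proof is correct, but it is organized differently from the paper's.

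The paper's route is a direct comparison:
\begin{itemize}
\item it writes down the explicit bracket $[a\wedge b,c\wedge d]=2(b,c)a\wedge d-2(a,c)b\wedge d-2(b,d)a\wedge c+2(a,d)b\wedge c$ on $\Lambda^2(\C^{2n})$;
\item it expands $abcd-cdab$ in $Cl(2n)$ by commuting generators past each other;
\item it matches the two expansions term by term.
\end{itemize}

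You instead isolate the single identity $\bigl[\tfrac{ab-ba}{2},v\bigr]=\varphi_{a\wedge b}(v)$ for $v\in\C^{2n}$. The bracket identity then follows from two facts: $\mathrm{ad}$ is a derivation, and $[A,\varphi_{c\wedge d}]=\varphi_{Ac\wedge d}+\varphi_{c\wedge Ad}$ for $A$ in $\son$. Expanding the latter formula gives exactly the paper's bracket on $\Lambda^2$.

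The paper's route is a one-shot computation, but it must track the central terms coming from $xy=\tfrac{xy-yx}{2}+(x,y)$. In fact its displayed expansion of $[ab,cd]$ is only right modulo scalars. The correct expansion is $2(b,c)ad-2(a,c)bd+2(b,d)ca-2(a,d)cb$, which differs from the displayed one by $4(a,c)(b,d)-4(a,d)(b,c)$. For example, $a=c=e_1$, $b=d=e_2$ gives $0$ versus $-4$. The paper's conclusion survives only because these scalars are exactly what the correction $-(x,y)$ in $x\wedge y\mapsto xy-(x,y)$ contributes when the $\Lambda^2$ bracket is mapped into $Cl(2n)$.

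Your argument never meets these scalars, since they are central and drop out of every commutator. It also gives a bonus: $\mathrm{ad}$ of the image recovers the defining representation on $\C^{2n}\subset Cl(2n)$, so the map is injective as well.
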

\begin{proof}
We have the following explicit description of a Lie bracket on $\Lambda^2(\C^{2n})\cong \son$, corresponding to the commutator on $\mathfrak{so}_{2n}$: $[a\wedge b, c\wedge d] := 2(b,c)a\wedge d -2(a,c)b\wedge d - 2(b,d)a\wedge c +2(a,d)b\wedge c$. Computing in $Cl(2n)$ we find that: $[ab,cd] = abcd-cdab = 2(b,c)ad -2(a,c)bd - 2(b,d)ac +2(a,d)bc$. Recall the Clifford quantization map $\Lambda^*(\C^{2n})\rightarrow Cl(2n)$, $a_1\wedge \dots \wedge a_k\mapsto \frac{1}{k!}\sum_{w\in S_k}(-1)^wa_{w(1)}\dots a_{w(k)}$, which is an isomorphism of vector spaces. The restriction of the Clifford quantization map $\Lambda^2(\C^{2n})\rightarrow Cl(2n)$, $
a\wedge b\mapsto (ab-ba)/2=ab-(a,b)\cdot 1$, 
is therefore a Lie algebra homomorphism.
\end{proof}

If we instead work with the following basis of $\C^{2n}$ (here ${\bf i}:=\sqrt{-1}$):
\[
\psi_i:= \frac{e_{2i-1}+{\bf i}e_{2i}}{2} \qquad \text{and} \qquad \psi_i^*:= \frac{e_{2i-1}-{\bf i}e_{2i}}{2}, \qquad 1\le i\le n,
\]
which satisfies
\begin{equation}\label{eq:pairingpsi}
(\psi_i,\psi_j)= 0 \qquad \text{and} \qquad (\psi_i,\psi_j^*) = \frac{1}{2}\delta_{ij} \qquad 1\le i,j\le n, 
\end{equation}
then we have the following. 

\begin{lemma}
    Using the basis $\psi_1, \dots, \psi_n, \psi_n^*, \dots, \psi_1^*$ to identify $\End(\C^{2n})\cong M_{2n}(\C)$, we have
    \[
    \son = \{A\in M_{2n}(\C) \ | \ Aw_0+w_0A^T=0\} = \C\cdot\{M_{i,j}:=E_{i,j}-E_{-j,-i} \ | \ i,j\in \{\pm 1, \dots, \pm n\}\},
    \]
    where $w_0$ is the anti-diagonal matrix, with all anti-diagonal entries $1$, and $E_{i,j}$ are matrix units.
\end{lemma}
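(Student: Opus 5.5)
We need to show that in the basis $\psi_1,\dots,\psi_n,\psi_n^*,\dots,\psi_1^*$, the condition $(Av,w)+(v,Aw)=0$ becomes $Aw_0+w_0A^T=0$, and that the matrices $M_{i,j}$ span the solution space. The plan is to first compute the Gram matrix of $(-,-)$ in this basis and then translate the invariance condition into matrix form.

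\textbf{Gram matrix.} Index the basis by $\{1,\dots,n,-n,\dots,-1\}$, with $\psi_{-i}:=\psi_i^*$. By \eqref{eq:pairingpsi}, $(\psi_i,\psi_j)=0$ and $(\psi_i,\psi_{-j})=\tfrac12\delta_{ij}$. The same computation, or symmetry of the form, gives $(\psi_i^*,\psi_j^*)=0$. Hence the Gram matrix is $G=\tfrac12 w_0$: in this ordering the index $-i$ sits in the position mirror to $i$, so the only nonzero entries lie on the antidiagonal. The form is bilinear, not sesquilinear, so no conjugation enters here.

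\textbf{Invariance condition.} For $A\in\End(\C^{2n})$ with matrix $A$ in this basis, invariance of the form means $A^TG+GA=0$ for all pairs of basis vectors. Multiplying by $2$ gives $A^Tw_0+w_0A=0$. Conjugating by $w_0$ and using $w_0^2=1$ and $w_0^T=w_0$ turns this into $w_0A^T+Aw_0=0$, which is the stated condition. This condition is equivalent to $w_0A^Tw_0=-A$, that is, $A_{-j,-i}=-A_{i,j}$ for all indices $i,j$, where $(w_0Aw_0)_{i,j}=A_{-i,-j}$.

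\textbf{Spanning set.} Each $M_{i,j}=E_{i,j}-E_{-j,-i}$ satisfies $w_0M_{i,j}^Tw_0=E_{-j,-i}-E_{i,j}=-M_{i,j}$, so each lies in $\son$. Conversely, any $A$ satisfying $A_{-j,-i}=-A_{i,j}$ can be written as
\[
A=\tfrac12\sum_{i,j}A_{i,j}M_{i,j},
\]
because the coefficient of $E_{i,j}$ on the right-hand side is $\tfrac12\bigl(A_{i,j}-A_{-j,-i}\bigr)=A_{i,j}$. This gives the claimed equality of spans.

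As a sanity check on dimension, the $M_{i,j}$ satisfy $M_{-j,-i}=-M_{i,j}$, and $M_{i,-i}=0$. A basis is therefore given by one representative from each such pair with $j\neq -i$, which yields $n(2n-1)$ elements, as expected.

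The only delicate point is the index bookkeeping: one must confirm that in the ordering $\psi_1,\dots,\psi_n,\psi_n^*,\dots,\psi_1^*$, the element $\psi_i^*$ occupies the position mirror to $\psi_i$. Otherwise $w_0$ would not be the antidiagonal matrix. Everything else is routine linear algebra.
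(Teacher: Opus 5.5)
Your proof is correct. The Gram matrix in this ordered basis is $\tfrac12 w_0$, so invariance becomes $A^Tw_0+w_0A=0$; conjugating by $w_0$ turns this into $Aw_0+w_0A^T=0$, i.e.\ $A_{-j,-i}=-A_{i,j}$, and the expansion $A=\tfrac12\sum_{i,j}A_{i,j}M_{i,j}$ gives the spanning statement. The paper states this lemma without proof as a standard fact, and your argument is the routine verification it leaves implicit.
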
 

\begin{lemma}\label{lem:cliffordgensrelns}
The Clifford algebra $Cl(2n)$ is generated by $\psi_i, \psi_i^{*}, \quad 1\le i\le  n$
with relations
\begin{equation}\label{eq:Cliffrelns}    \psi_i\psi^{*}_j+\psi^{*}_j\psi_i=\delta_{ij}1, \quad \psi_i\psi_j+\psi_j\psi_i=0, \quad \psi_i^{*}\psi_j^{*}+\psi_j^{*}\psi_i^{*}=0, \quad  1\le i,j \le n.
\end{equation}
\end{lemma}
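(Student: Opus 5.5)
The plan is to deduce this from the defining presentation of $Cl(2n)$ by an invertible linear change of generators. The elements $\psi_i,\psi_i^*$, $1\le i\le n$, are the images of a basis of $\C^{2n}$, since
\[
e_{2i-1}=\psi_i+\psi_i^*, \qquad e_{2i}=-\mathbf{i}(\psi_i-\psi_i^*).
\]
Hence they generate $Cl(2n)$. It then remains to check that the relations \eqref{eq:Cliffrelns} hold and that they imply all the defining relations $v^2=(v,v)$.

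First, I would verify that the relations hold in $Cl(2n)$. Polarizing the defining relation gives $vw+wv=2(v,w)$ for all $v,w\in\C^{2n}$. Applying this to the pairings \eqref{eq:pairingpsi} gives directly
\[
\psi_i\psi_j^*+\psi_j^*\psi_i=2\cdot\tfrac12\delta_{ij}=\delta_{ij}, \qquad \psi_i\psi_j+\psi_j\psi_i=0 .
\]
For the last relation we also need $(\psi_i^*,\psi_j^*)=0$. This is a one-line computation: $(\psi_i^*,\psi_j^*)=\tfrac14\delta_{ij}(1+\mathbf{i}^2)=0$.

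Next, for the converse, let $A$ be the algebra with generators $\psi_i,\psi_i^*$ and relations \eqref{eq:Cliffrelns}. Define a linear map $\C^{2n}\to A$ by sending $e_{2i-1}$ and $e_{2i}$ to the expressions above. For $v=\sum_i(a_i\psi_i+b_i\psi_i^*)$, expand $v^2$ in $A$:
\[
v^2=\tfrac12\sum_{i,j}\Bigl(a_ia_j[\psi_i,\psi_j]_++b_ib_j[\psi_i^*,\psi_j^*]_++2a_ib_j[\psi_i,\psi_j^*]_+\Bigr)=\sum_i a_ib_i .
\]
By bilinearity and \eqref{eq:pairingpsi}, $(v,v)=2\sum_{i,j}a_ib_j(\psi_i,\psi_j^*)=\sum_i a_ib_i$. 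So the defining relation holds in $A$, and the universal property gives an algebra map $Cl(2n)\to A$. This map and the tautological map $A\to Cl(2n)$ are mutually inverse on generators, so they are inverse isomorphisms.

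The only step needing care is the expansion of $v^2$: one must symmetrize correctly and track the factor $\tfrac12$ from \eqref{eq:pairingpsi}. Everything else is formal.
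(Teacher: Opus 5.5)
Your proof is correct and is the same change-of-basis argument the paper relies on implicitly: the paper states the lemma without proof, as an immediate consequence of $vw+wv=2(v,w)$ and the pairings \eqref{eq:pairingpsi}. You go further by writing out the converse direction through the universal property, and by supplying the pairing $(\psi_i^*,\psi_j^*)=0$, which \eqref{eq:pairingpsi} omits but which the third relation needs.
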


Write $\psi_{-i}:=\psi_i^*$, for $-i< 0$. Identifying $\End(\C^{2n})\cong M_{2n}(\C)$, via the basis $\psi_1, \dots, \psi_n, \psi_{-n}, \dots, \psi_{-1}$, the isomorphism $\Lambda^2(\C^{2n})\cong \son$ is such that $\psi_i\wedge\psi_{-j}\mapsto \varphi_{\psi_i\wedge\psi_{-j}} = M_{i,j}$ (the factor of $2$ in \eqref{eq:varphisubab} cancels the $\frac{1}{2}$ in \eqref{eq:pairingpsi}). Thus, we have the following.

\begin{lemma}\label{lem:so2ntocliff}
The assignment
\[
M_{i,j}\mapsto \frac{\psi_{i}\psi_{-j}-\psi_{-j}\psi_{i}}{2} \stackrel{\eqref{eq:Cliffrelns}}{=} \psi_{i}\psi_{-j} - \frac{1}{2}\delta_{i,j}, \qquad i,j\in \{\pm 1, \dots, \pm n\},
\]
defines a map $\son[2n]\rightarrow Cl(2n)$, inducing an algebra homomorphism $U(\son[2n])\rightarrow Cl(2n)$. 
\end{lemma}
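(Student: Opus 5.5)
The plan is to deduce Lemma \ref{lem:so2ntocliff} from the preceding lemma, which says that $\varphi_{a\wedge b}\mapsto \frac{ab-ba}{2}$ is a Lie algebra homomorphism $\son[2n]\rightarrow Cl(2n)$. What remains is to identify this map concretely on the basis $M_{i,j}$. The argument has three steps.

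\textbf{Step 1.} I will check the identification $\varphi_{\psi_i\wedge\psi_{-j}}=M_{i,j}$ directly from \eqref{eq:varphisubab}. Using \eqref{eq:pairingpsi}, we have $(\psi_k,\psi_{-l})=\frac12\delta_{kl}$ and $(\psi_k,\psi_l)=0$ for indices of the same sign. Evaluating $\varphi_{\psi_i\wedge\psi_{-j}}(v)=2\big((\psi_{-j},v)\psi_i-(\psi_i,v)\psi_{-j}\big)$ on basis vectors gives two contributions:
\[
\psi_j\mapsto \psi_i, \qquad \psi_{-i}\mapsto -\psi_{-j}.
\]
All other basis vectors are sent to $0$. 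This is exactly $E_{i,j}-E_{-j,-i}$ in the basis $\psi_1,\dots,\psi_n,\psi_{-n},\dots,\psi_{-1}$, with the factor $2$ cancelling the $\frac12$. The same computation covers all sign combinations of $i$ and $j$, using the convention $\psi_{-(-k)}=\psi_k$. The $M_{i,j}$ span $\son$ by the preceding lemma, but they are not linearly independent, since $M_{i,j}=-M_{-j,-i}$. So I also need to confirm well-definedness: the formula must satisfy $\frac{\psi_i\psi_{-j}-\psi_{-j}\psi_i}{2}=-\frac{\psi_{-j}\psi_{i}-\psi_{i}\psi_{-j}}{2}$. This holds trivially, and it matches $\psi_i\wedge\psi_{-j}=-\psi_{-j}\wedge\psi_i$.

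\textbf{Step 2.} Since $a\wedge b\mapsto\varphi_{a\wedge b}$ is a linear isomorphism $\Lambda^2(\C^{2n})\cong\son$, the assignment $M_{i,j}\mapsto\frac{\psi_i\psi_{-j}-\psi_{-j}\psi_i}{2}$ is precisely the Lie homomorphism of the previous lemma expressed in the new basis. Its bilinearity in $(a,b)$ guarantees that the change of basis from $e_k$ to $\psi_k$ causes no issue. The simplification to $\psi_i\psi_{-j}-\frac12\delta_{i,j}$ comes from the relation $\psi_i\psi_{-j}+\psi_{-j}\psi_i=2(\psi_i,\psi_{-j})=\delta_{i,j}$ in \eqref{eq:Cliffrelns}. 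When the signs agree this anticommutator vanishes, consistent with $\delta_{i,j}=0$ there.

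\textbf{Step 3.} A Lie algebra map into the commutator Lie algebra of an associative algebra extends uniquely to an algebra homomorphism $U(\son)\rightarrow Cl(2n)$ by the universal property of $U$.

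The only step requiring care is the bookkeeping of signs and indices in Step 1, where the cases $i,j$ of mixed sign must all be checked. No conceptual obstacle is expected.
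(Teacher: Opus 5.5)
Your proposal is correct and follows the paper's route. The paper likewise deduces the lemma from the preceding Lie homomorphism $\varphi_{a\wedge b}\mapsto\frac{ab-ba}{2}$ together with the identification $\varphi_{\psi_i\wedge\psi_{-j}}=M_{i,j}$ (the factor $2$ cancels the $\frac12$), then passes to $U(\son)$ by the universal property, exactly as in your Steps 1--3. One wording fix: $\psi_i\psi_{-j}+\psi_{-j}\psi_i$ vanishes when the subscripts $i$ and $-j$ have the same sign (that is, when $i$ and $j$ have opposite signs), not when $i$ and $j$ themselves agree in sign.
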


The presentation in Lemma \ref{lem:cliffordgensrelns} makes apparent that the algebra $Cl(2n)$ has a spin representation.

\begin{definition}\label{def:spinmodule}
    Let $I\subset Cl(2n)$ be the left ideal generated by $\{\psi_1, \dots, \psi_n\}$. The Clifford algebra's \emph{spin module} is $S:=Cl(2n)/I$. Since $\son$ maps to $Cl(2n)$, the Lie algebra $\son$ has a \emph{spin representation} $S$.
\end{definition} 

\begin{notation}\label{not:orderedbasisforS}
    For any $I\subset \{1,\dots,n\}$, associate the following vector in $S$
     \begin{equation*}
        \hwp_{I}:= \psi_{i_1}^{*} \dots \psi_{i_{|I|}}^{*}, \quad I=\{i_1<i_2<\dots < i_{|I|}\}.
    \end{equation*}
    The set $\{\hwp_I\}_{I\subset \{1, \dots, n\}}$ is a basis of $S$. 
\end{notation}

Note that $M_{i,j} = -M_{-j,-i}$, so $\{M_{i,j}\}$ is only a spanning set. Now, we will refine this spanning set to a basis. We choose a Cartan subalgebra $\h_{\son[2n]}$ of diagonal matrices in $\son[2n]$
\begin{equation}\label{eq:basisso2ncartan}
h=h_{1}M_{1,1} + h_2M_{2,2} + \dots + h_nM_{n,n}.
\end{equation}
Denote by $\epsilon_i \in \h_{\son[2n]}^*$ the functional $\epsilon_i(h) = h_i$. Then the roots spaces of $\g = \son[2n]$ are
\begin{equation}\label{eq:basisso2nroots}
		\g_{\epsilon_i - \epsilon_j} = \C\cdot M_{ij}, \qquad \g_{\epsilon_i + \epsilon_j} = \C\cdot M_{i,-j}, \qquad\text{and} \qquad \g_{-\epsilon_i-\epsilon_j} = \C\cdot M_{-j,i},
\end{equation}
		for $i\neq j\in\{1, \dots, n\}$. 

\begin{remark}\label{rem:dualmij}
    Since $\son$ is a simple Lie algebra it has a unique nondegenerate invariant form (up to scalar). It is convenient to use the trace form in the defining representation, which is computed by the formula $(E_{i,j},E_{k,\ell})=\delta_{j,k}\delta_{i,\ell}$. In this case, the dual basis is $M^{i,j}:=\frac{1}{2}M_{j,i}$.
\end{remark}
        
        Our choice of positive roots will be
		\[
		\Phi_+ = \{ \epsilon_i \pm \epsilon_j | 1\le i< j\le n\}
		\]
		with simple roots
		\[
		\alpha_1 = \epsilon_1 - \epsilon_2,\quad \alpha_2 = \epsilon_2 - \epsilon_3,\quad \dots, \quad  \alpha_{n-1} = \epsilon_{n-1} - \epsilon_n, \quad \text{and} \quad \alpha_n = \epsilon_{n-1} + \epsilon_n.
		\]
It is useful to keep the following expression for fundamental weights in terms of $\epsilon_i$:
\[
\varpi_{i}=\epsilon_1+\dots + \epsilon_{i},\quad  1\le i\le n-2, \quad \varpi_{n-1}=\frac{\epsilon_1}{2}+\dots+\frac{\epsilon_{n-1}}{2}-\frac{\epsilon_{n}}{2}, \quad \varpi_{n}=\frac{\epsilon_1}{2}+\dots+\frac{\epsilon_{n-1}}{2}+\frac{\epsilon_{n}}{2}.
\]
For dominant integral $\lambda$, we have $V_{\lambda}$, the finite-dimensional irreducible of highest weight $\lambda$.

\begin{remark}
The basis $\{\hwp_I\}_{I\subset\{1, \dots, n\}}$ of $S$ is a weight basis with respect to $\h_{\son[2n]}$, and
\[
\wt(\hwp_{\{i_1, \dots , i_k\}}) = \wt(\psi^*_{i_1}\cdots \psi^*_{i_k}) = \big(\frac{\epsilon_1}{2}+\dots+\frac{\epsilon_{n-1}}{2}+\frac{\epsilon_{n}}{2}\big) - \big(\epsilon_{i_1} + \cdots \epsilon_{i_k}\big),
\]
where $i_1 < \dots < i_k$.
\end{remark}

Relations between generators of $Cl(2n)$ are even, so $Cl(2n)$ admits a decomposition into even and odd subspaces $Cl(2n)=Cl(2n)^+\oplus Cl(2n)^-$, as a module over the subalgebra $Cl(2n)^+$. There is a respective decomposition of the spin representation $S=S^+\oplus S^-$, as $Cl(2n)^+$-modules, which corresponds to even and odd subspaces in $S$.

\begin{corollary}
    Restricting to $\son[2n]$, the spin module $S$ admits a decomposition into two irreducible $\son$-modules $S^+$ and $S^-$. We have the isomorphisms of $\son$-modules
    \begin{equation}\label{eq:Spmisohwt}
    S^+ \cong V_{\varpi_n}\quad \text{and} \quad S^-\cong V_{\varpi_{n-1}}.
    \end{equation}
\end{corollary}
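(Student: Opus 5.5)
The plan is to show four things in order: $S^\pm$ are $\son$-stable, they have the claimed dimensions and highest weights, and each is irreducible.

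\emph{Stability.} By Lemma \ref{lem:so2ntocliff}, each $M_{i,j}$ acts on $S$ by the quadratic Clifford element $\psi_i\psi_{-j}-\frac12\delta_{i,j}$. This element lies in $Cl(2n)^+$, so $\son$ acts through $Cl(2n)^+$. Hence the parity decomposition $S=S^+\oplus S^-$ is a decomposition into $\son$-submodules. In the basis of Notation \ref{not:orderedbasisforS}, $S^+$ is spanned by the $\hwp_I$ with $|I|$ even and $S^-$ by those with $|I|$ odd. Each has dimension $2^{n-1}$.

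\emph{Weights.} By the Remark, $\wt(\hwp_I)=\frac12\sum_k\epsilon_k-\sum_{i\in I}\epsilon_i$, so all weights $\frac12(\pm\epsilon_1\pm\cdots\pm\epsilon_n)$ occur with multiplicity one. For $S^+$ the weights are those with an even number of minus signs, and for $S^-$ those with an odd number.

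\emph{Highest weights.} In $S^+$, the vector $\hwp_\emptyset$ has weight $\varpi_n$. In $S^-$, the vector $\hwp_{\{n\}}=\psi_n^*$ has weight $\frac12(\epsilon_1+\dots+\epsilon_{n-1}-\epsilon_n)=\varpi_{n-1}$. I will check that each is killed by the positive root vectors $M_{i,j}$ ($i<j$) and $M_{i,-j}$, which act by $\psi_i\psi_j^*$ and $\psi_i\psi_j$ respectively.
\begin{itemize}
\item On $\hwp_\emptyset$: $\psi_i$ annihilates the class of $1$ because $I$ contains $\psi_i$. Both $\psi_i\psi_j$ and $\psi_i\psi_j^*$ end in such a $\psi$ after anticommuting, since $\psi_i\psi_j^*\cdot 1=-\psi_j^*\psi_i\cdot1=0$ for $i\neq j$.
\item On $\psi_n^*$ with $i<j$: $\psi_i\psi_j^*\psi_n^*$ vanishes. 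If $j=n$ this is because $(\psi_n^*)^2=0$. Otherwise $\psi_i$ anticommutes past both starred factors and kills the class of $1$.
\item On $\psi_n^*$ with $M_{i,-j}$: the term $\psi_i\psi_j\psi_n^*$ reduces, via $\psi_j\psi_n^*=\delta_{jn}-\psi_n^*\psi_j$, to $\delta_{jn}\psi_i\cdot 1$ up to terms ending in $\psi$. This is zero since $i\neq n$.
\end{itemize}

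\emph{Irreducibility.} The step I regard as the main point is showing that the highest weight vector generates everything. For this I use Weyl's dimension formula: $\dim V_{\varpi_n}=\dim V_{\varpi_{n-1}}=2^{n-1}$. This is a standard computation, or follows because the Weyl group orbit of $\varpi_n$ already consists of all $2^{n-1}$ weights with an even number of minus signs.

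The orbit argument gives irreducibility directly, without the dimension formula. The finite-dimensional module $S^+$ has multiplicity-free weights. Every weight is $W$-conjugate to $\varpi_n$ and hence extremal. So any irreducible summand must contain a dominant weight, and the only dominant weight present is $\varpi_n$, which has multiplicity one. Therefore $S^+$ has exactly one summand, so $S^+\cong V_{\varpi_n}$. The same argument applied to $\varpi_{n-1}$ gives $S^-\cong V_{\varpi_{n-1}}$.
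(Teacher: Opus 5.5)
Your proof is correct and follows the same route as the paper. The paper's proof notes that the $M_{i,j}$ map to $Cl(2n)^+$, so the parity splitting $S=S^+\oplus S^-$ is $\son$-stable, and then gets the isomorphisms from the weight space decompositions. You make that second step precise: all weights of $S^{\pm}$ lie in a single $W$-orbit, which contains exactly one dominant weight, and that weight has multiplicity one.

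Given the orbit argument, your explicit highest-weight-vector check is redundant. In that check, $\delta_{jn}\psi_i\cdot 1$ vanishes because $\psi_i$ lies in the defining left ideal for every $i$, not because $i\neq n$.
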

\begin{proof}
    Note that in Lemma \ref{lem:so2ntocliff}, the elements $M_{i,j}$ map to elements in $Cl(2n)^+$, so $S=S^+\oplus S^-$. The isomorphisms \eqref{eq:Spmisohwt} follow from weight space decompositions.
\end{proof}

Now we consider $S\ot S$ over $\son[2n]$ and record the following for later use.

\begin{proposition}\label{FH_decomposition}
For $n\in \mathbb{Z}$, let $p(n)\in \{0,1\}$ be the remainder of $n$ after dividing by $2$, i.e., the parity of $n$. We have the following decomposition of $\son$-modules:
\begin{align*}
    S^+\ot S^+ &\cong V_{2\varpi_n} \oplus V_{\varpi_{n-2}} \oplus V_{\varpi_{n-4}}\oplus \dots \oplus V_{\varpi_{p(n)}}, \\
    S^-\ot S^- &\cong V_{2\varpi_{n-1}} \oplus V_{\varpi_{n-2}} \oplus V_{\varpi_{n-4}}\oplus \dots \oplus V_{\varpi_{p(n)}}, \quad \text{and} \\
    S^-\otimes S^+ \cong S^+\ot S^- &\cong V_{\varpi_{n-1}+\varpi_{n}} \oplus V_{\varpi_{n-3}} \oplus \dots  \oplus V_{\varpi_{1-{p}(n)}},
\end{align*}
where $\varpi_0:=0$.
\end{proposition}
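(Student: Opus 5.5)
The plan is to identify $S\otimes S\cong \Lambda^*(\C^{2n})$ as $\son$-modules and then split this identification by parity and by the $\Lambda^n=\Lambda^n_+\oplus\Lambda^n_-$ splitting. The tool is the Clifford algebra. Since $S$ is the unique irreducible $Cl(2n)$-module of dimension $2^n$, we have $Cl(2n)\cong\End(S)\cong S\otimes S^*$. The conjugation action of $\son\subset Cl(2n)$ on $Cl(2n)$ corresponds to the natural $\son$ action on $S\otimes S^*$. By the Clifford quantization map this action also matches the natural action on $\Lambda^*(\C^{2n})$, since $\mathrm{ad}$ of a quadratic element acts on $v\in\C^{2n}\subset Cl(2n)$ by $\varphi$ and the quantization map is equivariant. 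Hence $S\otimes S^*\cong\bigoplus_{k=0}^{2n}\Lambda^k\C^{2n}$.

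Next I determine $S^*$. Comparing weights, the weights of $S^\pm$ are $\tfrac12(\pm\epsilon_1\pm\cdots\pm\epsilon_n)$ with the number of minus signs even, respectively odd. Negating all signs changes this number by $n$. So $(S^\pm)^*\cong S^\pm$ if $n$ is even, and $(S^\pm)^*\cong S^\mp$ if $n$ is odd. In either case $S^*\cong S$, and therefore $S\otimes S\cong\Lambda^*(\C^{2n})$.

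To refine the decomposition, I use the $\mathbb{Z}/2$-grading $Cl(2n)=Cl^+\oplus Cl^-$. The even part $Cl^+=\End(S^+)\oplus\End(S^-)$ corresponds to $\Lambda^{\mathrm{even}}$, and the odd part $Cl^-=\Hom(S^+,S^-)\oplus\Hom(S^-,S^+)$ corresponds to $\Lambda^{\mathrm{odd}}$. I then need the standard irreducible decompositions: $\Lambda^k\cong\Lambda^{2n-k}\cong V_{\varpi_k}$ for $k\le n-2$, $\Lambda^{n-1}\cong V_{\varpi_{n-1}+\varpi_n}$, and $\Lambda^n\cong V_{2\varpi_{n-1}}\oplus V_{2\varpi_n}$. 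For $S^+\otimes S^+\cong S^+\otimes(S^\pm)^*$ the highest weight $2\varpi_n$ appears with multiplicity one. The remaining summands are obtained by the dimension and parity count $\dim S^+\otimes S^+=2^{2n-2}$: the summands $\Lambda^k$ with $k\equiv n \pmod 2$, $k<n$, each appear once, because $\End(S^\pm)$ and $\Hom(S^\mp,S^\pm)$ together exhaust $\Lambda^*$ and the $\Lambda^k$ with $k<n$ occur twice in total (as $\Lambda^k$ and $\Lambda^{2n-k}$). A cleaner alternative for the bookkeeping is a direct highest weight vector check: the vectors $\hwp_\emptyset\otimes\hwp_{\{1,\dots,j\}}-\cdots$ can be built explicitly, although computing characters is easier.

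The main obstacle is pinning down which $\Lambda^k$ lands in which block $S^\pm\otimes S^\pm$. I would handle it with characters: $\mathrm{ch}(S^+)\mathrm{ch}(S^+)$, $\mathrm{ch}(S^+)\mathrm{ch}(S^-)$, and $\mathrm{ch}(S^-)\mathrm{ch}(S^-)$ can be compared after specializing to the central element $-1$ on the $\epsilon$-lattice. A cleaner route is the Weyl-group-free argument that the center of $\mathrm{Spin}_{2n}$ acts on $S^+\otimes S^+$ by the square of the $S^+$ central character. Matching central characters of the $V_{\varpi_k}$ forces $k\equiv n$ for $S^\pm\otimes S^\pm$ and $k\equiv n-1$ for $S^+\otimes S^-$. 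Counting dimensions then fixes all multiplicities at one. In particular $V_{\varpi_{n-1}+\varpi_n}\subset S^+\otimes S^-$, and $V_{2\varpi_{n-1}}$ and $V_{2\varpi_n}$ sit in $S^-\otimes S^-$ and $S^+\otimes S^+$ respectively, by their highest weights. The isomorphism $S^-\otimes S^+\cong S^+\otimes S^-$ is given by the flip.
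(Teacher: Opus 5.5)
The paper gives no argument of its own here, only a reference to \cite[Exercise 23.31(b)]{FultonHarris}, so your Clifford-algebra route is an independent proof, and most of it is sound. The equivariant identification $Cl(2n)\cong\End(S)\cong S\ot S^*\cong\Lambda^*\C^{2n}$ is right, as is $S^*\cong S$. The $Cl^{\pm}$ grading (equivalently, your central-character argument) then correctly gives
\begin{gather*}
(S^+\ot S^+)\oplus(S^-\ot S^-)\cong V_{2\varpi_n}\oplus V_{2\varpi_{n-1}}\oplus\bigoplus_{j\ge1}(\Lambda^{n-2j}\C^{2n})^{\oplus 2},\\
(S^+\ot S^-)\oplus(S^-\ot S^+)\cong\bigoplus_{j\ge0}(\Lambda^{n-1-2j}\C^{2n})^{\oplus 2}.
\end{gather*}
In the mixed case the flip $S^+\ot S^-\cong S^-\ot S^+$ does force every multiplicity to be one.

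The gap is the equal-sign case. Write $S^+\ot S^+\cong V_{2\varpi_n}\oplus\bigoplus_{j\ge1}V_{\varpi_{n-2j}}^{\oplus a_j}$. Then $S^-\ot S^-\cong V_{2\varpi_{n-1}}\oplus\bigoplus_{j\ge1}V_{\varpi_{n-2j}}^{\oplus(2-a_j)}$, and equality of dimensions only gives $\sum_{j\ge1}(a_j-1)\binom{2n}{n-2j}=0$. That does not force $a_j=1$: for large $n$ these binomial coefficients are not even superincreasing, so no size argument settles it. Central characters cannot separate the two blocks either, since $S^+\ot S^+$ and $S^-\ot S^-$ have the same one. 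And, unlike the mixed case, no flip identifies them. So ``each $\Lambda^k$ with $k<n$ occurs twice in total, hence once in each block'' is an unproved symmetry assumption.

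What is missing is a reason why the two blocks contain each $\Lambda^{n-2j}\C^{2n}$ equally often. The cleanest fix is to twist by the automorphism of $\son$ given by conjugation by a reflection $g\in O_{2n}\setminus SO_{2n}$ acting on weights by $\epsilon_n\mapsto-\epsilon_n$. This changes the parity of the number of minus signs in $\tfrac12(\pm\epsilon_1\pm\cdots\pm\epsilon_n)$, so it exchanges $S^+$ and $S^-$ and carries $S^+\ot S^+$ to $S^-\ot S^-$. Meanwhile each $\Lambda^k\C^{2n}$, being an $O_{2n}$-module, is isomorphic to its own twist. Hence $a_j=2-a_j$, so $a_j=1$.

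Two alternatives also work:
\begin{itemize}
\item Actually carry out the highest-weight-vector check you mention, in both blocks. This gives $a_j\ge1$ and $2-a_j\ge1$.
\item Bypass the Clifford bookkeeping by using that $S^{\pm}$ are minuscule. Then $V_\lambda\ot S^{\pm}\cong\bigoplus V_{\lambda+\nu}$ over the weights $\nu$ of $S^{\pm}$ with $\lambda+\nu$ dominant, which yields all three decompositions with multiplicity one directly.
\end{itemize}
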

\begin{proof}
    See \cite[Exercise 23.31(b)]{FultonHarris}.
\end{proof}

\subsection{Classical Howe duality for $(\son[2n], O_m)$}

We now work with the Clifford algebra $Cl(2nm)$, which by Lemma \ref{lem:cliffordgensrelns} has generators
\[
\psi_{ia},\psi_{ia}^{*}, \quad 1\le i \le n, \quad 1\le a \le m,
\]
and relations
\begin{equation}\label{eq:cliffrelns}
\psi_{ia}\psi_{jb}^* +\psi_{jb}^*\psi_{ia} = \delta_{ij}\delta_{ab}, \quad \psi_{ia}\psi_{jb}=-\psi_{jb}\psi_{ia}, \quad \text{and} \quad \psi_{ia}^*\psi_{jb}^*=-\psi_{jb}^*\psi_{ia}^*.
\end{equation}
We write $\howespin$ to denote the spin module over $Cl(2nm)$, see Definition \ref{def:spinmodule}. 

Consider the following Clifford subalgebras of $Cl(2nm)$
\[
Cl_n^{(a)}=\langle \psi_{ia}, \psi_{ia}^* \ | \ 1\le i \le n \rangle, \quad \text{for $1\le a \le m$}.
\] 
Note that
\[
Cl_n^{(1)}\otimes \dots\otimes  Cl_n^{(m)}\hookrightarrow Cl(2nm),
\]
and for each $1\le a\le m$, we also have $\son^{(a)}\hookrightarrow Cl_n^{(a)}$, see Lemma \ref{lem:so2ntocliff}. 

\begin{lemma}\label{lem:Sisso2nmodule}
There is an action of $\son$ on $\howespin$ via
\[
\son \hookrightarrow\son^{(1)}\otimes \dots \otimes \son^{(m)} \hookrightarrow Cl(2nm),
\qquad 
x \mapsto \sum_{1\le a \le m} {x}^{(a)} =: {x}^{\langle n \rangle}.
\]
Under this embedding we have $\howespin\cong S^{\ot m}$, where each $\son^{(a)}$ acts on the respective tensor component.
\end{lemma}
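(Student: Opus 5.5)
The claim has two parts. First, the map $x\mapsto x^{\langle n\rangle}$ defines an action of $\son$ on $\howespin$. Second, $\howespin\cong S^{\ot m}$ as $\son$-modules, where each $\son^{(a)}$ acts on the $a$-th tensor factor. I will prove them in that order, using only Lemmas \ref{lem:cliffordgensrelns} and \ref{lem:so2ntocliff} and the definition of the spin module.

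\emph{The action.} By Lemma \ref{lem:so2ntocliff} applied to the generators $\psi_{ia},\psi_{ia}^*$ with $a$ fixed, each map $x\mapsto x^{(a)}$ is a Lie algebra homomorphism $\son\to Cl_n^{(a)}\subset Cl(2nm)$. Its image consists of the quadratic elements $\psi_{ia}\psi_{-j,a}-\tfrac12\delta_{ij}$, which are even. By the relations \eqref{eq:cliffrelns}, generators with different $a$ anticommute. A product of two generators from block $a$ therefore commutes with every generator from block $b\neq a$, since moving it past picks up the sign twice. Hence $[x^{(a)},y^{(b)}]=0$ for $a\ne b$, and
\[
[x^{\langle n\rangle},y^{\langle n\rangle}]=\sum_a[x^{(a)},y^{(a)}]=[x,y]^{\langle n\rangle}.
\]
So the map is a Lie algebra homomorphism $\son\to Cl(2nm)$, and $\howespin$ becomes an $\son$-module by restriction.

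\emph{The isomorphism.} I will construct an explicit linear map $\Phi:S^{\ot m}\to\howespin$, using the bases from Notation \ref{not:orderedbasisforS} together with a Koszul sign:
\[
\Phi(\hwp_{I_1}\ot\cdots\ot\hwp_{I_m}) := \hwp_{I_1}^{(1)}\hwp_{I_2}^{(2)}\cdots\hwp_{I_m}^{(m)}\cdot 1,
\]
where $\hwp_I^{(a)}=\prod_{i\in I}\psi_{ia}^*$ in increasing order. The $\psi^*$'s with different indices anticommute, so the products $\prod\psi^*_{ia}$ over subsets of the index set $\{1,\dots,n\}\times\{1,\dots,m\}$ form a basis of $\howespin=Cl(2nm)/I$, up to signs. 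Hence $\Phi$ is a linear isomorphism, and both sides have dimension $2^{nm}$.

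It remains to check equivariance, that is, $x^{(a)}\Phi(v)=\Phi(1\ot\cdots\ot x\ot\cdots\ot 1\,v)$. This is the main point to verify carefully, because of the signs. The element $x^{(a)}$ is even, so it commutes with $\hwp^{(b)}_{I_b}$ for $b<a$. It also commutes with $\hwp^{(b)}_{I_b}$ for $b>a$: the first blocks lie to its left and do not interfere, and for the later blocks the even element can be moved to the right. Hence
\[
x^{(a)}\hwp^{(1)}_{I_1}\cdots\hwp^{(m)}_{I_m}\cdot1=\hwp^{(1)}_{I_1}\cdots\big(x^{(a)}\hwp^{(a)}_{I_a}\big)\cdots\hwp^{(m)}_{I_m}\cdot1 .
\]

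Next I need $x^{(a)}\hwp^{(a)}_{I_a}$ to reduce modulo the left ideal in the same way as in $S$. Write $x^{(a)}\hwp^{(a)}_{I_a}=\sum_J c_J\hwp^{(a)}_J+\sum_k u_k\psi_{ka}$, where the $c_J$ are the matrix coefficients of $x$ on $S$. Each term $u_k\psi_{ka}$, with the remaining blocks $\hwp^{(b)}_{I_b}$ ($b>a$) to its right, vanishes in $\howespin$: $\psi_{ka}$ anticommutes past those blocks up to a sign and then lies in the left ideal. Therefore only $\sum_J c_J\hwp^{(a)}_J$ survives, which is exactly $\Phi$ applied to $x$ acting on the $a$-th factor. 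Summing over $a$ gives $\son$-equivariance for the diagonal action.
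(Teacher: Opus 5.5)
Your proof is correct and matches the paper's approach. The paper states this lemma without proof and only records, in Notation \ref{not:psibasis}, the identification $\hwp_I\mapsto\hwp_{I_1}\otimes\cdots\otimes\hwp_{I_m}$, which is exactly the inverse of your $\Phi$. As your own computation shows, no Koszul sign is needed in $\Phi$, because evenness of $x^{(a)}$ lets it pass the blocks $b<a$; your remark about commuting with the blocks $b>a$ is never used and can be dropped.
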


\begin{notation}\label{not:psibasis}
    If $I\subset \{1, \dots, n\}\times \{1, \dots, m\}$, then write $I_a:=\{i \ | \ (i,a)\in I\}$, for $a=1, \dots, m$. Define elements in $\howespin$:
    \[
    \hwp_{I}:=\prod^{\rightarrow}_{i_1\in I_1}\psi_{i_11}^* \dots  \prod^{\rightarrow}_{i_m\in I_m}\psi_{i_mm}^*.
    \]
    The isomorphism $\howespin\cong S^{\otimes m}$ is such that $\hwp_{I}\mapsto \hwp_{I_1}\otimes \dots \otimes \hwp_{I_m}$.
\end{notation}

\begin{example}
    If $I=\{(1,1), (2,1), (4,1), (1,3),(3,3)\}\subset \{1,2,3,4\}\times \{1,2,3\}$, then $I_1=\{1,2,4\}$, $I_2= \emptyset$, and $I_3 = \{1,3\}$. Thus, $\hwp_I= \psi_{11}^*\psi_{21}^*\psi_{41}^*\psi_{13}^*\psi_{33}^*$.
\end{example}

An $O_m$-module can be reconstructed from the induced $\som$ action, together with the action of the diagonal element $e^{\pi{\bf i}E_{1,1}}={\rm diag}(-1,1,\dots)\in O_m$. Thus, to make the action of $O_m$ on $\howespin$ explicit, we will describe the action of $\som$ and the action of $e^{\pi{\bf i}E_{1,1}}$.

To describe the action of $\som$, consider the Clifford subalgebras of $Cl(2nm)$
\[
{}^{(i)}Cl_m:=\langle \psi_{ia}, \psi_{ia}^* \ | \ 1\le a \le m \rangle, \quad \text{for $1\le i\le n$}.
\]
For each $i$, there is a Lie subalgebra
\[
{}^{(i)}\mathfrak{so}_m \hookrightarrow {}^{(i)}Cl_m,
\]
such that the basis of $\som$ maps to $Cl(2nm)$ via 
\begin{equation}\label{eq:Babactionsom}
B_{ab}:=E_{ab}-E_{ba}\mapsto \frac{[\psi_{ia}^*,\psi_{ib}]}{2}-\frac{[\psi_{ib}^*,\psi_{ia}]}{2}  \stackrel{\eqref{eq:cliffrelns}}{=} \psi_{ia}^*\psi_{ib}-\psi_{ib}^*\psi_{ia}=:{}^{(i)}B_{ab}.
\end{equation}

\begin{lemma}\label{lemOmaction}
The action of $O_m$ on $\howespin$ is determined as follows. The action of $B_{ab}\in \mathfrak{so}_m$ on $\howespin\cong S^{\otimes m}$ is by $\sum_{i=1}^n{}^{(i)}B_{ab}$. The action of the diagonal element $e^{\pi{\bf i}E_{1,1}}\in O_m$ is given by
\[
e^{\pi{\bf i}E_{1,1}} \cdot \hwp_{I}=(-1)^{|I_1|}\hwp_I,
\]
where $I\subset \{1, \dots, n\}\times \{1, \dots, m\}$.
\end{lemma}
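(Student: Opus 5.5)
We need to establish that the assignments in Lemma \ref{lemOmaction} define an $O_m$-action on $\howespin$ commuting with the $\son$-action, and that this action is the standard one. The plan is to realize $O_m$ inside the Pin/Clifford picture. Since $\howespin$ is the unique irreducible $Cl(2nm)$-module, any automorphism of $Cl(2nm)$ is implemented on $\howespin$ by an operator determined up to scalar. The group $GL_m$, and in particular $O_m$, acts on the generators by $\psi_{ia}\mapsto \sum_b g_{ab}\psi_{ib}$ and $\psi^*_{ia}\mapsto \sum_b (g^{-T})_{ab}\psi^*_{ib}$. For $g\in O_m$ we have $g^{-T}=g$, and this substitution preserves the relations \eqref{eq:cliffrelns}, so it gives an algebra automorphism of $Cl(2nm)$. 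The first step is to record this and note that it commutes with the image of $\son$, since the elements $\sum_a \psi_{ia}\psi_{-j,a}$ are $O_m$-invariant contractions over the index $a$.

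The second step computes the differential. For $B_{ab}=E_{ab}-E_{ba}$, the induced derivation of $Cl(2nm)$ sends $\psi_{ic}\mapsto \delta_{ca}\psi_{ib}-\delta_{cb}\psi_{ia}$, and it acts the same way on the $\psi^*$'s. Such a derivation is inner, given by commutator with a quadratic element, and a direct check using \eqref{eq:cliffrelns} shows that this element is $\sum_i {}^{(i)}B_{ab}=\sum_i(\psi^*_{ia}\psi_{ib}-\psi^*_{ib}\psi_{ia})$. For example, $[\psi^*_{ia}\psi_{ib},\psi^*_{jc}]=\delta_{ij}\delta_{bc}\psi^*_{ia}$, and the remaining terms are handled similarly. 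This element kills the vacuum $1\in\howespin$: we have $\psi_{ib}\cdot 1=0$, and the normal ordering contributes no constant because $a\ne b$. The quadratic expression is therefore the correctly normalized action, and it integrates to $SO_m$. The normalization is fixed by requiring the vacuum to be invariant, or equivalently by the $\det$-twist convention.

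The third step treats the reflection $g=e^{\pi{\bf i}E_{1,1}}=\mathrm{diag}(-1,1,\dots,1)$. It acts on generators by $\psi_{i1}\mapsto-\psi_{i1}$ and $\psi^*_{i1}\mapsto -\psi^*_{i1}$, fixing all other generators. The operator $\hwp_I\mapsto(-1)^{|I_1|}\hwp_I$ implements this automorphism: it equals the parity operator of the subalgebra generated by the $\psi_{i1},\psi^*_{i1}$, and it anticommutes with exactly those generators. We normalize it to fix the vacuum. The main obstacle is to check that these choices glue into a genuine group action of $O_m$ rather than a projective or $\mathrm{Pin}$ action, and to fix the scalar ambiguity. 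I would handle this by working on the polynomial-like basis $\hwp_I$. The action of $g$ sends the vacuum to itself and intertwines $\psi^*$ by $g$, so $\howespin\cong\Lambda(\C^n\otimes\C^m)$ with $O_m$ acting via its defining action on $\C^m$. This gives an honest representation, possibly up to a $\det^{n/2}$-type twist that is absent with our normalization. Finally, $O_m$ is generated by $SO_m$ and $\mathrm{diag}(-1,1,\dots)$, which completes the determination.
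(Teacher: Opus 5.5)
The paper gives no proof of this lemma. It records the action as the standard $O_m$-action of classical $(\son,O_m)$ Howe duality, citing \cite{How95} and \cite{CW12} for Theorem \ref{Orthogonal_Howe_duality}, together with the remark that an $O_m$-module is determined by its $\som$-action and the action of $e^{\pi\imag E_{1,1}}$. Your proposal supplies a self-contained verification, and in substance it is correct:
\begin{itemize}
\item identifying $\howespin\cong\Lambda(\C^n\otimes\C^m)$, with $O_m$ acting through $\C^m$, gives an honest representation rather than a $\mathrm{Pin}$ one;
\item it commutes with $\son$, because the quadratic generators of the image of $\son$ are $O_m$-invariant contractions in the index $a$;
\item its differential is the inner derivation given by $\sum_i{}^{(i)}B_{ab}$, which kills the vacuum, so no additive constant appears (for $m\ge 3$ a constant is ruled out anyway since $[\som,\som]=\som$, while for $m=2$ your vacuum normalization is a genuine choice);
\item the reflection acts by $(-1)^{|I_1|}$.
\end{itemize}
The citation buys the paper the fact that this is the $O_m$ of Howe's theorem, so \eqref{eq:classicalmultfreedecomposition} applies. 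Your argument buys an explicit check of exactly the normalizations that Lemma \ref{lem:signforaap1} and Remark \ref{rem:stilde(1,2)} rely on.

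One convention slip needs fixing. With your substitution $\psi_{ia}\mapsto\sum_b g_{ab}\psi_{ib}$, the assignment $g\mapsto\phi_g$ is an anti-homomorphism: $\phi_g\phi_h=\phi_{hg}$. Moreover, the derivation you write, $\psi^*_{jc}\mapsto\delta_{ca}\psi^*_{jb}-\delta_{cb}\psi^*_{ja}$, equals $-\mathrm{ad}\big(\sum_i{}^{(i)}B_{ab}\big)$, not $+\mathrm{ad}$. Your own identity $[\psi^*_{ia}\psi_{ib},\psi^*_{jc}]=\delta_{ij}\delta_{bc}\psi^*_{ia}$ shows that $\mathrm{ad}$ sends $\psi^*_{jb}\mapsto\psi^*_{ja}$, whereas your derivation sends it to $-\psi^*_{ja}$. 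Use instead $\psi^*_{ia}\mapsto\sum_b g_{ba}\psi^*_{ib}$ (i.e.\ $\psi^*_i\otimes e_a\mapsto\psi^*_i\otimes ge_a$) and $\psi_{ia}\mapsto\sum_b (g^{-1})_{ab}\psi_{ib}$. Then $g\mapsto\phi_g$ is a homomorphism, and its differential is exactly $\mathrm{ad}\big(\sum_i{}^{(i)}B_{ab}\big)$ on both $\psi$ and $\psi^*$. This also matches the paper's computation ${}^{(i)}b\cdot\psi^*_{i1}=\psi^*_{i2}$ for $b=E_{21}-E_{12}$ in Appendix \ref{proof3}.
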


The following is a classical result, see \cite[Section 4.3.5]{How95} and \cite[Proposition 5.36]{CW12}.

\begin{theorem}\label{Orthogonal_Howe_duality}
The actions of $\son$ and $O_m$ on $\howespin$ commute and generate each other's centralizers. Under the action of the pair $(\son,O_m)$, the module $\howespin$ has the following multiplicity-free decomposition
\begin{equation}\label{eq:classicalmultfreedecomposition}
    \howespin\cong \bigoplus_{\substack{\lambda=(\lambda_1,\dots, \lambda_m) \\ \lambda^{\prime}_1+\lambda^{\prime}_2\le m, \ l(\lambda^{\prime})\le n}} V^{\son}(\lambda^{\top})\ot V^{O_m}(\lambda),
\end{equation}
where 
\begin{itemize}
    \item $\lambda$ is a partition (of length not greater than $m$),
    \item $\lambda'$ is the transposed partition,
    \item $l(-)$ is the length of a partition,
    \item $\lambda^\top :=\sum_{i=1}^{n}(\frac{m}{2}-\lambda'_{n-i+1})\epsilon_i$ is an $\son$ weight (note that $\lambda^{\top}_1\ge \dots \ge \lambda_{n-1}^{\top}\ge |\lambda_n^{\top}|$),
    \item $V^{\son}(\lambda^{\top})$ is the (unique) irreducible $\son$-representation of highest weight $\lambda^\top$, and
    \item $V^{O_m}(\lambda)$ is the (unique) irreducible $O_m$-representation labelled by $\lambda$ (see \cite[Section 19.5]{FultonHarris}).
\end{itemize}
\end{theorem}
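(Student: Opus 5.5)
The plan is to reduce to the standard structure theory of skew (fermionic) Howe duality: first show commutation, then show the two actions generate mutual commutants, and finally identify the isotypic components by highest weight vectors.

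\emph{Commutation.} The $\son$ action is $x^{\langle n\rangle}=\sum_a x^{(a)}$, built from the quadratic elements $\sum_a \psi_{ia}\psi_{-j,a}$. The $\som$ action is $\sum_i {}^{(i)}B_{ab}$, built from quadratics $\psi^*_{ia}\psi_{ib}-\psi^*_{ib}\psi_{ia}$. Using \eqref{eq:cliffrelns}, the bracket of any two such quadratics is again a linear combination of quadratics. So the commutation $[x^{\langle n\rangle},\sum_i{}^{(i)}B_{ab}]=0$ reduces to a short index computation. In this computation the $\delta_{ab}$ terms from summing over $a$ cancel against those from the antisymmetrization in $(a,b)$, and similarly for $i,j$. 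One also checks the pairings $\psi_{ia}\psi_{jb}$ and $\psi^*_{ia}\psi^*_{jb}$ corresponding to the roots $\pm(\epsilon_i+\epsilon_j)$. For the reflection $e^{\pi\imag E_{1,1}}$, Lemma \ref{lemOmaction} shows it acts by the parity of $|I_1|$. Every $x^{\langle n\rangle}$ changes $|I_a|$ only by an even total amount in each slot, since $x^{(1)}$ is even in $Cl_n^{(1)}$. Hence the reflection commutes with $\son$.

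\emph{Double commutant.} Here I would invoke the classical framework of Howe \cite{How95} (equivalently \cite[Proposition 5.36]{CW12}). The quadratic span of $Cl(2nm)$ is isomorphic to $\mathfrak{so}_{2nm}$, and $\howespin$ is its irreducible spin module. The subalgebras $\son$ and $\som$ form a reductive dual pair $(\son,O_m)$ in $O_{2nm}$, with $\C^{2nm}\cong\C^{2n}\otimes\C^m$. Howe's theorem for fermionic Fock modules then gives that the associative algebras generated by $\son$ and by $O_m$ are mutual commutants in $\End(\howespin)$. Rather than reprove this, I would cite it. The one point to check is that we must use the full $O_m$ rather than $SO_m$ (equivalently, $Pin$ and its double cover issues). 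This is why the reflection is included: for even $m$, $\som$ alone does not separate the associate pairs $\lambda$, $\lambda^*$.

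\emph{Decomposition.} The algebra $\End(\howespin)$ is semisimple and the commutants are mutual. By the double commutant theorem, $\howespin$ is therefore multiplicity free as a $\son\times O_m$ module, and the $O_m$ label is determined by the $\son$ highest weight. To match labels, I would compute $\son$ highest weight vectors. These are joint $\borel_{\son}$-singular vectors, built as products of determinant-like expressions in $\psi^*_{ia}$ that are antisymmetrized over rows of $\lambda'$. The $\son$ weight of such a vector is read off from the weight formula for $\hwp_I$. Each slot contributes $\tfrac12\sum\epsilon_i$ minus the occupied $\epsilon_i$, which yields $\sum_i(\tfrac m2-\lambda'_{n-i+1})\epsilon_i=\lambda^\top$ after reversing index order. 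On the $O_m$ side, the span of these vectors under $O_m$ is the irreducible $V^{O_m}(\lambda)$ in the sense of \cite[Section 19.5]{FultonHarris}; this is because it is generated by a $\glm$-type highest weight of shape $\lambda$ restricted suitably. The constraint $\lambda'_1+\lambda'_2\le m$ is exactly the condition for $V^{O_m}(\lambda)$ to be nonzero. The constraint $l(\lambda')\le n$ comes from having only $n$ fermion colors.

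\emph{Main obstacle.} The genuinely delicate step is this last matching of labels, including the sign of $\lambda^\top_n$ and the associate-pair bookkeeping for $O_m$. I would handle it by comparing with the branching from $(\gln,\glm)$ skew Howe duality on $\Lambda(\C^n\otimes\C^m)\cong\howespin$, as in \cite{CW12}, and then restricting. That argument is standard, so in the paper it suffices to cite \cite{How95,CW12}.
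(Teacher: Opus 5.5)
Your plan ends where the paper does: the paper gives no argument of its own for this theorem and only cites \cite[Section 4.3.5]{How95} and \cite[Proposition 5.36]{CW12}, so your citation-based proof is the same approach. If you ever spell out your sketch, note three corrections: $\howespin$ is irreducible over $Cl(2nm)$ but splits into two half-spin modules over $\mathfrak{so}_{2nm}$; the $\son$-singular vectors are in general not monomial or determinantal in the $\psi^*_{ia}$, because they must also be killed by the $\epsilon_i+\epsilon_j$ root vectors $\sum_a\psi_{ia}\psi_{ja}$ (compare the non-monomial vectors $\hwp_k^{++}$ in Proposition \ref{high_vect}); and the reflection is needed for odd $m$ as well, because $\lambda$ and its associate $\lambda^*$ both occur in $\howespin$ and always have isomorphic restrictions to $SO_m$.
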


In particular, the $O_m$ action on $\howespin$ generates $\End_{\son}(\howespin)$ and $V^{O_m}(\lambda)\cong \Hom_{\son}(V^{\son}(\lambda^{\top}), \howespin)$.

\begin{notation}
    Write $\flip_{a,a+1}\in \End_{\son}(\howespin)$ to denote the endomorphism corresponding, under $\howespin\cong S^{\otimes m}$, to $\id_{S^{\otimes a-1}}\otimes \flip \otimes \id_{S^{\otimes m-(a+1)}}$, where $\flip(v\otimes w) = w\otimes v$. 
\end{notation}

We will describe the endomorphism $\flip_{a,a+1}$ in terms of the $O_m$ action on $\howespin$. Note that the permutation matrices in $GL_m$ are elements of $O_m$. Write $(a,a+1)\in O_m\subset GL_m$ to denote the permutation matrix corresponding to a simple transposition.

\begin{lemma}\label{lem:signforaap1}
    The action of permutation matrices $ S_m\subset O_m$ on $\howespin$ agrees, up to sign, with the action of permutation of tensor factors on $\howespin\cong S^{\otimes m}$. Namely, 
    \[
    (a,a+1)\cdot \hwp_I=(-1)^{\#I_a\#I_{a+1}}\flip_{a,a+1}\cdot\hwp_I.
    \]
\end{lemma}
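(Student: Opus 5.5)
The proof is a direct computation on the basis $\{\hwp_I\}$ of $\howespin$, comparing the two operators term by term. We first make precise how the permutation matrix $(a,a+1)\in O_m$ acts. Since $O_m$ acts through $Cl(2nm)$ by conjugation-type (spinor) operators, the natural model is that $g\in O_m$ acts on the generators by $\psi_{ib}^*\mapsto \sum_c g_{cb}\psi_{ic}^*$ and $\psi_{ib}\mapsto\sum_c g_{cb}\psi_{ic}$, which preserves the relations \eqref{eq:cliffrelns}. It also fixes the vacuum vector $\hwp_\emptyset$, since it preserves the left ideal generated by the $\psi_{ib}$. This model is consistent with Lemma \ref{lemOmaction}: differentiating it gives \eqref{eq:Babactionsom}, and for $g=e^{\pi\imag E_{1,1}}$ it gives exactly the sign $(-1)^{|I_1|}$. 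Hence, as $O_m$ is generated by $SO_m$ together with that diagonal element, this is the action of Theorem \ref{Orthogonal_Howe_duality}.

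One caveat: a priori the action of $(a,a+1)$ could differ from this model by a global scalar. We pin this scalar down using the vacuum $\hwp_\emptyset$, on which both sides of the claimed identity act trivially. Concretely, $(a,a+1)$ lies in a copy of $O_2$, and we can write it as $e^{\pi\imag E_{1,1}}$ conjugated by an element of $SO_m$. That element is of the form $\exp(tB_{ab})$, and it visibly fixes $\hwp_\emptyset$, because ${}^{(i)}B_{ab}$ kills the vacuum.

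With the model in place, $(a,a+1)$ acts on $\hwp_I$ by swapping the column labels $a\leftrightarrow a+1$ in every $\psi^*_{ic}$ factor. This gives
\[
(a,a+1)\cdot\hwp_I=\prod_{i\in I_1}\psi^*_{i1}\cdots\prod_{i\in I_a}\psi^*_{i,a+1}\prod_{i\in I_{a+1}}\psi^*_{i,a}\cdots\prod_{i\in I_m}\psi^*_{im}.
\]
To return this to normal order, we move the block of $\#I_{a+1}$ anticommuting factors carrying label $a$ past the block of $\#I_a$ factors carrying label $a+1$. The internal order of each block is unchanged, so this costs exactly $(-1)^{\#I_a\#I_{a+1}}$, and the result is $\hwp_{I'}$, where $I'_a=I_{a+1}$ and $I'_{a+1}=I_a$.

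On the other side, Notation \ref{not:psibasis} identifies $\hwp_{I'}$ with $\flip_{a,a+1}(\hwp_{I_1}\otimes\cdots\otimes\hwp_{I_m})$. Comparing the two computations yields the stated formula. Since $\#I_a\#I_{a+1}$ is even unless both blocks are odd, the two operators indeed agree up to a sign that depends only on parities.

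The main obstacle is justifying that the $O_m$ action really is the substitution action above, and not a twist of it by a character or sign. The $\som$ part and the element $e^{\pi\imag E_{1,1}}$ are pinned down by Lemma \ref{lemOmaction}. I would first verify directly that the substitution formula reproduces both of these, and then appeal to $O_m=SO_m\rtimes\langle e^{\pi\imag E_{1,1}}\rangle$ with $SO_m$ connected, so the action is determined by that data. The rest is bookkeeping of anticommutation signs.
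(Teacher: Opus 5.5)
Your proof is correct, but it is organized differently from the paper's. The paper never writes down a global formula for the $O_m$-action. It reduces to $m=2$ and factors $(1,2)=e^{\pi\imag E_{1,1}}e^{\frac{\pi}{2}b}$ with $b=E_{21}-E_{12}$. It then computes $e^{\frac{\pi}{2}b}=\prod_i e^{\frac{\pi}{2}{}^{(i)}b}$ by hand on the monomials $1,\psi^*_{i1},\psi^*_{i2},\psi^*_{i1}\psi^*_{i2}$. This gives $e^{\frac{\pi}{2}b}\hwp_I=(-1)^{\#I_2}(-1)^{\#I_1\#I_2}\flip_{1,2}\hwp_I$, and the extra $(-1)^{\#I_2}$ is cancelled by the reflection $e^{\pi\imag E_{1,1}}$.

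You instead identify the whole $O_m$-action with the one induced by the Clifford automorphisms $\psi_{ib}\mapsto\sum_c g_{cb}\psi_{ic}$, $\psi^*_{ib}\mapsto\sum_c g_{cb}\psi^*_{ic}$, and check it against Lemma \ref{lemOmaction}. For the $\som$ part, the step you leave implicit is that the differentiated automorphism is the inner derivation $[\sum_i{}^{(i)}B_{ab},-]$. Since ${}^{(i)}B_{ab}$ kills the vacuum, the induced operator on $\howespin$ is then left multiplication by $\sum_i{}^{(i)}B_{ab}$. After that identification, a permutation matrix just relabels columns, and the only sign comes from reordering the two blocks.

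The paper's per-$i$ computation is exactly your substitution formula specialized to the rotation by $\pi/2$, so the two arguments share the same core. Your version gives a uniform description for every $g\in O_m$ and every $a$ at once. It needs no reduction to $m=2$ and no knowledge of how $e^{\pi\imag E_{a,a}}$ acts for $a>1$. The cost is the identification step, whereas the paper stays entirely within the data of Lemma \ref{lemOmaction}.

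Two small remarks. First, your ``global scalar'' paragraph is superfluous: two $O_m$-actions that agree on the connected group $SO_m$ and on $e^{\pi\imag E_{1,1}}$ agree everywhere. Second, for $a>1$ the element of $SO_m$ conjugating $e^{\pi\imag E_{1,1}}$ to $(a,a+1)$ must send $e_1$ to $\pm(e_a-e_{a+1})/\sqrt{2}$. It is therefore $\exp(tX)$ for some $X\in\som$, not a single $\exp(tB_{ab})$. This is harmless, because every element of $\som$ kills the vacuum.
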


\begin{proof}
   See Appendix \ref{proof3}.
\end{proof}

\begin{remark}\label{rem:stilde(1,2)}
    The matrix $(a,a+1)$ is not an element of $SO_m$, while the element $\tilde{s}_a\in SO_m$ from Lemma \ref{lift_of_braid_gp_gens} is. However, the difference between $(a,a+1)$ and $\tilde{s}_a$ is easily expressed by the equalities 
    \[
    \tilde{s}_a = (a,a+1)e^{\pi{\bf i} E_{a+1,a+1}} =e^{\pi{\bf i} E_{a,a}}(a,a+1).
    \]
    This difference also appears in $(GL_n,GL_m)$ duality, when passing to $SL_m$, see \cite[Corollary 3.6]{Valerio-qWeyl}. Note that $e^{\pi{\bf i} E_{a+1,a+1}}\hwp_I = (-1)^{\# I_{a+1}}\hwp_I$, so $\tilde{s}_a\cdot \hwp_I = (-1)^{\#I_{a+1}}(-1)^{\#I_a\#I_{a+1}}\flip_{a,a+1}\hwp_I$.
\end{remark}

\subsection{Relating connections under classical $(\son, O_m)$ Howe duality}

Write $\epsilon_a\in \mathfrak{h}_{\glm}^*$, for $\mathfrak{h}_{\glm}\subset \glm$, to denote the functional such that $\epsilon_a(E_{bb})=\delta_{ab}$. Using that $Conf_m(\C) \subset \C^m$ and $\mathfrak{h}_{\glm}^{reg}\subset \mathfrak{h}_{\glm}$, we identify
\begin{equation}\label{eq:identityconfhreg}
Conf_m(\C)\rightarrow \mathfrak{h}_{\glm}^{reg}, \quad (z_1, \dots, z_m) \mapsto \mathrm{diag} (z_1, \dots, z_m).
\end{equation}
Under \eqref{eq:identityconfhreg}, the symmetric group action on $Conf_m(\C)$ identifies with the Weyl group action on $\mathfrak{h}_{\glm}^{reg}$, see Remark \ref{rem:Wactiononh-permutescoords}.

Consider the trivial bundle with fiber $\howespin$ over $Conf_m(\C)$. If we treat $\howespin$ as an $\son$-module as in Lemma \ref{lem:Sisso2nmodule}, then $\howespin\cong S^{\ot m}$, so we have the KZ connection on $Conf_m(\C)\times \howespin$:
\begin{equation}\label{KZ_so2n}
    \nabla_{KZ,\son}^{h,\langle n\rangle}= d-h\sum_{1\le a< b \le m} \frac{dz_a -dz_b}{z_a-z_b}2\cdot \Omega_{ab}^{\langle n \rangle}, \quad \Omega_{ab}^{\langle n \rangle}:=\sum (M_{ij})^{(a)}\ot (M^{ij})^{(b)}.
\end{equation}
In the equality defining $\Omega_{ab}^{\langle n\rangle}$, the sum is over the basis $M_{ij}$ of $\son$, see \eqref{eq:basisso2ncartan} and \eqref{eq:basisso2nroots}, where the dual basis elements $M^{ij}$ are as in Remark \ref{rem:dualmij}. The action of $M^{(c)}_{ij}$ on $\howespin\cong S^{\otimes m}$ is by $\psi_{ic}\psi_{-jc} - \frac{1}{2}\delta_{i,j}$, where  if $j<0$, then we interpret $\psi_{-jc}:=\psi_{jc}^*$, see Lemma \ref{lem:so2ntocliff}.

\begin{remark}
    Here, we choose to work with the split Casimir operator $2\cdot \Omega_{ab}^{\langle n\rangle}\in U(\son)\otimes U(\son)$ so that the corresponding Casimir operator in $U(\son)$ acts by $(\lambda,\lambda+2\rho)$ on the highest weight representation $V^{\son}(\lambda)$ (where $(\epsilon_i,\epsilon_j)=\delta_{ij}$ and $\rho=(n-1)\cdot\epsilon_1+\dots+1\cdot\epsilon_{n-1}+0\cdot \epsilon_n$). For example, $2\cdot\sum M_{ij} M^{ij}$ acts by $2n-1$ on the tautological representation $\C^{2n}$.
\end{remark}

On the other hand, if we treat $\howespin$ as an $\mathfrak{so}_m$ module as in Lemma \ref{lemOmaction}, and use the identification in \eqref{eq:identityconfhreg}, then we may also define the boundary Casimir connection on $Conf_m(\C)\times \howespin$:
\begin{equation}\label{bCas_extended_som}
    {}^{\langle m\rangle}\nabla_{bCas,\mathfrak{so}_m\subset \mathfrak{gl}_m}^{h}= d-h\sum_{1\le a< b \le m} \frac{dz_a-dz_b}{z_a-z_b}{}^{\langle m\rangle}B^2_{ab}, \quad {}^{\langle m\rangle}B^2_{ab}:=\left(\sum_{i=1}^n{}^{(i)}B_{ab}\right)^2.
\end{equation}
The action of ${}^{(i)}B_{ab}$ on $\howespin$ is given by $\psi_{ia}^*\psi_{ib}-\psi_{ib}^*\psi_{ia}$, see \eqref{eq:Babactionsom}.

\begin{proposition}[{\cite[Theorem 10.9]{MR4983792}}]\label{Gauge_equiv}
The KZ connection and the boundary Casimir connection are related on $\howespin$ as follows:
\begin{equation}\label{KZ-bCas}
    \nabla_{KZ,\son}^{h, \langle n \rangle}+\frac{nh}{4}\sum_{1\le a<b\le m}\frac{dz_a-dz_b}{z_a-z_b}={}^{\langle m\rangle}\nabla_{bCas,\mathfrak{so}_m\subset \mathfrak{gl}_m}^{h/2}.
\end{equation}
\end{proposition}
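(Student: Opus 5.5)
The statement is an identity of connection one-forms on the same trivial bundle $Conf_m(\C)\times\howespin$. Comparing the coefficients of each $\frac{dz_a-dz_b}{z_a-z_b}$, it reduces to the following operator identity on $\howespin$, one for each pair $1\le a<b\le m$:
\begin{equation*}
2\cdot\Omega^{\langle n\rangle}_{ab}-\frac{n}{4}\,\id=\frac12\,{}^{\langle m\rangle}B^2_{ab},
\qquad\text{equivalently}\qquad
4\,\Omega^{\langle n\rangle}_{ab}-\frac{n}{2}={}^{\langle m\rangle}B_{ab}^2 .
\end{equation*}
I plan to prove this identity by a direct computation inside $Cl(2nm)$. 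Both sides are then explicit polynomials in the generators $\psi_{ia},\psi_{ia}^*,\psi_{ib},\psi_{ib}^*$, and they only involve the Clifford subalgebra generated by the $a$-th and $b$-th columns. So it suffices to prove an identity in that subalgebra, which is a copy of $Cl(4n)$, and the remaining tensor factors play no role.

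\emph{Left-hand side.} I will write $\Omega^{\langle n\rangle}_{ab}$ using the spanning set $\{M_{ij}\}$. Since $M_{ij}=-M_{-j,-i}$ and $M^{ij}=\frac12 M_{ji}$ (Remark \ref{rem:dualmij}), summing over all $i,j\in\{\pm1,\dots,\pm n\}$ counts each basis element twice. This gives
\[
\Omega_{ab}^{\langle n\rangle}=\frac14\sum_{i,j\in\{\pm1,\dots,\pm n\}}\Big(\psi_{ia}\psi_{-ja}-\tfrac12\delta_{ij}\Big)\Big(\psi_{jb}\psi_{-ib}-\tfrac12\delta_{ij}\Big),
\]
using Lemma \ref{lem:so2ntocliff}. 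The $\delta$-terms contribute $\sum_i\psi_{ia}\psi_{-ia}$-type sums. Pairing $i$ with $-i$ and using $\psi_{ia}\psi^*_{ia}+\psi^*_{ia}\psi_{ia}=1$, these collapse to constants. I expect the quadratic parts to cancel, precisely because each column factor $S$ is self-dual and the sum is symmetric in $\pm i$. What remains is the quartic sum $\sum_{i,j}\psi_{ia}\psi_{-ja}\psi_{jb}\psi_{-ib}$ plus a constant.

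\emph{Right-hand side.} I will expand
\[
{}^{\langle m\rangle}B_{ab}^2=\sum_{i,j=1}^n(\psi^*_{ia}\psi_{ib}-\psi^*_{ib}\psi_{ia})(\psi^*_{ja}\psi_{jb}-\psi^*_{jb}\psi_{ja}).
\]
I then reorder every quartic monomial, using \eqref{eq:cliffrelns}, into the same normal form as the quartic monomials on the left: grouping an $a$-pair and a $b$-pair as $(\psi_{\pm i a}\psi_{\pm j a})(\psi_{\pm j b}\psi_{\pm i b})$. Each reordering swaps anticommuting generators, and whenever two equal-index generators are swapped it produces a quadratic correction term. These corrections come from the terms with $i=j$, and from the $\delta_{ij}\delta_{ab}$ contractions, which here only occur for equal column index. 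The four sign patterns in $\pm i,\pm j$ on the left should match the four products on the right, giving the factor $4$. After that, the quadratic corrections are number operators $N_a=\sum_i\psi^*_{ia}\psi_{ia}$ and $N_b$. They must combine into $-\frac n2$ plus a multiple of $(N_a+N_b)-(N_a+N_b)$, that is, they must cancel.

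The main obstacle is this bookkeeping of signs and constants, and I expect the constant $-\frac n2$ to be the error-prone part. As an independent check I will use a Casimir argument. On $S\otimes S$ one has $2\Omega=\frac12\big(C_{S\otimes S}-2C_S\big)$, with $C_S$ acting by $(\varpi_n,\varpi_n+2\rho)=\frac{n(2n-1)}{4}$. I will evaluate both sides on the highest weight vector $\hwp_\emptyset\otimes\hwp_\emptyset$ of $V_{2\varpi_n}$, where $B_{ab}$ acts by $0$ because no $\psi^*$ appears. This forces $4\Omega=(2\varpi_n,2\varpi_n+2\rho)-2\cdot\frac{n(2n-1)}{4}=\frac n2$, so $4\Omega-\frac n2=0=B^2_{ab}$ there, which confirms the constant. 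Finally, as a further consistency check against Proposition \ref{FH_decomposition}, I will evaluate both sides on the vector $\hwp_{\{n\}}\otimes\hwp_\emptyset-\hwp_\emptyset\otimes\hwp_{\{n\}}$ and compare.
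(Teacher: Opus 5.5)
Your proposal is correct and follows the same route as the paper. You reduce to the operator identity $4\Omega^{\langle n\rangle}_{ab}-\frac n2={}^{\langle m\rangle}B^2_{ab}$ and verify it by expanding in $Cl(2nm)$; your Casimir evaluation on $\hwp_\emptyset\otimes\hwp_\emptyset$ is a valid extra check of the constant. The bookkeeping collapses if you write ${}^{\langle m\rangle}B_{ab}=\sum_{k\in\{\pm1,\dots,\pm n\}}\psi_{ka}\psi_{-kb}$: a single reordering of $\psi_{jb}\psi_{-ib}$ then gives $\sum_{i,j}\psi_{ia}\psi_{-ja}\psi_{jb}\psi_{-ib}={}^{\langle m\rangle}B_{ab}^2+n$, hence $4\Omega^{\langle n\rangle}_{ab}={}^{\langle m\rangle}B^2_{ab}+\frac n2$.
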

\begin{proof}[Proof sketch]
    For $1\le a< b\le m$, compare $\Omega_{ab}^{\langle n\rangle}$ to ${}^{\langle m\rangle}B_{ab}^2$ as endomorphisms of $\howespin$, i.e., compare
    \[
    \sum (\psi_{ia}\psi_{-ja} - \frac{1}{2}\delta_{ij})(\psi_{jb}\psi_{-ib} - \frac{1}{2}\delta_{ji}) \quad \text{to} \quad \frac{1}{2}\left(\sum_{i=1}^n\psi_{ia}^*\psi_{ib}-\psi_{ib}^*\psi_{ia}\right)^2.
    \]
    Expanding and using Clifford relations, we find the operator $2\cdot \Omega_{ab}^{\langle n \rangle} - \frac{1}{2}{}^{\langle m\rangle}B_{ab}^2$ acts on $\howespin$ by $\frac{n}{4}\cdot \id_{\howespin}$.
\end{proof}

\begin{lemma}
    Let $M\cong V^{O(m)}(\lambda)$ be an irreducible $O(m)$ module appearing in the decomposition of $\howespin$ from Theorem \ref{Orthogonal_Howe_duality}. There is an embedding $M\hookrightarrow \mathbb{S}$ such that both the KZ connection and the boundary Casimir connection admit fiber-wise (equivariant) restrictions to $M$.
\end{lemma}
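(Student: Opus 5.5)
The embedding comes directly from Theorem \ref{Orthogonal_Howe_duality}. Let $M\cong V^{O_m}(\lambda)$ appear in \eqref{eq:classicalmultfreedecomposition}, and set $\mu:=\lambda^{\top}$. Then $M\cong \Hom_{\son}(V^{\son}(\mu),\howespin)$. The space $\howespin^{\mathfrak{n}_+}[\mu]$ of $\son$-highest weight vectors of weight $\mu$ in $\howespin$ is canonically isomorphic to this Hom space, via evaluation at a fixed highest weight vector of $V^{\son}(\mu)$. Since the $O_m$-action commutes with $\son$, this subspace is $O_m$-stable, and by multiplicity-freeness it is isomorphic to $M$ as an $O_m$-module. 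We take $M\hookrightarrow \howespin$ to be the inclusion of $\howespin^{\mathfrak{n}_+}[\mu]$. The same argument would work with $M\otimes V^{\son}(\mu)$, the full isotypic component; the highest weight space is more convenient because $M$ itself is then the fiber.

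We next check that the KZ connection restricts to this fiber. Each $\Omega_{ab}^{\langle n\rangle}$ acts on $S^{\otimes m}$ as an $\son$-equivariant endomorphism, because the split Casimir commutes with the diagonal action. By Theorem \ref{Orthogonal_Howe_duality}, such an endomorphism lies in the image of the $O_m$-action, so in particular it preserves every $\son$-isotypic component and the highest weight vectors of weight $\mu$. Hence every coefficient of the one-form $\nabla_{KZ,\son}^{h,\langle n\rangle}$ in \eqref{KZ_so2n} preserves $M$, and the connection restricts to the trivial bundle $Conf_m(\C)\times M$.

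For the boundary Casimir connection, ${}^{\langle m\rangle}B_{ab}^2$ is the square of an element of $\som$ acting through $O_m$, so it preserves $M$ automatically. Alternatively, Proposition \ref{Gauge_equiv} already expresses the boundary Casimir form as the KZ form plus a scalar one-form, which also preserves $M$. The two restrictions are therefore related on $M$ by the same identity \eqref{KZ-bCas}.

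The remaining point, which I expect to be the main subtlety, is equivariance with respect to $S_m$, respectively $\Br_{S_m}$. For KZ, the descent uses the flips $\flip_{a,a+1}$. For the boundary Casimir, it uses $\sigma(\sigma_a)=\tilde{s}_a\in SO_m\subset O_m$. The action of $\tilde{s}_a$ preserves $M$ because it comes from $O_m$. For the flips, Lemma \ref{lem:signforaap1} and Remark \ref{rem:stilde(1,2)} show that $\flip_{a,a+1}$ differs from $\tilde{s}_a$ by a sign operator $\hwp_I\mapsto \pm\hwp_I$. That sign depends only on $\#I_a$ and $\#I_{a+1}$, i.e. on the parities of the $a$-th and $(a+1)$-th tensor factors, so it is a polynomial in the operators $e^{\pi\imag E_{c,c}}\in O_m$ (equivalently, of $S^\pm$-gradings of the factors). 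More simply, $\flip_{a,a+1}$ is itself $\son$-equivariant and hence preserves $M$ by the double centralizer statement. Thus both group actions preserve $M$, and the restricted connections descend to $Conf_m(\C)\times_{S_m}M$, resp. are $\Br_{S_m}$-equivariant in the sense of Proposition \ref{prop:iCasconnectiondescends}.
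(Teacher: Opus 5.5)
Your proposal is correct and is essentially the paper's argument. Your space $\howespin^{\mathfrak{n}_+}[\lambda^{\top}]$ is the image $p\howespin$ of the paper's projector $p\in U(\son)$ when the nonzero vector of $V^{\son}(\lambda^{\top})$ is taken to be the highest weight vector, and the KZ, boundary Casimir, and $\tilde{s}_a$ operators preserve it because the $\son$ and $O_m$ actions commute. Your observation that $\flip_{a,a+1}$ is itself $\son$-equivariant handles the flips a little more directly than the paper, which routes this step through Lemma \ref{lem:signforaap1} and the sign operators $e^{\pi\imag E_{c,c}}$.
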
 

\begin{proof}
    If $M\hookrightarrow \mathbb{S}$ is an $\som$-equivariant embedding, then the equality \eqref{KZ-bCas} shows the operators $\Omega_{ab}^{\langle n \rangle}$ and ${}^{\langle m\rangle}B^2_{ab}$ naturally act on $M$, and Lemma \ref{lift_of_braid_gp_gens} shows that the operator $\widetilde{s}_a$ naturally acts on $M$.

It remains to ensure that the operators $\flip_{a,a+1}\in \End(\howespin)$ act on $M$, via $M\hookrightarrow \howespin$, as well. By Theorem \ref{Orthogonal_Howe_duality}, we have a surjection $U(\son)\rightarrow \End_{O(m)}(\howespin)$, and the $M$-isotypic component of $\howespin$ is identified with $V^{\son}(\lambda^{\top})\cong \Hom_{O(m)}(M, \howespin)$. Let us choose an element $p\in U(\son)$ which projects onto a non-zero vector from $V^{\son}(\lambda^{\top})$ in the isotypic component corresponding to $M$ and acts by $0$ on any other isotypic component appearing in the decomposition of $\mathbb{S}$. Since $p\in U(\son)$, it follows from Theorem \ref{Orthogonal_Howe_duality} that the element $e^{{\rm i}\pi E_{1,1}}\in O(m)$ commutes with $p$. By Lemma \ref{lem:signforaap1}, the action by the operators $\flip_{a,a+1}\in \End(\howespin)$ restricts to $M$ along the embedding $M\hookrightarrow \mathbb{S}$ via $p$.
\end{proof}

Observe that we have an $S_m$-equivariant map
\[
\pi_{S_m}: Conf_m(\C)\twoheadrightarrow UConf_m(\C).
\]
Since the KZ connection is $S_m$-equivariant as well, its pushforward $\pi_{S_m*}(\nabla_{KZ,\son}^{h,\langle n\rangle})$ is well-defined. Furthermore, by considering the decomposition $\mathfrak{h}_{\glm}=\mathfrak{h}_{\slm}\oplus \C(1,\dots,1)$ and an $S_m$-equivariant identification $Conf_m(\C) \cong \mathfrak{h}_{\glm}^{reg}$ we can see that we have an $S_m$-equivariant sequence of isomorphisms
\[
UConf_m(\C)\cong \mathfrak{h}_{\glm}^{reg}/S_m = \mathfrak{h}_{\slm}^{reg}/ S_m \times \C(1,\dots, 1).
\]
Note that the KZ connection is translation-invariant with respect to the direction $(1,\dots,1)$. It follows that $\pi_{S_m*}(\nabla_{KZ,\son}^{h,\langle n\rangle})$ is trivial along the same direction on $\mathfrak{h}_{\slm}^{reg}/ S_m \times \C(1,\dots, 1)$, and the latter space deformation retracts onto the former. Therefore, if we consider the embedding of the $0$ fiber $\mathfrak{h}_{\slm}^{reg}/S_m$ under the map
\[
i_0:\mathfrak{h}_{\slm}^{reg}/S_m \hookrightarrow \mathfrak{h}_{\slm}^{reg}/S_m \times \C(1,\dots, 1),
\]
we can work with the flat connection $i_0^*\pi_{S_m*}(\nabla_{KZ,\son}^{h,\langle n\rangle})$ with logarithmic singularities at $\alpha\in \Phi_+$ (for the corresponding root system of $\slm$), and the monodromy representations for both connections are the same.

Now we note that $P:=\prod_{1\le a<b\le m}(z_a-z_b)^2$ is a well-defined regular function on $\mathfrak{h}_{\slm}^{reg}/S_m$. From Proposition \ref{Gauge_equiv} we have the following.

\begin{proposition}\label{KZ_bCas_flat_sections}
    A map $f:\mathfrak{h}_{\slm}^{reg}/S_m \rightarrow \mathbb{S}$ is a flat section of the KZ connection $i_0^*\pi_{S_m*}(\nabla_{KZ,\son}^{h,\langle n\rangle})$ if and only if $g=f\cdot P^{-\frac{nh}{8}}$ is a flat section of $\nabla_{bCas,\som\subset\slm}^{h/2}$.
\end{proposition}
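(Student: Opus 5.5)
The plan is a direct gauge computation based on Proposition \ref{Gauge_equiv}. Write the KZ connection restricted to the zero fiber as $d - A$ and the boundary Casimir connection as $d - A'$. Here
\[
A = h\sum_{a<b}\frac{dz_a-dz_b}{z_a-z_b}\,2\,\Omega_{ab}^{\langle n\rangle}
\quad\text{and}\quad
A' = \frac{h}{2}\sum_{a<b}\frac{dz_a-dz_b}{z_a-z_b}\,{}^{\langle m\rangle}B^2_{ab},
\]
both viewed as $\End(\howespin)$-valued one-forms. Equation \eqref{KZ-bCas} says exactly that
\[
A' = A - \frac{nh}{4}\sum_{a<b}\frac{dz_a-dz_b}{z_a-z_b}\,\id_{\howespin}.
\]
The difference is a scalar one-form, so the two connections differ by a rank-one (scalar) gauge term. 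The first step is to identify this scalar form as a logarithmic derivative. Since $d\log P = 2\sum_{a<b}\frac{dz_a-dz_b}{z_a-z_b}$, we get
\[
\frac{nh}{4}\sum_{a<b}\frac{dz_a-dz_b}{z_a-z_b} = \frac{nh}{8}\, d\log P.
\]
Here $P$ is $S_m$-invariant and nonvanishing on $\mathfrak{h}_{\slm}^{reg}$, so $d\log P$ descends to $\mathfrak{h}_{\slm}^{reg}/S_m$. I will also check that the pullback $i_0^*$ of these forms from $\mathfrak{h}_{\glm}^{reg}$ is compatible, which is clear because all forms involved are translation-invariant in the direction $(1,\dots,1)$.

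The second step is the computation itself. Take $g = fP^{-nh/8}$ for a fixed (local) branch of the power, and apply the Leibniz rule, using that scalar multiplication commutes with every endomorphism:
\[
dg - A'g = P^{-nh/8}\Big(df - \tfrac{nh}{8}\,d\log P\cdot f - A f + \tfrac{nh}{8}\,d\log P\cdot f\Big) = P^{-nh/8}(df - Af).
\]
Since $P^{-nh/8}$ is nowhere zero, $g$ is flat for $d-A'$ if and only if $f$ is flat for $d-A$. Both implications follow from this single identity.

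The main subtlety is that $P^{-nh/8}$ is multivalued for generic $h$. It is well defined only on the universal cover, or locally on simply connected opens; it is not a global function on $\mathfrak{h}_{\slm}^{reg}/S_m$. I will therefore state and prove the equivalence for local (or multivalued) flat sections, with a fixed branch of the power function. The only consequence for monodromy is a scalar character of $\Br_{S_m}$, namely $\gamma\mapsto$ the monodromy of $P^{-nh/8}$ along $\gamma$. I will also note that the identification of $\mathfrak{h}^{reg}$ with the base, and hence the notation $\nabla^{h/2}_{bCas,\som\subset\slm}$, matches \eqref{bCas_extended_som} after restricting from $\glm$ to $\slm$. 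This holds because the roots $\epsilon_a-\epsilon_b$ restrict unchanged.
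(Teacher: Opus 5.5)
Your proof is correct and is the argument the paper intends: the paper deduces the statement directly from the gauge relation of Proposition~\ref{Gauge_equiv}, where the scalar term $\frac{nh}{4}\sum_{a<b}\frac{dz_a-dz_b}{z_a-z_b}=\frac{nh}{8}\,d\log P$ is absorbed by the factor $P^{-nh/8}$, exactly as in your Leibniz-rule computation. Your caveat is a precision the paper leaves implicit: the equivalence should be read for local (or multivalued) flat sections with a fixed branch of $P^{-nh/8}$, and the monodromy of that branch only contributes the scalar factor recorded in Corollary~\ref{Br_S_m_monodromy}.
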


\begin{corollary}\label{Br_S_m_monodromy}
    Let $M\subset \howespin\cong S^{\otimes m}$ be an $O(m)$-submodule.
    Consider the monodromy representations
    \begin{equation}
        \pi_{KZ,\son}^h, \pi_{bCas,\som\subset \slm}^{h/2}: {\rm Br}_{S_m} \rightarrow GL(M)
    \end{equation}
    associated with the restriction to $M$ of the connections \eqref{KZ_so2n} and \eqref{bCas_extended_som}, respectively. Then we have
\[
\pi_{bCas, \som\subset \slm}^{h/2}(\sigma_a)= e^{\pi {\bf i}E_{a,a}} (a,a+1)\flip^{-1}_{a,a+1} \pi_{KZ, \son}^h(\sigma_a)e^{-\frac{\pi \imag n h}{4}},
\]
    where $\sigma_a$ are the braid generators of ${\rm Br}_{S_m}$ associated with the hyperplanes $z_a=z_{a+1}$ of the covering space $\mathfrak{h}_{\slm}^{reg}$.
\end{corollary}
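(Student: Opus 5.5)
The plan is to compare the two monodromies through the parallel-transport descriptions in Proposition \ref{parralel_transport} and Proposition \ref{parralel_transport_KZ}, using the gauge relation from Proposition \ref{KZ_bCas_flat_sections}. Fix a base point and, for the generator $\sigma_a$, a lift $\widetilde{\gamma}$ in $\mathfrak{h}_{\slm}^{reg}$ going from the base point $z$ to $s_a(z)$ along a small half-circle around the hyperplane $z_a=z_{a+1}$, in the positive direction. Proposition \ref{parralel_transport_KZ} then gives $\pi_{KZ,\son}^h(\sigma_a)=\flip_{a,a+1}\mathcal{P}_{KZ}(\widetilde\gamma)$. Here $\sigma'(\sigma_a)$ is the permutation of tensor factors, i.e.\ $\flip_{a,a+1}$, since the $S_m$ action on $V^{\otimes m}$ used to descend KZ is flipping factors. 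Proposition \ref{parralel_transport} gives $\pi_{bCas}^{h/2}(\sigma_a)=\tilde{s}_a\mathcal{P}_{bCas}(\widetilde\gamma)$, with $\tilde{s}_a=\sigma(\sigma_a)$ by Lemma \ref{lift_of_braid_gp_gens}.

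Next I will relate the two parallel transports. By Proposition \ref{Gauge_equiv}, restricted to $M$ (using the preceding lemma), the two connection forms differ by the scalar form $\frac{nh}{4}\sum_{a<b}d\log(z_a-z_b)$. Hence the flat sections differ by the scalar function $\prod_{a<b}(z_a-z_b)^{-nh/4}$, equivalently $P^{-nh/8}$, and so
\[
\mathcal{P}_{bCas}(\widetilde\gamma)=\frac{\phi(\widetilde\gamma(1))}{\phi(\widetilde\gamma(0))}\mathcal{P}_{KZ}(\widetilde\gamma),\qquad \phi=\prod_{a<b}(z_a-z_b)^{-nh/4},
\]
where $\phi$ is continued analytically along $\widetilde\gamma$. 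Along the half-circle, only the factor $z_a-z_{a+1}$ changes argument in a nontrivial way: it goes to its negative via $e^{\pi\imag}$. The other factors are permuted among themselves and return to the same product of values, provided the path is chosen close to the hyperplane so that their arguments vary negligibly. The ratio is therefore $e^{-\pi\imag nh/4}$. The precise sign of the exponent depends on the orientation convention for $\sigma_a$, and I would fix it to match the statement.

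Combining the two descriptions and rewriting $\tilde{s}_a=e^{\pi\imag E_{a,a}}(a,a+1)$ by Remark \ref{rem:stilde(1,2)} gives
\[
\pi_{bCas}^{h/2}(\sigma_a)=e^{\pi\imag E_{a,a}}(a,a+1)\,e^{-\pi\imag nh/4}\,\mathcal{P}_{KZ}(\widetilde\gamma)=e^{\pi\imag E_{a,a}}(a,a+1)\flip_{a,a+1}^{-1}\pi_{KZ}^h(\sigma_a)e^{-\pi\imag nh/4},
\]
where the scalar commutes with everything. This is the claimed identity.

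The main obstacle is the bookkeeping for the scalar multivalued function. One has to check carefully that the contributions of factors $(z_c-z_d)$ with $\{c,d\}\neq\{a,a+1\}$ cancel. This holds because the product over pairs is $S_m$-symmetric up to the sign of the $(a,a+1)$ factor, and the arguments stay continuous along a short path. One also has to make sure the branch of $P^{-nh/8}$ used in Proposition \ref{KZ_bCas_flat_sections} is consistent with the orientation of $\sigma_a$. The restriction to $M$ needs no extra work: the preceding lemma guarantees that $\flip_{a,a+1}$, $(a,a+1)$, $e^{\pi\imag E_{a,a}}$, and both connections preserve $M$.
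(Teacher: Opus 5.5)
Your proposal is correct and is the paper's own argument: the paper's proof likewise combines Propositions \ref{parralel_transport} and \ref{parralel_transport_KZ} with the scalar gauge $P^{-nh/8}$ of Proposition \ref{KZ_bCas_flat_sections}, and you have filled in what it leaves implicit, namely the half-turn of $z_a-z_{a+1}$ producing $e^{-\pi\imag nh/4}$ and the rewriting $\tilde{s}_a=e^{\pi\imag E_{a,a}}(a,a+1)$ from the remark following Lemma \ref{lem:signforaap1}. One small precision: the preceding lemma only provides a particular embedding of an irreducible $M$, so for an arbitrary $O(m)$-submodule it is cleaner to prove the identity on all of $\howespin$ and then restrict, noting that $\flip_{a,a+1}=(a,a+1)\cdot\frac{1}{2}\bigl(1+e^{\pi\imag E_{a,a}}+e^{\pi\imag E_{a+1,a+1}}-e^{\pi\imag E_{a,a}}e^{\pi\imag E_{a+1,a+1}}\bigr)$ lies in the linear span of the $O_m$-action and therefore preserves $M$.
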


\begin{proof}
    Using Proposition \ref{KZ_bCas_flat_sections}, we obtain the desired relation between the monodromy representation for the boundary Casimir connection, see Proposition \ref{parralel_transport}, and the monodromy representation for the KZ connection, see Proposition \ref{parralel_transport_KZ}.
\end{proof}


\section{Quantum groups and quantum symmetric pairs}\label{sec4}
\subsection{Quantum groups}

Let $h$ be a generic parameter in $\C$. Fix a simple complex Lie algebra $\g$, and choose $\h\subset \mathfrak{b}\subset \g$, and $\Delta \subset \Phi_+\subset \Phi$, as in \S \ref{sec:lietheorynotation}. 
\begin{notation}\label{not:quantumintegernotation}
We will consider the following integers and quantum integers.
\begin{itemize}
    \item $d_{\alpha_i}\in \{1, 2, 3\}$ denotes the root lengths, for $\alpha_i\in \Delta$.
    \item $a_{ij}:=2\frac{(\alpha_i,\alpha_j)}{(\alpha_i,\alpha_i)}$ denotes the entries of the Cartan matrix of $\mathfrak{g}$, for $\alpha_i,\alpha_j\in \Delta$.
    \item $q:=e^h$ and $q_i:=e^{d_{\alpha_i}h}$.
    \item $[n]_i:=\frac{q^n_i-q^{-n}_i}{q_i-q^{-1}_i}$, 
    $[n]_i!:=[n]_i \cdot \dots \cdot [1]_i$, and $\qbinom{n}{k}_i:=\frac{[n]_i!}{[k]_i! [n-k]_i!}$.
\end{itemize} 
\end{notation}

\begin{definition}[\cite{JantzenQgps,Lus4}]\label{Quantum_group}
    The \emph{quantum group} $U_{h}(\mathfrak{g})$ is an algebra generated by elements $E_i,F_i, K_i^{\pm 1}, 1\le i \le |\Delta|$, with the following relations
    \[
    K_iK_j=K_jK_i, \quad K_iK_i^{-1} = 1= K_{i}^{-1}K_i, \quad K_i E_j K_i^{-1}=q^{a_{ij}}_i E_j, \quad K_i F_j K_i^{-1} = q^{-a_{ij}}_iF_j,
    \]
    \[
    E_iF_j-F_jE_i=\delta_{ij}\frac{K_i-K_i^{-1}}{q_i-q_i^{-1}},
    \]
    \[
    \sum_{k=0}^{1-a_{ij}}(-1)^k \qbinom{1-a_{ij}}{k}_i E_i^{1-a_{ij}-k}E_j E_i^k=0,
    \]
    \[
    \sum_{k=0}^{1-a_{ij}}(-1)^k \qbinom{1-a_{ij}}{k}_i F_i^{1-a_{ij}-k}F_j F_i^k=0.
    \]
    Additionally, we will utilize the unique Hopf algebra structure on $U_h(\mathfrak{g})$ such that
    \begin{equation*}
        \Delta(E_i)=E_i \ot 1+ K_i\ot E_i, \quad \ep(E_i)=0, \quad S(E_i)=-K_i^{-1}E_i,
    \end{equation*}
    \begin{equation*}
        \Delta(F_i)=F_i \ot K_i^{-1}+ 1\ot F_i, \quad \ep(F_i)=0, \quad S(F_i)=-F_iK_i,
    \end{equation*}
    \begin{equation*}
        \Delta(K_i)=K_i\otimes K_i, \quad \ep(K_i)=1, \quad S(K_i)=K_i^{-1}.
    \end{equation*}
     For a coroot lattice element $\mu=\sum_{i=1}^{|\Delta|}m_i \alpha_i^{\vee}\in Y$, we define an element $K_{\mu}:=\prod_{i=1}^{|\Delta|} K_{i}^{m_i} \in U_{h}(\g)$.
\end{definition}

\subsection{Integrable modules, Lusztig's braid symmetries, and $R$-matrix}

\begin{definition}
    We call a $U_h(\mathfrak{g})$-module $M$ \emph{type I} if $M=\oplus_{\lambda\in X} M[\lambda]$, where $M[\lambda]$ are such that if $v\in M[\lambda]$, then $v$ is a weight vector of weight $\lambda$, i.e., $K_i v=q_i^{\langle \alpha_i^{\vee},\lambda\rangle}v$, for $1\le i \le |\Delta|$. 
\end{definition}

\begin{definition}
    We call $M$ \emph{integrable} if $E_i$ and $F_i$ act locally nilpotently on $M$, for $1\le i\le |\Delta|$.
\end{definition}

In this paper we will work with the following category.

\begin{definition}\label{fd-Uq-mod}
Let $\mathcal{C}_{\mathfrak{g}}$ be the category of type $I$ finite-dimensional integrable $U_h(\mathfrak{g})$-modules $M$.
\end{definition}

\begin{remark}\label{rem:quantumgroupmonoidal}
Using the coproduct $\Delta$ from Definition \ref{Quantum_group}, and the trivial associator $(V\otimes W)\otimes U\rightarrow V\otimes (W\otimes U)$, denoted $1$, we can view $\mathcal{C}_{\g}$ as a monoidal category.
\end{remark}

We also recall the definition of Lusztig's symmetries \cite{Lus4}. 

\begin{definition}
    The \emph{divided powers} of $E_i$ and $F_i$ are defined to be
    \[
    E_i^{(a)}:=\frac{E_i^a}{[a]_i!} \quad \text{and} \quad F_i^{(a)}:=\frac{F_i^a}{[a]_i!},
    \]
    respectively.
\end{definition}

\begin{definition}\label{def:Lussymmetry}
    Let $V\in \mathcal{C}_{\g}$. For $1\le i\le |\Delta|$, define \emph{Lusztig's symmetry} $T_i\in \End(V)$ as follows:
    \begin{equation*}
        T_i\cdot v=\sum_{\substack{a,b\ge 0 \\ b-a=\langle\alpha_i^{\vee},\lambda\rangle}}(-q_i)^bE_i^{(a)}F_i^{(b)}v, \qquad \text{for all $v\in V[\lambda]$}.
    \end{equation*}
See \cite[Remark 2.1]{Cautis} for this simplified formula. Our $T_i$ corresponds to $T''_{1,i}$ in Lusztig's notation \cite{Lus4}. 
\end{definition}

The operators $T_i$ are invertible and satisfy the braid relations of the Weyl group $W$ of $\mathfrak{g}$. Thus, for $w\in W$ and any reduced decomposition
\begin{equation*}
    w=s_{i_1}\cdots s_{i_k},
\end{equation*}
we can unambiguously write
\begin{equation*}
    T_w:=T_{i_1}\cdots T_{i_k}.
\end{equation*}
We will use a description of the $R$-matrix in terms of $T_{w_0}$, where $w_0\in W$ is the longest element, which originates in \cite{KirResh, MR1142216}. The following formulation is from {\cite[Theorem 7.1]{KamnTin}}.

\begin{theorem}\label{R-matrix_braiding}
    The monoidal category $\mathcal{C}_{\mathfrak{g}}$ admits a braiding $R_{V,W}$, for $V,W\in \mathcal{C}_{\mathfrak{g}}$, such that
\begin{equation}\label{R-matrix}
     R|_{V\ot W}:= \flip|_{V\ot W} \circ q^{(\wt(-),\wt(-))}\circ (T_{w_0}^{-1} \ot T_{w_0}^{-1} 
 )\circ \Delta(T_{w_0}).
\end{equation}
Here $\flip$ is the map
\begin{equation*}
    \flip: V\ot W \rightarrow W\ot V, \quad v\otimes w\mapsto w\otimes v,
\end{equation*}
and
\[
q^{(\wt(-),\wt(-))}:V[\lambda]\otimes W[\mu]\rightarrow V[\lambda]\otimes W[\mu], \quad v\otimes w\mapsto q^{(\lambda,\mu)}v\otimes w,
\]
where $(\lambda,\mu):=\sum_{1\le i,j\le |\Delta|} B_{ij}^{-1} \langle \alpha_i^\vee, \lambda \rangle \langle \alpha_j^\vee, \mu \rangle$ and $B_{ij}:=a_{ij}d_j^{-1}$.
\end{theorem}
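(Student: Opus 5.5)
The strategy is the standard Levendorskii--Soibelman/Kirillov--Reshetikhin argument: show that the operator $\check R:=\flip\circ q^{(\wt,\wt)}\circ(T_{w_0}^{-1}\ot T_{w_0}^{-1})\circ\Delta(T_{w_0})$ is a $U_h(\g)$-module isomorphism $V\ot W\to W\ot V$, natural in $V$ and $W$, and that it satisfies the two hexagon identities with respect to the trivial associator of Remark \ref{rem:quantumgroupmonoidal}. The plan is to first identify $(T_{w_0}^{-1}\ot T_{w_0}^{-1})\circ\Delta(T_{w_0})$ with the quasi-$R$-matrix $\Theta$ (up to the convention for Lusztig's $T''_{1,i}$), and then reduce everything to the known properties of $\Theta$.

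\textbf{Step 1: naturality.} This is immediate: $T_{w_0}$ acts on every object of $\mathcal{C}_\g$ by a formula in $E_i^{(a)}$, $F_i^{(b)}$ and the weight grading (Definition \ref{def:Lussymmetry}), so it commutes with module maps. The same holds for $\Delta(T_{w_0})$ on tensor products, and for $q^{(\wt,\wt)}$ and $\flip$.

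\textbf{Step 2: the key identity.} This is the main obstacle. I want to show
\[
\Delta(T_{w_0}) = (T_{w_0}\ot T_{w_0})\,\Theta',
\]
where $\Theta'=\sum_\nu \Theta'_\nu$ lies in a completion of $U^-\ot U^+$ (or $U^+\ot U^-$, depending on conventions) and is Lusztig's quasi-$R$-matrix. First I would do rank one: for $\mathfrak{sl}_2$, compute $\Delta(T_i)$ directly from the divided-power formula via the $q$-binomial expansion of $\Delta(E^{(a)})$ and $\Delta(F^{(b)})$. This gives $\Delta(T_i)=(T_i\ot T_i)\Theta_i$ with $\Theta_i=\sum_k c_k F_i^{(k)}\ot E_i^{(k)}$ (Lusztig's Prop.~5.3.4 type formula). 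Then I would induct along a reduced word for $w_0$, using $T_{w}(\Theta_{i})$-conjugations; the factorization $\Theta=\prod_\beta \Theta_\beta$ over positive roots ordered by the reduced word yields the general identity.

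\textbf{Step 3: intertwining and hexagons.} Lusztig's characterization says $\Theta\Delta(u)=\bar\Delta(u)\Theta$, and that $\flip\circ q^{(\wt,\wt)}\circ\Theta$ intertwines. Equivalently, one can check directly that $\Delta(T_{w_0})\Delta(u)\Delta(T_{w_0})^{-1}=\Delta(T_{w_0}(u))$, while $T_{w_0}$ sends $E_i$ to roughly $-F_{\tau(i)}K_{\tau(i)}$. Combined with the behavior of $q^{(\wt,\wt)}$ and $\flip$, this shows that $\check R$ commutes with generators. For the hexagons $R_{U\ot V,W}=(R_{U,W}\ot 1)(1\ot R_{V,W})$, I would use $(\Delta\ot 1)\Theta=\Theta_{13}\Theta_{23}$-type identities, which follow here from coassociativity of $\Delta(T_{w_0})$ applied twice:
\[
(\Delta\ot1)\Delta(T_{w_0})=(1\ot\Delta)\Delta(T_{w_0}),
\]
together with the bilinearity $q^{(\lambda+\mu,\nu)}=q^{(\lambda,\nu)}q^{(\mu,\nu)}$. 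Invertibility follows since each factor is invertible.

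Alternatively, the entire statement may simply be cited from \cite[Theorem 7.1]{KamnTin}, with the plan above serving as a sketch of why it holds. The hardest part is the bookkeeping of signs and conventions in Step 2, matching $T''_{1,i}$ and the coproduct of Definition \ref{Quantum_group}.
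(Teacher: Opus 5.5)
\textbf{The gap is in the hexagon step of Step 3.} Steps 1 and 2 and the intertwining half of Step 3 are sound. Set $\Theta':=(T_{w_0}^{-1}\ot T_{w_0}^{-1})\Delta(T_{w_0})$ and $\Pi:=q^{(\wt(-),\wt(-))}$. The relation $T_{w_0}\,u\,T_{w_0}^{-1}=T_{w_0}(u)$ on every module gives $\Theta'\Delta(u)=\bar\Delta(u)\Theta'$. Conjugation by $\Pi$ turns $\bar\Delta$ into $\Delta^{op}$, so $\flip\circ\Pi\circ\Theta'$ is a module map.

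With the trivial associator, the two hexagon axioms are equivalent to two separate identities:
\[
(\Delta\ot 1)(\Theta')=\Pi_{23}^{-1}\Theta'_{13}\Pi_{23}\,\Theta'_{23},\qquad (1\ot\Delta)(\Theta')=\Pi_{12}^{-1}\Theta'_{13}\Pi_{12}\,\Theta'_{12}.
\]
Expanding the action of $T_{w_0}$ on $U\ot V\ot W$ in the two ways allowed by coassociativity gives only one relation, the twist-cocycle identity $\Theta'_{12}\,(\Delta\ot 1)(\Theta')=\Theta'_{23}\,(1\ot\Delta)(\Theta')$. That identity holds for $(X^{-1}\ot X^{-1})\Delta(X)$ with \emph{any} invertible $X$, so it carries nothing specific to $T_{w_0}$. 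For example, $X=1+tF_i$ gives $1+tF_i\ot(K_i^{-1}-1)+O(t^2)$, and the first identity above already fails at order $t$: the discrepancy is $tF_i\ot(K_i^{-1}-1)\ot(K_i^{-1}-1)$. Bilinearity of $(\lambda,\mu)$ only accounts for the $\Pi$-factors.

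\textbf{The missing idea is triangularity plus uniqueness.} Your inductive factorization in Step 2 shows that $\Theta'$ is unitriangular, i.e.\ of the form $1\ot 1+\sum_{\nu>0}(\text{terms in }U^-_{-\nu}\ot U^+_{\nu})$. This is because each new factor is built from $T_{w'}^{-1}(F_i^{(k)})\ot T_{w'}^{-1}(E_i^{(k)})$ with $s_iw'>w'$. Combined with $\Theta'\Delta(u)=\bar\Delta(u)\Theta'$, Lusztig's uniqueness theorem for the quasi-$R$-matrix \cite[Ch.~4]{Lus4} forces $\Theta'=\Theta^{-1}=\bar\Theta$. This also spares you the root-vector factorization of $\Theta$ that you flagged as the main obstacle. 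The hexagons are then Lusztig's coproduct formulas for $\Theta$. Those are proved by observing that both sides are unitriangular and satisfy the same intertwining relation, and you can run that argument directly on the two displayed identities.

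For comparison, the paper does not prove this statement. It cites \cite[Theorem 7.1]{KamnTin} and, in Remark \ref{Hopf_quasitriangular}, transfers the formula from the coproduct $\Delta^{op}$ used there to $\Delta$. It does this via $\widetilde{R}^{op}=\widetilde{R}_{21}$, the fact that $q^{(\wt(-),\wt(-))}$ and $T_{w_0}^{-1}\ot T_{w_0}^{-1}$ commute with the flip, and uniqueness of the universal $R$-matrix \cite[Proposition 8.3.13]{CP}. Your fallback of citing \cite{KamnTin} is therefore the paper's route, provided you make that convention check explicit.
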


\begin{notation}
    For $V\in \mathcal{C}_{\g}$, we have a group homomorphism
    \[
    \pi_{R,\g}^h:\Br_{S_m}\rightarrow GL(V^{\otimes m}),
    \]
    defined by $\pi_{R,\g}^h(\sigma_a):=\id_{V^{\otimes a-1}}\otimes R_{V,V}\otimes \id_{V^{\otimes m-a-1}}$.
\end{notation}

\begin{remark}\label{Hopf_quasitriangular}
    In Definition \ref{Quantum_group}, we use the standard Lusztig's coproduct $\Delta$. The coproduct in \cite{KamnTin} is $\Delta^{op}$. Generally, for a quasi-triangular Hopf algebra $(H,\Delta,\epsilon,S,\widetilde{R})$, the universal $R$-matrix $\widetilde{R}$ for $\Delta$ and the universal $R$-matrix $\widetilde{R}^{op}$ for $\Delta^{op}$ are related as $\widetilde{R}^{op}=\widetilde{R}_{21}$ (one should distinguish the universal $R$-matrix $\widetilde{R}$ and the braiding $R=\flip \circ \widetilde{R}$). Since the operators $q^{(\wt(-),\wt(-))}$ and $T^{-1}_{w_0}\ot T^{-1}_{w_0}$ commute with transposition of factors, it follows that the universal $R$-matrix (and the braiding) for our choice of coproduct, can be written in the same form as in \cite[Theorem 7.1]{KamnTin}.
    
    Furthermore, let us denote by $U_{h}^{\ge 0}(\mathfrak{g})$ and $U_{h}^{\le 0}(\mathfrak{g})$ the subalgebras of $U_h(\mathfrak{g})$ corresponding to the Borel subalgebra $\mathfrak{b}\subset \mathfrak{g}$ and its opposite respectively, then according to \cite[Proposition 8.3.13]{CP} any universal $R$-matrix such that $\widetilde{R}\in U_{h}^{\le 0}(\mathfrak{g})\hat{\ot} U_{h}^{\ge 0}(\mathfrak{g})$ in an appropriate completion of the tensor product is unique up to a scalar. The scalar can be fixed via consideration of the counit.
\end{remark}

The following Theorem is a known result (e.g., see \cite[Theorem 3.1]{NeTu10}).

\begin{theorem}\label{thm:equivalencequantumclassical}
    For generic $h\in \C$ there is an equivalence of the braided tensor categories \[
    ({\rm Rep}(\mathfrak{g}), \Delta_{cl},\Phi_{h,KZ},e^{\pi {\rm i}h\Omega})\cong(\mathcal{C}_{\mathfrak{g}},\Delta, 1, R)_{\pi {\rm i} h},
    \]
    where $\Delta_{cl}$ is the classical cocommutative coproduct on $U(\mathfrak{g})$, and $\Phi_{KZ}$ is the Drinfeld associator arising from the KZ connection \eqref{KZ_defn} with the split Casimir $\Omega$. The category $\mathcal{C}_{\mathfrak{g}}$ and the braiding $R$ are as in the Definition \ref{fd-Uq-mod}, Remark \ref{rem:quantumgroupmonoidal}, and Theorem \ref{R-matrix_braiding}, but with $q=e^{\pi {\rm i}h}$ (as opposed to $q=e^{h}$ as in Notation \ref{not:quantumintegernotation}).
\end{theorem}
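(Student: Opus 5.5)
This equivalence is the standard Drinfeld--Kazhdan / Kohno--Drinfeld theorem, so the plan is to assemble it from known pieces rather than prove it from scratch. It has three parts: the classical braided category, a Drinfeld twist, and uniqueness of the universal $R$-matrix.

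The first step sets up the classical side. The category $({\rm Rep}(\mathfrak{g}),\Delta_{cl},\Phi_{h,KZ},e^{\pi {\rm i}h\Omega})$ is a braided quasi-Hopf structure on $U(\mathfrak{g})[[h]]$. Pentagon and hexagon hold because $\Phi_{KZ}$ is the normalized ratio of asymptotic solutions of the KZ equation \eqref{KZ_defn} for $r=3$. The braiding $e^{\pi{\rm i}h\Omega}$ is the local monodromy around $z_1=z_2$. All of this is Drinfeld's theory, and the monodromy interpretation is exactly the Kohno--Drinfeld statement that monodromy of $\nabla_{KZ,\g}^{h}$ on $V^{\otimes r}$ is computed by iterating $\Phi_{KZ}$ and $e^{\pi{\rm i}h\Omega}$.

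The second step uses Drinfeld's twisting theorem. Since $H^2$-type obstructions vanish for semisimple $\g$ (Whitehead lemmas), there is an algebra isomorphism $U(\mathfrak{g})[[h]]\cong U_h(\g)$ (with $q=e^{\pi{\rm i}h}$) and a twist $J\in U(\g)^{\otimes 2}[[h]]$. This pair carries $(\Delta_{cl},\Phi_{KZ})$ to the coassociative coproduct $\Delta$ with trivial associator. Transporting the braiding produces a universal $R$-matrix $\widetilde R=J_{21}e^{\pi{\rm i}h\Omega}J^{-1}$ for $(U_h(\g),\Delta)$ which lies in a completion of $U_h^{\le0}\hat\otimes U_h^{\ge0}$. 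Establishing that triangularity requires the Etingof--Kazhdan quantization argument, or equivalently Drinfeld's uniqueness of quantization of the standard Lie bialgebra, and I would cite it.

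The third step applies Remark \ref{Hopf_quasitriangular}. By the uniqueness result quoted there (\cite[Proposition 8.3.13]{CP}), $\widetilde R$ agrees up to scalar with the $R$-matrix of Theorem \ref{R-matrix_braiding}, and counit normalization fixes the scalar. The Kirillov--Reshetikhin/Levendorskii--Soibelman formula \eqref{R-matrix} then identifies the braidings.

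The last step passes from formal to generic $h\in\C$. Both structures are analytic in $h$ on finite-dimensional modules, since the KZ associator converges. Type I integrable modules deform the irreducibles with the same characters, so the functor is an equivalence for $h$ outside a countable set. I expect this formal-to-numerical step, together with the triangularity/uniqueness matching in the third step, to be the main obstacle. My first move there is to quote \cite[Theorem 3.1]{NeTu10} directly, rather than redo the analysis.
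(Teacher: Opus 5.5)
Your proposal lands where the paper does: the paper gives no argument of its own for this theorem and simply cites \cite[Theorem 3.1]{NeTu10}, which is also where your outline ultimately defers. One caveat on your sketch: the Drinfeld-twist steps by themselves only give the formal ($\C[[h]]$) statement, and analyticity of the two structures does not produce a functor at numerical $h$. The passage to generic numerical $h$ genuinely comes from the Kazhdan--Lusztig/Neshveyev--Tuset argument; the paper's Remark \ref{Artin's_lifting} uses Artin approximation only to get a twist holomorphic near $h=0$.
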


\begin{remark}\label{Artin's_lifting}
    From the proof of this Theorem it follows that for any finite collection of finite-dimensional $\mathfrak{g}$-modules $V_i,i\in I$ the Drinfeld's associator $\Phi_{h,KZ}$ is analytic (and holomorphic) in a neighborhood of $h=0$ for each triple $V_i \ot V_j \ot V_k$. For such a collection of $V_i$ an equivalence can be realized via an invertible Drinfeld twist $F_h|_{V_i,V_j} \in {\rm Im}(\rho_{V_i}\ot \rho_{V_j})$, where $\rho_{V_i}\ot \rho_{V_j}: U(\mathfrak{g})\ot U(\mathfrak{g}) \rightarrow \End(V_i\ot V_j)$, which is analytic (more precisely, holomorphic) in a neighborhood of $h=0$. Indeed, it is known that such a twist exists over a ring of power series $\C[[h]]$ and we have a solution $F_0=Id$ at $h=0$. Therefore, by Artin's analytic approximation Theorem there must be a locally holomorphic solution for $F_h$. For our purposes we will work with $V_i=S$. We can also extend such a twist $F_h$ to the case of $S^{\ot m}$ which we denote by $F^{(m)}_h$, in particular $F_h=F_h^{(2)}$.  
\end{remark}


\subsection{Quantum symmetric pairs}

Let $\tau$ be an involution on $\Delta$ which extends to involutions on $\mathfrak{h}$ and $\mathfrak{h}^*$, preserving the pairing $\langle h,\lambda\rangle =\lambda(h)$. Fix $\Delta_{\bullet} \subset \Delta$ of finite type. Write $W_{\Delta_{\bullet}}\subset W$ for the associated Weyl subgroup with $w_{\bullet}$ the longest element. Let $\rho_{\bullet}^{\vee}$ denote the half sum of $\Phi_{\bullet}$'s positive coroots.

\begin{definition}[\cite{W22}]\label{Satake_diagram}
    A \emph{Satake diagram} $(\Delta=\Delta_{\bullet}\cup \Delta_{\circ}, \tau)$, with $\Delta_{\circ}=\Delta\setminus \Delta_{\bullet}$, is the data as above such that $\tau(\Delta_{\bullet})=\Delta_{\bullet}$, the actions of $\tau$ and $-w_{\bullet}$ on $\Delta_{\bullet}$ coincide, and $\alpha_i (\rho_{\bullet}^{\vee}) \in \mathbb{Z}$ whenever $\tau(i)=i\in\Delta_{\circ}$.
\end{definition}
We denote by $\theta$ the involution of $\mathfrak{h},\mathfrak{h}^*$ given by $\theta=-w_{\bullet}\circ \tau$.

\begin{definition}
    If $\Delta_{\bullet}= \emptyset$, then we say the $\iota$quantum group is \emph{quasi-split}. If $\Delta_{\bullet}=\emptyset$ and $\tau=\id$, then we say the $\iota$quantum group is \emph{split}.
\end{definition}

\begin{remark}
    In the split case, the involution $\theta:\mathfrak{g}\rightarrow \mathfrak{g}$ associated to the Satake diagram is the Chevalley automorphism \eqref{eq:Chevalleyauto}.  
\end{remark}

We define the $\iota$coroot lattice as
\begin{equation*}
    Y^{\iota}:=\{\mu\in Y|\theta(\mu)=\mu\}
\end{equation*}
and the $\iota$weight lattice
\[
X_{\iota}:=X/\breve{X}, \quad \breve{X}:=\{\lambda-\lambda^{\theta}|\lambda\in X\}.
\]
There is a canonical pairing
\[
\langle-,-\rangle: Y^{\iota} \times X_{\iota} \rightarrow \mathbb{Z},
\]
but this pairing will be degenerate in general.

\begin{example}
    Let $\Delta$ be the set of simple roots for simple $\mathfrak{g}$ and consider the corresponding split Satake diagram (so $\Delta_{\circ} = \Delta$, $\Delta_{\bullet}= \emptyset$, and $\tau= \id$). Then $Y^{\iota} = 0$ and $X_{\iota}\cong \oplus_{\alpha_i\in \Delta}\mathbb{Z}/2\cdot \overline{\varpi_i}$.
\end{example}

\begin{definition}\label{iQuantumgroup}
    Fix a Satake diagram $(\Delta=\Delta_{\bullet}\cup\Delta_{\circ}, \tau)$. Choose scalars $\varsigma_i\in \C^{\times}$ and $\kappa_i\in \C$ satisfying the conditions
    \begin{itemize}
        \item $\varsigma_i=\varsigma_{\tau(i)}$, if $(\alpha_i,\alpha_{\tau(i)})=0$. 
        \item $\kappa_i=0$, unless $\tau(i) =i$, and $(\alpha_i,\alpha_j)=0$ for all $\alpha_j\in \Delta_{\bullet}$.
        \item $\alpha_i(h_{\alpha_k})\in2\mathbb{Z}$, for all $\alpha_k\in \Delta_{\circ}$, such that $\tau(\alpha_i) = \alpha_j= w_{\bullet}(\alpha_k)$.
    \end{itemize}
    Let $\mathfrak{g}$ be the Lie algebra associated with $\Delta$. The \emph{$\iota$quantum group} 
    \[
    U_h^{\iota}:=U^{\iota}_{h}(\Delta=\Delta_{\bullet} \cup \Delta_{\circ}, \tau,\varsigma,\kappa)
    \]
    is the subalgebra of $U_{h}(\mathfrak{g})$ generated by the following elements:
    \begin{equation*}
        B_i:= F_i +\varsigma_i T_{w_{\bullet}}(E_{\tau(i)})K_i^{-1} + \kappa_i K_i^{-1} \ (\alpha_i\in \Delta_{\circ}), \quad K_{\mu} \ (\mu\in Y^{\iota}),\quad  \text{and} \quad F_i,E_i \ (\alpha_i\in \Delta_{\bullet}).
    \end{equation*}
    Here $T_{w_{\bullet}}:=T_{w_{\bullet},+1}''$ in Lusztig's notation \cite{Lus4}.
\end{definition}

\begin{example}
    Let $\Delta$ be the set of simple roots for simple $\g$ and consider the corresponding split Satake diagram. Then $U_h^{\iota}$ is the subalgebra of $U_h(\g)$ generated by the elements $B_i=F_i+ \varsigma_iE_iK_i^{-1}+ \kappa_iK_i^{-1}$, for $\alpha_i\in \Delta$.
\end{example}

\begin{remark}
    Although it does not appear to be important in this paper, a fundamental aspect of the structure of the $\iota$quantum group is that it is a (left) coideal subalgebra of $U_{h}(\mathfrak{g})$
    \[
    \Delta: U^{\iota}_{h} \longrightarrow U^{\iota}_{h} \ot U_h(\mathfrak{g}),
    \]
    which specializes to $U(\mathfrak{g}^{\theta})$ modulo $h$, where $\theta$ is the involution associated to the Satake diagram as in \cite[Theorem 2.5]{MR3269184}.
\end{remark}

There is an $\iota$Serre presentation for quasi-split $\iota$quantum groups \cite{MR4309551}, which generalizes the following well-known presentation of the split $\iota$quantum group corresponding to the pair $\som\subset\slm$. 

\begin{theorem}\label{thm:iSerre}
    Let $\Delta$ be the set of simple roots for $\slm$ and consider the corresponding split Satake diagram. The $\iota$quantum group $U_h^{\iota}$ (with scalars $\varsigma_i=q_{i}^{-1}$ and $\kappa_i=0$, for all $\alpha_i\in \Delta$), is isomorphic to the algebra generated by $B_i=F_i+q_i^{-1}E_iK_i^{-1}$, $i=1, \dots, m-1$, modulo the $\iota$Serre relations
    \begin{equation}\label{eq:iserre}
    B_i^2B_{i\pm 1} - (q+q^{-1})B_iB_{i\pm 1}B_i +B_{i\pm 1}B_i^2 = B_{i\pm 1}
    \end{equation}
    and 
    \begin{equation}
    B_iB_j=B_jB_i \qquad |i-j|> 1.
    \end{equation}
\end{theorem}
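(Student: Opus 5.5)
The plan is to compare $U_h^{\iota}$ with an abstract algebra $A$. Let $A$ be generated by $\mathbf{b}_1,\dots,\mathbf{b}_{m-1}$ subject to the $\iota$Serre relations \eqref{eq:iserre} and the commutation relations. There are three steps.

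\textbf{Step 1: the relations hold.} We check that the elements $B_i=F_i+q_i^{-1}E_iK_i^{-1}$ of $U_h(\slm)$ satisfy these relations. This gives a surjection $\phi\colon A\to U_h^{\iota}$ with $\mathbf{b}_i\mapsto B_i$.

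For $|i-j|>1$ the check is immediate. The generators $E_i,F_i,K_i^{\pm1}$ commute with $E_j,F_j,K_j^{\pm1}$, because $a_{ij}=0$. Hence $B_iB_j=B_jB_i$.

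For $j=i\pm1$ we expand $B_i^2B_j-(q+q^{-1})B_iB_jB_i+B_jB_i^2$ and sort the terms by their $(E,F)$-degree.

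\begin{itemize}
\item The pure $F$-part vanishes by the quantum Serre relation.
\item The pure $E$-part is the $E$-Serre expression with $K^{-1}$ factors inserted. Commuting the $K$'s to the right only rescales the terms by powers of $q$. These powers should produce exactly the $q$-binomial pattern, so this part also vanishes.
\item The mixed terms containing $E_i$ and $F_i$ are handled with $E_iF_i-F_iE_i=(K_i-K_i^{-1})/(q-q^{-1})$. After this, the surviving terms should collapse to $F_j+q^{-1}E_jK_j^{-1}=B_j$.
\end{itemize}

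This is a finite rank-two computation. It can also be organised as a check inside $U_h(\mathfrak{sl}_3)$.

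\textbf{Step 2: injectivity via a PBW filtration.} Filter $A$ by the degree in the $\mathbf{b}_i$. The associated graded of $A$ is a quotient of $U_h^-(\slm)$, since the top-degree parts of the defining relations of $A$ are exactly the quantum Serre relations for the $F_i$.

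Next filter $U_h(\slm)$ by a filtration under which $U_h^{\iota}$ has associated graded $U_h^-$; for instance, take $F$-degree minus $E$-degree, or use the triangular decomposition $U_h=U^-U^0U^+$. Under this filtration the leading term of $B_i$ is $F_i$. By the PBW theorem for $U_h(\slm)$, monomials in the $B_i$ indexed by a PBW basis of $U_h^-$ stay linearly independent in $U_h^{\iota}$: their leading terms are linearly independent in $U^-\otimes(\text{lower terms})$.

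Putting this together, $\operatorname{gr}A$ surjects onto $U^-$ (as a quotient of $U^-$, it is at most $U^-$), and $\operatorname{gr}\phi$ is compatible with these maps. It follows that $\operatorname{gr}\phi$ is injective, and therefore $\phi$ is an isomorphism. This is the standard argument of Letzter and Kolb. Since Definition \ref{iQuantumgroup} already gives $U_h^{\iota}$ as generated by the $B_i$, this filtration argument is all we need for injectivity.

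\textbf{Main obstacle.} I expect the hardest part to be the mixed-term bookkeeping in Step 1. The $K$-commutations produce many $q$-powers, and the right-hand side $B_{i\pm1}$ appears only because of the specific normalization $\varsigma_i=q_i^{-1}$, which must be tracked carefully.

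I would first check the relation in the vector representation $\C^3$ of $U_h(\mathfrak{sl}_3)$. This is a quick sanity test that the constants are right before carrying out the general computation. Alternatively, I would cite Letzter's presentation and Chen--Lu--Wang \cite{MR4309551}, specialized to type A with $\varsigma_i=q_i^{-1}$ and $\kappa_i=0$.
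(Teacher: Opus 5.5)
Your proof is correct. The paper gives no argument for this statement: it quotes the presentation as well known and points to Chen--Lu--Wang \cite{MR4309551} for the quasi-split generalization. Your two steps, the rank-two verification and the Letzter--Kolb filtration argument for injectivity, are the standard proof behind that citation. The citation buys generality: all quasi-split types and arbitrary parameters. Your route is self-contained in type A and shows exactly why $\varsigma_i=q_i^{-1}$ is the right normalization.

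A few details are worth tightening.

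In Step 1, keep $\varsigma$ general.
\begin{itemize}
\item Pure $E$-part: moving the $K^{-1}$'s to the right contributes $q^0$ to each of the three terms, so this part is literally $\varsigma^3$ times the $E$-Serre expression times $K_i^{-2}K_j^{-1}$, and vanishes.
\item Cross terms of type $F_i^2E_jK_j^{-1}$ and $(E_iK_i^{-1})^2F_j$: your list does not mention these. They vanish because $1-(q+q^{-1})q^{\pm1}+q^{\pm2}=0$.
\item Terms with one $E_i$ and one $F_i$: these give $q\varsigma_iF_j+q\varsigma_i\varsigma_jE_jK_j^{-1}$.
\end{itemize}
So the right-hand side is $q_i\varsigma_iB_j$, which equals $B_j$ precisely when $\varsigma_i=q_i^{-1}$.

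In Step 2, the basis of $U_h^-$ you lift should consist of monomials $F_{j_1}\cdots F_{j_k}$ in the Chevalley generators. Lusztig's PBW basis is built from root vectors, which have no direct $B$-analogue.

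The conclusion is cleanest stated as follows. Consider the surjections $U_h^-\twoheadrightarrow\operatorname{gr}A\twoheadrightarrow\operatorname{gr}U_h^{\iota}$. Their composite is injective: in the grading with $\deg F_i=1$, $\deg E_i=-1$, $\deg K_i=0$, the degree-$N$ component of $\sum_{|J|=N}a_JB_J$ is $\sum_{|J|=N}a_JF_J$. Hence both maps are isomorphisms, and so is $\phi$. Your ``$F$-degree minus $E$-degree'' suggestion is exactly this grading.

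This step uses that $U_h^-$ is presented by the quantum Serre relations. That holds because $q$ is not a root of unity for generic $h$, and it remains true for the parameter $-q^2$ used later in the paper.
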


\subsection{Integrable modules over $\iota$quantum groups}

Here we recall the relevant definitions from \cite{WW25}, see also \cite[Section 3.3]{MR4533490}.

\textbf{From here on, we will only be concerned with quasi-split $\iota$quantum groups with $\varsigma_i=q_{i}^{-1}$.}

    \begin{definition}
        For $n\in \mathbb{Z}$, let $p(n)\in \{\overline{0},\overline{1}\}$ be the parity. The \emph{$\iota$divided power} of $B_i$ is defined to be
        \[
        B_i^{(m)}:=\frac{B_i^m}{[m]_i!}, 
        \]
        for $\tau(i) \ne i$, while
        \begin{equation}\label{eq:idivpowereven}
        B_{i,\overline{0}}^{(m)}=\frac{1}{[m]_i!}B_i^{1+\delta_{p(m),\overline{0}}}\prod_{r=1}^{\lceil m/2 \rceil -1}(B_i^2-[2r]_i^2)
        \end{equation}
        and
        \begin{equation}\label{eq:idivpowerodd}
        B_{i,\overline{1}}^{(m)}=\frac{1}{[m]_i!}B_i^{\delta_{p(m),\overline{1}}} \prod_{r=1}^{\lfloor m/2 \rfloor}(B_i^2-[2r-1]_i^2),
        \end{equation}
        for $\tau(i) = i$.
    \end{definition}
    \begin{definition}\label{def:iquantumintegrable}
        A $U_h^{\iota}$-module $M$ is \emph{$X_{\iota}$-weighted} if $M$ has a decomposition
        \[
        M=\bigoplus_{\lambda\in X_\iota} M_{\lambda},
        \]
        such that
        \[
        K_{\mu}v=q^{\langle \mu, \lambda \rangle}v, \quad \text{for all $\mu\in Y^\iota$, $\lambda\in X_\iota$, and $v\in M_{\lambda}$}
        \]
        (recall that for $\mu \in Y^{\iota}$, we have $K_{\mu}\in U_h^{\iota}$), and such that
        \[
        B_i:M_{\lambda}\rightarrow M_{\lambda + \overline{\alpha_i}},\quad  \text{for all $\lambda \in X_{\iota}$ and $\alpha_i\in \Delta_{\circ}$}.
        \]
        An $X_{\iota}$-weighted $U_h^{\iota}$-module $M$ is called \emph{integrable} if, for all $\lambda\in X_\iota$, when $v\in M_{\lambda}$, then 
        \[
        B_j^{(m)}v=0 \ (\tau(j) \neq j) \quad \text{and} \quad B_{i,p(\langle \alpha_i^{\vee},\lambda \rangle)}^{(m)}v=0 \ (\tau(i) =i), \quad \text{for all $m\gg 0$},
        \]
        i.e., $M$ is locally nilpotent with respect to $\iota$divided powers of $B_i$.
    \end{definition}

\begin{remark}
    Parity $p(\langle \alpha_i^\vee, \lambda \rangle )$ is independent of choice of representative $\widetilde\lambda \in X$ of $\lambda\in X_\iota$.
\end{remark}

\begin{lemma}\label{lem:deformiquantummodule}
    For each finite-dimensional irreducible $\som$-module $M$ with integral highest weight there exists a unique (up to isomorphism) $U^{\iota}_h(\som \subset \slm)$-module structure on $M$, which specializes to the $\som$ action under the limit $h\rightarrow 0$. Moreover, the eigenvalues of the generators $B_i$ acting in such a representation are contained in $\{[\ell] \ | \ \ell \in \mathbb{Z}\}$.
\end{lemma}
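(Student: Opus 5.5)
The plan is to realize every such module inside the quantum spin Howe duality picture, and then argue uniqueness by rigidity.

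\emph{Existence.} Use quantum $(U_h(\son),U_h^{\iota}(\som))$ spin Howe duality in Wenzl's form (\S\ref{sec5}). The quantum spin module $\howespin_h\cong S_h^{\otimes m}$ carries commuting actions of $U_h(\son)$ and $U^{\iota}_h(\som\subset\slm)$. Choose $n$ large enough that the given integral highest weight $\lambda$ of $\som$ arises in \eqref{eq:classicalmultfreedecomposition} as a constituent of an irreducible $O_m$-module $V^{O_m}(\lambda)$. Since $h$ is generic, the decomposition of $\howespin_h$ under the commuting pair is a flat deformation of the classical one. The multiplicity space $\Hom_{U_h(\son)}(V_h(\lambda^\top),\howespin_h)$ is therefore a $U_h^{\iota}$-module whose $h\to 0$ limit is $V^{O_m}(\lambda)$. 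Restricting to the appropriate $\pm$ eigenspace of the component-group action gives $M$ as an $\som$-module.

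\emph{Uniqueness.} The plan is to show that $H^1(\som,\End M)=0$, which holds by Whitehead's lemma since $\som$ is semisimple for $m\ge 3$. Deformations of the module structure over $\C[[h]]$ are governed by this cohomology, and the $\iota$Serre presentation of Theorem \ref{thm:iSerre} makes $U_h^{\iota}$ a flat deformation of $U(\som)$. Hence any two deformations are conjugate order by order. Passing from formal to analytic $h$ follows the same pattern as in Remark \ref{Artin's_lifting}: Artin approximation applied to the finite system of equations defining an intertwiner. For $m=2$ the algebra $\som$ is abelian and the claim can be checked directly.

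\emph{Eigenvalues.} Each $B_i$ generates a rank-one subalgebra isomorphic to $U_h^{\iota}(\mathfrak{so}_2\subset\mathfrak{sl}_2)=\C[B_i]$. The condition in Definition \ref{def:iquantumintegrable} forces $B_i$ to be annihilated by the product polynomials in \eqref{eq:idivpowereven} and \eqref{eq:idivpowerodd}. Those polynomials have roots in $\{\pm[\ell]_i\}$, which lies in $\{[\ell]\mid \ell\in\mathbb{Z}\}$ because $[-\ell]=-[\ell]$. It therefore suffices to show that the module constructed above is integrable in this sense. For this I would invoke Wang--Zhang's result \cite{WW25} that restrictions of integrable $U_h(\g)$-modules are integrable $U_h^\iota$-modules; equivalently, one can compute directly on $\howespin_h$, where $B_i$ acts through a sum of rank-one pieces with spectrum given by quantum integers.

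\emph{Main obstacle.} The hardest step is the existence step: verifying that Wenzl's quantum Howe duality really produces a $U_h^{\iota}$-module with the prescribed classical limit. This requires matching the generators $B_i$ with Wenzl's operators and checking multiplicity-freeness at generic $h$. I would first try a continuity argument, using that dimensions of the isotypic components are constant in $h$.
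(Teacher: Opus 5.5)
Your route differs from the paper's. The paper simply invokes Iorgov--Klimyk \cite{MR2143754}: their Gelfand--Tsetlin formulas for classical-type representations of $U_h^{\iota}(\som\subset\slm)$ give existence and the spectrum of the generators, and their classification of finite-dimensional irreducibles gives uniqueness. For $m\ge 3$ your uniqueness argument is a sound and more conceptual replacement. The first-order discrepancy between two deformations satisfies the linearization at $h=0$ of the $\iota$Serre relations, which present $U(\som)$. So it is a $1$-cocycle, hence a coboundary by Whitehead. The intertwiner equations are linear, so passing from formal to analytic $h$ is harmless. (A presentation alone does not make $U_h^\iota$ flat, but this argument does not need flatness.) The eigenvalue step is also fine once you have one integrable model: uniqueness transports its spectrum to any other structure.

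The existence step, however, fails as written, for two reasons.

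\emph{The parameter.} Wenzl's operators $C_i$ realize $U^{\iota}_{2h+\pi\imag}(\som)$, with parameter $Q=-q^2$. At $h=0$ its $\iota$Serre relation becomes $\{B_i,\{B_i,B_{i\pm1}\}\}=B_{i\pm1}$, so this algebra is not a deformation of $U(\som)$. The Howe-dual multiplicity space is therefore not yet a $U^\iota_h$-module with classical limit $V^{O_m}(\lambda)$. You must pass to the twisted operators $\widetilde C_i=\imag e^{\pi\imag E_{i,i}}C_i$ of Proposition \ref{sign_switch}, and then rescale $h$. These still commute with $U_{2h}(\son)$ and give a $U^{\iota}_{2h}(\som)$-action specializing to $\som$. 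By Proposition \ref{C-action} and a short sign computation, they have spectrum $\pm[\ell]_{q^2}$ on $\howespin$. That is your integrability input; Wang--Zhang's restriction theorem does not apply directly, since $\howespin$ is not presented as a restricted $U_h(\slm)$-module.

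\emph{Extracting $M$.} Taking a $\pm$-eigenspace of the component group is wrong.
\begin{itemize}
\item If $V^{O_m}(\lambda)|_{\som}$ is irreducible, there is nothing to extract, and the eigenspaces of $e^{\pi\imag E_{1,1}}$ are not $\som$-stable.
\item If it splits as $M_+\oplus M_-$ ($m$ even, $\lambda'_1=m/2$), the reflection interchanges $M_+$ and $M_-$ rather than acting on each by a sign.
\end{itemize}
What you actually need is that the quantum multiplicity space splits as well. This follows from the same $H^1$-vanishing: every $\som$-endomorphism of $V^{O_m}(\lambda)$ lifts order by order. So the commutant is a flat deformation of $\C\times\C$, and the idempotent onto $M_\pm$ lifts (compare the end of the proof of Theorem \ref{monodromy}).

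Finally, your remark that $m=2$ "can be checked directly" is not right. Here $U_h^\iota(\mathfrak{so}_2\subset\mathfrak{sl}_2)=\C[B]$, and on a one-dimensional module $B$ may act by any analytic $c(h)$ with the prescribed value at $h=0$. So for $m=2$ neither uniqueness nor the eigenvalue claim holds unless you impose integrability in the sense of Definition \ref{def:iquantumintegrable}, which forces $c(h)$ to be a quantum integer.
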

\begin{proof}
    Follows from the explicit formulas for the action of the generators $B_i$ in the Gelfand-Tsetlin basis, and the classification of finite dimensional irreducible (classical) modules, found in \cite{MR2143754}.
\end{proof}

\begin{lemma}
    Let $\kompact = \som$. Let $M$ be a finite dimensional integrable $\kompact$-module (so the action is the differential of some finite dimensional $K=SO_m$-module). Then the $U_h^{\iota}(\som\subset \slm)$-module structure on $M$ is integrable as in Definition \ref{def:iquantumintegrable}.
\end{lemma}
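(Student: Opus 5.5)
The plan is to reduce to irreducible modules and then construct the required $X_{\iota}$-grading directly from the spectra of the generators $B_i$. By complete reducibility of finite-dimensional $SO_m$-modules, and since both the $U_h^{\iota}$-structure of Lemma \ref{lem:deformiquantummodule} and Definition \ref{def:iquantumintegrable} are compatible with direct sums, we may assume $M$ is irreducible. In the split case $Y^{\iota}=0$, so the condition on the $K_{\mu}$ is vacuous. Also $X_{\iota}\cong\bigoplus_i \mathbb{Z}/2\cdot\overline{\varpi_i}$, so a class $\lambda\in X_{\iota}$ is determined by the signs $(-1)^{\langle\alpha_i^{\vee},\lambda\rangle}$ for $i=1,\dots,m-1$.

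\textbf{Step 1 (semisimplicity of $B_i$).} First I show that each $B_i$ acts diagonalizably on $M$. By Lemma \ref{lem:deformiquantummodule} its eigenvalues lie in $\{[\ell]\}$. For diagonalizability I would use the Gelfand--Tsetlin realization of \cite{MR2143754}. After a conjugation by the braid group action, each $B_i$ can be brought to the generator $B_{1}$, which acts through a rank-one $U_h^{\iota}(\mathfrak{so}_2)$, i.e. a polynomial algebra $\C[B_1]$. That generator acts by a tridiagonal matrix whose off-diagonal entries are generically nonzero, with eigenvalues $[\ell]$ that are distinct on each $\mathfrak{so}_3$-type block. Alternatively, one can argue by deformation: classically $b_i$ acts semisimply with eigenvalues $\imag\ell$, the eigenvalues $[\ell]$ are distinct for generic $h$, and so eigenvalue multiplicities are locally constant in $h$.

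\textbf{Step 2 (the parity involutions).} Define $J_i\in\End(M)$ to act by $(-1)^{\ell}$ on the $[\ell]$-eigenspace of $B_i$. Since $[-\ell]=-[\ell]$ and $\ell\equiv-\ell$ mod $2$, we can write $J_i=f(B_i)$ with $f$ even. The key computation uses the $\iota$Serre relation \eqref{eq:iserre}. Take $v$ with $B_iv=[\ell]v$, and decompose $B_jv$ for $|i-j|=1$ into $B_i$-eigenvectors of eigenvalue $[k]$. The relation forces $[k]^2-[2][k][\ell]+[\ell]^2=1$, and hence $k=\ell\pm1$. It follows that $J_iB_jJ_i^{-1}=-B_j$ for $|i-j|=1$, while for $|i-j|>1$ the operators $J_i$ and $B_j$ commute. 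Since $J_j=f(B_j)$ with $f$ even, $J_i$ commutes with $J_j$ in all cases.

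\textbf{Step 3 (grading and integrability).} The $J_i$ are commuting involutions, so $M$ decomposes into their joint eigenspaces. We set $M_{\lambda}$ to be the joint eigenspace on which $J_i=(-1)^{\langle\alpha_i^{\vee},\lambda\rangle}$ for all $i$. The relation $J_jB_i=(-1)^{a_{ji}}B_iJ_j$ from Step 2 says precisely that $B_i:M_{\lambda}\to M_{\lambda+\overline{\alpha_i}}$, so $M$ is $X_{\iota}$-weighted. Now let $v\in M_{\lambda}$ and $p=p(\langle\alpha_i^{\vee},\lambda\rangle)$. Then $v$ is a sum of $B_i$-eigenvectors with eigenvalues $[\ell]$, where $\ell\equiv p$ and $|\ell|$ is bounded by finite-dimensionality. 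Comparing with \eqref{eq:idivpowereven} and \eqref{eq:idivpowerodd}, for $m\gg0$ the operator $B^{(m)}_{i,p}$ contains the factor $\prod(B_i^2-[k]^2)$ over all $k\equiv p$ with $|k|$ up to that bound. In the parity-zero case it also contains a factor $B_i$, which kills the eigenvalue $[0]$. Since $B_i$ is diagonalizable by Step 1, $B^{(m)}_{i,p}v=0$.

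I expect the main obstacle to be Step 1: producing a rigorous argument that $B_i$ is diagonalizable for generic $h$. The eigenvalue-shift computation in Step 2 is a routine quadratic identity in quantum integers.
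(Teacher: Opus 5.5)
Your reduction to irreducible $M$ matches the paper's, and your Steps 2 and 3 are correct. The paper's one-line proof stops at ``direct sum of irreducibles of integral highest weight'' and leaves the irreducible case to Lemma \ref{lem:deformiquantummodule}, i.e.\ to the Iorgov--Klimyk Gelfand--Tsetlin realization \cite{MR2143754}. It never writes down an $X_{\iota}$-grading. Your involutions $J_i$ construct that grading intrinsically from the $\iota$Serre relation, which goes beyond what the paper says.

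The genuine gap is Step 1, and it cannot be avoided: integrability itself forces every $B_i$ to act semisimply. For $k\gg0$ the operators $B^{(2k+1)}_{i,\overline{0}}$ and $B^{(2k)}_{i,\overline{1}}$ annihilate the even, resp.\ odd, parity summands. As polynomials in $B_i$ they are squarefree and coprime (the observation used in the proof of Lemma \ref{grading}), so their product is a squarefree polynomial killing $M$. Step 2, by contrast, does not need Step 1. Let $M^{\mathrm{gen}}_{[\ell]}$ be the generalized eigenspaces of $B_i$. On $\Hom(M^{\mathrm{gen}}_{[\ell]},M^{\mathrm{gen}}_{[k]})$ the operator $X\mapsto B_i^2X-[2]B_iXB_i+XB_i^2$ has the single eigenvalue $[k]^2-[2][k][\ell]+[\ell]^2$. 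So the grading exists unconditionally, and the whole difficulty sits in the last sentence of Step 3.

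Neither of your sketches for Step 1 works as stated.
\begin{itemize}
\item \emph{Deformation sketch.} Semisimplicity is not an open condition when eigenvalues repeat. The matrix $\begin{pmatrix}0&h\\0&0\end{pmatrix}$ is diagonalizable at $h=0$ and has locally constant eigenvalue multiplicities, yet is not diagonalizable for $h\neq0$. The classical $b_i$ has highly repeated eigenvalues on $M$, so local constancy of the multiplicities of the $[\ell]$ says nothing about nilpotent parts.
\item \emph{Gelfand--Tsetlin sketch.} Only the bottom generator of the chain is diagonal in the Gelfand--Tsetlin basis. ``Conjugating by the braid group action'' is circular if it means the operators $\iQW_i$, which by Definition \ref{def:iQW} are only defined on modules already known to be integrable. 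If you instead mean algebra automorphisms of $U_h^{\iota}$, you are twisting $M$ rather than conjugating in $\End(M)$. You would then still have to show the twisted module is of the type covered by those formulas. The rank-one algebra $\C[B_1]$ cannot help either, since it has non-semisimple modules.
\end{itemize}

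A clean fix is a rank-two reduction. For $m\ge3$ and $j=i\pm1$, the relations of Theorem \ref{thm:iSerre} hold with either index squared. So $B_i,B_j$ give an action of $U_h^{\iota}(\mathfrak{so}_3\subset\mathfrak{sl}_3)$ on $M$ in which $B_i$ plays the role of $B_1$. By complete reducibility and the Gelfand--Tsetlin formulas of \cite{MR2143754}, $B_1$ acts diagonally on each irreducible summand, so $B_i$ is diagonalizable on $M$. For $m=2$ the irreducible $M$ is one-dimensional, so there is nothing to prove. With this input, your Steps 2--3 complete the proof.
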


\begin{proof}
    Since $M$ comes from an $SO_m$-module, it follows that $M$ is a direct sum of irreducible $\som$-modules, each  with integral highest weight. 
\end{proof}

    Note that in \cite{MR2143754}, it is proven that finite-dimensional $U_h^{\iota}(\som\subset \slm)$-modules are completely reducible. Thus, for $\som$-modules, we may say integrable to mean ``is the differential of a $K$-module", or equivalently, is integrable in the sense of Definition \ref{def:iquantumintegrable} when $h=0$.

\subsection{$\iota$quantum Weyl group} In this section we introduce the $\iota$quantum analogue of Lusztig's symmetries from Definition \ref{def:Lussymmetry}. These explicit series formulas in terms of $\iota$divided powers originally appeared in Weinan Zhang's thesis \cite{zhang-thesis}, and then were further developed in \cite{WW25}.
    
\textbf{Now, we will only be concerned with split $\iota$quantum groups with $\varsigma_i=q_i^{-1}$ and $\kappa_i=0$.} 

    \begin{definition}\label{def:iQW}
        Fix $\alpha_i\in \Delta_{\circ}$, such that $\tau(i) = i$. Let $M$ be an integrable $U_h^\iota$-module. For any $v\in M_{\lambda}$, $\lambda\in X_\iota$, we define
        \begin{equation}\label{iQ_symm}
            \iQW_i\cdot v:=\sum_{k\ge 0, \  p(k)=p(\langle \alpha_i^{\vee},\lambda \rangle)} (-q_i)^{-k/2} B_{i,p(k)}^{(k)}\cdot v
        \end{equation}
        and
        \begin{equation}\label{iQ_symm_inverse}
            \iQW_i^{-1}\cdot v:=\sum_{k\ge 0, \  p(k)=p(\langle \alpha_i^{\vee},\lambda \rangle)} (-q_i)^{k/2} B_{i,p(k)}^{(k)}\cdot v
        \end{equation}
        We call $\iQW_i$ the $i$-th generator of the \emph{$\iota$quantum Weyl group}, also known as the $i$-th \emph{relative braid symmetry} of $M$. 
    \end{definition}

The operators $\iQW_i$ and $\iQW_i^{-1}$ are denoted $\textbf{T}_{i,-1}'$ and $\textbf{T}_{i,+1}''$ respectively, in \cite{WW25}.

    \begin{theorem}[{\cite[Corollary 3.10]{WW25}}]
    The operator $\iQW_i$ is invertible with inverse $\iQW_i^{-1}$. 
    \end{theorem}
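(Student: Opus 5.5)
The plan is to reduce to a rank one statement about a single operator $B_i$ acting semisimply, and then to a scalar identity on each eigenvector.

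\textbf{Parity-stable blocks.} Since the formulas \eqref{iQ_symm} and \eqref{iQ_symm_inverse} only involve polynomials in the single generator $B_i$, fix $\alpha_i$ and a parity $p\in\{\overline{0},\overline{1}\}$. Let $M^{(p)}:=\bigoplus_{\lambda:\, p(\langle\alpha_i^\vee,\lambda\rangle)=p}M_\lambda$. Because $B_i\colon M_\lambda\to M_{\lambda+\overline{\alpha_i}}$ and $\langle\alpha_i^\vee,\alpha_i\rangle=2$, the subspace $M^{(p)}$ is $B_i$-stable, and on it both $\iQW_i$ and $\iQW_i^{-1}$ act by the same formal series in $B_i$ with parity $p$ fixed. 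It therefore suffices to show that the two series multiply to the identity on each $M^{(p)}$.

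\textbf{Semisimplicity and eigenvalues.} Take $v\in M^{(p)}$. By integrability (Definition \ref{def:iquantumintegrable}) we have $B_{i,p}^{(m)}v=0$ for $m\gg0$, and we choose such an $m$ with parity opposite to the one producing the extra factor $B_i^2$. Concretely, for $p=\overline0$ take $m$ odd, so \eqref{eq:idivpowereven} becomes $B_i\prod_r(B_i^2-[2r]_i^2)$. For $p=\overline1$ \eqref{eq:idivpowerodd} is $\prod_r(B_i^2-[2r-1]_i^2)$ up to a factor $B_i^{\delta}$, so we take $m$ even. Then $v$ is annihilated by a polynomial in $B_i$ with distinct roots $\{[\ell]_i : \ell\equiv p \bmod 2,\ |\ell|\le L\}$. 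These values are distinct for generic $h$. Hence $B_i$ acts semisimply on the finite-dimensional $\C[B_i]$-span of $v$, with eigenvalues $[\ell]_i$, $\ell\equiv p$. We may thus assume $B_iv=[\ell]_iv$.

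\textbf{Scalar computation.} On such an eigenvector, $B_{i,p}^{(k)}$ acts by a scalar $c_k(\ell)$. Using $[a]^2-[b]^2=[a+b][a-b]$, this scalar becomes $\frac{1}{[k]!}\prod_j[\ell+j]$ over $j\equiv k-1 \bmod 2$ with $|j|\le k-1$. This is a (signed, in the variable $q_i^2$) quantum binomial coefficient, and it vanishes for $k>|\ell|$, so both sums are finite. The coefficient $c_k(\ell)$ is invariant under $q\mapsto q^{-1}$. Consequently
\[
f(\ell):=\sum_k(-q_i)^{-k/2}c_k(\ell)\quad\text{and}\quad g(\ell):=\sum_k(-q_i)^{k/2}c_k(\ell)
\]
are exchanged by $q\mapsto q^{-1}$. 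The key step, and the main obstacle, is to evaluate $f(\ell)$ in closed form. I would first try the $q$-binomial theorem $\sum_k q^{\binom k2}\qbinom{N}{k}x^k=\prod_{j=0}^{N-1}(1+q^jx)$ at a specialization such as $x=-q^{-N}$. The aim is to show that $f(\ell)$ is a monomial $\epsilon\, q_i^{a(\ell)}$ with $\epsilon^2=1$ (it should behave like a rotation by $\pi/2$, with $a(\ell)$ quadratic-free or similar).

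\textbf{Conclusion.} Given the monomial form, $g(\ell)=\epsilon\, q_i^{-a(\ell)}$, and so $f(\ell)g(\ell)=1$. Thus $\iQW_i\iQW_i^{-1}=\iQW_i^{-1}\iQW_i=\id$ on every eigenvector, hence on $M$. If the monomial evaluation proves messy, the fallback is to check the identity $f(\ell)g(\ell)=1$ by induction on $|\ell|$. For this I would use the recursion in $\ell$ for $c_k(\ell)$ coming from the factorization $[\ell+j]$ shifts, and compare with small cases $\ell=0,\pm1,\pm2$ computed by hand.
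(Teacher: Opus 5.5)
Your reduction is sound: passing to the parity block $M^{(p)}$, choosing an $\iota$divided power $B^{(m)}_{i,p}$ with $m$ of the right parity so that the annihilating polynomial has distinct roots, and concluding that $B_i$ acts semisimply with eigenvalues $[\ell]_i$, $\ell\equiv p$, is all correct. But the proposal has a genuine gap: it stops exactly where the content of the statement lies. You never show that $f(\ell)$ is a signed monomial, nor that $f(\ell)g(\ell)=1$; you only list strategies.

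Worse, the formula you would feed into those strategies is wrong. From \eqref{eq:idivpowereven}, \eqref{eq:idivpowerodd} and $[a]_i^2-[b]_i^2=[a+b]_i[a-b]_i$, one gets, for $k\ge1$ and $\ell\equiv k\equiv p$ (with $c_0=1$),
\[
c_k(\ell)=\frac{[\ell]_i}{[k]_i!}\prod_{\substack{|t|\le k-2\\ t\equiv k \bmod 2}}[\ell+t]_i .
\]
For example $c_2(\ell)=[\ell]_i^2/[2]_i$, not $[\ell-1]_i[\ell+1]_i/[2]_i$. In your version every factor $[\ell+t]_i$ has $\ell+t$ odd, so no term ever vanishes. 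That contradicts your own termination claim, and a $q$-binomial evaluation built on it would be summing the wrong, non-terminating series. Your guess about the shape of the answer is also off: the exponent is quadratic in $\ell$.

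Your fallback of inducting on $|\ell|$ is the right instinct, but the recursion that closes has to mix the two parities. Lemma \ref{lem:divpowerpolyrecursion} is a pure quantum-integer identity, so it holds with $Q$ replaced by $q_i$. It says $c_k(\ell+1)-c_k(\ell-1)=(q_i^{\ell}+q_i^{-\ell})\,c_{k-1}(\ell)$ for $k\ge1$ and $\ell\not\equiv k$. Put $x:=(-q_i)^{1/2}$ and $g(\ell)=\sum_{k\equiv \ell}x^k c_k(\ell)$. Then
\[
g(\ell+1)=g(\ell-1)+x\,(q_i^{\ell}+q_i^{-\ell})\,g(\ell),\qquad g(0)=1,\quad g(1)=x,
\]
and induction gives $g(\ell)=(-1)^{\lfloor \ell/2\rfloor}x^{\ell^2}$ for $\ell\ge0$. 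This is Proposition \ref{i-q_gens}, proved in Appendix \ref{proof6.2}. Negative $\ell$ follow because $c_k$ is even or odd in $[\ell]_i$.

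Your symmetry then finishes the argument, provided you phrase it as $x\mapsto x^{-1}$. This is legitimate because $-q_i^{-1}=(-q_i)^{-1}$ and $[n]_i$ is invariant, and it gives $f(\ell)=(-1)^{\lfloor \ell/2\rfloor}x^{-\ell^2}$, hence $fg=1$. If instead you write $f=\epsilon q_i^{a}$, the prefactor is $\pm\imag$ for odd $\ell$, and the naive ``$q\mapsto q^{-1}$ with $\epsilon$ fixed'' would give $fg=-1$.

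The paper itself only quotes this theorem from Wang--Zhang rather than proving it. Its Proposition \ref{i-q_gens} is nonetheless exactly the computation your plan is missing.
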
 

Moreover, these operators satisfy braid relations.

    \begin{theorem}[{\cite[Theorem E]{WW25}}]
        Let $\alpha_i, \alpha_j\in \Delta_{\circ}$ such that $\tau(i) = i$ and $\tau(j) = j$. Then for an integrable $U_{h}^{\iota}$-module $M$ we have the following equality of endomorphisms of $M$:
        \[
        \iQW_i\iQW_j\iQW_i\cdots = \iQW_j\iQW_i\iQW_j\cdots,
        \]
        where both sides have $m_{ij}$ factors, with $m_{ij}$ being the order of $s_{i}s_{j}$ in the Weyl group of $\mathfrak{g}$.
    \end{theorem}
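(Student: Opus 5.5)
This is \cite[Theorem E]{WW25}; here is how I would prove it directly. Braid relations involve only two indices, so the first step is to reduce to the rank two $\iota$quantum group $U_h^{\iota}$ generated by $B_i,B_j$. Any integrable $U_h^{\iota}$-module restricts to an integrable module over this subalgebra, and finite-dimensional integrable modules are completely reducible. So it suffices to check the identity on irreducible integrable modules $M$ of the rank two split $\iota$quantum group.

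When $m_{ij}=2$, we have $B_iB_j=B_jB_i$. Each $\iQW_i$ is a (weight-space-wise) polynomial in $B_i$ by \eqref{iQ_symm}, and $B_j$ preserves the parity $p(\langle\alpha_i^\vee,\lambda\rangle)$ because $a_{ij}=0$. Hence $\iQW_i\iQW_j=\iQW_j\iQW_i$ directly. The real cases are $m_{ij}\in\{3,4,6\}$.

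For these I would first establish an intertwining property. There is an algebra automorphism $\mathbf{T}_i$ of $U_h^{\iota}$ (the relative braid automorphism of Kolb--Pellegrini / Wang--Zhang) such that
\[
\iQW_i(x\cdot v)=\mathbf{T}_i(x)\cdot \iQW_i(v)\qquad\text{for all }x\in U_h^{\iota},\ v\in M.
\]
It suffices to verify this for $x=B_j$. Using \eqref{iQ_symm}, this reduces to identities in the rank one and rank two algebras involving the $\iota$divided powers \eqref{eq:idivpowereven}, \eqref{eq:idivpowerodd} and the $\iota$Serre relations \eqref{eq:iserre}. This is a computation with the $\iota$divided power expansion of $\mathbf{T}_i(B_j)$.

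Granting that the automorphisms $\mathbf{T}_i$ satisfy the braid relations (a purely algebraic check on the generators $B_i,B_j$), both sides of the desired relation intertwine the same automorphism of $U_h^{\iota}$. Their ratio therefore commutes with the $U_h^{\iota}$-action on the irreducible module $M$, and by Schur's lemma it is a scalar $c_M(h)$.

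To show $c_M(h)=1$, I would evaluate both sides on one convenient vector, for instance an eigenvector of $B_i$ with eigenvalue $[\ell]$ (Lemma \ref{lem:deformiquantummodule}) in an extremal $X_\iota$-weight space, where the sums \eqref{iQ_symm} truncate. As a consistency check, at $h=0$ the operators $\iQW_i$ should specialize to $\exp(\frac{\pi}{2}b_i)=\tilde s_i$ (Lemma \ref{lift_of_braid_gp_gens}), which satisfy the braid relations in $K$; hence $c_M(0)=1$. I expect the main obstacle to be the intertwining identity for $m_{ij}=3,4,6$. Already for $m_{ij}=3$, verifying it requires delicate $q$-combinatorics of $\iota$divided powers with the parity conditions. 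Failing that, I would use the factorization $\iQW_i=\Upsilon_iT_i$ from \eqref{eq:TKT} together with the known braid relations for Lusztig's $T_i$. This turns the problem into an identity among rank one quasi-$K$-matrices twisted by $T_w$, in the spirit of Dobson--Kolb's factorization of the quasi-$K$ matrix \cite{MR4021849}.
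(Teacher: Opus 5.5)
The paper does not prove this statement; it quotes it from \cite[Theorem E]{WW25}, and the only mechanism it records is the factorization \eqref{eq:TKT}. Judged on its own terms, your outline has a reasonable skeleton. The rank-two reduction is fine, and the $m_{ij}=2$ case is correct as written. You also correctly observe that if each $\iQW_i$ is $\mathbf{T}_i$-semilinear and the $\mathbf{T}_i$ satisfy the braid relations, the two sides differ by a $U_h^{\iota}$-linear automorphism. However, the two steps that carry the real content are not supplied.

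The intertwining identity $\iQW_i(B_jv)=\mathbf{T}_i(B_j)\,\iQW_i(v)$ for $a_{ij}\neq 0$, on an arbitrary integrable module, is only asserted. Your normalization of the Schur scalar $c_M$ would also not work as proposed:
\begin{itemize}
\item In the split case $\theta=-\mathrm{id}$ on $X$, so $X_\iota=X/2X$ is a finite $2$-group. There are no extremal $X_\iota$-weight spaces and no analogue of Lusztig's extremal vectors. The truncation of \eqref{iQ_symm} happens on every vector, so it gives no leverage.
\item A $B_i$-eigenvector in a fixed $i$-parity component is only rescaled by $\iQW_i$. It is not a $B_j$-eigenvector, so the very next factor $\iQW_j$ already requires the full rank-two structure of $M$. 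You exhibit no vector on which both sides can be evaluated, and none is evident.
\item $c_M(0)=1$ pins down only one value of what is, at best (after choosing an analytic family $M_h$), an analytic function of $h$. It does not give $c_M(h)=1$ for generic $h$.
\end{itemize}
Some rigidity input is needed. For instance, compare determinants to get $c_M^{\dim M}=1$. This is immediate when $m_{ij}$ is even. For $m_{ij}=3$ it requires $\iQW_i$ and $\iQW_j$ to be conjugate on $M$, e.g. via $\iQW_i\iQW_j$ and the relative analogue of $T_iT_j(E_i)=E_j$. Only after that does deforming $M$ to $h=0$, as in Lemma \ref{lem:deformiquantummodule}, force $c_M\equiv 1$.

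Your fallback through \eqref{eq:TKT} is the route the paper points to, and it avoids the scalar entirely. One has $\Upsilon_iT_i\Upsilon_jT_j\cdots=\Upsilon_i\,T_i(\Upsilon_j)\,T_iT_j(\Upsilon_i)\cdots\,T_iT_j\cdots$. So the braid relation reduces to Lusztig's braid relation plus independence of the reduced word in the rank-two product of twisted quasi-$K$-matrices, which is what the Dobson--Kolb factorization \cite{MR4021849} is designed to give.

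But that is an identity on integrable $U_h(\g)$-modules, where $\Upsilon_i$ and $T_i$ act. The theorem concerns arbitrary integrable $U_h^{\iota}$-modules, which are not assumed finite-dimensional; your reduction to irreducibles therefore also tacitly needs local finiteness. To transfer the identity you need an additional input, e.g. that every finite-dimensional irreducible integrable $U_h^{\iota}$-module occurs as a submodule of the restriction of a finite-dimensional $U_h(\g)$-module. Since $\iQW_i$ is defined intrinsically from $B_i$ and the $X_\iota$-grading (unique on irreducibles by the argument of Lemma \ref{grading}), the relation would then restrict. As it stands, neither of your routes reaches the stated generality.
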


    \begin{corollary}\label{cor:iqwbraidrep}
        Let $M$ be an integrable $U_{h}^{\iota}$-module. There is a representation
        \[
        \pi_{bW, \g^{\theta}\subset \g}^{h}:{\rm Br}_{W}\rightarrow GL(M)
        \]
        where for $\alpha_i\in \Delta_{\circ}$, the $i$-th braid group generator maps to the invertible operator $\iQW_i$.
    \end{corollary}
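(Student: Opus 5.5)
The corollary is a formal consequence of the two cited results of Wang--Zhang. My plan is to define the representation on the generators of $\Br_W$ and then check that it respects the defining relations of $\Br_W$. The point needing care is that the $\iQW_i$ are defined weight space by weight space, so they must assemble to well-defined invertible operators on all of $M$.

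First I verify that each $\iQW_i$ is a well-defined endomorphism of $M$. Since $M$ is $X_\iota$-weighted, $M=\bigoplus_{\lambda\in X_\iota}M_\lambda$, and \eqref{iQ_symm} defines $\iQW_i$ on each summand $M_\lambda$. The parity $p(\langle\alpha_i^\vee,\lambda\rangle)$ depends only on the class $\lambda\in X_\iota$, by the Remark following Definition \ref{def:iquantumintegrable}. Integrability gives $B^{(k)}_{i,p(k)}v=0$ for $k\gg0$ with $p(k)=p(\langle\alpha_i^\vee,\lambda\rangle)$, so the sum in \eqref{iQ_symm} is finite. Extending linearly gives $\iQW_i\in\End(M)$, and the same argument applies to $\iQW_i^{-1}$ defined by \eqref{iQ_symm_inverse}. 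By \cite[Corollary 3.10]{WW25}, these two operators are mutually inverse, so $\iQW_i\in GL(M)$.

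Next I use the Artin presentation of $\Br_W$: generators $\sigma_i$ for $\alpha_i\in\Delta$, subject only to the braid relations $\sigma_i\sigma_j\sigma_i\cdots=\sigma_j\sigma_i\sigma_j\cdots$ with $m_{ij}$ factors on each side. In the split case $\Delta_\circ=\Delta$ and $\tau=\id$, so every generator $\sigma_i$ has an assigned operator $\iQW_i$. By \cite[Theorem E]{WW25}, the operators $\iQW_i$ satisfy exactly these braid relations. The universal property of the presentation then yields a unique group homomorphism $\pi^h_{bW,\g^\theta\subset\g}\colon\Br_W\to GL(M)$ with $\sigma_i\mapsto\iQW_i$.

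The only possible obstacle is making sure the hypotheses of the cited results match our setting. Wang--Zhang work in the quasi-split case with a particular normalization of the parameters. Here we have fixed $\varsigma_i=q_i^{-1}$ and $\kappa_i=0$, and $M$ is integrable in the sense of Definition \ref{def:iquantumintegrable}. I would check that these are precisely their standing assumptions, which the paper has arranged via the boldface conventions above. Once that is confirmed, nothing further needs to be proved.
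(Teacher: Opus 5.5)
Your proposal is correct and follows the same route as the paper: the corollary is read off directly from invertibility \cite[Corollary 3.10]{WW25} and the braid relations \cite[Theorem E]{WW25}, via the Artin presentation of $\mathrm{Br}_W$ (in the split case every $\alpha_i\in\Delta$ lies in $\Delta_\circ$). Your extra check, that integrability makes the sums in Definition \ref{def:iQW} finite on each $M_\lambda$, is a harmless addition that the paper leaves implicit.
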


    \begin{remark}
        Formulas for relative braid symmetries for quasi-split $\iota$quantum groups are developed in \cite{WW25}, but in Definition \ref{def:iQW} we only recall the formulas for split symmetric pairs. The definition of boundary Casimir connection in \cite{MR4983792} is only for split symmetric pairs, but see Question \ref{quest:quasisplit}.
    \end{remark}


\section{The dual pair $(U_{2h}(\son), U_{2h+\pi {\bf i}}^{\iota}(O_m))$}\label{sec5}


    To quantize classical $(\son, 
    O_m)$ Howe duality, we will work with $(U_{2h}(\son[2n]), U^{\iota}_{2h+\pi {\bf i}}(\mathfrak{so}_m))$, where
        \begin{itemize}
            \item $U_{2h}(\son[2n])$ is a quantum group, with $\Delta=D_n$, as in Definition \ref{Quantum_group} (but with $h$ replaced by $2h$), and
            \item $U^{\iota}_{2h+\pi {\bf i}}(\mathfrak{so}_m)$ is an $\iota$quantum group, with $\Delta_{\circ}=A_{m-1}$, $\Delta_{\bullet} = \emptyset$, $\tau=\id$, $\varsigma_i=q_i^{-1}$, and $\kappa_i=0$, as in Definition \ref{iQuantumgroup}.
        \end{itemize}

\begin{notation}
    Our convention is that $q:=e^h$, and, since we work with $d_i=2$, that 
    \begin{itemize}
        \item $q_i=e^{2h}=q^2$, when considering $U_{2h}(\son)$, and
        \item $q_i=e^{2h+\pi {\bf i}} = -q^2$, when considering $U_{2h+\pi {\bf i}}^{\iota}(\som)\subset U_{2h+\pi {\bf i}}^{\iota}(\slm)$. 
    \end{itemize}
    If it is ambiguous, then we will specify $q_i=q^2$ or $q_i=-q^2$. When convenient, we may write $Q:=-q^2$.
\end{notation}

\subsection{Spin representation for $U_{2h}(\son[2n])$}

Let $S$ be the spin representation for $\son$ as in \S\ref{spinrepforson}. We will define an action of $U_{2h}(\son[2n])$ on $S$. To this end, we introduce the following operators in $\End(S)$
\begin{equation*}
    \omega_i:=\psi_i \psi_i^{*}+q^{-1}\psi_i^{*}\psi_i, \quad i=1, \dots, n.
\end{equation*}
For any $I\subset \{1,\dots,n\}$, with $I=\{i_1< i_2 < \dots < i_{|I|}\}$, there is a vector $\hwp_{I}:= \psi_{i_1}^{*} \dots \psi_{i_{|I|}}^{*}\in S$, as in Notation \ref{not:orderedbasisforS}. Note that, 
\[
\omega_i\cdot \hwp_{I} = \begin{cases}
    \hwp_I \quad \text{if $i\notin I$, and }\\
    q^{-1}\hwp_I \quad \text{if $i\in I$},
\end{cases}
\]
in particular the operators $\omega_i$ are invertible and commute. 

\begin{proposition}[{\cite[Proposition 3.1]{Wenzl-Spin}}]\label{QG_so2n_action}
    There is an algebra homomorphism 
    \[
    \rho:U_{2h}(\son)\rightarrow \End(S)
    \]
    given by
\begin{equation*}
    \rho(E_i)=\psi_i\psi_{i+1}^{*}\omega_i\omega_{i+1}^{-1}, \qquad \rho(F_i)=(\omega_i\omega_{i+1}^{-1})^{-1}\psi_{i+1}\psi_{i}^{*},
\end{equation*}
\begin{equation*}
    \rho(E_n)=\psi_{n-1}\psi_n q\omega_{n-1}\omega_{n}, \qquad \rho(F_n)=(q\omega_{n-1}\omega_{n})^{-1}\psi_n^{*}\psi_{n-1}^{*}
\end{equation*}
\begin{equation*}
    \rho(K_i)= \omega_i^2 \omega_{i+1}^{-2}, \qquad \rho(K_n)= q^2 \omega_{n-1}^2 \omega_n^{2}.
\end{equation*}
\end{proposition}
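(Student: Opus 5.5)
The plan is a direct verification that the operators $\rho(E_i),\rho(F_i),\rho(K_i^{\pm1})$ satisfy the defining relations of $U_{2h}(\son)$ from Definition \ref{Quantum_group} (with $q_i=q^2$ for every node, since the root system is simply laced and we use $d_i=2$). Everything is checked on the basis $\{\hwp_I\}$ of $S$ from Notation \ref{not:orderedbasisforS}. The key bookkeeping device is that each $\hwp_I$ is a simultaneous eigenvector of the commuting, invertible operators $\omega_j$, with eigenvalue $q^{-[j\in I]}$. Each generator is a Clifford monomial times a diagonal operator, so it sends basis vectors to $\pm$ basis vectors times a power of $q$. The commutation rules I will use are $\omega_j\psi_j=\psi_j\,q^{-1}\omega_j$ and $\omega_j\psi_j^*=q\,\psi_j^*\omega_j$ (up to the correct direction, to be fixed by acting on $\hwp_I$), together with the fact that $\omega_j$ commutes with $\psi_k,\psi_k^*$ for $k\neq j$.

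The steps are as follows.

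\textbf{(1) Cartan relations.} The relations $K_iK_j=K_jK_i$ and invertibility are immediate. For $K_iE_jK_i^{-1}=q^{2a_{ij}}E_j$, note that $E_j$ shifts the weight of $\hwp_I$ by the simple root $\alpha_j$. For example, $\psi_i\psi_{i+1}^*$ removes $i$ from $I$ and adds $i+1$, which changes the weight $\varpi_n-\sum_{k\in I}\epsilon_k$ by $\epsilon_i-\epsilon_{i+1}$. Moreover, $\rho(K_i)$ acts on $\hwp_I$ by $q^{2\langle\alpha_i^\vee,\wt\rangle}$: the exponent is $2(-[i\in I]+[i+1\in I])$, which equals $2\langle\alpha_i^\vee,\wt(\hwp_I)\rangle$, and the spin-node case works the same way, where the factor $q^2$ accounts for the $\tfrac12$'s in $\varpi_n$. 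So this relation follows from weight considerations. This also shows that $S$ is type I.

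\textbf{(2) $[E_i,F_j]$.} For $i\neq j$, the Clifford parts anticommute pairwise an even number of times, so they commute; the $\omega$-prefactors produce matching powers of $q$ on both orderings. Hence the commutator vanishes. This includes the pair $(n-1,n)$: the monomials $\psi_{n-1}\psi_n^*$ and $\psi_n^*\psi_{n-1}^*$ contain both $\psi_n^*$-type factors, and one checks the action on each $\hwp_I$. For $i=j$, I restrict to the two-dimensional span of $\hwp_I$ and its image, which is a copy of the vector representation of $U_{q^2}(\mathfrak{sl}_2)$. There I verify $E_iF_i-F_iE_i=(K_i-K_i^{-1})/(q^2-q^{-2})$ on each of the three cases: $\hwp_I$ is killed by both, or $\hwp_I$ is the top vector, or $\hwp_I$ is the bottom vector. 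The $\omega$ normalizations are exactly what produces the value $\pm1$ for $(K-K^{-1})/(q^2-q^{-2})$ when $K=q^{\pm2}$.

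\textbf{(3) Serre relations.} These are the main obstacle in principle, but they simplify drastically. Each $\rho(E_i)$ squares to zero, because $\psi_i^2=0$ and the $\omega$'s only rescale. So for $a_{ij}=-1$ the relation $E_i^2E_j-[2]E_iE_jE_i+E_jE_i^2=0$ reduces to $E_iE_jE_i=0$. This holds because a word $E_iE_jE_i$ would have to apply $\psi_i$ twice with only one intervening creation of index $i$ that is impossible by index bookkeeping. Concretely, for adjacent $i,j$ the composite contains $\psi_k$ twice for the shared index $k$, with no intervening $\psi_k^*$. I will check this case by case for $j=i\pm1$ and for the pairs $(n-2,n)$ and $(n-1,n)$. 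For $a_{ij}=0$, commutation follows as in (2), by the even number of anticommutations plus $\omega$ bookkeeping. The $F$-relations are identical.

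The step needing most care is the $i=j$ case at the spin node $n$, where the normalization $q\omega_{n-1}\omega_n$ has to produce $K_n=q^2\omega_{n-1}^2\omega_n^2$ exactly. I will do that computation explicitly on the four possible restrictions of $I$ to $\{n-1,n\}$.
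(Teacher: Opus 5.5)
Your plan is correct. The paper gives no argument for this statement: it cites \cite[Proposition 3.1]{Wenzl-Spin} and only remarks that Wenzl's formulas were adjusted to the present conventions. Checking the defining relations directly on the basis $\{\hwp_I\}$ is therefore a genuinely different, self-contained route.

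What your route buys is a verification of exactly what the citation leaves implicit, namely that the adjusted normalizations (the factor $q$ in $\rho(E_n),\rho(F_n)$ and $q^2$ in $\rho(K_n)$) are correct. Write $n_j:=\psi_j^*\psi_j$, so that $\omega_j=q^{-n_j}$. Whenever the result is nonzero, one gets $\rho(E_i)\hwp_I=q^{-1}\psi_i\psi_{i+1}^*\hwp_I$, $\rho(F_i)\hwp_I=q\,\psi_{i+1}\psi_i^*\hwp_I$, $\rho(E_n)\hwp_I=q^{-1}\psi_{n-1}\psi_n\hwp_I$ and $\rho(F_n)\hwp_I=q\,\psi_n^*\psi_{n-1}^*\hwp_I$. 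Hence $\rho(E_iF_i)=(1-n_i)n_{i+1}$, $\rho(F_iE_i)=n_i(1-n_{i+1})$, $\rho(E_nF_n)=(1-n_{n-1})(1-n_n)$ and $\rho(F_nE_n)=n_{n-1}n_n$. The commutators are $n_{i+1}-n_i$ and $1-n_{n-1}-n_n$, which is $(K-K^{-1})/(q^2-q^{-2})$, as you predict. The citation's advantage is brevity, and it keeps the conventions tied to the same source that supplies the commuting operators of Proposition \ref{b-action} and Theorem \ref{thm:quantumorthogonalHowe}.

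Three small corrections for the write-up:
\begin{enumerate}
\item The commutation rules go the other way: $\omega_j\psi_j=q\,\psi_j\omega_j$ and $\omega_j\psi_j^*=q^{-1}\psi_j^*\omega_j$.
\item Your stated reason for $\rho(E_iE_jE_i)=0$ points at the wrong index. In $E_iE_{i+1}E_i$ the shared index occurs as $\psi_{i+1}^*\psi_{i+1}\psi_{i+1}^*$, which is not zero. The vanishing comes from an outer index: $\psi_i$ appears twice with no $\psi_i^*$ in between. Likewise $\psi_{i+2}^*$ repeats in $E_{i+1}E_iE_{i+1}$, and $\psi_{n-2}$, resp.\ $\psi_n$, repeats for the pair $(n-2,n)$.
\item In $D_n$ one has $a_{n-1,n}=(\epsilon_{n-1}-\epsilon_n,\epsilon_{n-1}+\epsilon_n)=0$, so $(n-1,n)$ is not an $a_{ij}=-1$ pair. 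The relation needed there is $E_{n-1}E_n=E_nE_{n-1}$. Your anticommutation argument does not apply literally, since $\psi_n$ and $\psi_n^*$ do not anticommute. It holds anyway because both products vanish: each contains $\psi_{n-1}$ twice.
\end{enumerate}
The same kind of vanishing handles $[E_{n-1},F_n]$, $[E_n,F_{n-1}]$, and every pair $E_i,F_j$ with $i\neq j$ whose index sets overlap. Pairs with disjoint index sets commute outright. None of this affects your conclusion.
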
 

\begin{remark}
    We adjusted Wenzl's formulas to match our conventions.
\end{remark}

The action of $U_{2h}(\son)$ on $S$ gives rise to an action on $S^{\otimes m} $, using the coproduct $\Delta$ from Definition \ref{Quantum_group}. Consider the case when $m=2$ and recall the decomposition of $S\ot S$ into the four summands
\begin{equation}\label{eq:foursummands}
S^{+}\ot S^{+}, \quad S^{-}\ot S^{-}, \quad S^{+}\ot S^{-}, \quad \text{and} \quad S^{-}\ot S^{+},
\end{equation}
using the notation of \eqref{eq:Spmisohwt}. 

\begin{lemma}
    Each of the four summands in \eqref{eq:foursummands} is preserved by the action of $U_{2h}(\son)$.
\end{lemma}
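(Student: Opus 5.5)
The argument is a parity count. Every generator in Proposition \ref{QG_so2n_action} acts on $S$ by an even element of the Clifford algebra, so the action of $U_{2h}(\son)$ on $S$ preserves the $\mathbb{Z}/2$-grading $S=S^+\oplus S^-$. The coproduct then reassembles these gradings on $S\otimes S$.

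\textbf{Step 1: parity on a single factor.} I will read off from Proposition \ref{QG_so2n_action} that each of $\rho(E_i)$, $\rho(F_i)$, $\rho(K_i^{\pm1})$, for $1\le i\le n$, is a product of an even number of the generators $\psi_j,\psi_j^*$ together with the operators $\omega_j^{\pm1}$.
\begin{itemize}
\item $\rho(E_i)$ and $\rho(F_i)$ for $i<n$ contain the quadratic factor $\psi_i\psi_{i+1}^*$ or $\psi_{i+1}\psi_i^*$.
\item $\rho(E_n)$ and $\rho(F_n)$ contain $\psi_{n-1}\psi_n$ or $\psi_n^*\psi_{n-1}^*$.
\item Each $\omega_j$ acts diagonally on the basis $\hwp_I$, by the formula above, so it preserves $|I|$.
\end{itemize}
An element built from $\psi_j,\psi_j^*$ in an even number of factors sends $\hwp_I$ to a combination of $\hwp_J$ with $|J|\equiv |I| \pmod 2$. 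Hence $\rho(E_i)$, $\rho(F_i)$, $\rho(K_i^{\pm1})$ all preserve $S^+$ (spanned by $\hwp_I$ with $|I|$ even) and $S^-$ (spanned by $\hwp_I$ with $|I|$ odd). So $S^{\pm}$ are $U_{2h}(\son)$-submodules of $S$.

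\textbf{Step 2: pass to $S\otimes S$.} By Definition \ref{Quantum_group}, $\Delta(E_i)=E_i\otimes 1+K_i\otimes E_i$, $\Delta(F_i)=F_i\otimes K_i^{-1}+1\otimes F_i$, and $\Delta(K_i)=K_i\otimes K_i$. Each tensor factor of each term is one of the operators from Step 1 or the identity, so each term preserves every summand $S^{\epsilon}\otimes S^{\epsilon'}$, for $\epsilon,\epsilon'\in\{+,-\}$. The generators of $U_{2h}(\son)$ therefore preserve all four summands in \eqref{eq:foursummands}, and so does the whole algebra.

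There is no real obstacle. The only point to check is that in the explicit formulas of Proposition \ref{QG_so2n_action} the $\omega$-factors are diagonal in the basis $\hwp_I$, which is exactly the computation of $\omega_i\cdot \hwp_I$ recorded just before the proposition.
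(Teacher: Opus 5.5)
Your proposal is correct and takes the same approach as the paper. The paper's proof is the single observation that $U_{2h}(\son)$ acts on $S$ by even operators; you have spelled out the parity check on the generators from Proposition~\ref{QG_so2n_action} and the passage to $S\otimes S$ through the coproduct.
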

\begin{proof}
The action of $U_{2h}(\son)$ on $S$ is given by even operators. 
\end{proof}

For each of the four summands in \eqref{eq:foursummands} we will now explicitly describe all $U_{2h}(\son)$-singular vectors (up to scalar).

\begin{proposition}\label{high_vect}
    Let $\epsilon_1, \epsilon_2 \in \{\pm\}$. The $k$-th highest weight vector $\hwp_k^{\epsilon_1\epsilon_2}\in S^{\epsilon_1}\otimes S^{\epsilon_2}$, is given by
\begin{equation}\label{eq:highvect++}
    \hwp_{k}^{++}=\sum_{J\subset I_{2k},2\mid\#J} c^{+}_J\cdot \hwp_{J}\otimes \hwp_{I_{2k}\setminus J}, \qquad \hwp_0^{++}=1\otimes 1,
\end{equation}
\begin{equation}
    \hwp^{--}_k = \sum_{J\subset I_{2k},2\nmid\#J} c^{-}_J\cdot \hwp_{J}\otimes \hwp_{I_{2k}\setminus J}, \qquad  \hwp_0^{--}=\psi_n^{*}\otimes \psi_n^{*}, 
\end{equation}
\begin{equation}
    \hwp^{+-}_k = \sum_{J\subset I_{2k+1},2\mid\#J} c^{+}_J\cdot \hwp_{J}\otimes \hwp_{I_{2k+1}\setminus J}, \qquad  \hwp_0^{+-}=1\otimes \psi_n^{*}, \qquad \text{and}
\end{equation}
\begin{equation}
\hwp^{-+}_k = \sum_{J\subset I_{2k+1},2\nmid\#J} c^{-}_J\cdot \hwp_{J}\otimes \hwp_{I_{2k+1}\setminus J}, \qquad \quad \hwp_0^{-+}=\psi_n^{*}\otimes 1.
\end{equation}
Here, we use the notation
\begin{itemize}
    \item $\hwp_I=\psi_{i_1}^*\dots \psi_{i_{|I|}}^*$, $I=\{i_1 < \dots < i_{|I|}\}$, see Notation \ref{not:orderedbasisforS},
    \item $I_{k}:=\{n-k+1,\dots,n\}$, so $\hwp_{I_k}=\psi_{n-k+1}^{*}\dots\psi_{n}^{*}$ (letting $I_0:=\emptyset$ and $\hwp_{I_0} = 1$),
    \item $c^{\epsilon}_J:=(-q^2)^{\hgt(\lambda_{low, \epsilon} -\lambda_J)}$, for $\epsilon\in \{\pm 1\}$,
    \item $\lambda_J = \wt (\hwp_{J})$ is the weight of the corresponding monomial,
    \item $\lambda_{low,\pm}$ is the lowest weight of the corresponding module $S^{\pm}$, and
    \item $\hgt(\lambda):=\sum_{i=1}^{n}m_i$, for a weight $\lambda=\sum_{i=1}^{n} m_i \alpha_i$.
\end{itemize}
These are all the singular vectors (up to scalar) in $S^{\epsilon_1}\otimes S^{\epsilon_2}$.
\end{proposition}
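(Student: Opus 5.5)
Our plan is to check singularity directly, i.e.\ that $\Delta(E_i)=E_i\otimes 1+K_i\otimes E_i$ kills each vector for $i=1,\dots,n$, and then to count. First, every $\hwp_J\otimes\hwp_{I_{2k}\setminus J}$ (resp.\ with $I_{2k+1}$) has the same weight, namely $\wt(1\otimes 1)-\sum_{j\in I_{2k}}\epsilon_j$ (resp.\ with $I_{2k+1}$). By Proposition \ref{FH_decomposition}, this weight equals $\varpi_{n-2k}$ (resp.\ $\varpi_{n-2k-1}$), up to the conventions for $\varpi_0$ and for the extreme indices $k=0$, where the vector is $2\varpi_n$, $2\varpi_{n-1}$, or $\varpi_{n-1}+\varpi_n$. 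The decomposition in Proposition \ref{FH_decomposition} is multiplicity free. For generic $h$ the tensor product $S^{\epsilon_1}\otimes S^{\epsilon_2}$ of $U_{2h}(\son)$-modules decomposes exactly as in the classical case, since characters are preserved. Hence each weight space of a highest weight occurring there contains a one-dimensional space of singular vectors. So it suffices to show that the displayed vectors are nonzero, which is clear since $c_J\neq 0$, and that they are annihilated by all $\Delta(E_i)$. The parity restriction on $\#J$ just records which summand $S^{\epsilon_1}\otimes S^{\epsilon_2}$ the vector lies in.

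For the annihilation we use Proposition \ref{QG_so2n_action}. For $i<n-2k$ (resp.\ $i<n-2k-1$), both $\psi_i\psi_{i+1}^*$ terms vanish on each factor, because neither $i$ nor $i+1$ is in the support in a way that allows $\psi_i\psi^*_{i+1}$ to act nontrivially: $\psi_{i+1}^*$ would need $i+1\notin$ the support while $\psi_i$ needs $i\in$ the support, and $i$ is never in the support. Similarly, $E_n$ acts by $\psi_{n-1}\psi_n$, which requires both $n-1$ and $n$ to lie in the same factor; these terms cancel pairwise. The main computation is for $n-2k\le i<n$. Here $E_i$ moves the index $i+1$ to $i$ inside one tensor factor. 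A nonzero term arises only from a $J$ with $i+1\in J$ and $i\notin J$ (first factor), or with $i+1\notin J$ and $i\notin$ the complement (second factor). We pair $J$ with $J'=J\triangle\{i,i+1\}$. The two contributions $E_i\hwp_J\otimes\hwp_{J^c}$ and $K_i\hwp_{J'}\otimes E_i\hwp_{J'^c}$ produce the same basis tensor. We must check that their coefficients cancel: $c_J$ times the $\omega$-factor and Clifford sign from $E_i$ acting on the first factor should equal minus $c_{J'}$ times the $K_i$-eigenvalue $q^{\pm 2}$ on $\hwp_{J'}$ times the $\omega$-factor and sign on the second factor. Since $\hgt(\lambda_{low}-\lambda_{J'})$ and $\hgt(\lambda_{low}-\lambda_J)$ differ by exactly one (the weights differ by $\alpha_i$), we get $c_{J'}=(-q^2)^{\pm1}c_J$. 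This ratio is what should cancel the $q^{\pm2}$ from $K_i$ and the signs. The Clifford sign from commuting $\psi_i\psi_{i+1}^*$ past the earlier $\psi^*$'s is trivial, since these are even operators acting on adjacent indices. The case $i=n$ is analogous, with $J'=J\triangle\{n-1,n\}$ and the opposite parity shift, using $\rho(E_n)$ and $\rho(K_n)$.

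The main obstacle is the bookkeeping of the $\omega$-eigenvalues and the $K_i$-weights to confirm that the exponent of $-q^2$ exactly matches. We will carry this out for $i<n$ by listing the four membership cases of $(i,i+1)$ in $J$, and do $i=n$ separately; the case $J'=J$ cannot occur. A mismatch by a global sign or power of $q$ could be absorbed into the convention defining $c_J$ via $\lambda_{low,\epsilon}$, which is why the coefficient depends on $\epsilon$.
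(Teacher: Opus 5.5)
Your proposal is correct and follows the paper's argument: you check annihilation by each $\Delta(E_i)$ directly from Proposition \ref{QG_so2n_action}, then use the multiplicity-free decomposition of Proposition \ref{FH_decomposition}, transported to generic $h$, to conclude that these vectors exhaust the singular vectors. When you do the bookkeeping, correct one slip: $\rho(E_i)$ is nonzero on $\hwp_J$ only if $i\in J$ and $i+1\notin J$. It replaces $i$ by $i+1$ with the uniform scalar $-q^{-1}$, not by a trivial sign; likewise $\rho(E_n)$ acts by $-q^{-1}$ on $\hwp_J$ when $\{n-1,n\}\subset J$, while $K_i$ acts by $q^2$ on the partner $\hwp_{J'}$. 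The cancellation condition is therefore exactly $c_{J'}=-q^{-2}c_J$ with $\lambda_{J'}=\lambda_J+\alpha_i$, and the stated coefficients satisfy this on the nose. Drop the remark about absorbing a mismatch via $\lambda_{low,\epsilon}$: $J$ and $J'$ have the same parity, so $\lambda_{low,\epsilon}$ cancels in $c_{J'}/c_J$ and could not fix anything.
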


\begin{proof}
    It is a straightforward computation to check these are singular vectors of the specified weight. That they exhaust all singular vectors (up to scalar) follows from Proposition \ref{FH_decomposition} and Theorem \ref{thm:equivalencequantumclassical}.
\end{proof}

\subsection{Commuting action of $U_{2h+\pi {\bf i}}^{\iota}(\som)$ on $\howespin$}

The $\iota$quantum group $U^{\iota}_{2h+\pi {\bf i}}(\mathfrak{so}_m)$ is the subalgebra of $U_{2h+\pi {\bf i}}(\slm)$ generated by 
    \begin{equation}\label{eq:quantumBigenerators}
        B_i=F_i+q^{-1}_iE_iK_i^{-1}=F_i-q^{-2}E_iK_i^{-1}, \quad i=1, \dots, m-1.
    \end{equation}
    These elements satisfy the $\iota$Serre relations \eqref{eq:iserre}:
    \begin{equation}
    B_i^2B_{i\pm 1} + (q^2+q^{-2})B_iB_{i\pm 1}B_i +B_{i\pm 1}B_i^2 = B_{i\pm 1}
    \end{equation}
    and 
    \begin{equation}
    B_iB_j=B_jB_i \qquad |i-j|> 1.
    \end{equation}
    By Theorem \ref{thm:iSerre}, the generators $B_i$, and the $\iota$Serre relations, give generators-and-relations for $U^{\iota}_{2h+\pi {\bf i}}(\mathfrak{so}_m)$.

We will construct an action of this $\iota$quantum group using the following elements in $\End(S^{\otimes 2})$.
\begin{definition}
Let
\begin{equation*}
    C:=\sum_{i=1}^n \Omega_{i-1}^{-1}\otimes\Omega_{i-1}(\psi_i \otimes \psi_i^{*}+\psi_i^{*}\otimes \psi_i),
\end{equation*}
where
\begin{equation*}
    \Omega_i :=\prod_{j=1}^{i}\omega_j^2, \quad 1\le i\le n, \quad \text{and} \quad \Omega_0:=1.
\end{equation*}
\end{definition}

Consider the $U_{2h}(\son)$-module $\howespin\cong S^{\ot m}$, $\hwp_I\mapsto \hwp_{I_1}\otimes \dots \hwp_{I_m}$, as in Notation \ref{not:psibasis}. 

\begin{definition}
    Let 
    \begin{equation}\label{c_i}
        C_i:=\id_{S^{\ot i-1}}\ot C \ot \id_{S^{\ot m-i-1}}.
    \end{equation}
\end{definition}

\begin{proposition}[{\cite[Proposition 4.2]{Wenzl-Spin}}]\label{b-action}
    There is an algebra homomorphism
    \begin{align*}
        U^{\iota}_{2h+\pi {\bf i}}(\mathfrak{so}_m)&\rightarrow \End_{U_{2h}(\son)}(S^{\otimes m}). \\
        B_i&\mapsto C_i
    \end{align*}
    In particular, the $C_i$ commute with the action of $U_{2h}(\son)$ on $S^{\otimes m}$ determined by Proposition \ref{QG_so2n_action}.
\end{proposition}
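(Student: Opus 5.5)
The proof splits into two independent verifications: that each $C_i$ commutes with the image of $U_{2h}(\son)$ on $S^{\otimes m}$, and that the $C_i$ satisfy the relations of Theorem \ref{thm:iSerre} at $q_i=-q^2$. By Theorem \ref{thm:iSerre}, the second statement is exactly what is needed for $B_i\mapsto C_i$ to define an algebra homomorphism.

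\emph{Commutation.} Since $C_i$ acts only on the $i$-th and $(i+1)$-st factors, and $\Delta$ is coassociative, it suffices to treat $m=2$. Here I would show $[C,\Delta(X)]=0$ on $S\otimes S$ for $X\in\{E_j,F_j,K_j\}$, using the formulas of Proposition \ref{QG_so2n_action}.
\begin{itemize}
\item For $K_j$ this is a weight count. The summand $\psi_i\otimes\psi_i^*+\psi_i^*\otimes\psi_i$ has total weight zero, and $\Omega_{i-1}^{\mp1}$ is diagonal.
\item For $E_j$ with $j<n$, write $\Delta(E_j)=E_j\otimes 1+K_j\otimes E_j$. Only the summands $i=j,j+1$ of $C$ fail to commute trivially with $\psi_j\psi_{j+1}^*$. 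For these, the Clifford relations together with $\omega_k\psi_k=\psi_k$ and $\omega_k\psi_k^*=q^{-1}\psi_k^*$ (up to the appropriate ordering) should make the cross terms cancel. The factor $\Omega_{i-1}^{-1}\otimes\Omega_{i-1}$ is there precisely to absorb the $K_j$ appearing in the coproduct.
\item The cases $F_j$ and $E_n,F_n$ are analogous, with $\psi_{n-1}\psi_n$ in place of $\psi_j\psi_{j+1}^*$.
\end{itemize}
Alternatively, I could check commutation on the explicit basis $\hwp_{J}\otimes\hwp_{J'}$; this is a finite sign and $q$-power bookkeeping.

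\emph{Relations.} The relation $C_iC_j=C_jC_i$ for $|i-j|>1$ holds because the two operators act on disjoint tensor factors, and all operators involved are even. The $\iota$Serre relation \eqref{eq:iserre} only involves three consecutive factors, so it reduces to $m=3$ with the pair $C\otimes1$, $1\otimes C$. Here I would use the commutation just established: both sides of \eqref{eq:iserre} lie in $\End_{U_{2h}(\son)}(S^{\otimes3})$.

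One route is to verify the relation directly by expanding in Clifford generators. For each index $i$ one can view $\psi_{i}^{(a)},\psi_i^{*(a)}$ ($a=1,2,3$) as a copy of a rank-one Clifford algebra. The operator $C\otimes 1$ then becomes a $q$-twisted sum $\sum_i X_i^{(12)}$, where the $X_i$ for different $i$ $q$-commute via the $\Omega$ factors. The computation reduces to the rank $n=1$ identity plus cross terms that cancel, similarly to the classical identity that $\sum_i{}^{(i)}B_{ab}$ satisfy the $\mathfrak{so}_3$ relations.

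A cleaner route is to first check the quadratic relation $C^2$ on $S\otimes S$. Proposition \ref{high_vect} gives all singular vectors, so on each isotypic component $C$ acts by an explicitly computable scalar, the value of $C$ on $\hwp_k^{\epsilon_1\epsilon_2}$. I expect these eigenvalues to be $\pm[\ell]$ with respect to $Q=-q^2$. One could then try to deduce the cubic relation from a Hecke/BMW-type argument. However, the cubic $\iota$Serre relation genuinely involves three factors, so I would ultimately fall back on the direct Clifford computation for $n=1$ followed by an induction on $n$, splitting off the last index using $\Omega_{n-1}$.

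I expect the main obstacle to be the $\iota$Serre relation. The combination of signs from $Q=-q^2$, the Clifford anticommutation across tensor factors, and the $q$-twists from the $\Omega$'s is delicate. Making the induction on $n$ close, so that the cross terms between index $n$ and indices $<n$ cancel exactly, is the step I would expect to take the most care.
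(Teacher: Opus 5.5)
Your overall architecture matches what the paper relies on. The paper itself proves nothing here: it cites Wenzl, who checks commutation on generators and then the relations of the presentation in Theorem \ref{thm:iSerre}, exactly as you plan. Your commutation plan is fine. The gap is in the cubic $\iota$Serre relation. There the mechanism you describe (rank-one identity plus cross terms that cancel) is false for generic $q$.

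For $n=1$ one has $\Omega_0=1$, so $C=\psi_1\otimes\psi_1^*+\psi_1^*\otimes\psi_1$ is a plain hopping operator. On $S^{\otimes 3}$ one checks sector by sector that $C_1C_2C_1=0$ and $C_1^2C_2+C_2C_1^2=C_2$. So the rank-one relation holds with \emph{any} coefficient in front of $C_1C_2C_1$. The value $q^2+q^{-2}$ must therefore come from the cross terms, which cannot simply cancel.

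Concretely, write $C\otimes 1=\sum_i X_i$ and $1\otimes C=\sum_i Y_i$. Moving $\psi_i,\psi_i^*$ past the modes $<i$ turns the $i$-th summand into the local hopping of mode $i$ times $Q^{N^{(1)}_{<i}-N^{(2)}_{<i}}$. Here $Q=-q^2$ and $N^{(a)}_{<i}$ counts the occupied modes $<i$ in tensor factor $a$. For $n=2$, let $D$ be this twist in $X_2$. The mode-$2$ diagonal terms give $X_2^2Y_2+(q^2+q^{-2})X_2Y_2X_2+Y_2X_2^2-Y_2=(D^2-1)Y_2\neq 0$. The cross terms give $X_1^2Y_2+(q^2+q^{-2})X_1Y_2X_1+Y_2X_1^2=(1-D^2)Y_2$, which cancels it, and this equality holds only for this coefficient. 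Your analogy with $\sum_i{}^{(i)}B_{ab}$ is the $q=1$ picture. There $D=\pm1$, different modes anticommute, and the cross terms really do drop out of $\{C_1,\{C_1,C_2\}\}=C_2$.

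For the same reason, your induction on $n$ that splits off the last index does not close with only the $\iota$Serre relation for $C^{(n-1)}$ as hypothesis. Write $C^{(n)}_a=A_a+D_aX_a$, where:
\begin{enumerate}
\item $A_a$ is the $(n-1)$-mode operator;
\item $D_a=Q^{N^{(a)}_{<n}-N^{(a+1)}_{<n}}$ is the twist coming from $\Omega_{n-1}^{-1}\otimes\Omega_{n-1}$ and the fermionic signs;
\item $X_a$ is the hopping of mode $n$.
\end{enumerate}
Compare coefficients of the mode-$n$ operator $X_1^2X_2$, using $X_1X_2X_1=0$ and $X_2=X_1^2X_2+X_2X_1^2$ on mode $n$. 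This forces $A_1^2D_2+(q^2+q^{-2})A_1D_2A_1+D_2A_1^2=(1-D_1^2)D_2$. Split $A_1=A^++A^-$ according to whether a particle moves from factor $2$ to factor $1$ or back. The condition then becomes $Q^{-1}A^-A^+-QA^+A^-=(1-D_1^2)/(Q-Q^{-1})$. This is a $U_Q(\mathfrak{sl}_2)$-type relation between $C^{(n-1)}$ and the twist. It does not follow from the $\iota$Serre relations, which do not involve $D$ at all.

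The missing idea is that the twists act as Cartan elements. Then $C$ has the shape of an iterated coideal coproduct $\Delta(B_i)=B_i\otimes K_i^{-1}+1\otimes B_i$ of single-mode operators. One clean repair is to split off the \emph{first} mode instead. Then $C^{(n)}_a=x_a+D_aA_a$, with $A_a$ the $(n-1)$-mode operator on modes $2,\dots,n$. Both the hopping $x_a$ and the twist $D_a=Q^{n^{(a)}_1-n^{(a+1)}_1}$ are supported on mode $1$; here $n^{(a)}_1\in\{0,1\}$ is the occupation of mode $1$ in factor $a$. Besides the inductive hypothesis, you then only need finitely many identities on the $8$-dimensional space $(\C^2)^{\otimes 3}$. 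One example is $x_1^2D_2+(q^2+q^{-2})x_1D_2x_1+D_2x_1^2=(1-D_1^2)D_2$. Alternatively, as the remark after the proposition suggests, restrict the quantum skew Howe duality action of $U_{2h+\pi\imag}(\slm)$ to the coideal subalgebra.
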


\begin{remark}
    The proof of Proposition \ref{b-action} uses a generators and relations presentation of $U^{\iota}_{2h+\pi {\bf i}}(\mathfrak{so}_m)$. In \cite[Proposition 4.2]{Wenzl-Spin}, Wenzl checks that the $C_i$ satisfy the $\iota$Serre relations \ref{eq:iserre}. That the $C_i$ satisfy the $\iota$Serre relations is given a diagrammatic interpretation in \cite{MR4824565, mcnamara2025quantumspinbrauercategory}. A different description of the action of the $\iota$quantum group on $\howespin$ is by restricting the action of quantum $\slm$ on $\howespin$ from quantum skew Howe duality \cite{aboumrad2022skewhowedualitytypes}.
\end{remark}

We now compute the action of $C$ on highest weight vectors in $S^{\ot 2}$.

\begin{proposition}\label{C-action}
    Let $Q=-q^2$. Then we have
    \begin{equation}\label{eq:2kand2kp1}
        C\hwp^{\pm \pm}_k= [2k]_Q \hwp_k^{\mp\mp} \quad \text{and} \quad C\hwp^{\pm \mp}_k= [2k+1]_Q \hwp_k^{\mp\pm}.
    \end{equation}
\end{proposition}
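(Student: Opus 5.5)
Since $C$ commutes with the $U_{2h}(\son)$-action (Proposition \ref{b-action}) and is odd in each tensor factor, it sends a singular vector of weight $\mu$ in $S^{\epsilon_1}\ot S^{\epsilon_2}$ to a singular vector of the same weight in $S^{-\epsilon_1}\ot S^{-\epsilon_2}$. Proposition \ref{high_vect} lists all singular vectors up to scalar, and their weights match as claimed: $\hwp_k^{++}$ and $\hwp_k^{--}$ have weight $2\varpi_n$ shifted down by $\epsilon_{n-2k+1}+\dots+\epsilon_n$, resp.\ $2\varpi_{n-1}$ shifted down correspondingly, and a direct comparison shows $\hwp_k^{++}$ and $\hwp_k^{--}$ have the same weight, namely that of $\hwp_{I_{2k}}$ placed against $1\otimes 1$. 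By multiplicity-freeness it follows that $C\hwp_k^{\pm\pm}=a\,\hwp_k^{\mp\mp}$ and $C\hwp_k^{\pm\mp}=b\,\hwp_k^{\mp\pm}$ for some scalars. For $k=0$ one checks directly that $C(1\ot 1)=0$ (each $\psi_i$ kills $1$ and $\psi_i^*\ot\psi_i$ also kills it), which is consistent with $[0]_Q=0$. The remaining task is to determine the scalar.

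To do this, I would compare a single coefficient. In $\hwp_k^{\mp\mp}$ I would pick a convenient basis vector, for instance $1\ot\hwp_{I_{2k}}$ for the $--\to++$ direction, or $\psi^*_{n-2k+1}\ot\hwp_{I_{2k}\setminus\{n-2k+1\}}$ in the other. I would then compute its coefficient in $C\hwp_k^{\pm\pm}$. Only the terms of $C$ indexed by $i\in I_{2k}$ contribute. Each term $\Omega_{i-1}^{-1}\ot\Omega_{i-1}(\psi_i\ot\psi_i^*+\psi_i^*\ot\psi_i)$ moves the index $i$ from one tensor factor to the other. This contributes a sign from commuting $\psi_i$ or $\psi_i^*$ past the earlier $\psi^*_j$ in the ordered monomial, a sign from passing the first tensor factor's parity (the Koszul sign when acting on $v\ot w$), and a power of $q$ from the $\Omega_{i-1}^{\pm1}$ factors. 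Since $i-1<n-2k+1$ only partly, $\Omega_{i-1}$ counts the elements $j<i$ of $I_{2k}$ in each factor.

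Next I would sum these contributions over $i\in I_{2k}$, weighted by the coefficients $c_J=(-q^2)^{\hgt(\lambda_{low}-\lambda_J)}$. For sets $J$ within $I_{2k}$, the height difference changes by a fixed amount depending on the position of $i$ when $i$ is toggled. The result should be a geometric sum $\sum_{r} Q^{\,2k-1-2r}$ (up to an overall normalization), that is, $[2k]_Q$ in the even case and $[2k+1]_Q$ in the odd case, where $I_{2k+1}$ has one more element. Concretely, I would index $I_{2k}$ as positions $r=1,\dots,2k$ and show that the $r$-th contribution is $Q^{2k+1-2r}$ times the target coefficient, using $\omega_j^2=q^{-2}$ on occupied slots together with the Clifford signs, which combine into $Q=-q^2$.

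The main obstacle is the sign and power bookkeeping in this last step. It requires reconciling the Koszul signs, the $\Omega_{i-1}$ twists, and the height convention in $c_J$ (with $\hgt$ measured in simple roots of $D_n$, where toggling $\epsilon_i$ contributes heights of the form $n-i$ or similar). I would first verify the $n=1,2$ cases by hand to fix normalizations, then run the general induction on $k$. An alternative is to peel off the last two indices and use the recursion $[2k]_Q=Q[2k-1]_Q+Q^{-(2k-1)}$.
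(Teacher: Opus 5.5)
Your strategy is the same as the paper's. Because $C$ commutes with $U_{2h}(\son)$ and is odd in each factor, multiplicity-freeness gives $C\hwp_k^{\pm\pm}\in\C\,\hwp_k^{\mp\mp}$ and $C\hwp_k^{\pm\mp}\in\C\,\hwp_k^{\mp\pm}$. The scalar is then read off from one coefficient as a geometric sum in $Q$, which the appendix does for $\hwp_k^{++}$.

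However, one item in your sign bookkeeping is wrong and would make the computation fail in half the cases: there is no Koszul sign. The operator $C\in\End(S^{\ot 2})$ is an ordinary tensor product of operators, $(a\otimes b)(v\otimes w)=av\otimes bw$. Every term of $\hwp_k^{--}$ and of $\hwp_k^{-+}$ has an odd first factor $\hwp_J$. A Koszul factor $(-1)^{|v|}$ would therefore be a uniform $-1$ on these vectors, and you would get $C\hwp_k^{--}=-[2k]_Q\hwp_k^{++}$ and $C\hwp_k^{-+}=-[2k+1]_Q\hwp_k^{+-}$. The case $k=0$ already shows this. We have $C(\psi_n^*\otimes 1)=(\Omega_{n-1}^{-1}\otimes\Omega_{n-1})(\psi_n\psi_n^*\otimes\psi_n^*)=1\otimes\psi_n^*$, as $[1]_Q=1$ requires, whereas the Koszul rule gives $-1\otimes\psi_n^*$. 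Similarly, for $n=2$ one gets $C\hwp_1^{--}=[2]_Q\hwp_1^{++}$ only without it. The only signs are those from anticommuting $\psi_i,\psi_i^*$ past the $\psi_j^*$ inside a single tensor factor. The issue is invisible in the $\hwp_k^{+\pm}$ cases, where the first factor is even, so checking only those would not catch it.

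A smaller point: your weight claims are off. For $k\ge 1$, $\hwp_k^{++}$ and $\hwp_k^{--}$ both have weight $\varpi_{n-2k}$, not ``$2\varpi_{n-1}$ shifted down''. But $\hwp_0^{--}=\psi_n^*\otimes\psi_n^*$ has weight $2\varpi_{n-1}\neq 2\varpi_n$, so your proportionality argument does not pair it with $\hwp_0^{++}=1\otimes 1$. You still need $C\hwp_0^{--}=0$. This holds because $S^+\otimes S^+$ has no singular vector of weight $2\varpi_{n-1}$, or directly because $\psi_i\psi_n^*=0$ in $S$ for $i<n$ and $(\psi_n^*)^2=0$. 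With these two corrections your outline goes through as in the paper.
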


\begin{proof}
    See Appendix \ref{proof5.12}.
\end{proof}

Before we proceed to work with the braid group symmetries, note that $X_\iota =\oplus_{i=1}^{m-1} \mathbb{Z}/2\mathbb{Z}\cdot \overline{\varpi_i}$. Let us define an $X_\iota$-grading on $\howespin$ so that the submodule
\begin{equation}\label{eq:igradingeven}
    S^{\ot i-1}\ot \big((S^+\ot S^+) \oplus (S^- \ot S^-)\big)\ot S^{\ot m-i-1}, \quad 1\le i \le m-1,
\end{equation}
is even, with respect to $\alpha_i^{\vee}\in \Delta^{\vee}$, and the submodule
\begin{equation}\label{eq:igradingodd}
    S^{\ot i-1}\ot \Big((S^+\ot S^-) \oplus (S^- \ot S^+)\Big)\ot S^{\ot m-i-1}, \quad 1\le i \le m-1,
\end{equation}
is odd with respect to $\alpha_i^{\vee}\in \Delta^{\vee}$. 

\begin{example}
    If $m=3$, so $X_{\iota}\cong \mathbb{Z}/2\times \mathbb{Z}/2$, then $\howespin_{(\overline{0},\overline{0})} = (S^+\otimes S^+\otimes S^+) \oplus (S^-\otimes S^-\otimes S^-)$ and $\howespin_{(\overline{0},\overline{1})} = (S^+\otimes S^+\otimes S^-)\oplus (S^-\otimes S^-\otimes S^+)$.
\end{example}

Thus, we have a decomposition
\begin{equation}\label{eq:SXiotagrading}
    \howespin = \bigoplus_{\lambda\in X_{\iota}}\howespin_{\lambda}
\end{equation}
with respect to the specified $X_{\iota}$-grading.

\begin{lemma}
    Each summand in the decomposition in \eqref{eq:SXiotagrading} is preserved by the action of $U_{2h}(\son)$. The decomposition in \eqref{eq:SXiotagrading} makes $\howespin$ an $X_{\iota}$-weighted module for $U_{2h+\pi {\bf i}}^\iota (\som)$, as in Definition \ref{def:iquantumintegrable}.
\end{lemma}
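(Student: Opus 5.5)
The lemma has two claims: the $X_\iota$-homogeneous pieces $\howespin_\lambda$ are $U_{2h}(\son)$-submodules, and the grading is compatible with the $\iota$quantum group in the sense of Definition \ref{def:iquantumintegrable}. I would first describe the grading in terms of the tensor-factor parities. Write $\howespin\cong S^{\otimes m}=\bigoplus_{\epsilon\in\{\pm\}^m} S^{\epsilon_1}\otimes\cdots\otimes S^{\epsilon_m}$. By \eqref{eq:igradingeven} and \eqref{eq:igradingodd}, the component of $\overline{\varpi_i}$-coordinate, i.e. parity with respect to $\alpha_i^\vee$, of the summand indexed by $\epsilon$ is $\overline{0}$ if $\epsilon_i=\epsilon_{i+1}$ and $\overline{1}$ otherwise. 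So each $\howespin_\lambda$ is a direct sum of summands $S^{\epsilon_1}\otimes\cdots\otimes S^{\epsilon_m}$. It is in fact the sum of exactly two of them, $\epsilon$ and $-\epsilon$, since $\lambda$ determines $\epsilon$ up to a global sign flip.

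For the first claim, I would use that $\rho(E_i),\rho(F_i),\rho(K_i^{\pm1})$ from Proposition \ref{QG_so2n_action} are even operators on $S$, so each preserves $S^+$ and $S^-$. The iterated coproduct $\Delta^{(m-1)}$ of a generator is a sum of tensor products of generators and $K$'s. Hence it preserves every summand $S^{\epsilon_1}\otimes\cdots\otimes S^{\epsilon_m}$, and therefore every $\howespin_\lambda$.

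For the second claim, in the split case $Y^\iota=0$, so the $K_\mu$ condition is vacuous. It remains to check that $C_i$ maps $\howespin_\lambda$ into $\howespin_{\lambda+\overline{\alpha_i}}$. The operator $C$ is a sum of terms $\Omega_{i-1}^{-1}\otimes\Omega_{i-1}\cdot(\psi\otimes\psi^*+\psi^*\otimes\psi)$. The $\Omega$ factors are even, and each of $\psi_j,\psi_j^*$ is odd on $S$. So $C_i$ flips both $\epsilon_i$ and $\epsilon_{i+1}$ and leaves the other signs unchanged.

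Next I compute $\overline{\alpha_i}$ in $X_\iota$ through its pairings: $\langle\alpha_j^\vee,\alpha_i\rangle=a_{ji}$. This is $2\equiv 0$ for $j=i$, $-1\equiv 1$ for $j=i\pm1$, and $0$ otherwise. Flipping both $\epsilon_i$ and $\epsilon_{i+1}$ has matching effects on the parities:
\begin{itemize}
\item the parity of the pair $(\epsilon_i,\epsilon_{i+1})$ is unchanged;
\item the parities of the pairs $(\epsilon_{i-1},\epsilon_i)$ and $(\epsilon_{i+1},\epsilon_{i+2})$ flip;
\item all other parities are unchanged.
\end{itemize}
So the grading shifts exactly by $\overline{\alpha_i}$, as required.

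The main obstacle is bookkeeping for the identification of $X_\iota$ with $(\mathbb{Z}/2)^{m-1}$, i.e. making sure $\overline{\alpha_i}$ is read off via the pairing $\langle\alpha_j^\vee,-\rangle$ mod $2$, consistently with how the grading was defined. A sanity check is Proposition \ref{C-action}: $C$ sends $\hwp_k^{\pm\pm}$ to $\hwp_k^{\mp\mp}$, which preserves the $\alpha_i^\vee$-parity as predicted.
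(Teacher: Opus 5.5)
Your proposal is correct and takes the same approach as the paper, whose proof just says the lemma ``follows by parity considerations'' from the explicit formulas in Propositions \ref{QG_so2n_action} and \ref{b-action}. You have written those parity considerations out in full: the $U_{2h}(\son)$ generators act by even operators; $C_i$ flips the parities of slots $i$ and $i+1$; in $X_\iota=X/2X$ one has $\overline{\alpha_i}=\sum_j a_{ji}\overline{\varpi_j}$, which matches that flip; and the $K_\mu$ condition is vacuous because $Y^\iota=0$.
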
 
\begin{proof}
    Follows by parity considerations, using the explicit descriptions of the action of generators in Proposition \ref{QG_so2n_action} and Proposition \ref{b-action}
\end{proof}

\begin{lemma}
    The $X_{\iota}$-weighted module $\howespin$ is an integrable $U_{2h+\pi {\bf i}}^\iota (\som)$-module, as in Definition \ref{def:iquantumintegrable}.
\end{lemma}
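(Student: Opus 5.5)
We must show that for every $v\in\howespin_\lambda$ and every $i$, the $\iota$divided power $B^{(k)}_{i,p(\langle\alpha_i^\vee,\lambda\rangle)}$ (with $B_i$ acting by $C_i$) kills $v$ for $k\gg 0$. Since $C_i=\id\otimes C\otimes\id$, only the two tensor factors $i,i+1$ matter. It therefore suffices to prove the claim for $C$ acting on $S\otimes S$, graded by the parity of \eqref{eq:igradingeven}, \eqref{eq:igradingodd}.

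First, $S\otimes S$ decomposes under $U_{2h}(\son)$ as a direct sum of irreducible highest weight modules generated by the singular vectors $\hwp_k^{\epsilon_1\epsilon_2}$ of Proposition \ref{high_vect}. This uses complete reducibility of $\mathcal{C}_{\son}$ for generic $h$, or Theorem \ref{thm:equivalencequantumclassical} together with Proposition \ref{FH_decomposition}. By Proposition \ref{b-action}, $C$ commutes with $U_{2h}(\son)$. Hence $C$ maps the submodule generated by $\hwp_k^{\pm\pm}$ into the one generated by $\hwp_k^{\mp\mp}$, and similarly for the mixed parities. These pairs of submodules are isomorphic, since the highest weights of $S^+\otimes S^+$ and $S^-\otimes S^-$ in a given pair agree (both are $\varpi_{n-2k}$ for $k\ge1$). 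The action of $C$ is then determined by its action on highest weight vectors, which is given by Proposition \ref{C-action}.

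On the span of $\hwp_k^{++}$ and $\hwp_k^{--}$ (for $k\ge1$), $C$ acts by the matrix $\begin{pmatrix}0&[2k]_Q\\ [2k]_Q&0\end{pmatrix}$. So $C^2=[2k]_Q^2$ there, and $C$ is diagonalizable with eigenvalues $\pm[2k]_Q$. When $k=0$ we have $C=0$; the weights $2\varpi_n$ and $2\varpi_{n-1}$ differ, so $C$ kills both vectors. In the odd summands, $C^2=[2k+1]_Q^2$. Thus on the even-parity subspace $C^2$ acts semisimply with eigenvalues in $\{[2r]_Q^2\}\cup\{0\}$, and on the odd-parity subspace with eigenvalues in $\{[2r-1]_Q^2\}$, where $r$ ranges over a finite set bounded by $n$.

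The parity of $\langle\alpha_i^\vee,\lambda\rangle$ is $\overline 0$ on \eqref{eq:igradingeven} and $\overline 1$ on \eqref{eq:igradingodd}. On the even part, $B^{(k)}_{i,\overline0}$ contains the factor $B_i^{1+\delta}\prod_{r=1}^{\lceil k/2\rceil-1}(B_i^2-[2r]_i^2)$, where $[\,\cdot\,]_i=[\,\cdot\,]_Q$ because $q_i=-q^2=Q$. For $k$ large this product annihilates every eigenvector of the semisimple operator $C^2$: the factor $B_i$ kills the $0$-eigenspace, and the factors $B_i^2-[2r]_Q^2$ kill the others. The odd part is handled identically using \eqref{eq:idivpowerodd}. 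Semisimplicity of $C$ on each summand makes this factorwise vanishing legitimate.

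The main point to check carefully is the bookkeeping of the pairing. One must verify that $C$ really intertwines the specific isotypic pieces, so that no nontrivial Jordan blocks arise between different $k$. This follows because distinct $k$ give distinct highest weights, so $\Hom$-spaces between non-matching pieces vanish.
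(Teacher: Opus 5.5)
Your proposal is correct and follows essentially the same route as the paper. Both reduce to $C$ acting on $S\otimes S$, use Proposition \ref{C-action} to see that $C$ has eigenvalues $\pm[\ell]_Q$ with the parity of $\ell$ matching the $X_\iota$-parity, and conclude that the $\iota$divided powers of that parity vanish for large index. Your explicit argument that $C$ is diagonalizable fills in a point the paper leaves implicit: you pair the isomorphic isotypic pieces, observe that $C$ acts by a $2\times 2$ antidiagonal matrix with nonzero entries, and note that $C=0$ on the unpaired $k=0$ even summands.
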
  
\begin{proof}
    Since $\howespin\cong S^{\otimes m}$ and each $C_i$ acts on the tensor components $i$ and $i+1$, Proposition \ref{C-action} shows that the eigenvalues of $C_i$ on $\howespin$ are $\pm [\ell]_Q$, for $0\le \ell\le n$. The singular vectors in Proposition \ref{C-action} are $X_{\iota}$-graded, so combining \eqref{eq:2kand2kp1} with \eqref{eq:igradingeven} and \eqref{eq:igradingodd}, we find that, for $\epsilon \in \{0,1\}$,
    \[
    \bigoplus_{\ell \in \epsilon + 2\mathbb{Z}}\{v\in \howespin \ | \ C_iv=[\ell]_{Q}v\} = \bigoplus_{\substack{\lambda \in X_{\iota} \\ p(\langle\alpha_i^{\vee}, \lambda\rangle)= \epsilon}}\howespin_{\lambda}.
    \]
    Since the action of $U_{2h+\pi {\bf i}}^\iota (\som)$ on $\howespin$ is by $B_i\mapsto C_i$, it follows that $\howespin$ is locally nilpotent with respect to the $\iota$divided powers, in the sense of Definition \ref{def:iquantumintegrable}. Thus, $\howespin$ is integrable. 
\end{proof}

\begin{lemma}\label{grading}
    Let $m\ge 2$, and let $M$ be a simple finite-dimensional $U^\iota_{2h+\pi{\bf i}}(\som)$-module. If $M$ admits an $X_{\iota}$-grading for which $M$ is integrable with respect to it, then such a grading is unique.
\end{lemma}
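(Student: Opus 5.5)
The grading is forced by the eigenvalues of the generators $B_i$, so the plan is to show that the integrability condition pins down the parity $p(\langle\alpha_i^\vee,\lambda\rangle)$ on each graded piece. In the split case $X_\iota\cong\bigoplus_{i=1}^{m-1}\mathbb{Z}/2\cdot\overline{\varpi_i}$, so an element $\lambda\in X_\iota$ is determined by the parities $p(\langle\alpha_i^\vee,\lambda\rangle)$ for $i=1,\dots,m-1$. The condition $B_i:M_\lambda\to M_{\lambda+\overline{\alpha_i}}$ only involves the $Y^\iota=0$ part trivially, so the content of an $X_\iota$-grading is this collection of parities on each summand.

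\textbf{Step 1: fix a generator and use the divided powers.} Fix $i$ and a graded piece $M_\lambda$ with $\epsilon:=p(\langle\alpha_i^\vee,\lambda\rangle)$. Since $M$ is finite dimensional, the relevant $\iota$divided power $B^{(k)}_{i,\epsilon}$ is, up to a nonzero scalar, a polynomial in $B_i$ whose roots are $\pm[\ell]_i$ for $\ell\equiv\epsilon+1 \pmod 2$ below a bound, together with a factor $B_i$ when appropriate; see \eqref{eq:idivpowereven} and \eqref{eq:idivpowerodd}. Local nilpotence on $v\in M_\lambda$ says this polynomial kills $v$ for $k\gg0$. Hence the minimal polynomial of $B_i$ on the $B_i^2$-stable subspace generated by $M_\lambda$ has all roots among $\{[\ell]_i : \ell\equiv\epsilon\pmod 2\}$. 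One should be careful here: the roots of the even-parity product are $0$ and $\pm[2r]_i$, so the surviving eigenvalues are exactly the $[\ell]_i$ with $\ell\equiv\epsilon$.

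\textbf{Step 2: deduce uniqueness.} For generic $h$, with $q_i=-q^2$, the sets $\{\pm[\ell]_i:\ell\text{ even}\}$ and $\{\pm[\ell]_i:\ell\text{ odd}\}$ are disjoint. Therefore $\bigoplus_{p(\langle\alpha_i^\vee,\lambda\rangle)=\epsilon}M_\lambda$ must equal the sum of generalized eigenspaces of $B_i$ for eigenvalues $[\ell]_i$ with $\ell\equiv\epsilon$, exactly as in the proof that $\howespin$ is integrable. This subspace depends only on the $U^\iota$-module structure of $M$. Intersecting over $i=1,\dots,m-1$ recovers each $M_\lambda$ intrinsically. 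So any two integrable gradings coincide, up to which piece is labelled by which $\lambda$, and the labels are themselves determined by the parities. Two consequences follow. First, both gradings give the same decomposition. Second, since $m\ge 2$ guarantees at least one $B_i$, the statement is nonvacuous. Simplicity of $M$ is not really needed in this argument, though it could be used to note that the pieces are permuted transitively by the $B_i$.

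\textbf{Main obstacle.} The delicate step is Step 1 at the boundary cases. The operator $B_i$ need not be diagonalizable a priori, and the $\iota$divided power kills $v$ only for large $k$, not a fixed polynomial. I would handle this by noting that for large $k$ the roots of the defining polynomials exhaust all $[\ell]_i$ of the opposite parity. Any eigenvalue component of $v$ of the wrong parity, or not of the form $[\ell]_i$, would then survive under multiplication by the polynomial: it is multiplied by a nonzero scalar on the corresponding generalized eigenspace, up to nilpotent corrections that are still invertible. This contradicts vanishing, and it also shows that $M_\lambda$ lies in the asserted generalized eigenspace sum. Genericity of $h$ is needed to ensure that $[\ell]_i=\pm[\ell']_i$ forces $\ell=\pm\ell'$.
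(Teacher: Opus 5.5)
Your argument is correct, but it is organized differently from the paper's. The paper takes one nonzero homogeneous vector $v\in M_\lambda$ and observes that its degree is detected by which $\iota$divided powers eventually kill it, because $B^{(2k+1)}_{i,\overline{0}}$ and $B^{(2k)}_{i,\overline{1}}$ are products of distinct linear factors with no common factor. It then uses simplicity: $M$ is spanned by the vectors $B_{i_1}\cdots B_{i_k}v$, whose degrees are forced to be $\lambda+\overline{\alpha_{i_1}}+\dots+\overline{\alpha_{i_k}}$.

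You instead describe every graded piece intrinsically, as $M_\lambda=\bigcap_{i} G^{(i)}_{p(\langle\alpha_i^\vee,\lambda\rangle)}$. Here $G^{(i)}_{\epsilon}$ is the sum of the generalized eigenspaces of $B_i$ for the eigenvalues $[\ell]_Q$ with $\ell\equiv\epsilon \pmod 2$, in the spirit of the paper's proof that $S^{\otimes m}$ is integrable. This gives a bit more: simplicity of $M$ is not used, and you never need a vector to be homogeneous for two different gradings at once, which the cyclicity argument leaves implicit. Both proofs rest on the same input: for generic $h$ the sets $\{[\ell]_Q:\ell\ \text{even}\}$ and $\{[\ell]_Q:\ell\ \text{odd}\}$ are disjoint.

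Two slips should be fixed. First, the roots of the relevant divided powers have $\ell\equiv\epsilon$, not $\epsilon+1$. In your last paragraph, likewise, the point is that the roots avoid the opposite parity, not that they exhaust it. Second, the extra factor you must worry about is in the odd case, not the even one. The divided power $B^{(2k+1)}_{i,\overline{1}}$ contains a factor $B_i$, i.e.\ the even-parity root $[0]_Q=0$. For $\epsilon=\overline{1}$ you should therefore use $B^{(2k)}_{i,\overline{1}}$, which is allowed because integrability gives vanishing for all $k\gg 0$. This is exactly why the paper singles out $B^{(2k+1)}_{i,\overline{0}}$ and $B^{(2k)}_{i,\overline{1}}$.
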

\begin{proof}
    If $v\in M=\oplus_{\lambda\in X_{\iota}}M_{\lambda}$ is a non-zero graded vector, then $v\in M_{\lambda}$ if and only if
    \[
    B_{i,p(\langle \alpha_i^{\vee},\lambda \rangle)}^{(m)}v=0, \quad \text{for all $m\gg 0$ and for $i=1, \dots, m-1$.}
    \]
    The forward implication is just Definition \ref{def:iquantumintegrable}, while the reverse implication follows by using that $\iota$divided powers of the form $B_{i,\overline{0}}^{(2k+1)}$ and $B_{i,\overline{1}}^{(2k)}$, viewed as polynomials in $B_i$, each are a product of distinct linear factors and do not have a factor in common. Since $M$ is irreducible and $0\ne v\in M$, $M$ is cyclic and generated by $v$. Thus, 
    \[
    M = U_{2h+\pi\imag}^{\iota}(\som)\cdot v = \mathrm{span}\{B_{i_1}B_{i_2}\dots B_{i_k}\cdot v \ | \ i_1, i_2, \dots, i_k\in \{1, \dots, m-1\}\}
    \]
    For $X_{\iota}$-weighted modules, we have\footnote{This condition is part of Definition \ref{def:iquantumintegrable}, but it is known to experts that the relations \eqref{eq:iserre} imply this condition for integrable modules.} $B_i:M_{\lambda}\rightarrow M_{\lambda + \overline{\alpha_i}}$, so each vector $B_{i_1}B_{i_2}\dots B_{i_k}\cdot v\in M$ is $X_{\iota}$-graded. The statement follows.
\end{proof}

\subsection{Double centralizer by commuting action of $U_{2h+\pi \rm{i}}(O_m)$ on $\howespin$}
The action of $U^{\iota}_{2h+\pi {\bf i}}(\mathfrak{so}_m)$ does not generate $\End_{U_{2h}(\son)}(S^{\otimes m})$. This parallels the classical orthogonal Howe duality, which is for $(O_m, \son)$, not $(\mathfrak{so}_m, \son)$. To generate $\End_{U_{2h}(\son)}(S^{\otimes m})$, we introduce the following algebra, which is a quantum analogue of $O_m$. 

\begin{definition}
    Define $ U^{\iota}_{2h+\pi {\bf i}}(O_m):=(\mathbb{Z}/2\mathbb{Z})\ltimes  U^{\iota}_{2h+\pi {\bf i}}(\mathfrak{so}_m)$ where the generator of $\mathbb{Z}/2\mathbb{Z}$ acts by $-1$ on the generator $B_1\in U^{\iota}_{2h+\pi {\bf i}}(\mathfrak{so}_m)$ and by $1$ on all other generators $B_2, \dots, B_{m-1}\in U^{\iota}_{2h+\pi {\bf i}}(\mathfrak{so}_m)$.
\end{definition}

\begin{lemma}
    There is an action of $ U^{\iota}_{2h+\pi {\bf i}}(O_m)$ on $\howespin\cong S^{\otimes m}$, extending the $U^{\iota}_{2h+\pi {\bf i}}(\mathfrak{so}_m)$ action in Proposition \ref{b-action}, such that the generator of $\mathbb{Z}/2\mathbb{Z}$ acts by $e^{\pi \rm{i} E_{1,1}}$.
\end{lemma}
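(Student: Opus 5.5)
To prove the lemma, we define the action of the generator $g$ of $\mathbb{Z}/2\mathbb{Z}$ on $\howespin$ by the operator $e^{\pi\imag E_{1,1}}$ of Lemma \ref{lemOmaction}, i.e.\ $g\cdot\hwp_I=(-1)^{|I_1|}\hwp_I$. Under $\howespin\cong S^{\otimes m}$ this is $P\otimes \id_{S^{\otimes m-1}}$, where $P$ is the parity operator on $S=S^+\oplus S^-$: it is $+1$ on $S^+$ and $-1$ on $S^-$. Since $U^{\iota}_{2h+\pi\imag}(O_m)$ is the semidirect product $(\mathbb{Z}/2\mathbb{Z})\ltimes U^{\iota}_{2h+\pi\imag}(\som)$, an action extending that of Proposition \ref{b-action} amounts to two things. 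First, $g$ must act by an involution. Second, the twisting relations must hold: $gC_1g^{-1}=-C_1$ and $gC_ig^{-1}=C_i$ for $2\le i\le m-1$. Then the universal property of the semidirect product (equivalently, the presentation by generators $B_i$, $g$ with the $\iota$Serre relations together with $g^2=1$ and the conjugation relations) gives the action. The first requirement is immediate from $P^2=\id$.

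For $i\ge 2$, the operator $C_i$ acts only on tensor factors $i$ and $i+1$, while $g$ acts only on factor $1$, so they commute. For $i=1$, we write $C_1=C\otimes\id$ with
\[
C=\sum_{j=1}^n \Omega_{j-1}^{-1}\otimes\Omega_{j-1}(\psi_j\otimes\psi_j^*+\psi_j^*\otimes\psi_j).
\]
The operators $\omega_j$, and hence $\Omega_{j-1}$, are even: they are diagonal in the basis $\hwp_I$, so they commute with $P$. Each of $\psi_j$ and $\psi_j^*$ changes $|I|$ by one, so it anticommutes with $P$. Therefore every summand of $C$ has an odd operator in its first tensor factor, and
\[
(P\otimes\id)\,C\,(P\otimes\id)^{-1}=-C.
\]
This is exactly the relation $gB_1g^{-1}=-B_1$. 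The same conclusion also follows from Proposition \ref{C-action}: $C$ maps $S^{\pm}\otimes S^{\epsilon}$ to $S^{\mp}\otimes S^{\mp\epsilon}$, so it swaps the parity of the first factor.

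There is one subtlety in interpreting the statement. The Clifford generators $\psi_{j}$ acting on $S^{\otimes m}$ through the identification of Notation \ref{not:psibasis} carry Koszul signs. We should therefore check that the formula for $C$ is read as a literal tensor product of operators on $S\otimes S$. In that reading the sign computation above is the honest one. In either reading only the parity of the operator in factor $1$ matters, so the conclusion is unaffected.

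The main care point is confirming that the semidirect product is defined using exactly these conjugation relations. Granting that, the lemma follows. Finally, this action of $g$ commutes with $U_{2h}(\son)$ by Proposition \ref{QG_so2n_action}, since all the generators there act by even operators on each factor. This is not needed for the lemma itself, but it will be needed later for the double centralizer statement.
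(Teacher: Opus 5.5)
Your proposal is correct and follows the paper's proof, which is the one-line observation that $C_1$ acts by odd operators in each of the first two tensor factors. Hence $e^{\pi\imag E_{1,1}}$, the parity operator on the first factor, anticommutes with $C_1$, while it commutes with the $C_i$ for $i\ge 2$, which do not act on that factor. Your extra checks (that $g$ acts by an involution, and that verifying the conjugation relations on the generators $B_i$ suffices for the semidirect product) spell out details the paper leaves implicit.
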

\begin{proof}
    The operator $C_1$ acts on the first two tensor factors by odd operators in each tensor component.
\end{proof}

There is a quantum analogue of Theorem \ref{Orthogonal_Howe_duality}.

\begin{theorem}[{\cite[Theorem 5.3(b)]{Wenzl-Spin}}]\label{thm:quantumorthogonalHowe}
      The actions of $U_{2h+\pi {\bf i}}^{\iota}(O_m)$ and $U_{2h}(\son)$ commute on $\howespin$ and centralize each other. Under the action of the pair $(U_{2h}(\son), U^{\iota}_{2h+\pi {\bf i}}(O_m))$, $\howespin$ decomposes as
      \begin{equation}
          \howespin\cong \bigoplus_{\substack{\lambda=(\lambda_1,\dots, \lambda_m) \\ \lambda^{\prime}_1+\lambda^{\prime}_2\le m, \ l(\lambda^{\prime})\le n}} V^{\son}_{2h,\lambda^{\top}}\ot V^{O_m}_{2h+\pi{\bf i},\lambda}, 
      \end{equation}
      where
\begin{itemize}
    \item the combinatorial notations $\lambda$, $\lambda'$, $l(-)$, and $\lambda^{\top}$, are as in Theorem \ref{Orthogonal_Howe_duality},
    \item $V^{\son}_{2h,\lambda^{\top}}$ is the (unique) irreducible $U_{2h}(\son)$-representation of highest weight $\lambda^\top$ (see Theorem \ref{thm:equivalencequantumclassical}), and
    \item $V^{O_m}_{2h+\pi{\bf i},\lambda}:= \Hom_{U_{2h}(\son)}(V^{\son}_{2h,\lambda^{\top}}, \howespin) $ is the (unique) irreducible $U^{\iota}_{2h+\pi{\bf i}}(O_m)$-representation labelled by $\lambda$.
\end{itemize}
\end{theorem}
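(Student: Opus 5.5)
The plan is a deformation argument from the classical double centralizer of Theorem \ref{Orthogonal_Howe_duality}, using that the tensor powers $S^{\otimes m}$ vary continuously in $h$.

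\textbf{Commutation.} The action of $U_{2h+\pi{\bf i}}^{\iota}(\som)$ commutes with that of $U_{2h}(\son)$ by Proposition \ref{b-action}. It remains to check that $e^{\pi{\bf i}E_{1,1}}$ commutes with $U_{2h}(\son)$. This holds because $e^{\pi{\bf i}E_{1,1}}$ acts on the first tensor factor $S$ by the parity operator $\pm1$ on $S^{\pm}$. By Proposition \ref{QG_so2n_action}, the generators $E_i,F_i,K_i$ act on $S$ by even operators. Under the coproduct they act on $S^{\otimes m}$ by sums of tensor products of even operators on the individual factors, so they commute with the parity operator on the first factor.

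\textbf{Decomposition on the $\son$ side.} For generic $h$, Theorem \ref{thm:equivalencequantumclassical} and Remark \ref{Artin's_lifting} give an equivalence of tensor categories. Under it, $S^{\otimes m}$ as a $U_{2h}(\son)$-module has the same isotypic decomposition as classically: each $V^{\son}_{2h,\lambda^{\top}}$ appears with multiplicity $\dim V^{O_m}(\lambda)$, and the index set is the one in \eqref{eq:classicalmultfreedecomposition}. Consequently,
\[
\End_{U_{2h}(\son)}(\howespin)\cong\prod_\lambda \End\big(V^{O_m}_{2h+\pi{\bf i},\lambda}\big),
\]
where $V^{O_m}_{2h+\pi{\bf i},\lambda}:=\Hom_{U_{2h}(\son)}(V^{\son}_{2h,\lambda^{\top}},\howespin)$, and its dimension $\sum_\lambda(\dim V^{O_m}(\lambda))^2$ is independent of $h$.

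\textbf{The key step: generation at generic $h$.} This is where the main difficulty lies. Let $A_h$ be the image of $U^{\iota}_{2h+\pi{\bf i}}(O_m)$ in $\End(\howespin)$, and write $Q=-q^2$. By the first step, $A_h\subseteq \End_{U_{2h}(\son)}(\howespin)$, so it suffices to show $\dim A_h\ge \dim \End_{U_{2h}(\son)}(\howespin)$.

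\begin{itemize}
\item The operators $C_i$ and $e^{\pi{\bf i}E_{1,1}}$ depend polynomially on $q^{\pm1}$.
\item At $h=0$, $Q=1$, and $C$ reduces to the classical operator $\sum_i \psi_i\otimes\psi_i^*+\psi_i^*\otimes\psi_i$. Up to sign conventions in the identification $\howespin\cong S^{\otimes m}$, this is the action of $B_{12}$ from \eqref{eq:Babactionsom}.
\item Hence $A_0$ is the image of $O_m$, which by Theorem \ref{Orthogonal_Howe_duality} equals the full commutant at $h=0$.
\item The dimension of the span of words of bounded length in the generators is lower semicontinuous, since it is the rank of a matrix with polynomial entries. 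So $\dim A_h\ge\dim A_0$ for $h$ outside a proper analytic subset, which gives $A_h=\End_{U_{2h}(\son)}(\howespin)$ generically.
\end{itemize}

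The subtle point is the sign bookkeeping at $h=0$. One has to match $C_i$ with the classical $B_{i,i+1}$ acting on $\howespin$ via the Clifford-algebra sign conventions of Notation \ref{not:psibasis} and Lemma \ref{lem:signforaap1}. I would handle this by rewriting $C_i$ through the isomorphism $\hwp_I\mapsto\hwp_{I_1}\otimes\cdots\otimes\hwp_{I_m}$. If a genuine discrepancy appears, it should be a conjugation by a sign-diagonal operator, which does not affect generation.

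\textbf{Conclusion.} Once $A_h$ is the full commutant, the double centralizer theorem for the semisimple algebra $\End_{U_{2h}(\son)}(\howespin)$ does the rest. The $U_{2h}(\son)$ action generates the commutant of $A_h$, and each $V^{O_m}_{2h+\pi{\bf i},\lambda}$ is an irreducible $U^{\iota}_{2h+\pi{\bf i}}(O_m)$-module. These modules are pairwise non-isomorphic because they are distinguished by the central idempotents corresponding to distinct $\lambda$, which yields the stated decomposition.
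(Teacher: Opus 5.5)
The paper does not prove this statement; it quotes it from Wenzl. Your deformation route (commutation, equal commutant dimensions for generic $h$, lower semicontinuity of $\dim A_h$, double centralizer) is a sensible self-contained alternative. However, the $h=0$ step that everything rests on is wrong as written, and the remedy you propose cannot work.

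At $h=0$ we have $q=1$, so $Q=-q^2=-1$, not $1$. The $C_i$ then satisfy $C_i^2C_{i\pm1}+2C_iC_{i\pm1}C_i+C_{i\pm1}C_i^2=C_{i\pm1}$, which is not the $\som$ relation $[B_i,[B_i,B_{i\pm1}]]=-B_{i\pm1}$. The Koszul signs in $\howespin\cong S^{\otimes m}$ are not a conjugation; they multiply by a parity operator. Precisely, Lemma \ref{lem:tildeCisC} together with Proposition \ref{sign_switch} gives $C_i|_{h=0}=e^{\pi{\bf i}E_{i,i}}(E_{i,i+1}-E_{i+1,i})$.

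No conjugation, sign-diagonal or otherwise, turns this into $\pm(E_{i,i+1}-E_{i+1,i})$. By Proposition \ref{C-action}, $C$ has eigenvalue $[1]_Q=1$ on $\hwp_0^{+-}+\hwp_0^{-+}$. By contrast, $E_{12}-E_{21}$ integrates to a circle action on $\howespin$, so its spectrum lies in ${\bf i}\mathbb{Z}$. Hence ``$A_0$ is the image of $O_m$'' does not follow. A priori $A_0$ is generated only by $e^{\pi{\bf i}E_{1,1}}$ and the twisted elements $e^{\pi{\bf i}E_{i,i}}B_{i,i+1}$.

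The claim is nevertheless true, and proving it is exactly where the extra generator of $U^{\iota}_{2h+\pi{\bf i}}(O_m)$ is needed.
Since $e^{\pi{\bf i}E_{1,1}}\in A_0$, you get $B_{12}=e^{\pi{\bf i}E_{1,1}}C_1|_{h=0}\in A_0$. A unital subalgebra of $\End(\howespin)$ contains the exponentials of its elements. Moreover, $\howespin$ is a genuine $O_m$-module (Lemma \ref{lemOmaction}). So $\exp(\pi B_{12})$, which is the action of $\mathrm{diag}(-1,-1,1,\dots,1)\in O_m$, namely $e^{\pi{\bf i}E_{1,1}}e^{\pi{\bf i}E_{2,2}}$, lies in $A_0$.

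Thus $e^{\pi{\bf i}E_{2,2}}\in A_0$, hence $B_{23}=e^{\pi{\bf i}E_{2,2}}C_2|_{h=0}\in A_0$. Inductively, every $B_{i,i+1}$ and every $e^{\pi{\bf i}E_{i,i}}$ lies in $A_0$. So $A_0$ contains the image of $U(\som)$ and of $e^{\pi{\bf i}E_{1,1}}$, i.e.\ the image of $O_m$, which is the full commutant by Theorem \ref{Orthogonal_Howe_duality}. With this repair, your semicontinuity and double-centralizer steps go through unchanged.
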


\subsection{Commuting action of $U_{2h}^{\iota}(\som)$ on $\howespin$}
Although the action of $U^{\iota}_{2h+\pi {\bf i}}(\mathfrak{so}_m)$ is important for setting up the quantum orthogonal Howe duality, we will also need to utilize a related action of $U^{\iota}_{2h}(\mathfrak{so}_m)$ on $S^{\otimes m}$.
\begin{definition}
Define
\[
\widetilde{C}_i:= \id_{S^{\ot i-1}}\ot \widetilde{C} \ot \id_{S^{\ot m-i-1}},
\]
where
\begin{equation}\label{tilde_c}
    \widetilde{C}={\bf i}\sum_{i=1}^n (\Omega_{i-1}^{-1}\ot \Omega_{i-1})\cdot (\psi_{i1}^*\psi_{i2}-\psi_{i2}^* \psi_{i1}).
\end{equation}
\end{definition}
\begin{lemma}\label{lem:tildeCisC}
We have the equality
\[
\widetilde{C}_i= {\bf i} e^{\pi {\bf i} E_{i,i}} C_i
\]
of operators on $\howespin\cong S^{\otimes m}$.
\end{lemma}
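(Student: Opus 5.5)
The plan is to verify the identity directly on the basis $\{\hwp_I\}$ of $\howespin$ from Notation \ref{not:psibasis}. Both sides act only on the tensor factors $i$ and $i+1$, and both are built from the same diagonal operators $\Omega_{j-1}^{\pm1}$. So the claim should reduce to a sign comparison, the Koszul sign that appears when the Clifford generators $\psi_{ja},\psi_{ja}^*$ of $Cl(2nm)$ are transported to operators on $S^{\otimes m}$.

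First I will record how these generators act under $\howespin\cong S^{\otimes m}$, $\hwp_I\mapsto \hwp_{I_1}\otimes\cdots\otimes\hwp_{I_m}$. Moving $\psi_{ja}$ (or $\psi_{ja}^*$) past the monomials $\prod\psi^*_{\bullet b}$ with $b<a$ gives
\[
\psi_{ja}\,\hwp_I=(-1)^{\sum_{b<a}|I_b|}\,\hwp_{I_1}\otimes\cdots\otimes\psi_j\hwp_{I_a}\otimes\cdots\otimes\hwp_{I_m},
\]
and similarly for $\psi_{ja}^*$. In $\widetilde C_i$ the indices $1,2$ of \eqref{tilde_c} are read as the positions $i,i+1$. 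Then in each product $\psi_{ji}^*\psi_{j,i+1}$ and $\psi_{j,i+1}^*\psi_{ji}$ the common sign $(-1)^{\sum_{b<i}|I_b|}$ appears twice and cancels. This leaves only the sign coming from $|I_i|$.

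The key computation is on $\hwp_{I_i}\otimes\hwp_{I_{i+1}}$. For $\psi_{ji}^*\psi_{j,i+1}$, the operator $\psi_{j,i+1}$ acts first and picks up $(-1)^{|I_i|}$; then $\psi_{ji}^*$ acts with no sign. For $\psi_{j,i+1}^*\psi_{ji}$, the operator $\psi_{ji}$ first lowers $|I_i|$ by one; then $\psi^*_{j,i+1}$ picks up $(-1)^{|I_i|-1}$. Combining the two terms,
\[
(\psi_{ji}^*\psi_{j,i+1}-\psi_{j,i+1}^*\psi_{ji})\,\hwp_I=(-1)^{|I_i|}\,\bigl(\psi_j^*\otimes\psi_j+\psi_j\otimes\psi_j^*\bigr)(\hwp_{I_i}\otimes\hwp_{I_{i+1}}),
\]
with the other tensor factors unchanged. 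Since $\Omega_{j-1}^{\pm1}$ is diagonal in the $\hwp$ basis and only involves indices $<j$, multiplying by $\Omega_{j-1}^{-1}\otimes\Omega_{j-1}$ and summing over $j$ gives $\widetilde C_i=\imag\,P_i\,C_i$. Here $P_i$ is the operator multiplying $\hwp_I$ by $(-1)^{|I_i|}$. By Lemma \ref{lemOmaction} and Remark \ref{rem:stilde(1,2)}, $P_i$ is exactly the action of $e^{\pi\imag E_{i,i}}$.

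The main obstacle is the bookkeeping of which side the parity operator sits on. Each summand of $C_i$ changes $|I_i|$ by exactly one, so $P_iC_i=-C_iP_i$. Whether the parity is read off the input or the output vector therefore changes the overall sign. This is the step I will double-check most carefully, in three ways. I will fix the ordering convention of Notation \ref{not:psibasis}. I will check whether $e^{\pi\imag E_{i,i}}$ is to be composed before or after $C_i$. And I will test the identity on the smallest case $n=1$, $m=2$, with $\hwp_I=1\otimes 1$. If a sign discrepancy appears, it has to be absorbed by the convention for the action of $e^{\pi\imag E_{i,i}}$ on $\howespin$, since the computation of the ratio $\widetilde C_i/C_i$ itself is forced.
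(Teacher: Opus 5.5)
Your route is the paper's, a direct check on the basis $\hwp_I$. Your Koszul-sign formula for $\psi_{ja},\psi_{ja}^*$, the cancellation of $(-1)^{\sum_{b<i}|I_b|}$, and your key display are all correct. The conclusion you draw from the display is not. In the display, $(-1)^{|I_i|}$ is the parity of the \emph{input} vector, so what you have proved is $\widetilde C_i=\imag\,C_iP_i$. Since $P_iC_i=-C_iP_i$, this equals $-\imag\,P_iC_i$, not $\imag\,P_iC_i$ as you then write.

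Your proposed test on $1\otimes 1$ cannot detect this, because both sides vanish there, but $\psi_1^*\otimes 1$ does. Take $n=1$, $m=2$. The term $\psi_{11}^*\psi_{12}\psi_{11}^*$ kills the vacuum, so $\widetilde C\,\psi_{11}^*=-\imag\,\psi_{12}^*\psi_{11}\psi_{11}^*\cdot 1=-\imag\,\psi_{12}^*$, i.e.\ $\widetilde C(\psi_1^*\otimes 1)=-\imag\,(1\otimes\psi_1^*)$. On the other side, $C(\psi_1^*\otimes 1)=1\otimes\psi_1^*$; this is the case $C\hwp_0^{-+}=\hwp_0^{+-}$ of Proposition \ref{C-action}. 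Since $e^{\pi\imag E_{1,1}}$ acts on $1\otimes\psi_1^*$ by $+1$, you get $\imag\,e^{\pi\imag E_{1,1}}C(\psi_1^*\otimes 1)=+\imag\,(1\otimes\psi_1^*)$, the opposite sign.

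The escape you propose, absorbing the sign into the convention for $e^{\pi\imag E_{i,i}}$, is not available. Letting $e^{\pi\imag E_{i,i}}$ act by $-(-1)^{|I_i|}$ amounts to twisting the $O_m$-action by $\det$. That contradicts Lemma \ref{lemOmaction} and Remark \ref{rem:stilde(1,2)}, which are exactly what you cite to identify $P_i$. It also flips the sign in Lemma \ref{lem:signforaap1}.

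The tensor-sign convention inside $C$ is not free either. The non-Koszul reading is the one consistent with Proposition \ref{C-action}. With Koszul signs one gets $\widetilde C=\imag\,C$ with no parity operator at all.

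So, under the paper's conventions, your computation proves $\widetilde C_i=\imag\,C_i\,e^{\pi\imag E_{i,i}}=-\imag\,e^{\pi\imag E_{i,i}}C_i$. The printed identity is off by an overall sign, and the paper's ``straightforward computation'' does not address this. You should state and prove the corrected form rather than the printed one. The sign is harmless for Proposition \ref{sign_switch}: the $\iota$Serre relations are invariant under $B_j\mapsto -B_j$ for all $j$, and $\widetilde C_i^2=C_i^2$ either way. It does, however, enter the odd-parity computation in the proof of Lemma \ref{iQW_pm_q2}, where odd powers of $\widetilde C$ appear.
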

\begin{proof}
    Straightforward computation.
\end{proof}

\begin{proposition}\label{sign_switch}
    We have an algebra homomorphism 
    \[
    U^{\iota}_{2h}(\som \subset \slm) \rightarrow \End_{U_{2h}(\son)}(S^{\ot m}),
    \]
    such that
    \[
    \qquad B_i \mapsto \widetilde{C}_i.
    \]
    Furthermore, when $h=0$, so $U^{\iota}_{0}(\som \subset \slm) \cong U(\som)$, this algebra homomorphism specializes to a classical $\som$ action given by
    \[
    B_i \mapsto {\bf i}(E_{i,i+1}-E_{i+1,i}).
    \]
\end{proposition}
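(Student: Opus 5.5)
The plan is to deduce the proposition from Proposition \ref{b-action} via Lemma \ref{lem:tildeCisC}. In effect we twist the $U^{\iota}_{2h+\pi{\bf i}}(\som)$-action by the sign operators $\epsilon_i:=e^{\pi{\bf i}E_{i,i}}$. By Lemma \ref{lemOmaction}, each $\epsilon_a$ acts on $\hwp_I$ by $(-1)^{|I_a|}$, i.e.\ by the parity operator on the $a$-th tensor factor. Since every generator of $U_{2h}(\son)$ acts by even operators on each factor (Proposition \ref{QG_so2n_action}), the $\epsilon_a$ commute with $U_{2h}(\son)$. Hence $\widetilde{C}_i={\bf i}\,\epsilon_i C_i$ lies in $\End_{U_{2h}(\son)}(S^{\ot m})$ by Proposition \ref{b-action}.

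It remains to check that the $\widetilde{C}_i$ satisfy the defining relations of $U^{\iota}_{2h}(\som\subset\slm)$. By Theorem \ref{thm:iSerre} with $q_i=q^2$, these are
\[
\widetilde{C}_i^2\widetilde{C}_{i\pm1}-(q^2+q^{-2})\widetilde{C}_i\widetilde{C}_{i\pm1}\widetilde{C}_i+\widetilde{C}_{i\pm1}\widetilde{C}_i^2=\widetilde{C}_{i\pm1}
\]
together with $\widetilde{C}_i\widetilde{C}_j=\widetilde{C}_j\widetilde{C}_i$ for $|i-j|>1$. The key input is the commutation of $\epsilon_a$ with $C_b$. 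The operator $C_b$ acts by an odd operator in factors $b$ and $b+1$ and trivially (evenly) elsewhere, because each $\Omega_{i-1}$ is even. Therefore $\epsilon_aC_b=-C_b\epsilon_a$ if $a\in\{b,b+1\}$, and $\epsilon_aC_b=C_b\epsilon_a$ otherwise.

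Far commutativity then follows immediately, since $\epsilon_i$ and $\epsilon_j$ commute with $C_j$ and $C_i$ respectively when $|i-j|>1$.

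For the $\iota$Serre relation, write $\widetilde C_i={\bf i}\epsilon_iC_i$ and $\widetilde C_j={\bf i}\epsilon_jC_j$ with $j=i\pm1$, and move all $\epsilon$'s to the left. From $\epsilon_i C_i=-C_i\epsilon_i$ and $\epsilon_i^2=1$ we get $\widetilde C_i^2=-\epsilon_iC_i\epsilon_iC_i=C_i^2$. For the cubic terms one tracks the signs from commuting $\epsilon_i$ past $C_j$ and $\epsilon_j$ past $C_i$; exactly one of these pairs anticommutes, depending on whether $j=i+1$ (then $\epsilon_{i+1}$ anticommutes with $C_i$) or $j=i-1$ (then $\epsilon_i$ anticommutes with $C_{i-1}$). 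Carrying this out, each of the terms $\widetilde C_i^2\widetilde C_j$, $\widetilde C_j\widetilde C_i^2$ and $\widetilde C_j$ equals ${\bf i}\epsilon_j$ times the corresponding $C$-monomial. The middle term $\widetilde C_i\widetilde C_j\widetilde C_i$ instead picks up an extra sign relative to ${\bf i}\epsilon_j C_iC_jC_i$. Consequently the relation for the $\widetilde C$'s with coefficient $-(q^2+q^{-2})$ is equivalent to the relation for the $C$'s with coefficient $+(q^2+q^{-2})$. That is precisely the $\iota$Serre relation for $U^{\iota}_{2h+\pi{\bf i}}$ stated before Proposition \ref{b-action}, which holds there. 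This sign bookkeeping is the main point at which care is needed: I would carry it out separately for $j=i+1$ and $j=i-1$, checking in each case that the extra sign lands on the middle term.

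Finally, for the $h=0$ specialization, set $q=1$. Then $\omega_i=\psi_i\psi_i^*+\psi_i^*\psi_i=1$ and $\Omega_{i-1}=1$, so $\widetilde C={\bf i}\sum_{k}(\psi_{k1}^*\psi_{k2}-\psi_{k2}^*\psi_{k1})$. On factors $i,i+1$ this is ${\bf i}\sum_k{}^{(k)}B_{i,i+1}$, which by Lemma \ref{lemOmaction} and \eqref{eq:Babactionsom} is the action of ${\bf i}(E_{i,i+1}-E_{i+1,i})$. Here I am using the identification of $\howespin$ with the Clifford model, in which the Koszul signs are absorbed consistently.
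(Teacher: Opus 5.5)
Your proposal is correct and follows the paper's proof. Like the paper, you write $\widetilde{C}_i={\bf i}\,\epsilon_iC_i$ via Lemma \ref{lem:tildeCisC}, use that $\epsilon_a$ anticommutes with $C_b$ exactly when $a\in\{b,b+1\}$ to turn the $\iota$Serre relations for the $C_i$ (coefficient $+(q^2+q^{-2})$, from Proposition \ref{b-action}) into those for $U^{\iota}_{2h}$ (coefficient $-(q^2+q^{-2})$), and specialize $\Omega_i=1$ at $h=0$. Your sign bookkeeping checks out for both $j=i\pm1$, with the extra sign landing only on the middle term, and your explicit argument that the $\epsilon_a$ commute with $U_{2h}(\son)$ supplies a step the paper leaves implicit.
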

\begin{proof}
    We know that the elements $C_i$ satisfy the $\iota$Serre relations for $U^{\iota}_{2h+\pi {\bf i}}(\som \subset \slm)$, that is
    \[
    C_{i}^2 C_{i\pm 1}+(q^2+q^{-2})C_i C_{i\pm 1} C_{i}+C_{i\pm 1}C_i^2=C_{i\pm 1},
    \]
    and
    \[
    C_iC_j=C_jC_i, \quad |i-j|>1.
    \]
    By utilizing Lemma \ref{lem:tildeCisC}, along with the fact that $e^{\pi {\bf i} E_{j,j}} C_{i}=-C_{i}e^{\pi {\bf i} E_{j,j}}$, when $j=i$ or $i+1$, and $e^{\pi {\bf i} E_{j,j}}$ commutes with $C_i$ otherwise, we get the relations
    \[
    \widetilde{C}_i^{2}\widetilde{C}_{i\pm 1}-(q^2+q^{-2})\widetilde{C}_i \widetilde{C}_{i\pm 1} \widetilde{C}_i + \widetilde{C}_{i\pm 1}\widetilde{C}_i^2= \widetilde{C}_{i\pm 1}
    \]
    and
    \[
    \widetilde{C}_i\widetilde{C}_j=\widetilde{C}_j\widetilde{C}_i, \quad |i-j|>1.
    \]
    Thus, using the presentation of $U_{2h}^{\iota}(\som\subset \slm)$ in Theorem \ref{thm:iSerre}, we get the desired homomorphism. 
    
    For the second statement we observe that when $h=0$ the elements $\Omega_i$ act by $1$ on each respective $S$ component. Therefore, according to \eqref{eq:Babactionsom}, $\widetilde{C}_i$ act by ${\bf i} (E_{i,i+1}-E_{i+1,i})\in \som$.
\end{proof}

%
\section{Braid group actions on tensor powers of type $D$ spin representation}\label{sec6}
%

 In this section we will follow the arguments from \cite[Section 4]{BER}, in which an analogue of Theorem \ref{R-matrix-to-i-q-Weyl} is proven for the type $B$ spin representation. We will use the notation $Q=-q^2$.

\subsection{$R$-matrix action on singular vectors}
We describe the action of $R_{S,S}$ on the highest weight vectors in $S^{\ot 2}$, see Proposition \ref{high_vect} and \eqref{R-matrix}.

\begin{proposition}\label{R-action}
    We have
\begin{equation}
    R_{S,S}(\hwp_k^{\pm \pm})=(-1)^kq^{\frac{n-2(2k)^2}{2}} \hwp_k^{\pm \pm} \qquad \text{and} \qquad R_{S,S}(\hwp_k^{\pm \mp})=(-1)^kq^{\frac{n-2(2k+1)^2}{2}} \hwp_k^{\mp \pm}.
\end{equation}
\end{proposition}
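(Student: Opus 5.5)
The approach is the standard one: compute the braiding on each highest weight vector using the formula \eqref{R-matrix}, namely
\[
R_{S,S}=\flip\circ q^{(\wt(-),\wt(-))}\circ(T_{w_0}^{-1}\otimes T_{w_0}^{-1})\circ\Delta(T_{w_0}).
\]
Since $R_{S,S}$ is a $U_{2h}(\son)$-module map $S\otimes S\to S\otimes S$, it sends the singular vector $\hwp_k^{\epsilon_1\epsilon_2}$ to a singular vector of the same weight in $S^{\epsilon_2}\otimes S^{\epsilon_1}$. By Proposition \ref{FH_decomposition} and Proposition \ref{high_vect}, this singular vector is unique up to scalar, so it must be a multiple of $\hwp_k^{\epsilon_2\epsilon_1}$. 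It therefore suffices to determine one scalar per $(k,\epsilon_1,\epsilon_2)$.

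To find the scalar, I would use the classical fact (Drinfeld; Kirillov--Reshetikhin) that on an isotypic component $V_\nu\subset V_\lambda\otimes V_\mu$ the composite $\flip_{\mu,\lambda}\circ R_{V_\lambda,V_\mu}$ — more precisely the square of the braiding, or the braiding itself up to a sign — acts by $\pm q^{\frac12(c(\nu)-c(\lambda)-c(\mu))}$. Here $c(\lambda)=(\lambda,\lambda+2\rho)$, and the normalization of $q$ is adapted to $U_{2h}$ with $d_i=2$. The magnitude is forced by the ribbon/twist relation $R^2=\Delta(\theta)^{-1}(\theta\otimes\theta)$. Concretely, $\hwp_k^{\pm\pm}$ generates $V_{\varpi_{n-2k}}$ (respectively $V_{2\varpi_n}$ for $k=0$), and $c(\varpi_{n-2k})-2c(\varpi_n)$ computes with $(\epsilon_i,\epsilon_j)=\delta_{ij}$ as $n-2(2k)^2$ up to the overall factor. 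The $\pm\mp$ case works the same way with $2k+1$. This gives the exponent $\frac{n-2(2k)^2}{2}$ (resp.\ $\frac{n-2(2k+1)^2}{2}$) after matching the normalization of $(\ ,\ )$ in Theorem \ref{R-matrix_braiding}.

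To pin down the sign $(-1)^k$ rigorously, I would work with the explicit formula rather than appeal to the classical fact. The plan has two steps.

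First, evaluate on the lowest-weight term. The vector $\Delta(T_{w_0})\hwp_k$ is a lowest weight vector of the same summand, hence a multiple of the lowest weight vector $\hwp_k'$. Then $(T_{w_0}^{-1}\otimes T_{w_0}^{-1})$ and $q^{(\wt,\wt)}$ act on the individual pure tensors of $\hwp_k'$. The key simplification is that $T_{w_0}$ on $S$ sends weight vectors $\hwp_I$ to $\pm q^{\bullet}\hwp_{I^c}$-type vectors (extremal weights, since every weight of the minuscule module $S^\pm$ is extremal). So $T_{w_0}^{\pm1}$ on $S$ can be computed from Definition \ref{def:Lussymmetry}, where on each minuscule weight vector only one term survives, with factor $1$ or $-q_i$.

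Second, compare the coefficient of one chosen pure tensor, say the term $J=\emptyset$ or $J=I_{2k}$, on both sides. This reduces everything to tracking the coefficients $c_J^\pm=(-q^2)^{\hgt(\cdots)}$ together with the signs produced by reduced words of $w_0$ acting on monomials $\psi^*_{i_1}\cdots\psi^*_{i_r}$.

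The main obstacle is this sign and $q$-power bookkeeping for $\Delta(T_{w_0})$ on a non-pure tensor. To avoid computing $\Delta(T_{w_0})$ directly, I would instead use that $\Delta(T_{w_0})$ acts on the irreducible summand $V_\nu$ as $T_{w_0}|_{V_\nu}$. This maps the highest weight vector to a known multiple of the lowest weight vector, namely $(-1)^{\langle2\rho^\vee,\nu\rangle}q^{\bullet}$ in Lusztig's normalization. So only the lowest-weight component of $\hwp_k$ under this map needs to be matched. If the normalizations become unwieldy, a fallback is induction on $n$ via restriction to $U_{2h}(\son[2n-2])$: the vectors $\hwp_k$ factor compatibly with $I_{2k}$, which reduces the check to the base cases $k=0$, where $R$ on $1\otimes 1$ or $1\otimes\psi_n^*$ is just $q^{(\wt,\wt)}\flip$ on extremal pure tensors.
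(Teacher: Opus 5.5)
Your first step (the braiding is an intertwiner, so it sends $\hwp_k^{\epsilon_1\epsilon_2}$ to a multiple of $\hwp_k^{\epsilon_2\epsilon_1}$) matches the paper. The gap is that the step which actually determines the scalar is not something you can carry out as described.

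\textbf{The twist argument does not fix the $\pm\mp$ scalars.} For $\pm\mp$ the braiding maps $S^{\pm}\otimes S^{\mp}$ to $S^{\mp}\otimes S^{\pm}$. So the scalar in $R_{S,S}\hwp_k^{+-}=\beta\hwp_k^{-+}$ depends on the relative normalization of $\hwp_k^{+-}$ and $\hwp_k^{-+}$, which is fixed by the coefficients $c_J^{\pm}$: rescaling $\hwp_k^{-+}$ by any power of $q$ changes $\beta$. The ribbon relation only constrains the product of the $+-$ and $-+$ scalars. Hence ``the magnitude is forced'' is false there, and the explicit step must produce the whole scalar, not only the sign. (For $\pm\pm$ your argument does give $\pm q^{\frac{n-2(2k)^2}{2}}$, and the sign could even be read off at $h=0$ from whether the summand lies in $\mathrm{Sym}^2S^{\pm}$ or $\Lambda^2S^{\pm}$.)

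\textbf{The explicit computation is exactly where you stall.} Knowing that $\Delta(T_{w_0})$ acts on $V_\nu$ as $T_{w_0}|_{V_\nu}$ only tells you that $T_{w_0}\hwp_k$ equals $\prod_j(-q_{i_j})^{\ell_j}F_{i_1}^{(\ell_1)}\cdots F_{i_N}^{(\ell_N)}\hwp_k$. The expansion of this vector in the pure tensors $\hwp_A\otimes\hwp_B$ is precisely the unknown you need; ``a known multiple of the lowest weight vector'' is known only relative to that $F$-expression.

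\textbf{The fallback fails as well.}
\begin{enumerate}
\item The vector $\hwp_k$ is supported on $I_{2k}$, so induction on $n$ via $\son[2n-2]$ keeps $k$ fixed and bottoms out at $n=2k$, not at $k=0$.
\item The braiding of $U_{2h}(\son)$ does not simply restrict to that of the Levi: both the quasi-$R$-matrix part and the factor $q^{(\wt(-),\wt(-))}$ change.
\end{enumerate}

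\textbf{The missing idea} is the unitriangularity of $(T_{w_0}^{-1}\otimes T_{w_0}^{-1})\circ\Delta(T_{w_0})$; it is the quasi-$R$-matrix from the Kirillov--Reshetikhin/Levendorskii--Soibelman factorization. For $v\in V[\lambda]$ and $w\in W[\mu]$ it sends $v\otimes w$ to $v\otimes w+V[<\lambda]\otimes W[>\mu]$, hence
\[
R_{V,W}(v\otimes w)=q^{(\lambda,\mu)}\,w\otimes v+W[>\mu]\otimes V[<\lambda].
\]
With this you never need $\Delta(T_{w_0})$, $T_{w_0}$ on $S$, or any lowest weight vector.

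\begin{itemize}
\item Write $\hwp_k^{++}=\sum_J c^+_J\hwp_J\otimes\hwp_{I_{2k}\setminus J}$. Only the $J=\emptyset$ term has second factor of weight $\lambda_{I_{2k}}$.
\item Every other term has strictly higher second factor, so by triangularity its image has first factor of weight strictly above $\lambda_{I_{2k}}$. It therefore cannot contribute to the coefficient of $\hwp_{I_{2k}}\otimes\hwp_\emptyset$ in $R_{S,S}\hwp_k^{++}$.
\item Comparing with the coefficient $\beta c^+_{I_{2k}}$ in $\beta\hwp_k^{++}$ gives $\beta$ as the weight factor $q^{\frac{n-4k}{2}}$ times $c^+_\emptyset/c^+_{I_{2k}}=(-q^2)^{-(2k^2-k)}$. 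Here $\hgt(\epsilon_{n-2k+1}+\dots+\epsilon_n)=2k^2-k$, so $\beta=(-1)^kq^{\frac{n-8k^2}{2}}$, sign included.
\end{itemize}

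The other three cases are the same one-coefficient comparison. For $--$ and $-+$ use the extremal term $J=\{n\}$. For $\pm\mp$ read off the coefficient in $\hwp_k^{\mp\pm}$; this is exactly where the normalization $c_J^{\pm}$ enters and determines the individual scalar.
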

\begin{proof}
See Appendix \ref{proof6.1}.
\end{proof}

\subsection{$\iota$quantum Weyl group action on singular vectors}

In Proposition \ref{C-action}, we computed the action of $C\in \End(S^{\otimes 2})$ on the singular vectors. Since the $\iota$quantum Weyl group is defined in terms of the $\iota$divided powers of the generators $B_i$, which map to $C_i=\id\otimes C\otimes \id\in \End(S^{\otimes m})$, we are reduced to computing how the $\iota$quantum Weyl group generators act on $C$-eigenspaces. 

To this end, let us consider polynomials
\begin{equation}\label{eq:evendivpowerpoly}
    B_{\overline{0}}^{(2k)}(b):=\frac{b^{2}(b^2-[2]_Q^2)(b^2-[4]_Q^2)\dots (b^2-[2k-2]^2_Q)}{[2k]_Q!}
\end{equation}
and
\begin{equation}\label{eq:odddivpowerpoly}
    B_{\overline{1}}^{(2k+1)}(b):=\frac{b(b^2-[1]_Q^2)(b^2-[3]_Q^2)\dots (b^2-[2k-1]^2_Q)}{[2k+1]_Q!},
\end{equation}
corresponding to even $\iota$divided powers \eqref{eq:idivpowereven} and odd $\iota$divided powers \eqref{eq:idivpowerodd}, as well as series
\begin{equation}\label{eq:evenseries}
    \iQW^{-1}_{\overline{0}}(b):=\sum_{k\ge 0} (-Q)^kB_{\overline{0}}^{(2k)}(b)
\end{equation}
and
\begin{equation}\label{eq:oddseries}
    (-Q)^{-1/2}\cdot \iQW_{\overline{1}}^{-1}(b):=\sum_{k\ge 0} (-Q)^k B_{\overline{1}}^{(2k+1)}(b),
\end{equation}
corresponding to the (inverses of the) respective $\iota$quantum Weyl group generators \eqref{iQ_symm}.

\begin{proposition}\label{i-q_gens}
    The following identities hold
    \begin{equation}
    \iQW_{\overline{0}}^{-1}([2k]_Q)=(-1)^k Q^{2k^2} \qquad \text{and} \qquad \iQW_{\overline{1}}^{-1}([2k+1]_Q)=(-Q)^{1/2} (-1)^kQ^{2(k^2+k)}.
\end{equation}
\end{proposition}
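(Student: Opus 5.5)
The plan is to evaluate the series \eqref{eq:evenseries} and \eqref{eq:oddseries} at $b=[2k]_Q$ and $b=[2k+1]_Q$. Both series terminate there, so the two claims reduce to finite identities between rational functions of $Q$, which I will prove by induction on $k$. The even and odd cases run in parallel; I treat the even one first.

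\emph{Termination and factorization.} In \eqref{eq:evendivpowerpoly}, the polynomial $B_{\overline{0}}^{(2j)}(b)$ contains the factor $b^2-[2k]_Q^2$ as soon as $j\ge k+1$. Hence $B^{(2j)}_{\overline{0}}([2k]_Q)=0$ for $j>k$, and $\iQW_{\overline{0}}^{-1}([2k]_Q)$ is the finite sum over $0\le j\le k$. Likewise in \eqref{eq:odddivpowerpoly} the factor $b^2-[2k+1]_Q^2$ appears for $j\ge k+1$, so the odd sum also stops at $j=k$. I then apply the standard identity $[a]_Q^2-[c]_Q^2=[a-c]_Q[a+c]_Q$ to each factor, which gives
\[
B_{\overline{0}}^{(2j)}([2k]_Q)=\frac{[2k]_Q^2\prod_{r=1}^{j-1}[2k-2r]_Q[2k+2r]_Q}{[2j]_Q!}=\frac{[2k]_Q}{[k+j]_Q}\cdot\frac{[2k]_Q\,[2k-2]_Q\cdots[2k-2j+2]_Q\;[2k+2]_Q\cdots[2k+2j]_Q}{[2j]_Q!}\cdot\frac{[k+j]_Q}{[2k+2j]_Q},
\]
with the analogous expression $\frac{[2k+1]_Q\prod_{r=1}^{j}[2k+1-(2r-1)]_Q[2k+1+(2r-1)]_Q}{[2j+1]_Q!}$ in the odd case. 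The aim is to rewrite these as a ratio of quantum integers times a (skew) $Q$-binomial coefficient of the type $\qbinom{k+j}{2j}$. The classical limit, where the terms are $\tfrac{2k}{k+j}\binom{k+j}{2j}$-type numbers and $\sum_j(-1)^j(\cdots)$ is a Chebyshev evaluation $\cos(k\pi)$, tells me which normalization to aim for.

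\emph{Proving the terminating identity.} The target is $\sum_{j=0}^k(-Q)^jB^{(2j)}_{\overline{0}}([2k]_Q)=(-1)^kQ^{2k^2}$, and similarly in the odd case. I will first try induction on $k$. The reason this should work is that the factorized terms satisfy a simple contiguity relation in $k$: comparing $B^{(2j)}_{\overline 0}([2k+2]_Q)$ with $B^{(2j)}_{\overline 0}([2k]_Q)$ and $B^{(2j-2)}_{\overline 0}([2k]_Q)$ is a Pascal-type recurrence for $Q$-binomials. That recurrence should give a three-term relation $S_{k+1}=a_kS_k+b_kS_{k-1}$ for the partial sums, and the closed forms $(-1)^kQ^{2k^2}$ should satisfy it. Checking $k=0,1$ by hand fixes the base cases; for example, $k=1$ gives $1+(-Q)\frac{[2]_Q^2}{[2]_Q}=1-Q[2]_Q=-Q^2$, which matches.

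If the recurrence becomes messy, my fallback is to recognize the sum as a specialization of the $q$-Chu--Vandermonde identity, or of the terminating $q$-binomial theorem in base $Q^2$. A second fallback is to note that the value is intrinsic: $\iQW_i^{-1}$ acts on the $[2k]_Q$-eigenvector of $B_i$ in an irreducible rank-one integrable module, so the scalar could be read off from the rank-one computation of \cite{WW25} or the analogous computation in \cite[Section 4]{BER}. Either way, I must track the half-integer power $(-Q)^{1/2}$ in the odd case consistently with Definition \ref{def:iQW}.

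The main obstacle I expect is this combinatorial identity itself, namely getting the $Q$-binomial normalization and the signs right so that the induction closes. Termination and factorization are routine.
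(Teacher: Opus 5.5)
\paragraph{Verdict.} The proposal has a genuine gap: the reduction is right, but the identity that constitutes the proposition is never proved, and the recurrence you sketch for it does not exist.

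\paragraph{What is missing.} Your reduction is correct and is exactly the paper's first step. Both series terminate at $j=k$, and $[a]_Q^2-[c]_Q^2=[a+c]_Q[a-c]_Q$ turns each $\iota$divided power into a product of quantum integers. But the proposition \emph{is} the resulting terminating identity, and you leave it unproved.

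The mechanism you sketch would not work as described. To sum a contiguity relation against the weights $(-Q)^j$, its coefficients must be independent of $j$. Consider a relation $B^{(2j)}_{\overline 0}([2k+2]_Q)=\alpha\,B^{(2j)}_{\overline 0}([2k]_Q)+\beta\,B^{(2j-2)}_{\overline 0}([2k]_Q)$ of that kind. The cases $j=0,1$ force $\alpha=1$ and $\beta=[4k+2]_Q$. It then already fails at $j=2$, $k=1$, where it would require $[4]_Q=[2]_Q[3]_Q$. Moreover, such a relation would only give a first-order recurrence for $S_k$, not the three-term one you want. The three-term recurrence needs the values at $[2k-2]_Q$ as well, and you never identify a relation that supplies them.

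\paragraph{How the paper closes it.} The paper couples the even and odd families instead of treating them separately. From
\[
[2k+2]_Q[2k+2i]_Q-[2k]_Q[2k-2i+2]_Q=[4k+2]_Q[2i]_Q
\]
and
\[
[2k+1]_Q[2k+2i]_Q-[2k-1]_Q[2k-2i]_Q=[4k]_Q[2i+1]_Q
\]
one gets, with coefficients independent of $i$,
\begin{align*}
B^{(2i)}_{\overline 0}([2k+2]_Q)-B^{(2i)}_{\overline 0}([2k]_Q)&=(Q^{2k+1}+Q^{-2k-1})\,B^{(2i-1)}_{\overline 1}([2k+1]_Q),\\
B^{(2i+1)}_{\overline 1}([2k+1]_Q)-B^{(2i+1)}_{\overline 1}([2k-1]_Q)&=(Q^{2k}+Q^{-2k})\,B^{(2i)}_{\overline 0}([2k]_Q).
\end{align*}
Sum against $(-Q)^i$ and write $\widetilde T:=(-Q)^{-1/2}\iQW^{-1}_{\overline 1}$. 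This gives
\[
\widetilde T([2k+1]_Q)=\widetilde T([2k-1]_Q)+(Q^{2k}+Q^{-2k})\,\iQW^{-1}_{\overline 0}([2k]_Q),
\]
\[
\iQW^{-1}_{\overline 0}([2k+2]_Q)=\iQW^{-1}_{\overline 0}([2k]_Q)-Q(Q^{2k+1}+Q^{-2k-1})\,\widetilde T([2k+1]_Q).
\]
A leapfrog induction from $\iQW^{-1}_{\overline 0}([0]_Q)=\widetilde T([1]_Q)=1$ then closes in two lines. Eliminating $\widetilde T$ from these two relations produces precisely the second-order recurrence you were after.

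\paragraph{Your fallbacks.} The $q$-Chu--Vandermonde fallback also works, but only once the parameters are identified, which you did not do. The term ratio shows the even sum is ${}_2\phi_1(Q^{-4k},Q^{4k};Q^2;Q^4,Q^4)$, so the base is $Q^4$, not $Q^2$. The odd sum is $[2k+1]_Q\,{}_2\phi_1(Q^{-4k},Q^{4k+4};Q^6;Q^4,Q^4)$. The evaluation ${}_2\phi_1(p^{-k},b;c;p,p)=b^k(c/b;p)_k/(c;p)_k$ then yields $(-1)^kQ^{2k^2}$ and $(-1)^kQ^{2k(k+1)}$ respectively.

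One of these two completions has to be carried out before the proposal is a proof. Appealing to \cite{WW25} or \cite{BER} would only replace the argument by a citation, and you would still have to match their normalizations to $Q=-q^2$.
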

\begin{proof}
    See Appendix \ref{proof6.2}.
\end{proof}

\subsection{Identification of $R$-matrix and $\iota$quantum Weyl group}

Consider the setting of \S\ref{sec5}, with the element $B_i\in U_{2h+\pi{\bf i}}^\iota (\mathfrak{so}_m)$ acting via $C_i$ on $S^{\ot m}$. We wish to compute the action of $\iQW_i$ on $S^{\otimes m}$, where $\iQW_i$ is the $\iota$quantum Weyl group generator associated to $B_i$ defined by the series \eqref{iQ_symm}. 

For this computation, it suffices to restrict to the case $m=2$ and $i=1$, so $B=B_1\in U_{2h+\pi{\bf i}}^\iota (\mathfrak{so}_2)$ acting on $S^{\otimes 2}$. Now, it is enough to compute how $\iQW_1=\iQW$ acts on the highest weight vectors $\hwp_k^{\pm\pm}$ and $\hwp_k^{\pm\mp}$ in $S^{\otimes 2}$, from Proposition \ref{high_vect}. We already computed the action of $C_1=C$ on these highest weight vectors in Proposition \ref{C-action}. Since the action of $U_{2h+\pi{\bf i}}^\iota (\mathfrak{so}_2)$ on $S^{\otimes 2}$ is by $B_1\mapsto C_1$, it is then elementary to deduce how the operator $\iQW$ acts.

\begin{theorem}\label{R-matrix-to-i-q-Weyl}
The $R$-matrix \eqref{R-matrix} for $U_{2h}(\son)$ and the $\iota$quantum Weyl group generator $\iQW$ \eqref{iQ_symm} for $U_{2h+\pi{\bf i}}^\iota (\mathfrak{so}_2)$ are related by
\begin{equation}\label{R_iQW_comp}
    R_{S,S}=q^{\frac{n}{2}} \iQW
\end{equation}
as endomorphisms of $S^{\ot 2}$.
\end{theorem}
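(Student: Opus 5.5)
The plan is to compare the two sides of \eqref{R_iQW_comp} on the $U_{2h}(\son)$-singular vectors of $S^{\otimes 2}$. Both sides are $U_{2h}(\son)$-module endomorphisms of $S^{\otimes 2}$. For $R_{S,S}$ this holds because it is the braiding of Theorem \ref{R-matrix_braiding}. For $\iQW$ it holds because, by Definition \ref{def:iQW}, on each $X_\iota$-graded piece $\iQW$ is a polynomial in $C$ (the series truncates on a finite-dimensional module). By Proposition \ref{b-action}, $C$ commutes with $U_{2h}(\son)$. The $X_\iota$-grading pieces \eqref{eq:igradingeven} and \eqref{eq:igradingodd} are also $U_{2h}(\son)$-stable.

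Since $S^{\otimes 2}$ is semisimple (Theorem \ref{thm:equivalencequantumclassical}), it is generated as a $U_{2h}(\son)$-module by its singular vectors. So it suffices to check \eqref{R_iQW_comp} on the vectors $\hwp_k^{\epsilon_1\epsilon_2}$ of Proposition \ref{high_vect}. These span all singular vectors. We must be careful that $V_{\varpi_{n-2k}}$ occurs twice (once in $S^+\otimes S^+$ and once in $S^-\otimes S^-$), so the check must be done on the two-dimensional spans, not just up to scalar.

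\emph{Even case.} Fix $k$ and let $P_k=\mathrm{span}\{\hwp_k^{++},\hwp_k^{--}\}$. This lies in the even graded piece, so there $\iQW^{-1}$ acts by the series $\iQW^{-1}_{\overline 0}(C)$ of \eqref{eq:evenseries}. By Proposition \ref{C-action}, $C$ swaps $\hwp_k^{++}$ and $\hwp_k^{--}$ with factor $[2k]_Q$. Hence $\hwp_k^{++}\pm\hwp_k^{--}$ are eigenvectors with eigenvalues $\pm[2k]_Q$. (For $k=0$, $C$ acts by $0$ on $P_0$ and the claim is immediate.)

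The polynomials \eqref{eq:evendivpowerpoly} are even in $b$. Therefore $\iQW^{-1}$ acts on $P_k$ by the scalar $\iQW^{-1}_{\overline 0}([2k]_Q)=(-1)^kQ^{2k^2}$, using Proposition \ref{i-q_gens}. Inverting, and using $Q^{-2k^2}=q^{-4k^2}$, we get
\[
\iQW\,\hwp_k^{\pm\pm}=(-1)^kq^{-4k^2}\hwp_k^{\pm\pm}.
\]
Multiplying by $q^{n/2}$ gives exactly the formula for $R_{S,S}(\hwp_k^{\pm\pm})$ in Proposition \ref{R-action}.

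\emph{Odd case.} Let $P'_k=\mathrm{span}\{\hwp_k^{+-},\hwp_k^{-+}\}$, which lies in the odd graded piece. Here $C$ swaps the two vectors with factor $[2k+1]_Q$. The function $(-Q)^{-1/2}\iQW^{-1}_{\overline 1}(b)$ of \eqref{eq:oddseries} is odd in $b$. Hence on the eigenvector $\hwp_k^{+-}\pm\hwp_k^{-+}$ it takes the value $\pm(-Q)^{-1/2}\iQW^{-1}_{\overline 1}([2k+1]_Q)$. In other words, $\iQW^{-1}$ acts on $P'_k$ as the scalar $\iQW^{-1}_{\overline 1}([2k+1]_Q)=(-Q)^{1/2}(-1)^kQ^{2(k^2+k)}$ times the swap $\hwp_k^{+-}\leftrightarrow\hwp_k^{-+}$. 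The swap is an involution, so
\[
\iQW\,\hwp_k^{\pm\mp}=(-Q)^{-1/2}(-1)^kQ^{-2(k^2+k)}\hwp_k^{\mp\pm}=(-1)^kq^{-(2k+1)^2}\hwp_k^{\mp\pm},
\]
using $(-Q)^{1/2}=q$. Again this is $q^{-n/2}R_{S,S}(\hwp_k^{\pm\mp})$ by Proposition \ref{R-action}.

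The main point requiring care is bookkeeping rather than new ideas. First, the branch of $(-Q)^{1/2}=(q^2)^{1/2}$ must be taken to be $q=e^h$, consistently with the $(-q_i)^{-k/2}$ in \eqref{iQ_symm}. Second, the parity label $p(\langle\alpha_1^\vee,\lambda\rangle)$ selecting $\iQW_{\overline 0}$ versus $\iQW_{\overline 1}$ must match the grading \eqref{eq:igradingeven}--\eqref{eq:igradingodd}. Third, we must verify that the even/odd symmetry of the divided-power polynomials is what converts the eigenvalue computation of Proposition \ref{i-q_gens} into the statement "scalar" in the even case and "scalar times swap" in the odd case. Once these are fixed, the identity follows from Propositions \ref{C-action}, \ref{R-action} and \ref{i-q_gens}.
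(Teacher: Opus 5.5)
Your proposal is correct and follows the same approach as the paper's proof. Both use that $\iQW$ commutes with $U_{2h}(\son)$ to reduce to the singular vectors $\hwp_k^{\pm\pm},\hwp_k^{\pm\mp}$, and then combine Propositions \ref{C-action}, \ref{i-q_gens} and \ref{R-action}. Your parity argument explains why $\iQW^{-1}$ acts on each pair by a scalar in the even case and by a scalar times the swap in the odd case, even though $C$ itself exchanges $\hwp_k^{++}\leftrightarrow\hwp_k^{--}$. The paper leaves this step implicit when it evaluates $\iQW^{-1}_{\overline 0}$ and $\iQW^{-1}_{\overline 1}$ at $[2k]_Q$ and $[2k+1]_Q$.
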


\begin{proof}
Since $C\in \End_{U_{2h}(\son)}(S^{\otimes 2})$, the action of $B\in U^{\iota}_{2h+\pi \imag}(\mathfrak{so}_2)$ commutes with the action of $U_{2h}(\son)$ on $S^{\ot 2}$, and it follows that the action of $\iQW$ also commutes with the action of $U_{2h}(\son)$. Note that $S^{\otimes 2}$ is generated over $U_{2h}(\son)$ by a spanning set of highest weight vectors (by virtue of being finite-dimensional). Since the vectors $\hwp_k^{\pm\pm},\hwp_k^{\pm\mp}$ span the subspace of highest weight vectors in $S^{\ot 2}$, it suffices to prove the equality \eqref{R_iQW_comp} holds on the vectors $\hwp_k^{\pm\pm},\hwp_k^{\pm\mp}$.

Combining Propositions \ref{C-action} and \ref{i-q_gens}, then comparing to Proposition \ref{R-action}, it follows that:
    \begin{equation}\label{R_iQW_hr_comp}
    R_{S,S}\hwp_k^{\pm \pm}= q^{\frac{n}{2}}\left(\iQW^{-1}_{\overline{0}} ([2k]_{Q})\right)^{-1}\hwp_k^{\pm \pm} \quad \text{and} \quad  R_{S,S}\hwp_k^{\pm \mp}= q^{\frac{n}{2}} \left(\iQW^{-1}_{\overline{1}}([2k+1]_{Q})\right)^{-1}\hwp_k^{\mp \pm}.
\end{equation}
Since $\iQW^{-1}_{\overline{0}}$ and $\iQW^{-1}_{\overline{1}}$ are the coefficients of the respective restrictions of the symmetry $\iQW^{-1}$ to the even and odd parity subspaces of $S^{\ot 2}$ with respect to the coroot $\alpha_1^{\vee}$, the desired equality \eqref{R_iQW_comp} holds on the vectors $\hwp_k^{\pm\pm},\hwp_k^{\pm\mp}$.
\end{proof}

\subsection{$\iota$quantum Weyl group for $U_{2h}^{\iota}(\som\subset \slm)$}\label{subsec:iquantumWeyl}

We would like to re-express the action of $\iQW$ on $S^{\otimes 2}$ in terms of $U_{2h}^{\iota}(\som[2]\subset \mathfrak{sl}_2)$, as opposed to $U_{2h+\pi {\bf i}}(\mathfrak{so}_2 \subset \mathfrak{sl}_2)$.

\begin{lemma}\label{iQW_pm_q2}
    The $\iota$quantum Weyl group symmetries for $C_i$ and $\widetilde{C}_i$ on $S^{\ot m}$ are related as follows
    \[
        e^{\pi {\rm i}(E_{i,i}+E_{i,i}E_{i+1,i+1})}\iQW_i(C_i)= e^{\pi {\bf i}E_{i,i}}(i,i+1)\flip^{-1}_{i,i+1} \iQW_i(C_i)=\iQW_i(\widetilde{C}_i). 
    \]
\end{lemma}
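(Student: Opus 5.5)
The plan is to reduce everything to $m=2$, $i=1$, since both $C_i$ and $\widetilde{C}_i$, the operators $e^{\pi\imag E_{i,i}}$, $e^{\pi\imag E_{i+1,i+1}}$, and $(i,i+1)\flip^{-1}_{i,i+1}$ act only on the $i$-th and $(i+1)$-st tensor factors of $\howespin\cong S^{\otimes m}$. I read $e^{\pi\imag E_{i,i}E_{i+1,i+1}}$ as the operator that multiplies $\hwp_I$ by $(-1)^{\#I_i\#I_{i+1}}$. With that reading, the first equality is immediate from Lemma \ref{lem:signforaap1}: $(i,i+1)\flip^{-1}_{i,i+1}\hwp_I=(-1)^{\#I_i\#I_{i+1}}\hwp_I$, and $e^{\pi\imag E_{i,i}}$ multiplies $\hwp_I$ by $(-1)^{\#I_i}$. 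So the real content is the second equality,
\[
\iQW_i(\widetilde C_i)=\epsilon\,\eta\,\iQW_i(C_i),\qquad \epsilon:=e^{\pi\imag E_{i,i}},\quad \eta:=e^{\pi\imag E_{i,i}E_{i+1,i+1}} .
\]

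\emph{Step 1: algebraic relations.} By Lemma \ref{lem:tildeCisC}, $\widetilde C=\imag\epsilon C$. Since $\epsilon^2=1$ and $\epsilon C=-C\epsilon$, we get
\[
\widetilde C^{2}=\imag\epsilon C\,\imag\epsilon C=-\epsilon C\epsilon C=\epsilon^{2}C^{2}=C^{2}.
\]
More generally $\widetilde C^{2j}=C^{2j}$ and $\widetilde C^{2j+1}=\imag\epsilon\, C^{2j+1}$.

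\emph{Step 2: compare the quantum integers.} We have $[\ell]_{-x}=(-1)^{\ell-1}[\ell]_x$. Hence $[\ell]_Q^2=[\ell]_{q^2}^2$, so the factors $(b^2-[r]^2)$ in \eqref{eq:evendivpowerpoly} and \eqref{eq:odddivpowerpoly} are the same for $q_i=Q$ and $q_i=q^2$. For the factorials, $[2j]_Q!=(-1)^j[2j]_{q^2}!$ and $[2j+1]_Q!=(-1)^j[2j+1]_{q^2}!$. The prefactors in \eqref{iQ_symm} are $(-Q)^{-k/2}=q^{-k}$ for $C$, and $(-q^2)^{-k/2}$ for $\widetilde C$.
\begin{itemize}
\item For even $k=2j$, the sign $(-1)^j$ in the prefactor cancels the sign in the factorial. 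Using Step 1, each term of $\iQW(\widetilde C)$ on the $\alpha_i^\vee$-even part \eqref{eq:igradingeven} equals the corresponding term of $\iQW(C)$. So the two symmetries agree there.
\item For odd $k=2j+1$, the factorial sign $(-1)^j$ again cancels against part of $(-q^2)^{-(2j+1)/2}=\big((-q^2)^{-1/2}\big)^{2j+1}$. What remains is a single overall constant $(-q^2)^{-1/2}q\cdot\imag$ times $\epsilon$ in front. With the branch $(-q^2)^{1/2}=\imag q$ (the same convention as in \eqref{eq:oddseries} and Proposition \ref{i-q_gens}), this constant is $1$. So on the odd part \eqref{eq:igradingodd}, $\iQW(\widetilde C)=\epsilon\,\iQW(C)$, with $\epsilon$ on the left because $\widetilde C^{2j+1}=\imag\epsilon C^{2j+1}$.
\end{itemize}
The parity of the $X_\iota$-grading for $\widetilde C$ should be taken equal to that for $C$. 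This is justified by Lemma \ref{grading}, or directly because $\widetilde C^2=C^2$ gives the same eigenspaces of the divided-power polynomials.

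\emph{Step 3: match with the sign operator.} It remains to check that $\eta$ is trivial wherever it acts nontrivially relative to $\epsilon$, so that $\epsilon\eta$ reproduces the piecewise answer from Step 2. On the even part, $\#I_i\equiv\#I_{i+1}$. If both are even, $\epsilon\eta$ acts by $1$. If both are odd, it acts by $(-1)^{1+1}=1$. This matches the identity found in Step 2. On the odd part, exactly one of $\#I_i,\#I_{i+1}$ is odd, so $\eta=1$ and $\epsilon\eta=\epsilon$. One must check this on the image of $\iQW(C)$, since $\epsilon\eta$ sits on the left; $\iQW(C)$ preserves the even/odd parity with respect to $\alpha_i^\vee$ ($C$ swaps $S^{+}\otimes S^{-}$ with $S^{-}\otimes S^{+}$), so the check is consistent.

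The main obstacle is bookkeeping in the odd case: fixing the branch of $(-q_i)^{-k/2}$ for $q_i=-q^2$ versus $q_i=q^2$ so that the leftover scalar is exactly $1$, rather than $\pm1$ or $\pm\imag$. I would pin this down by testing on $\hwp_0^{+-}$, where $C$ has eigen-behaviour $[1]=1$ by Proposition \ref{C-action}, and comparing with Proposition \ref{i-q_gens}.
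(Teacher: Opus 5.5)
Your proposal is correct and follows essentially the same route as the paper's appendix proof. Both reduce to $m=2$, $i=1$ and use Lemma \ref{lem:signforaap1} to see that $e^{\pi\imag E_{1,1}}(1,2)\flip^{-1}_{1,2}$ is $1$ on the $\alpha_1^\vee$-even part and $e^{\pi\imag E_{1,1}}$ on the odd part, then compare the series termwise using $\widetilde{C}=\imag e^{\pi\imag E_{1,1}}C$, $\widetilde{C}^2=C^2$, the sign relation between $[\ell]_{-q^2}$ and $[\ell]_{q^2}$, and the branch $(-q^2)^{1/2}=\imag q$; the only difference is that you compare the series for $\iQW$ directly, whereas the paper compares the $\iQW^{-1}$ series and then has to invert using that the odd part anticommutes with $e^{\pi\imag E_{1,1}}$.
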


\begin{proof}
See Appendix \ref{prooflem6.4}.
\end{proof}

%
\section{Monodromic realisation of $\iota$quantum Weyl group operators}\label{sec7}
%

We now prove Theorem \ref{thm:main}, by establishing the following.

\begin{theorem}\label{monodromy}
    Let $(\mathfrak{k}\subset \mathfrak{g}) = (\mathfrak{so}_m\subset \mathfrak{sl}_m)$ and let $M$ be a finite-dimensional irreducible integrable $\mathfrak{k}$-module. Let 
    \[
    \pi_{bCas,\kompact\subset \g}^h:\BrW\rightarrow GL(M)
    \]
    be the monodromy representation of the connection $\nabla_{bCas, \kompact\subset \g}^{h,M}=d- h\sum_{\alpha\in \Phi_+}\frac{d\alpha}{\alpha}B_{\alpha}^2|_M$, for $h\in\C$, as in Proposition \ref{parralel_transport}. Let 
    \[
    \pi_{bW, \kompact\subset \g}^{2h}:\BrW\rightarrow GL(M)
    \]
    be the representation obtained by treating $M$ as a $U^{\iota}_{2h}(\mathfrak{k}\subset \mathfrak{g})$-module, as in Lemma \ref{lem:deformiquantummodule}, and acting by generators $\iQW_i$ of the $\iota$quantum Weyl group, as in Corollary \ref{cor:iqwbraidrep}. Then for generic $h$ (and for $h=0$) we have
    \[
    \pi^h_{bCas,\kompact\subset \g} \cong \pi_{bW, \kompact \subset \g}^{2\pi {\bf i}h},
    \]
    an isomorphism of representations of $\Br_W$.
\end{theorem}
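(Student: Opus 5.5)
We follow the chain of isomorphisms in Figure \ref{fig:main-proof}. First, every irreducible integrable $\som$-module $M$ is an $\som$-submodule of some irreducible $O_m$-module $V^{O_m}(\lambda)$ occurring in $\howespin\cong S^{\otimes m}$, for $n$ large enough. The restriction of an irreducible $O_m$-module to $SO_m$ is either irreducible or a sum of two conjugate irreducibles. Since both representations in the statement are built from the $\som$-action alone (the connection $\nabla_{bCas}$ and the $\iota$quantum Weyl group of $U^\iota_{2h}(\som)$), it suffices to prove the isomorphism on $V^{O_m}(\lambda)\subset\howespin$ and then restrict to $M$. We do this by showing the two representations are isomorphic on the whole of $\howespin$ in a way compatible with the $O_m$-isotypic decomposition. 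By the preceding Lemma, the connections restrict to $V^{O_m}(\lambda)$ embedded via a projector $p\in U(\son)$. On the quantum side, $U_{2h}(\son)$ commutes with everything by Theorem \ref{thm:quantumorthogonalHowe}, so the same reduction applies.

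Step 1 (classical side): apply Corollary \ref{Br_S_m_monodromy} with $h$ replaced by $2h$. This gives
\[
\pi^{h}_{bCas}(\sigma_a)=e^{\pi\imag E_{a,a}}(a,a+1)\flip_{a,a+1}^{-1}\,\pi^{2h}_{KZ,\son}(\sigma_a)\,e^{-\pi\imag nh/2}.
\]
Step 2: by the Kohno--Drinfeld theorem (Theorem \ref{thm:equivalencequantumclassical}, with the twist $F^{(m)}_{2h}$ of Remark \ref{Artin's_lifting}), $\pi^{2h}_{KZ,\son}$ is conjugate to $\pi^{2\pi\imag h}_{R,\son}$, i.e.\ to $\sigma_a\mapsto R_{S,S}$ acting in slots $a,a+1$ with $q=e^{2\pi\imag h}$. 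The twist is built from $U(\son)$, so it commutes with the $O_m$-action and with $\flip$-composites of the form $(a,a+1)\flip_{a,a+1}^{-1}$, since these lie in the $O_m$-image by Lemma \ref{lem:signforaap1}. The twist also deforms the $O_m$-action to the commuting $U^\iota(O_m)$-action. Step 3: by Theorem \ref{R-matrix-to-i-q-Weyl}, $R_{S,S}=q^{n/2}\iQW_a(C_a)$, and with $q=e^{2\pi\imag h}$ the prefactor $q^{n/2}$ exactly cancels $e^{-\pi\imag nh/2}$. Step 4: Lemma \ref{iQW_pm_q2} converts $e^{\pi\imag E_{a,a}}(a,a+1)\flip^{-1}_{a,a+1}\iQW_a(C_a)$ into $\iQW_a(\widetilde C_a)$, the $\iota$quantum Weyl generator for the $U^\iota_{2h}(\som)$-action of Proposition \ref{sign_switch}, at $q=e^{2\pi\imag h}$, i.e.\ parameter $2\pi\imag h$.

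Step 5: identify the resulting $U^\iota(\som)$-module on the relevant isotypic piece with the deformation of $M$ from Lemma \ref{lem:deformiquantummodule}. The action of Proposition \ref{sign_switch} specializes at $h=0$ to the classical action (up to the factor $\imag$ convention). The uniqueness in Lemma \ref{lem:deformiquantummodule}, together with Lemma \ref{grading}, shows that the module and its $X_\iota$-grading, and hence the operators $\iQW_i$, agree with the intrinsic ones. The case $h=0$ is immediate, since both sides reduce to $\sigma$. Genericity of $h$ is needed for Theorem \ref{thm:equivalencequantumclassical}.

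The main obstacle is Step 2: ensuring that the Kohno--Drinfeld intertwiner is compatible with the $O_m$ and $U^\iota(O_m)$ structures, so that it restricts to an isomorphism between the classical $V^{O_m}(\lambda)$ and its quantum counterpart, and that the sign operators $e^{\pi\imag E_{a,a}}(a,a+1)\flip^{-1}$ are carried to each other. I would handle this by working with multiplicity spaces $\Hom_{\son}(V(\lambda^\top),\howespin)$ and $\Hom_{U_{2h}}(V_{2h,\lambda^\top},\howespin)$. The twist induces a braid-equivariant isomorphism between them. The sign operators are diagonal in the $\psi_I$-basis and depend only on tensor parities, which the twist preserves because it is even.
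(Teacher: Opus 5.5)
The overall route is the paper's: Corollary \ref{Br_S_m_monodromy} with $h\mapsto 2h$, then Kohno--Drinfeld via $F^{(m)}_{2h}$, then Theorem \ref{R-matrix-to-i-q-Weyl} and Lemma \ref{iQW_pm_q2}, cutting down by a projector from the image of $U(\son)$. The gap is in the last reduction, from $V^{O_m}(\lambda)$ to an irreducible $\som$-module $M$.

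That reduction rests on your claim that the twist commutes with the $O_m$-action, which is false. $F^{(m)}_{2h}$ lies in the image of $U(\son)^{\otimes m}$ acting factorwise. It therefore commutes with the diagonal $\son$-action and with the factorwise parity operators $e^{\pi\imag E_{a,a}}$; this evenness is the right reason the sign operators pass through it, as you say at the end. It does not commute with $O_m$: otherwise conjugation by it would fix the classical commutant (the span of $O_m$). Your own Step 2 needs it to carry that commutant onto the different quantum commutant, the $U^\iota(O_m)$-image; for instance $\flip_{a,a+1}$ lies in the former but not the latter. Since $F^{(m)}_{2h}$ is even, it therefore fails to commute with $\som$.

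Consequently, when $V^{O_m}(\lambda)|_{\som}=M\oplus M'$, conjugation by $F^{(m)}_{2h}$ only gives an isomorphism between $\pi_{bCas}$ on $M\oplus M'$ and $\pi_{bW}$ on $N_h:=F^{(m)}_{2h}p\,\howespin=N_h^1\oplus N_h^2$. Nothing forces $M$ to be carried to the summand deforming $M$. That both representations are ``built from the $\som$-action alone'' only shows that each side splits. An isomorphism of direct sums need not restrict to summands, and your Step 5 identifies $N_h^1,N_h^2$ as $U^\iota_{2h}(\som)$-modules but not this matching.

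What is missing is the paper's holomorphic idempotent. The commutant $\End_{U^\iota_{2h}(\som)}(N_h)$ is commutative of dimension $1$ or $2$. So near $h=0$ you can choose an idempotent $\widetilde e_h$ in it, holomorphic in $h$, with $\widetilde e_0=e_M$ the classical projection onto $M$ along $M'$. Then $\widetilde e_h\circ F^{(m)}_{2h}\circ e_M\colon M\to\widetilde e_hN_h$ is braid-equivariant, for two reasons:
\begin{itemize}
\item $e_M$ commutes with $\pi_{bCas}$, which is built from $SO_m$ and $U(\som)$;
\item $\widetilde e_h$ commutes with each $\iQW_a(\widetilde C_a)$, since it commutes with $\widetilde C_a$ and the $\alpha_a^\vee$-parity pieces are spectral subspaces of $\widetilde C_a$.
\end{itemize}
This map is the identity at $h=0$, hence invertible for small $h$. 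Only then does your Step 5 identify $\widetilde e_hN_h$ with the deformation of $M$ from Lemma \ref{lem:deformiquantummodule}.

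A minor slip: in $R_{S,S}=q^{n/2}\iQW$ the relevant $q$ is $e^{\pi\imag h}$, so $q_i=q^2=e^{2\pi\imag h}$. With your $q=e^{2\pi\imag h}$, the factor $q^{n/2}$ would not cancel $e^{-\pi\imag nh/2}$.
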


\begin{proof}
   Consider the boundary Casimir connection with the fiber given by the $\som$-module $\mathbb{S}$. On the same space we have the KZ connection given by \eqref{KZ_so2n}. Due to Proposition \ref{Gauge_equiv} and Corollary \ref{Br_S_m_monodromy} we can relate the monodromy representations $\Br_W\rightarrow GL(\howespin)$ of $\nabla^{h,\howespin}_{bCas, \som \subset \slm}$ and $\nabla^{2h,\howespin}_{KZ,\son}$ as follows:
    \begin{equation}
        \pi^h_{bCas, \som \subset \slm}(\sigma_a)=e^{\pi {\rm i}(E_{a,a}+E_{a,a}E_{a+1,a+1})}e^{-\frac{\pi {\rm i}nh}{2}}\pi^{2h}_{KZ,\son}(\sigma_a),
    \end{equation}
    where $\sigma_a$ is the $a$-th braid group generator. 

    According to the Kohno--Drinfeld Theorem \cite{Drinfeld2,Koh1}, the monodromy representation and $R$-matrix representation for connections with fiber $\howespin\cong S^{\otimes m}$ are isomorphic in the context of Theorem \ref{thm:equivalencequantumclassical}
    \[
    \pi^{2h}_{KZ,\son} \cong \pi^{2\pi {\bf i}h}_{R, \son}:\Br_{\mathrm{S_m}}\rightarrow GL(\howespin).
    \]
    Namely, we have
    \begin{equation}
        F^{(m)}_{2h}\pi^{2h}_{KZ,\son}(\sigma_a) (F^{(m)}_{2h})^{-1} = \pi^{2\pi {\bf i}h}_{R, \son} (\sigma_a),
    \end{equation}
    where $F^{(m)}_{2h}$ is the Drinfeld twist from the Remark \ref{Artin's_lifting}. Note that $F^{(m)}_{2h}$ commutes with the $(\mathbb{Z}/2\mathbb{Z})^{\oplus m}$-grading of $\mathbb{S}$ given by $e^{\pi {\rm i}E_{aa}}$.

        Fix a partition $\lambda = (\lambda_1, \dots, \lambda_m)$ such that $\lambda_1'+\lambda_2'\le m$ and an integer $n\ge l(\lambda')$. Consider the representation map $U(\son)\rightarrow \End(\mathbb{S})$. Due to the orthogonal Howe duality as in Theorem \ref{Orthogonal_Howe_duality} and the density Theorem, we may choose an isotypic idempotent $e_0\in \End(\mathbb{S})$ corresponding to the module $V^{\son}(\lambda^{\top})$. Furthermore, since $e_0$ commutes with the $\som$-action on $\mathbb{S}$ we may restrict the fiber of $\nabla_{bCas,\som\subset \slm}^{h, \howespin}$ to the image of $e_0$. The monodromy representation of the boundary Casimir connection restricts to the image of $e_0$ as well, and is given by
    \begin{equation}
        e_0\pi^h_{bCas, \som \subset \slm}(\sigma_a) e_0=e_0e^{\pi {\rm i}(E_{a,a}+E_{a,a}E_{a+1,a+1})}e^{-\frac{\pi {\rm i}nh}{2}}\pi^{2h}_{KZ,\son}(\sigma_a)e_0.
    \end{equation}
    The element $F^{(m)}_{2h} e_0 (F^{(m)}_{2h})^{-1}$ corresponds to the projector onto the isotypic component of $V_{2h,\lambda^\top}^{\son}$ in the context of quantum orthogonal Howe duality from Theorem \ref{thm:quantumorthogonalHowe}. Therefore, its image admits the action of $U^{\iota}_{2h+\pi{\rm i}}(O_m)$ from Proposition \ref{b-action}.

    Now, by Theorem \ref{R-matrix-to-i-q-Weyl} and the Lemma \ref{iQW_pm_q2} we have
    \[
    \pi^{2\pi {\rm i} h}_{R,\son}(\sigma_a)=e^{\frac{\pi{\rm i}hn}{2}}\iQW_a(C_a)=e^{-\pi {\rm i}(E_{a,a}+E_{a,a}E_{a+1,a+1})}e^{\frac{\pi{\rm i}hn}{2}}\iQW_a(\widetilde{C}_a).
    \]
    Therefore, we get that
    \[
    F^{(m)}_{2h}e_0\pi^h_{bCas, \som \subset \slm}(\sigma_a) e_0 (F^{(m)}_{2h})^{-1} = F^{(m)}_{2h}e_0(F^{(m)}_{2h})^{-1} \iQW_a(\widetilde{C}_a) F^{(m)}_{2h}e_0 (F^{(m)}_{2h})^{-1}.
    \]

    So far we have only proved the equivalence of the monodromy representation $\pi^h_{bCas,\kompact\subset \g}$ and $ \pi_{bW, \kompact \subset \g}^{2\pi {\bf i}h}$ for modules $V^{O(m)}(\lambda)^{\oplus k}$ and $V^{O(m)}_{2h}(\lambda)^{\oplus k}$ for some $k$ over $O(m)$ and $U^{\iota}_{2h}(O(m))$ correspondingly. In order to obtain the statement of the Theorem for $k=1$, let us choose a projector $e_0'$ in the image of $U(\son)\rightarrow \End(\mathbb{S})$ corresponding to a one-dimensional image in $V^{\son}(\lambda^{\top})$. Since the conjugation of $e_0'$ by $F^{(m)}_{2h}$ yields a projector in the image of $U_{2h}(\son)\rightarrow \End(\mathbb{S})$, it commutes with the elements $\iQW_a(\widetilde{C}_a)$, so both representations are still isomorphic.
    
    We can further reduce it to the case of the actual irreducible modules over $U(\som)$ and $U_{2h}^{\iota}(\som)$ depending on whether $V^{O(m)}(\lambda)$ restricts to an irreducible $\som$-module, or $V^{O(m)}(\lambda)$ restricts to a direct sum of two irreducible $\som$-modules (see \cite[Proposition 10.3.]{MR4983792}), by recalling that at $h=0$: $F^{(m)}_{2h}={\rm Id}$ and $\widetilde{C}_a$ specialize to $B_{a,a+1}$. Consider the holomorphic family of algebras about $h=0$
    \[
    \End_{U^{\iota}_{2h}(\som)}(F^{(m)}_{2h}e_0'(F^{(m)}_{2h})^{-1}\mathbb{S}).
    \]
    It is commutative and $1$ or $2$-dimensional, therefore locally we can choose an idempotent $\widetilde{e}$ holomorphic in $h$ which projects onto a single irreducible summand $V^{O(m)}(\lambda)$. Since any $M$ from the statement of the Theorem is of this form, we are done.
\end{proof}

\appendix

\section{Technical proofs}

\subsection{Proof of Lemma \ref{lem:signforaap1}}\label{proof3}

    We will show that the action of permutation matrices $(a,a+1)\in  S_m\subset O_m$ on $\howespin$ are such that
    \[
    (a,a+1)\cdot \hwp_I=(-1)^{\#I_a\#I_{a+1}}\flip_{a,a+1}\cdot\hwp_I.
    \]

\begin{proof}
    According to \eqref{eq:Babactionsom}, the action of $\som$ on $\howespin\cong S^{\ot m}$ is given by even operators, therefore it is sufficient for us to treat the case when $m=2$. Let us take, for convenience of notation, $b:=-B_{12}=E_{21}-E_{12}\in \mathfrak{so}_2$ and compute the action of $e^{\frac{\pi}{2}b}$ on $\howespin\cong S^{\ot2}$. The action of $b$ on $\howespin$ is by
    \[
    \sum_{i=1}^n{}^{(i)}b, \quad {}^{(i)}b=\psi_{i2}^*\psi_{i1}-\psi_{i1}^*\psi_{i2}.
    \]
    Since the ${}^{(i)}b$ commute with each other, the action of $e^{\frac{\pi}{2}b}$ on $\howespin$ is by $\prod_{i=1}^n e^{\frac{\pi}{2}{}^{(i)}b}$, so we must compute the action of each $e^{\frac{\pi}{2}{}^{(i)}b}$ on $\hwp_I$, for $I\subset \{1, \dots, n\}\times \{1, 2\}$. 

Observe that
    \[
    {}^{(i)}b\cdot 1=0, \qquad {}^{(i)}b\cdot \psi^*_{i1}\psi^*_{i2}=0, \qquad {}^{(i)}b\cdot \psi_{i1}^*=\psi_{i2}^*, \qquad \text{and} \qquad {}^{(i)}b\cdot \psi_{i2}^*=-\psi_{i1}^*.
    \]
    Since
    \[
    \exp \left(\frac{\pi}{2}
    \begin{pmatrix}
        0 & -1\\
        1 & 0
    \end{pmatrix}
    \right)=
    \begin{pmatrix}
        0 & -1\\
        1 & 0
    \end{pmatrix},
    \]
we get (noting that $\psi^*_{i1}\psi^*_{i2}=-\psi^*_{i2}\psi^*_{i1}$)
\begin{equation}\label{eq:expbiaction1}
e^{\frac{\pi}{2}{}^{(i)}b}\cdot 1 = 1, \qquad e^{\frac{\pi}{2}{}^{(i)}b}\cdot \psi^*_{i1}\psi^*_{i2}=-\psi^*_{i2}\psi^*_{i1}, \qquad e^{\frac{\pi}{2}{}^{(i)}b}\cdot \psi_{i1}^* = \psi_{i2}^*, \qquad \text{and} \qquad e^{\frac{\pi}{2}{}^{(i)}b}\cdot \psi_{i2}^*=-\psi_{i1}^*.
\end{equation}
    
    Let us take a monomial $\hwp_I$, for $I\subset \{1, \dots, n\}\times \{1, 2\}$, and, for $1\le i\le n$, perform the following operation with it. Pull a monomial $m_1$ of the form $1,\psi^*_{i1},\psi^*_{i2}$ or $\psi^*_{i1}\psi^*_{i2}$ to the left, act on $\hwp_I=\pm m_1m_2$ by $e^{\frac{\pi}{2}{}^{(i)}b}$, which by \eqref{eq:expbiaction1} results in a monomial of the form $\pm1m_2,\pm \psi^*_{i2}m_2,\mp\psi^*_{i1}m_2$ or $\mp\psi^*_{i2}\psi^*_{i1}m_2$, respectively, and then move generators with the last index $2$ or $1$ to the original spot of the $1$ or $2$-generators correspondingly. Performing this operation for all $1\le i\le n$, results in the following
\[
e^{\frac{\pi}{2}b}\cdot \hwp_I=\prod_{i=1}^n e^{\frac{\pi}{2}{}^{(i)}b}\cdot \hwp_I = (-1)^{\#I_2}\prod^{\rightarrow}_{i_1\in I_1}\psi_{i_12}^* \prod^{\rightarrow}_{i_2\in I_2}\psi_{i_21}^*
\]
so
\[
e^{\frac{\pi}{2}b}\cdot \hwp_I= (-1)^{\#I_2}(-1)^{\#I_1\#I_2}\prod^{\rightarrow}_{i_2\in I_2}\psi_{i_21}^*\prod^{\rightarrow}_{i_1\in I_1}\psi_{i_12}^* = (-1)^{\#I_2}(-1)^{\#I_1\#I_2}\flip_{1,2}\cdot \hwp_I.
\]
Since in $O_2$, we have $(12)= e^{\pi {\bf i} E_{1,1}}\cdot e^{\frac{\pi}{2}b}$ and 
\[
e^{\pi {\bf i} E_{1,1}}\cdot \flip_{1,2}\cdot \hwp_I = (-1)^{\#I_2}\flip_{1,2}\cdot\hwp_I,
\]
it follows that
\[
(12)\cdot \hwp_I=e^{\pi {\bf i} E_{1,1}}\cdot e^{\frac{\pi}{2}b}\cdot \hwp_I= (-1)^{\#I_1\#I_2}\flip_{1,2} \cdot\hwp_I.
\]
\end{proof}

\subsection{Proof of Proposition \ref{C-action}}\label{proof5.12}

Recall, we aim to show that 
    \[
        C\hwp^{\pm \pm}_k= [2k]_Q \hwp_k^{\mp\mp} \quad \text{and} \quad C\hwp^{\pm \mp}_k= [2k+1]_Q \hwp_k^{\mp\pm}.
    \]

\begin{remark}
Since $Q=-q^2$, we have $[2k]_Q= -\frac{[4k]_q}{[2]_q}$ and $[2k+1]_Q= +\frac{[4k+2]_q}{[2]_q}$.    
\end{remark}

\begin{proof}[Proof of Proposition \ref{C-action}]
Clearly, $C\hwp_0^{\pm \pm}=0$, $C\hwp_0^{\pm\mp}=\hwp_0^{\mp\pm}$, and more generally,
$C\hwp_{k}^{\pm \pm}$ is proportional to $\hwp_{k}^{\mp \mp}$ and $C\hwp_{k}^{\pm \mp}$ is proportional to $\hwp_{k}^{\mp \pm}$. To precisely determine the proportions is a direct computation, e.g.,
\[
    C\hwp^{++}_k = \hwp^{--}_k ((-q^2)^{-(2k-1)}+\sum_{i=n-2k+1}^{n-1}(-q^2)^{n-i}(-q^2)^{n+1-2k-i})=[2k]_Q\hwp^{--}_k.
\]
The other cases are similar.
\end{proof}

\subsection{Proof of Proposition \ref{R-action}}\label{proof6.1}

Recall that we aim to show 
\[
    R_{S,S}(\hwp_k^{\pm \pm})=(-1)^kq^{\frac{n-2(2k)^2}{2}} \hwp_k^{\pm \pm}, \quad R_{S,S}(\hwp_k^{\pm \mp})=(-1)^kq^{\frac{n-2(2k+1)^2}{2}} \hwp_k^{\mp \pm}.
\]

\begin{proof}[Proof of Proposition \ref{R-action}]
 We first consider two perspectives on the braiding
 \[
 R=\flip\circ q^{(\wt(-),\wt(-))}\circ (T_{w_0}^{-1} \ot T_{w_0}^{-1} 
 )\circ \Delta(T_{w_0}),
 \]
 as in \eqref{R-matrix}. Let $V,W$ be type I $U_h(\g)$-modules. On the one hand, $R$ acts as a $U_h(\g)$-module intertwiner $V\otimes W\rightarrow W\otimes V$. The key takeaway is that $R$ sends highest weight vectors to highest weight vectors. On the other hand, if $v\in V[\lambda]$ and $w\in W[\mu]$, then
 \[
(T_{w_0}^{-1} \ot T_{w_0}^{-1} 
 )\circ \Delta(T_{w_0})\cdot v\otimes w= v\otimes w + V[<\lambda]\otimes W[> \mu],
 \]
as in \cite[Proposition 3.5.]{BER} (see also Remark \ref{Hopf_quasitriangular}), and therefore
 \begin{equation}\label{eq:Risscalarflip-unitri}
 R_{V,W}\cdot v\otimes w = q^{(\lambda, \mu)}w\otimes v + W[>\mu]\otimes V[<\lambda].
 \end{equation}
 The key takeaway is that we can use weight considerations to compute with $R$. 
 
 We will now argue separately for each case of $\hwp_k^{\pm\pm}$ and $\hwp_k^{\pm\mp}$.

\smallskip
\noindent\textbf{Part (a): The case of $\hwp_k^{++}$.}\\

We know that $R_{S^+,S^+}$ is an intertwiner, so sends highest weight vectors to highest weight vectors. Thus,
\begin{equation}\label{eq:R++scalar}
    R_{S^+,S^+}(\hwp_k^{++})=\beta\hwp_k^{++}
\end{equation}
and we wish to find the scalar $\beta$. For this, we first observe that in \eqref{eq:highvect++}, we have
\[
\hwp_k^{++}=c^+_{\emptyset}\hwp_{\emptyset}\otimes \hwp_{I_{2k}} + (\dots),
\]
where $(\dots)$ is a sum of terms in $S^{\otimes 2}$ such that all have the second tensor factor in weight strictly higher (in dominance order) than the weight of $\hwp_{I_{2k}}$. Compare the outcome of the action by $R_{S^+,S^+}$ to find
\[
q^{(\lambda_{\emptyset},\lambda_{I_{2k}})}c^+_{\emptyset}\hwp_{I_{2k}} \otimes \hwp_{\emptyset}=\beta c^+_{I_{2k}}\hwp_{I_{2k}} \otimes \hwp_{\emptyset},
\]
(here the left hand uses \eqref{eq:Risscalarflip-unitri} and the right hand side uses \eqref{eq:R++scalar}). We conclude that
\[
    \beta=q^{\frac{n-4k}{2}}(-q^2)^{-2k^2+k}=(-1)^kq^{\frac{n-8k^2}{2}}.
\]

\smallskip
\noindent\textbf{Part (b): The case of $\hwp_k^{--}$.}\\

If $k=0$, then
\[
    \beta = q^{(\lambda_{\{n\}},\lambda_{\{n\}})}=q^{\frac{n}{2}}.
\]
Otherwise, 
\[
    q^{(\lambda_{\{n\}},\lambda_{I_{2k}\setminus \{n\}})}c^-_{\{n\}}\hwp_{I_{2k}\setminus \{n\}}\otimes \hwp_{\{n\}} =\beta c^-_{I_{2k}\setminus \{n\}}\hwp_{I_{2k}\setminus \{n\}}\otimes \hwp_{\{n\}},
\]
so
\[
    \beta=q^{\frac{n-4k}{2}}Q^{-2k^2+k}=(-1)^kq^{\frac{n-8k^2}{2}}.
\]

\smallskip
\noindent\textbf{Part (c): The case of $\hwp_k^{+-}$.}\\

Here we have $R_{S^+,S^-}\hwp_{k}^{+-}=\beta \hwp_{k}^{-+}$. Therefore,
\[
    q^{(\lambda_{\emptyset},\lambda_{I_{2k+1}})}Q^{\hgt(\lambda_{low,+}-\lambda_{\emptyset})}\hwp_{I_{2k+1}}\otimes \hwp_{\emptyset}=\beta Q^{\hgt(\lambda_{low,-}-\lambda_{I_{2k+1}})}\hwp_{I_{2k+1}}\otimes \hwp_{\emptyset},
\]
so
\[
    \beta=q^{(\lambda_{\emptyset},\lambda_{I_{2k+1}})}Q^{\hgt(\lambda_{low,+}-\lambda_{low,-}+\lambda_{I_{2k+1}}-\lambda_{\emptyset})}=q^{\frac{n-4k-2}{2}}(-q^2)^{-2k^2-k}=(-1)^kq^{\frac{n-2(2k+1)^2}{2}}.
\]

\smallskip
\noindent\textbf{Part (d): The case of $\hwp_k^{-+}$.}\\

Let $R_{S^-,S^+}\hwp_{k}^{-+}=\beta \hwp_{k}^{+-}$. Therefore,
\[
    q^{(\lambda_{\{n\}},\lambda_{I_{2k+1}\setminus \{n\}})}Q^{\hgt(\lambda_{low,-}-\lambda_{\{n\}})}\hwp_{I_{2k+1}\setminus \{n\}}\otimes \hwp_{\{n\}}=\beta Q^{\hgt(\lambda_{low,+}-\lambda_{I_{2k+1}\setminus \{n\}})}\hwp_{I_{2k+1}\setminus \{n\}}\otimes \hwp_{\{n\}},
\]
so
\[
\beta=(-1)^kq^{\frac{n-2(2k+1)^2}{2}}.
\]
\end{proof}

\subsection{Proof of Proposition \ref{i-q_gens}}\label{proof6.2}

Recall, we aim to show that the following identities hold
\[
    \iQW^{-1}_{\overline{0}}([2k]_Q)=(-1)^k Q^{2k^2} \qquad \text{and} \qquad \iQW^{-1}_{\overline{1}}([2k+1]_Q)=(-Q)^{1/2} (-1)^kQ^{2(k^2+k)}.
\]

We first give the following lemma on quantum number arithmetic.

\begin{lemma}\label{lem:quantumarith}
    For $i,j\in \mathbb{Z}$, we have
\begin{equation}\label{eq:quantaritheven}
    [2i]_Q^2-[2j]_Q^2=[2i+2j]_Q[2i-2j]_Q
\end{equation}
and
\begin{equation}\label{eq:quantuarithodd}
    [2i+1]_Q^2-[2j+1]^2_Q=[2i+2j+2]_Q[2i-2j]_Q.
\end{equation}
\end{lemma}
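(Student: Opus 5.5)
The plan is to reduce both identities to the standard difference-of-squares identity for quantum integers,
\[
[a]_Q^2-[b]_Q^2=[a+b]_Q[a-b]_Q, \qquad a,b\in\mathbb{Z},
\]
and then specialize. Setting $a=2i,\ b=2j$ gives \eqref{eq:quantaritheven} directly. Setting $a=2i+1,\ b=2j+1$ gives $a+b=2i+2j+2$ and $a-b=2i-2j$, which is \eqref{eq:quantuarithodd}. So the whole content lies in this single identity, and it holds for any invertible $Q$ with $Q\neq \pm1$. It therefore applies verbatim to $Q=-q^2$ when $h$ is generic.

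To prove the identity, multiply both sides by $(Q-Q^{-1})^2$ and write $x=Q^{a}$, $y=Q^{b}$. The left side becomes
\[
(x-x^{-1})^2-(y-y^{-1})^2 = x^2+x^{-2}-y^2-y^{-2},
\]
since the constant terms $-2$ cancel. The right side becomes
\[
(xy-x^{-1}y^{-1})(xy^{-1}-x^{-1}y).
\]
Expanding this product gives $x^2-y^2-y^{-2}+x^{-2}$, because the cross terms are $-y^{2}$ and $-y^{-2}$. The two sides agree, and dividing back by $(Q-Q^{-1})^2\neq 0$ finishes the proof.

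There is no real obstacle here; this is a routine Laurent polynomial identity. The only point needing care is the sign bookkeeping when expanding the product of the two differences. It is also worth noting that negative arguments such as $2i-2j<0$ need no separate treatment, since $[-\ell]_Q=-[\ell]_Q$ is built into the definition of the quantum integer.
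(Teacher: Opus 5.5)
Your proof is correct and is essentially the paper's argument: clear the denominator $(Q-Q^{-1})^2$, cancel the constant terms, and factor the numerator as $(Q^{a+b}-Q^{-a-b})(Q^{a-b}-Q^{b-a})$. The only difference is that you do this once for general exponents $a,b$, which also covers the odd identity that the paper leaves as ``similar.''
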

\begin{proof}
    For \eqref{eq:quantaritheven}, note that
    \begin{align*}
    [2i]_Q^2-[2j]_Q^2&=\frac{Q^{4i}-2 +Q^{-4i}-Q^{4j}+2 -Q^{-4j}}{(Q-Q^{-1})^2} \\
    &=\frac{(Q^{2i+2j}-Q^{-2i-2j})(Q^{2i-2j}-Q^{2j-2i})}{(Q-Q^{-1})^2}\\
    &=[2i+2j]_Q[2i-2j]_Q.
    \end{align*}
    The argument for \eqref{eq:quantuarithodd} is similar.
\end{proof}

Now we can write the $\iota$divided power polynomials, which are products of difference of squares of quantum numbers, as honest products of quantum numbers.

\begin{lemma}
For $i\in \mathbb{Z}_{\ge 0}$ and $k\in \mathbb{Z}$, we have
\begin{equation}
    B_{\overline{0}}^{(2i)}([2k]_Q)=\frac{[2k]_Q^2[2k-2]_Q[2k+2]_Q\dots [2k-2i+2]_Q[2k+2i-2]_Q}{[2i]_Q!},
\end{equation}
and
\begin{equation}
    B_{\overline{1}}^{(2i+1)}([2k+1]_Q)=[2k+1]_Q\frac{[2k]_Q[2k+2]_Q\dots [2k-2i+2]_Q[2k+2i]_Q}{[2i+1]_Q!}.
\end{equation}
\end{lemma}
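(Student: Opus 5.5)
The plan is to substitute $b=[2k]_Q$ (respectively $b=[2k+1]_Q$) directly into the defining products \eqref{eq:evendivpowerpoly} and \eqref{eq:odddivpowerpoly}. Each factor is then a difference of squares of quantum numbers, and Lemma \ref{lem:quantumarith} factors it into a product of two quantum numbers. The denominators $[2i]_Q!$ and $[2i+1]_Q!$ are not touched, so only the numerators need to be identified.

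For the even case, the numerator of $B_{\overline{0}}^{(2i)}([2k]_Q)$ is
\[
[2k]_Q^2\prod_{r=1}^{i-1}\bigl([2k]_Q^2-[2r]_Q^2\bigr).
\]
Apply \eqref{eq:quantaritheven} with the first index equal to $k$ and the second equal to $r$. This gives $[2k]_Q^2-[2r]_Q^2=[2k+2r]_Q[2k-2r]_Q$. Taking the product over $r=1,\dots,i-1$ produces exactly the factors $[2k\pm 2]_Q,\dots,[2k\pm(2i-2)]_Q$. Together with the leading $[2k]_Q^2$, this is the claimed numerator. The case $i=0$ should be treated separately, since $B_{\overline 0}^{(0)}=1$ by convention while the displayed formula is an empty product. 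I should also confirm that \eqref{eq:idivpowereven} gives $b^2$ (and not $b$) as the leading factor for even powers. It does: the exponent there is $1+\delta_{p(m),\overline 0}=2$.

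For the odd case, the numerator of $B_{\overline 1}^{(2i+1)}([2k+1]_Q)$ is
\[
[2k+1]_Q\prod_{r=1}^{i}\bigl([2k+1]_Q^2-[2r-1]_Q^2\bigr).
\]
Apply \eqref{eq:quantuarithodd} with the first index $k$ and the second index $r-1$. This gives $[2k+1]_Q^2-[2r-1]_Q^2=[2k+2r]_Q[2k-2r+2]_Q$. As $r$ runs from $1$ to $i$, the first factors are $[2k+2]_Q,\dots,[2k+2i]_Q$ and the second factors are $[2k]_Q,[2k-2]_Q,\dots,[2k-2i+2]_Q$. This matches the stated product $[2k]_Q[2k+2]_Q\cdots[2k-2i+2]_Q[2k+2i]_Q$.

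The main obstacle is bookkeeping rather than ideas. I will need to check that the index ranges in \eqref{eq:idivpowerodd} (the product up to $\lfloor m/2\rfloor=i$) match the ranges in the polynomial \eqref{eq:odddivpowerpoly} as the paper writes it. In particular, the displayed polynomial has last factor $b^2-[2k-1]_Q^2$, where its $k$ plays the role of $i$ here. I also want to verify \eqref{eq:quantuarithodd} itself by the same expansion used for \eqref{eq:quantaritheven}: the numerator of the difference is
\[
Q^{4i+2}+Q^{-4i-2}-Q^{4j+2}-Q^{-4j-2}=(Q^{2i+2j+2}-Q^{-2i-2j-2})(Q^{2i-2j}-Q^{2j-2i}).
\]
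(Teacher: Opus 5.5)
Your proposal is correct and is the paper's argument: the paper's proof simply applies Lemma \ref{lem:quantumarith} factor by factor to \eqref{eq:evendivpowerpoly} and \eqref{eq:odddivpowerpoly}, and your index choices ($j=r$ in the even case, $j=r-1$ in the odd case) supply the bookkeeping the paper leaves implicit. Your caveat about $i=0$ is also reasonable, since that case rests on the convention $B_{\overline{0}}^{(0)}=1$ rather than on the product formula.
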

\begin{proof}
    Apply Lemma \ref{lem:quantumarith} to \eqref{eq:evendivpowerpoly} and \eqref{eq:odddivpowerpoly}. 
\end{proof}

Next, we derive a recursion for the $\iota$divided power polynomials.

\begin{lemma}\label{lem:divpowerpolyrecursion}
For $i\in \mathbb{Z}_{\ge 0}$ and $k\in \mathbb{Z}$, we have
    \begin{equation}\label{eq:evendivpolyrecursion}
    B_{\overline{0}}^{(2i)}([2k+2]_Q)-B_{\overline{0}}^{(2i)}([2k]_Q)=(Q^{2k+1}+Q^{-2k-1})B_{\overline{1}}^{(2i-1)}([2k+1]_Q)
\end{equation}
and
\begin{equation}\label{eq:odddivpolyrecursion}
    B_{\overline{1}}^{(2i+1)}([2k+1]_Q)-B_{\overline{1}}^{(2i+1)}([2k-1]_Q)=(Q^{2k}+Q^{-2k})B_{\overline{0}}^{(2i)}([2k]_Q).
\end{equation}
\end{lemma}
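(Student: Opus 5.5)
We prove both identities by comparing the product formulas of the preceding lemma at neighbouring arguments. Write $P(x) := B_{\overline{0}}^{(2i)}(x)$. By the preceding lemma,
\[
P([2k+2]_Q) = \frac{[2k+2]_Q^2\,\prod_{r=1}^{i-1}[2k+2-2r]_Q[2k+2+2r]_Q}{[2i]_Q!}, \qquad
P([2k]_Q) = \frac{[2k]_Q^2\,\prod_{r=1}^{i-1}[2k-2r]_Q[2k+2r]_Q}{[2i]_Q!}.
\]
Both numerators share the common factor
\[
A := [2k-2i+4]_Q\cdots[2k]_Q\,[2k+2]_Q\cdots[2k+2i-2]_Q,
\]
the product of the even quantum numbers $[2k+2s]_Q$ with $-i+2\le s\le i-1$. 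Factoring out $A$ gives
\[
P([2k+2]_Q)-P([2k]_Q) = \frac{A}{[2i]_Q!}\Big([2k+2]_Q[2k+2i]_Q-[2k]_Q[2k-2i+2]_Q\Big).
\]
The right-hand side should equal $(Q^{2k+1}+Q^{-2k-1})B_{\overline{1}}^{(2i-1)}([2k+1]_Q)$. By the preceding lemma (with $i$ replaced by $i-1$),
\[
B_{\overline{1}}^{(2i-1)}([2k+1]_Q) = [2k+1]_Q\,\frac{[2k-2i+4]_Q\cdots[2k+2i-2]_Q}{[2i-1]_Q!}.
\]
Here the product runs over the same even range as $A$, so it equals $A$. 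Since $[2i]_Q! = [2i]_Q[2i-1]_Q!$, the recursion \eqref{eq:evendivpolyrecursion} reduces to the scalar identity
\[
[2k+2]_Q[2k+2i]_Q-[2k]_Q[2k-2i+2]_Q=[2i]_Q\,(Q^{2k+1}+Q^{-2k-1})\,[2k+1]_Q .
\]

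This identity is verified by expanding $[a]_Q[b]_Q(Q-Q^{-1})^2 = Q^{a+b}+Q^{-a-b}-Q^{a-b}-Q^{b-a}$. The left side times $(Q-Q^{-1})^2$ is
\[
Q^{4k+2i+2}+Q^{-4k-2i-2}-Q^{2i-2}-Q^{2-2i}-Q^{4k-2i+2}-Q^{-4k+2i-2}+Q^{2i-2}+Q^{2-2i}.
\]
The middle terms cancel, leaving $(Q^{2i}-Q^{-2i})(Q^{4k+2}-Q^{-4k-2})$. On the right side, $(Q^{2k+1}+Q^{-2k-1})[2k+1]_Q = [4k+2]_Q$, so the right side is $[2i]_Q[4k+2]_Q$, whose expansion is the same expression. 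The degenerate case $i=0$ gives $0=0$, using the conventions $P=1$ and $B_{\overline{1}}^{(-1)}=0$. The case $i=1$, where $A$ is an empty product, is handled by the same computation.

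The odd recursion \eqref{eq:odddivpolyrecursion} is proved the same way. By the preceding lemma, the numerators of $B_{\overline{1}}^{(2i+1)}([2k+1]_Q)$ and $B_{\overline{1}}^{(2i+1)}([2k-1]_Q)$ share the common product $[2k-2i+2]_Q\cdots[2k+2i-2]_Q$. This is also the product appearing in $B_{\overline{0}}^{(2i)}([2k]_Q)$ apart from the factor $[2k]_Q^2$ there; the bookkeeping needs care because one copy of $[2k]_Q$ sits inside the shared product. After cancellation the recursion reduces to
\[
[2k+1]_Q[2k+2i]_Q-[2k-1]_Q[2k-2i]_Q=[2i+1]_Q\,[2k]_Q\,(Q^{2k}+Q^{-2k}),
\]
which the same expansion verifies, since $(Q^{2k}+Q^{-2k})[2k]_Q=[4k]_Q$.

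The only real obstacle is the index bookkeeping: checking that the shared factor in each case is exactly the product appearing in the lower divided power, including at the boundary values of $i$.
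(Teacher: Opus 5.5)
Your proposal is correct and is essentially the paper's proof: factor out the shared product of even quantum numbers and reduce the two recursions to the scalar identities $[2k+2]_Q[2k+2i]_Q-[2k]_Q[2k-2i+2]_Q=[4k+2]_Q[2i]_Q$ and $[2k+1]_Q[2k+2i]_Q-[2k-1]_Q[2k-2i]_Q=[4k]_Q[2i+1]_Q$, which are then checked by expanding in powers of $Q$. The one boundary case your factorization does not literally cover is the odd recursion at $i=0$, where the shared-product decomposition breaks down; with the convention $B_{\overline{0}}^{(0)}=1$ that case reads $[2k+1]_Q-[2k-1]_Q=Q^{2k}+Q^{-2k}$, which is a one-line check.
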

\begin{proof}
To establish \eqref{eq:evendivpolyrecursion}, observe that
\begin{gather*}
    B_{\overline{0}}^{(2i)}([2k+2]_Q)-B_{\overline{0}}^{(2i)}([2k]_Q)\\
    =\\
    \frac{[2k-2i+4]_Q\dots [2k+2i-2]_Q}{[2i]_Q!}([2k+2]_Q[2k+2i]_Q-[2k]_Q[2k-2i+2]_Q),
\end{gather*}
and 
\begin{gather*}
    [2k+2]_Q[2k+2i]_Q-[2k]_Q[2k-2i+2]_Q\\
    =
    \\
    \frac{Q^{4k+2i+2}-Q^{2-2i}-Q^{2i-2}+Q^{-4k-2i-2}-Q^{4k-2i+2}+Q^{-2i+2}+Q^{2i-2}-Q^{-4k+2i-2}}{(Q-Q^{-1})^2}\\
    =\\
    [4k+2]_Q[2i]_Q=(Q^{2k+1}+Q^{-2k-1})[2k+1]_Q[2i]_Q.
\end{gather*}
Similarly, for \eqref{eq:odddivpolyrecursion}, use
\begin{gather*}
    [2k+1]_Q[2k+2i]_Q-[2k-1]_Q[2k-2i]_Q\\
    =\\
    \frac{Q^{4k+2i+1}-Q^{1-2i}-Q^{2i-1}+Q^{-4k-2i-1}-Q^{4k-2i-1}+Q^{1-2i}+Q^{2i-1}-Q^{-4k+2i+1}}{(Q-Q^{-1})^2}\\
    =\\
    [4k]_Q[2i+1]_Q\\
    =\\
    (Q^{2k}+Q^{-2k})[2k]_Q[2i+1]_Q.
\end{gather*}
\end{proof}

Now, we will use the recursion from Lemma \ref{lem:divpowerpolyrecursion} to prove the Proposition.

\begin{proof}[Proof of Proposition \ref{i-q_gens}]
For convenience of notation, let $\widetilde{\iQW^{-1}_{\overline{1}}}:=(-Q)^{-1/2}\iQW^{-1}_{\overline{1}}$, so
\[
\iQW^{-1}_{\overline{0}} = 1- QB_{\overline{0}}^{(2)} + \cdots \quad \text{and} \quad \widetilde{\iQW^{-1}_{\overline{1}}} = B_{\overline{1}} - QB_{\overline{1}}^{(3)} + \cdots .
\]

We will prove the claim for $k\in \mathbb{Z}_{\ge 0}$ by induction. The base for the induction is
\[
    \iQW^{-1}_{\overline{0}}([0]_Q)=1 \quad \text{and} \quad \widetilde{\iQW^{-1}_{\overline{1}}}([1]_Q)=1.
\]
Suppose for induction that
\begin{equation}\label{eq:inductionappendix}
    \iQW^{-1}_{\overline{0}}([2k]_Q)=(-1)^k Q^{2k^2}\quad \text{and} \quad \widetilde{\iQW^{-1}_{\overline{1}}}([2k-1]_Q)=(-1)^{k-1}Q^{2(k-1)k}.
\end{equation}
First, we compute
\begin{align*}
    \widetilde{\iQW^{-1}_{\overline{1}}}([2k+1]_Q) &\stackrel{\eqref{eq:odddivpolyrecursion}}{=}\widetilde{\iQW^{-1}_{\overline{1}}}([2k-1]_Q) + (Q^{2k}+Q^{-2k})\iQW^{-1}_{\overline{0}}([2k]_Q) \\
    &\stackrel{\eqref{eq:inductionappendix}}{=}(-1)^{k-1}Q^{2(k^2-k)} +(Q^{2k}+Q^{-2k})(-1)^kQ^{2k^2}\\
    &=(-1)^kQ^{2k^2+2k} \\
    &=(-1)^kQ^{2k(k+1)}
\end{align*}
Then, using the previous computation, we compute 
\begin{align*}
    \iQW^{-1}_{\overline{0}}([2k+2]_Q)&\stackrel{\eqref{eq:evendivpolyrecursion}}{=}\iQW^{-1}_{\overline{0}}([2k]_Q)-Q(Q^{2k+1}+Q^{-2k-1}) \widetilde{\iQW^{-1}_{\overline{1}}}([2k+1]_Q)\\
    &=\iQW^{-1}_{\overline{0}}([2k]_Q)-Q(Q^{2k+1}+Q^{-2k-1})(-1)^k Q^{2(k^2+k)}\\
    &\stackrel{\eqref{eq:inductionappendix}}{=}(-1)^kQ^{2k^2}-Q(Q^{2k+1}+Q^{-2k-1})(-1)^k Q^{2(k^2+k)}\\
    &=(-1)^{k+1}Q^{2k^2+4k+2}\\
    &=(-1)^{k+1}Q^{2(k+1)^2}.
\end{align*}
This finishes the proof for $k\in \mathbb{Z}_{\ge 0}$. We leave it to the reader to deduce the result for all $k\in \mathbb{Z}$.
\end{proof}

\subsection{Proof of Lemma \ref{iQW_pm_q2}}\label{prooflem6.4}

We aim to show that
 \[
        e^{\pi {\bf i}E_{i,i}}(i,i+1)\flip^{-1}_{i,i+1} \iQW_i(C_i)=\iQW_i(\widetilde{C}_i). 
    \]

\begin{proof}[Proof of Lemma \ref{prooflem6.4}]

It suffices to prove the claim when $m=2$ and $i=1$. By Lemma \ref{lem:signforaap1}, we have $(1,2)\flip_{1,2}^{-1}=(-1)^{\#I_1\#I_2}$ which is $+1$ on $S^-\otimes S^+$, $S^+\otimes S^-$, and $S^+\otimes S^+$, and is $-1$ on $S^-\otimes S^-$. Therefore,
\[
l_1 :=e^{\pi {\bf i}E_{1,1}}(1,2)\flip^{-1}_{1,2}=\begin{cases}
    e^{\pi {\bf i}E_{1,1}}, & \text{ on } S^{\otimes 2}_{\overline{1}}:=(S^-\ot S^+) \oplus (S^+ \ot S^-),\\
    1, & \text{ on } S^{\otimes 2}_{\overline{0}}:=(S^+ \ot S^+) \oplus (S^- \ot S^-).
\end{cases}
\]
We will re-express $l_1 \iQW_1$, originally written in terms of $C$ and $Q=-q^2$, by using $\widetilde{C}$ and $q^2$. 

Recall that
\[
\widetilde{C}={\bf i}e^{\pi {\bf i}E_{1,1}} C,
\]
and note that
\[
[2k]_Q=-[2k]_{q^2} \qquad \text{and} \qquad [2k+1]_Q=+[2k+1]_{q^2},
\]
for $k\in \mathbb{Z}$.

For the even part, we observe that
\[
\widetilde{C}^2=-e^{\pi {\bf i}E_{1,1}} C e^{\pi {\bf i}E_{1,1}} C= C^2, 
\]
so
\begin{align*}
\iQW^{-1}_{\bar{0},Q}(C)&\stackrel{\eqref{eq:evenseries}}{=}1-Q B_{\bar{0},Q}^{(2)}(C)+Q^2B_{\bar{0},Q}^{(4)}(C)-\dots \\
&= 1-q^2 B_{\bar{0},q^2}^{(2)}(\widetilde{C})+(q^2)^2B_{\bar{0},q^2}^{(4)}(\widetilde{C})-\dots\\
&= \iQW^{-1}_{\bar{0},q^2}(\widetilde{C}).
\end{align*}
Let us choose a positive branch of the root in the equation \eqref{iQ_symm}. For the odd part, we have
\begin{align*}
    e^{\pi {\bf i}E_{1,1}}\iQW^{-1}_{\bar{1},Q}(C)&\stackrel{\eqref{eq:oddseries}}{=} e^{\pi {\bf i}E_{1,1}} (-Q)^{1/2}\big(B_{\bar{1},Q}^{(1)}(C)-QB_{\bar{1},Q}^{(3)}(C)+Q^2B_{\bar{1},Q}^{(5)}(C)-\dots\big)\\
    &=\frac{1}{{\bf i}} (q^2)^{1/2}\big(B_{\bar{1},q^2}^{(1)}(\widetilde{C})-q^2B_{\bar{1},q^2}^{(3)}(\widetilde{C})+(q^2)^2B_{\bar{1},q^2}^{(5)}(\widetilde{C})-\dots\big)\\
    &=-(-q^2)^{1/2}\big(B_{\bar{1},q^2}^{(1)}(\widetilde{C})-q^2B_{\bar{1},q^2}^{(3)}(\widetilde{C})+(q^2)^2B_{\bar{1},q^2}^{(5)}(\widetilde{C})-\dots\big) \\
    &= -\iQW^{-1}_{\bar{1},q^2}(\widetilde{C}).
\end{align*}
Therefore,
\[
e^{\pi {\bf i}E_{1,1}}\iQW^{-1}_{\bar{1},Q}(C)=e^{-\pi {\bf i}E_{1,1}}\iQW^{-1}_{\bar{1},Q}(C)=-\iQW^{-1}_{\bar{1},Q}(C)e^{-\pi {\bf i}E_{1,1}}=\iQW^{-1}_{\bar{1},q^2}(\widetilde{C}) 
\]
So $l_1 \iQW(C)=\iQW(\widetilde{C})$ on $S^{\ot 2}$. 

\end{proof}

\bibliographystyle{plain}
\bibliography{2025arxiv}

\end{document}